\newcommand{\sus}[1]{{#1}}
\newtheorem{thm}{Theorem}[section]
\newtheorem{cor}[thm]{Corollary}
\newtheorem{lem}[thm]{Lemma}
\newtheorem{prop}[thm]{Proposition}
\theoremstyle{definition}
\newtheorem{defn}[thm]{Definition}
\theoremstyle{remark}
\newtheorem{rem}[thm]{Remark}
\numberwithin{equation}{section}
\newcommand{\R}{\mathbb R}
\newcommand{\eps}{\epsilon}
\newcommand{\p}{\partial}
\newcommand{\di}{\displaystyle}
\newcommand{\comment}[1]{}
\newcommand{\classG} {\mathcal{G}}
\def\C {\mathcal{C}}
\def\S {\mathbb{S}}
\def\bv {\mathbf{v}}
\def\bu {\mathbf{u}}
\def\eps{\varepsilon}
\def\Hh{\mathcal{H}}
\def\Ss{\mathcal{S}}
\def\Ceh{\mathcal{C}}
\def\Mah{\mathfrak{M}}
\def\Heh{\mathcal{H}}
\def\Seh{\mathcal{S}}
\def\Neh{\mathcal{N}}
\def\Feh{\mathcal{F}}
\def\Geh{\mathcal{G}}
\def\Beh{\mathcal{B}}
\newcommand{\loc}{\mathrm{loc}}
\DeclareMathOperator*{\tsum}{\textstyle{\sum}}
\newcommand{\de}[1] {\mathrm{d} #1}
\begin{document}
\title[Segregated configurations involving the square root of the laplacian]{Segregated configurations involving the square root of the laplacian and their free boundaries}
\date{\today}

\author[D. De Silva]{Daniela De Silva}\thanks{}
\address{Daniela De Silva \newline \indent
Barnard College, Columbia University  \newline \indent
Department of Mathematics,
 \newline \indent
 New York, NY 10027, USA}
\email{desilva@math.columbia.edu}

\author[S. Terracini]{Susanna Terracini}\thanks{}
\address{Susanna Terracini \newline \indent
 Dipartimento di Matematica ``Giuseppe Peano'', Universit\`a di Torino, \newline \indent
Via Carlo Alberto, 10,
10123 Torino, Italy}
\email{susanna.terracini@unito.it}

\date{\today} 

\subjclass[2010] {
35J70, 
35J75,  
35R11, 
35B40, 
35B44, 
35B53, 
}

\keywords{Free boundary problems, nonlocal diffusion, improvement of flatness, monotonicity formulas, blow-up classification}
%
%
%
%
%
%
%
%
%
%
%

\thanks{Work partially supported by the ERC Advanced Grant 2013 n.~339958 Complex Patterns for Strongly Interacting Dynamical Systems - COMPAT }

\begin{abstract}

We study the local structure and the regularity of free boundaries 
of segregated minimal configurations involving the square root of the laplacian. We develop an improvement of flatness theory and, as a consequence of this and Almgren's monotonicity formula, we obtain partial regularity (up to a small dimensional set) of the nodal set, thus extending the known results in \cite{CL2008,TT} for the standard diffusion to some anomalous case.

\end{abstract}

\maketitle

\section{Introduction}

The analysis of the nodal sets of  segregated stationary configurations for systems of elliptic equations has been the subject of an intense study in the last decade, starting from the works \cite{ctvNehari,ctvOptimal,ctvVariational,CCCL2004,CL2008,CKL2009,nttv,dwz1,dwz2,dwz3}. The present paper is concerned with the geometric structure of the nodal set, when the creation of a free boundary is triggered by the interplay between \emph{fractional diffusion} and \emph{competitive interaction}. A prototypical example comes the following system of fractional Gross-Pitaevskii equations

\begin{equation}\label{eq:general_system}
\begin{cases}
(-\Delta + m_i^2)^{1/2}u_i+V(x)u_i=\omega_iu_i^3-\beta u_i\sum_{j\neq i} a_{ij}u^2_j\;\\
u_i\in H^{1/2}(\R^N),
\end{cases}
\end{equation}

\

\noindent with $a_{ij}=a_{ji}>0$, which is the relativistic version of the  Hartree-Fock approximation theory for mixtures of Bose-Einstein condensates in different hyperfine states which overlap in space.  The sign of $\omega_i$ reflects the type of interaction of the particles within each single state. If $\omega_i$ is positive, the self interaction is attractive (focusing problems defocusing otherwise). $V$ represents an external potential. The sign of $\beta$, on the other hand, accounts for the interaction of particles in different states. This interaction is attractive when negative and repulsive otherwise. If the condensates repel, and the competition rate tends to infinity, the  densities eventually separate spatially, giving rise to a free boundary: the common nodal set of the components $u_i$'s. This phenomenon is called phase separation and has been described in the recent literature, both physical and mathematical, in the case of standard diffusion. It is by now a well-established fact that in the case of elliptic systems with standard diffusion this nodal set is comparable, as regards to the qualitative properties, to that of the scalar solutions. The main reason can be attributed to the validity of a weak reflection law (see \cite{TT}) which constitutes the condition of extremality at the common interface.  Relevant connections have been established with optimal partition problems involving spectral functionals (cfr \cite{TT,RTT}).

We consider the following model with fractional diffusion: according to \cite{cs},  the $n$-dimensional half laplacian can be interpreted as a (nonlinear) Dirichlet-to-Neumann operator. For this reason we shall state all our results for harmonic functions with nonlinear Neumann boundary conditions involving strong competition terms.  Precisely, the following uniform-in-$\beta$ estimates have been derived in \cite{TVZ}. Here we denote by $B^+_r:= B_r\cap\{z>0\} \subset \R^{n+1}$ and by $\mathcal B_r:=B_r \cap\{z=0\}$ (where $z$ is the $(n+1)$-th coordinate). 
\begin{thm}[Local uniform H\"older bounds,  \cite{TVZ}]\label{thm: intro_local}
Let the functions $f_{i,\beta}$ be continuous and uniformly bounded (w.r.t. $\beta$) on bounded sets, and let $\{\bu_{\beta}=(u_{i,\beta})_{1\leq i\leq k}\}_{\beta}$ be a family of $H^1(B^+_1)$ solutions to the problems
\[
    \begin{cases}
    - \Delta u_i = 0 & \text{in } B^+_1\\
    \partial_{\nu} u_i = f_{i,\beta}(u_i) - \beta u_i \tsum_{j \neq i} u_j^2 & \text{on } \mathcal B_1.
    \end{cases} \tag{$P_{\beta}$}
\]
Let us assume that
\[
    \| \bu_{\beta} \|_{L^{\infty}(B^+_1)} \leq M,
\]
for a constant $M$ independent of $\beta$. Then for every $\alpha \in (0,1/2)$ there exists a constant
$C = C(M,\alpha)$, not depending on $\beta$, such that
\[
    \| \bu_\beta\|_{\C^{0,\alpha}\left(\overline{B^+_{1/2}}\right)} \leq C(M,\alpha).
\]
Furthermore, $\{\bu_{\beta}\}_{\beta }$ is relatively compact in $H^1(B^+_{1/2}) \cap \C^{0,\alpha}\left(\overline{B^+_{1/2}}\right)$ for every $\alpha < 1/2$.
\end{thm}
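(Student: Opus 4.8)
The plan is a blow-up and compactness argument in the spirit of Noris--Tavares--Terracini--Verzini, transplanted to the harmonic extension with nonlinear Neumann datum. The first step is a uniform-in-$\beta$ energy bound: I would test $(P_\beta)$ against $\eta^2 u_{i,\beta}$, with $\eta$ a cutoff equal to $1$ on $B_{3/4}^+$ and supported in $B_1$, and integrate by parts on $B_1^+$; since $\eta$ vanishes on the spherical part of $\partial B_1^+$, only $\mathcal B_1$ contributes, the competition term produces $-\beta\int_{\mathcal B_1}\eta^2 u_{i,\beta}^2\tsum_{j\neq i}u_{j,\beta}^2\le 0$, and $\int_{\mathcal B_1}\eta^2 u_{i,\beta}f_{i,\beta}(u_{i,\beta})$ is controlled by $M$ and the uniform bound of the $f_{i,\beta}$ on $[-M,M]$. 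Absorbing the cross term gives $\int_{B_{3/4}^+}|\nabla u_{i,\beta}|^2\le C(M)$ and, as a byproduct, $\beta\int_{\mathcal B_{3/4}}u_{i,\beta}^2 u_{j,\beta}^2\le C(M)$ for $i\neq j$.

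For the $\C^{0,\alpha}$ bound I would argue by contradiction. Fix $\alpha\in(0,1/2)$ and suppose $L_n:=\max_i[u_{i,\beta_n}]_{\C^{0,\alpha}(\overline{B_{1/2}^+})}\to\infty$ along a sequence $\beta_n$; up to relabeling and a subsequence the maximum is attained at $i=1$, and I would pick $x_n,y_n\in\overline{B_{1/2}^+}$ with $|u_{1,\beta_n}(x_n)-u_{1,\beta_n}(y_n)|\ge\tfrac12 L_n r_n^\alpha$, where $r_n:=|x_n-y_n|\to 0$ by the uniform $L^\infty$ bound. The rescalings $v_{i,n}(X):=\frac{1}{L_n r_n^\alpha}u_{i,\beta_n}(x_n+r_n X)$ are nonnegative harmonic functions on half-balls expanding to $\R^{n+1}_+$ (or to $\R^{n+1}$, if $\dist(x_n,\{z=0\})/r_n\to\infty$), with locally bounded $\C^{0,\alpha}$ seminorms, with $[v_{1,n}]_{\C^{0,\alpha}}\ge\tfrac12$ realized at $0$ and $e_n:=(y_n-x_n)/r_n\in\partial B_1$, and solving a Neumann problem whose source coefficient $r_n^{1-\alpha}/L_n\to 0$ and whose competition coefficient is $\mathfrak b_n:=\beta_n L_n^2 r_n^{2\alpha+1}\ge 0$, with $\mathfrak b_n\int v_{i,n}^2 v_{j,n}^2\le C$ locally. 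Using the rescaled energy and Hölder bounds I would extract a limit $\bv$, converging uniformly on compacts and in $H^1_{\loc}$, after a standard Harnack alternative for components whose value at $0$ may diverge.

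Now one splits into two cases. If $\mathfrak b_n$ stays bounded, $\bv$ solves on the half-space (or on $\R^{n+1}$) a system with finite competition and no source and growth $|\bv(X)|\le C(1+|X|^\alpha)$; boundary regularity and a Liouville theorem force each $v_i$ constant. If $\mathfrak b_n\to\infty$, then $v_iv_j\equiv 0$ on $\{z=0\}$ and $\bv$ is a segregated entire profile: each $v_i\ge 0$ harmonic in $\R^{n+1}_+$, traces with pairwise disjoint supports, $\partial_\nu v_i\le 0$ on $\{v_i>0\}$, and the weak extremality condition at the common nodal set inherited in the limit, still with sub-$1/2$ growth. The heart of the matter is the Liouville theorem that such a profile with $\alpha<1/2$ is constant: I would derive it from an Almgren-type monotonicity formula, whose frequency function is monotone nondecreasing with limit $\le\alpha<1/2$ at infinity by the growth bound, while at any free boundary point the frequency is $\ge\tfrac12$ --- the threshold being the homogeneity of the least degenerate nontrivial segregated profile, the harmonic extension of $\sqrt r\,\cos(\theta/2)$ in a two-dimensional section. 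Hence the free boundary is empty, only one component survives, and a nonnegative half-space-harmonic function with nonpositive Neumann datum and sub-$1/2$ growth is constant by the corresponding fractional Liouville theorem. In every case $v_1$ is constant, contradicting $[v_{1,n}]_{\C^{0,\alpha}}\ge\tfrac12$.

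For the compactness statement, choosing $\alpha'\in(\alpha,1/2)$ gives a uniform $\C^{0,\alpha'}$ bound on $\overline{B_{1/2}^+}$ and hence $\C^{0,\alpha}$-precompactness by Arzel\`a--Ascoli for every $\alpha<1/2$, while testing $(P_\beta)$ against $u_{i,\beta}-u_i$, with $u_i$ a weak $H^1$-limit, and using the good sign of the competition term together with the vanishing of the source term yields strong convergence of the gradients in $H^1(B_{1/2}^+)$. I expect the main obstacle to be the Liouville theorem for segregated profiles --- namely establishing the Almgren monotonicity formula in the Neumann/half-Laplacian setting and the sharp lower bound $\tfrac12$ for the frequency at free boundary points; a secondary technical point is getting the blow-up sequence to converge in a topology strong enough to pass to the limit in the boundary term $\mathfrak b_n v_{i,n}\tsum_{j\neq i}v_{j,n}^2$, which reduces to showing $\mathfrak b_n v_{i,n}^2 v_{j,n}^2\to 0$ in $L^1_{\loc}(\{z=0\})$.
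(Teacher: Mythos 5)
The paper does not prove this theorem: it is imported verbatim from \cite{TVZ} (Terracini--Verzini--Zilio), so there is no in-paper argument to compare against. Your proposal reconstructs the standard blow-up strategy from that line of work (Noris--Tavares--Terracini--Verzini adapted to the half-Laplacian/Neumann extension), and the outline is essentially the correct one: the Caccioppoli-type energy bound with the good sign of the competition term, the contradiction rescaling with $L_n$ the largest Hölder seminorm and the correctly computed competition coefficient $\mathfrak b_n=\beta_n L_n^2 r_n^{2\alpha+1}$, the dichotomy $\mathfrak b_n$ bounded/unbounded, and the two Liouville theorems (bounded-competition Liouville on one side, Almgren monotonicity with the sharp frequency threshold $1/2$ for segregated entire profiles on the other). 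The main places where your sketch leans on results it does not establish are precisely the ones you flag: the Almgren monotonicity formula for the Neumann extension and the frequency lower bound $1/2$ at free boundary points, and the passage to the limit in the nonlinear boundary term (one must show $\mathfrak b_n\int_{\mathcal K}v_{i,n}^2v_{j,n}^2\to 0$, which in \cite{TVZ} is obtained via a Poho\v{z}aev/frequency argument rather than from the energy bound alone). Two smaller points deserve care: (a) extracting a \emph{locally uniform} Hölder bound for the rescaled sequence (so that the lower bound $[v_{1,n}]_{\C^{0,\alpha}}\ge\tfrac12$ survives the limit) needs an auxiliary estimate near the contact pair --- the usual fix in NTTV-type arguments is to choose $x_n$ as a suitably penalized maximum point so that the seminorm cannot concentrate outside a compact set; and (b) in the strong $H^1$ compactness step, testing against $u_{i,\beta}-u_i$ produces, besides the term with the good sign, the cross term $\beta\int u_{i,\beta}u_i\sum_{j\ne i}u_{j,\beta}^2$, and you need $\beta\int u_{i,\beta}^2 u_{j,\beta}^2\to 0$ (not merely bounded), which is again the delicate point above. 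Modulo these, the route you describe is the right one and is the one the cited reference follows.
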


As a byproduct, up to subsequences, we have convergence of the solutions to ($P_{\beta}$) to some limiting profile, whose components are segregated on the boundary $\mathcal {B}_1$. If furthermore $f_{i,\beta}\to f_i$, uniformly on compact sets, we can prove that this limiting profile satisfies
\begin{equation}\label{eq:limiting_profiles}
    \begin{cases}
    - \Delta u_i = 0 & \text{in } B^+_1\\
    u_i \partial_{\nu} u_i = f_{i}(u_i)u_i & \text{on } \mathcal {B}_1.
    \end{cases}
\end{equation}
One can see that, for solutions of this type of equation, the highest possible regularity corresponds indeed to the H\"older exponent $\alpha=1/2$. As a matter of fact, it has been proved that the limiting profiles do enjoy such optimal regularity.
\begin{thm}[Optimal regularity of limiting profiles,  \cite{TVZ}]\label{thm: intro_limiting_prof}
Under the assumptions above, assume moreover that the locally Lipschitz continuous functions $f_i$ satisfy $f_i(s) = f_i'(0)s + O(|s|^{1+\eps})$ as $s\to 0$, for some $\eps>0$. Then $\bu\in\C^{0,1/2}\left(\overline{B^+_{1/2}}\right)$.
\end{thm}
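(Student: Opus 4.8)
The plan is to upgrade the H\"older exponent from the $\alpha<1/2$ delivered by Theorem~\ref{thm: intro_local} to the endpoint $1/2$ through a blow-up argument organised around an Almgren-type monotonicity formula for the limiting system \eqref{eq:limiting_profiles}, the expansion $f_i(s)=f_i'(0)s+O(|s|^{1+\eps})$ being exactly what makes that formula almost monotone and what forces blow-ups to solve the homogeneous problem. First I would reduce to a growth estimate at the common nodal set $Z:=\{x\in\mathcal B_{1/2}:\bu(x)=0\}$. Indeed, each $u_i$ is harmonic in $B_1^+$, so the estimate is classical in the interior; and on $\mathcal B_{1/2}\setminus Z$ exactly one component is nonzero and solves a \emph{scalar} Neumann problem $-\Delta u_i=0$, $\partial_\nu u_i=f_i(u_i)$ with $\C^{0,\alpha}$ datum (a Lipschitz function of a H\"older function), hence is $\C^{1,\alpha}$ up to $\mathcal B$ there. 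Thus it suffices to prove that
\[
\sup_{B_r^+(x_0)}|\bu|\le C\,r^{1/2}\qquad\text{for every }x_0\in Z\text{ and every }r\le\tfrac14,
\]
with $C=C(M)$; a covering argument then patches the local estimates into the claimed bound on $\overline{B_{1/2}^+}$.

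Next I would set up the monotonicity formula. Fix $x_0\in Z$ and, writing $\partial^+B_r^+:=\partial B_r(x_0)\cap\{z>0\}$, define
\[
H(x_0,r):=\int_{\partial^+B_r^+}|\bu|^2\,d\sigma,\qquad D(x_0,r):=\int_{B_r^+(x_0)}|\nabla\bu|^2\,dz-\int_{\mathcal B_r(x_0)}\tsum_i f_i(u_i)\,u_i\,dx,
\]
and the frequency $N(x_0,r):=rD(x_0,r)/H(x_0,r)$. Differentiating in $r$ and using $-\Delta u_i=0$, the boundary relation $u_i\partial_\nu u_i=f_i(u_i)u_i$, the segregation $u_iu_j\equiv0$ on $\mathcal B_1$, and a Rellich--Pohozaev identity on $B_r^+$, one gets a differential inequality for $N(x_0,\cdot)$ in which the only obstruction to monotonicity is a lower-order term generated by the linear part $f_i'(0)s$ of $f_i$, controlled by $\int_{\mathcal B_r}\sum_i u_i^2\lesssim r\,H(x_0,r)$ via the trace inequality and the H\"older bound of Theorem~\ref{thm: intro_local}, the $O(|s|^{1+\eps})$ remainder being of even lower order since $|\bu|\to0$ at $x_0$. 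Hence $r\mapsto N(x_0,r)$ has a finite limit $N(x_0,0^+)$ and $N(x_0,r)\ge N(x_0,0^+)-Cr$ for $r$ small.

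The core of the argument is the lower bound $N(x_0,0^+)\ge 1/2$, and I expect this to be the main obstacle. Set $\rho(r):=(r^{-n}H(x_0,r))^{1/2}$, which tends to $0$ since $\bu$ is continuous and vanishes at $x_0$, and consider the blow-up $\bu_r(x):=\bu(x_0+rx)/\rho(r)$, a segregated solution on $B_{1/r}^+$ of the same system with $f_i$ replaced by $f_{i,r}(s)=\tfrac{r}{\rho(r)}f_i(\rho(r)s)=r\,f_i'(0)s+O(r\,\rho(r)^{\eps}|s|^{1+\eps})\to0$. The monotonicity formula provides the uniform local energy and trace bounds needed to invoke Theorem~\ref{thm: intro_local} along the sequence, so along a subsequence $\bu_r\to\bu_0$ in $H^1_{\loc}\cap\C^{0,\alpha}_{\loc}(\overline{\R^{n+1}_+})$ with $\bu_0\not\equiv0$; in the limit each component is harmonic in $\R^{n+1}_+$, satisfies $\partial_\nu u_{0,i}=0$ where it is nonzero and $u_{0,i}u_{0,j}\equiv0$ on $\{z=0\}$, and $\bu_0$ is homogeneous of degree $N(x_0,0^+)$. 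It then remains to show such a homogeneous segregated profile cannot have degree $<1/2$. If the common nodal set of $\bu_0$ on $\{z=0\}$ is empty, then each nontrivial component reflects (evenly or oddly) to a homogeneous harmonic function on $\R^{n+1}$ which is nonconstant, as it vanishes at the origin, hence has integer degree $\ge1$; otherwise some component has a nonempty proper nodal set on the equator $\S^{n}\cap\{z=0\}$, and then --- this is the crucial point --- its homogeneity cannot drop below that of the one-phase model solution $\sqrt{\rho}\,\cos(\theta/2)$, which equals $1/2$. This last fact is a spectral-gap statement for the mixed Dirichlet--Neumann eigenvalue problem on the hemisphere (equivalently a lower bound for the associated optimal-partition eigenvalue), and can also be obtained from an Alt--Caffarelli--Friedman-type monotonicity adapted to the half-space Neumann setting; it is here that the competitive structure is used essentially and that one truly needs the expansion of $f_i$ near $0$ rather than mere local Lipschitz continuity, so that the blow-up limit solves the clean homogeneous problem.

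Finally I would convert the frequency bound into the growth estimate. Since $H'(x_0,r)/H(x_0,r)=(n+2N(x_0,r))/r$ exactly, inserting $N(x_0,r)\ge\tfrac12-Cr$ and integrating gives $H(x_0,r)\le C\,r^{n+1}$, hence also $\int_{B_r^+(x_0)}|\bu|^2\le C\,r^{n+2}$. To pass to a pointwise bound, even-reflect across $\{z=0\}$ to obtain $\ov{|u_i|}$ on $B_r(x_0)\subset\R^{n+1}$: because $|\partial_\nu|u_i||=|f_i(u_i)|\le C$ where $|u_i|>0$ and $\partial_\nu|u_i|\le0$ (Hopf) where $|u_i|=0$, one has $\Delta\ov{|u_i|}\ge -C\,\mathcal H^{n}\lfloor\mathcal B_r$ in $\mathcal D'(B_r)$, so $\ov{|u_i|}$ is subharmonic up to the Newtonian potential of a bounded measure supported on the $n$-plane, which is $O(r)$ in sup norm. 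Comparing with the solution of $\Delta v=-C\,\mathcal H^{n}\lfloor\mathcal B_r$ with boundary data $\ov{|u_i|}$ on $\partial B_r$ and using the mean value property together with $H(x_0,r)\le C r^{n+1}$,
\[
\sup_{B_{r/2}}\ov{|u_i|}\ \le\ C\,r^{-n}\!\!\int_{\partial B_r}\ov{|u_i|}+Cr\ \le\ C\,r^{-n/2}H(x_0,r)^{1/2}+Cr\ \le\ C\,r^{1/2}.
\]
Summing over $i$ yields $\sup_{B_{r/2}^+(x_0)}|\bu|\le C\,r^{1/2}$, which is the required growth estimate and completes the proof.
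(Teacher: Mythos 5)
This statement is cited from \cite{TVZ} and the paper you were given does not supply its own proof of it; so your proposal can only be compared against the general machinery that \cite{TVZ} uses and that this paper re\-deploys in Section \ref{sec:segregated}. Judged on that basis, your plan is the right one and follows the same route: an Almgren-type frequency $N(x_0,r)=rD/H$ with the boundary term $-\int_{\mathcal B_r}\sum_i f_i(u_i)u_i$, almost-monotonicity via a Rellich--Pohozaev identity on $B_r^+$ (the expansion $f_i(s)=f_i'(0)s+O(|s|^{1+\eps})$ being exactly what makes the perturbation lower order), compactness of the blow-up family from the uniform H\"older bounds of Theorem \ref{thm: intro_local}, homogeneity of the blow-up limit, and a classification showing that no segregated $\nu$-homogeneous profile with $\nu<1/2$ exists. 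Indeed, the frequency lower bound $N(x_0,0^+)\ge 1/2$, via the hemispherical Dirichlet--Neumann eigenvalue problem, is precisely Lemma \ref{lem:minimal} (quoted from \cite[Lemmata 7.6, 7.7]{TVZ}), so your ``crucial point'' is the same point the reference proves. Your conversion back to growth ($H(x_0,r)\le Cr^{n+1}$ and then a pointwise bound via even reflection and subharmonicity modulo a surface measure) is a standard and correct way to finish.

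Two small cautions, not gaps in the strategy but places where the write-up is loose. First, your classification of degree-$<1/2$ profiles mixes ``the common nodal set is empty'' with ``each nontrivial component's own trace nodal set is empty''; once two or more components are nontrivial, segregation forces each one to vanish on a proper subset of $\S^{n-1}$, and only then does the Dirichlet--Neumann spectral bound apply, while the pure even/odd-reflection case really only arises when a single component survives with either full or empty trace support. You should phrase the dichotomy per component, not on the common nodal set. Second, the frequency lower bound must be uniform over $x_0\in Z$ to conclude $H(x_0,r)\le Cr^{n+1}$ with a constant independent of $x_0$; this is exactly what the almost-monotonicity with a uniform constant $C$ gives (so that $N(x_0,r)\ge 1/2-Cr$ uniformly), and it is worth stating explicitly since the pointwise estimate needs it. With these clarifications, your proof matches the intended argument.
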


\sus{It is worthwhile noticing that these result apply to the (local) minimizers of the functionals
\begin{equation}\label{eq:penalized_functional}
J_\beta(U)=\sum_{i=1}^k \int_{B^+_1} \frac12\nabla
u_i(x,z)|^2 dx\,dz +\beta\sum_{1\le i<j\le k} \int_{\mathcal {B}_1}  u_i^2 (x,0)u_j^2(x,0)\, dx
\end{equation}
in the set  of all configurations with fixed boundary data. Taking the singular limit as
\(\beta\to+\infty\) we are naturally lead to consider the energy minimizing profiles which segregate only at the characteristic hyperplane $\{z=0\}$.
}
\medskip

Our main goal is to describe, from  differential and geometric measure theoretical points of view, the structure of the trace on the characteristic hyperplane $\{z=0\}$ of the common nodal set of these limiting profiles. From now on, for the sake of simplicity we shall assume that the reactions $f_i$'s are identically zero and we reflect the components $u_i$'s through the hyperplane $\{z=0\}$. 
It is worthwhile noticing that we cannot deduce from the system \eqref{eq:limiting_profiles} alone any regularity property of the common nodal set $\Neh(\bu)=\{x\in \Omega\cap\R^n\times\{0\}:\bu(x,0)=0\}$, as the equations can be independently solved for arbitrary, though mutually disjoint,  nodal sets $\Neh(u_i)=\{(x,0)\in\Omega\cap\R^n\times\{0\}:u_i(x,0)=0\}$ on the characteristic hyperplane $\{z=0\}$. 

\sus{
\begin{defn}[Segregated minimal configurations, class $\Mah(\Omega)$]\label{def:segr_min_conf}
For an open, $z$-symme\-tric $\Omega\subset\R^{n+1}$, we define  the class $\Mah(\Omega)$ of the \emph{segregated minimal configurations} as the set of all the even-in-$z$ vector valued functions $\bu=(u_1,\ldots,u_k)\in (H^1(\Omega))^k\cap \Ceh^{0,1/2}(\Omega)$, whose components are all nonnegative and achieving the minimal
 the energy among configurations segregating only at the characteristic hyperplane $\{z=0\}$, that is solutions to 
\[
\min\left\{\sum_{i=1}^k \int_{\Omega}|\nabla u_i|^2\;:\;
\begin{array}{ll}
u_i(x,0)\cdot u_j(x,0)\equiv 0 \; \R^n\textrm{-a.e.}\;{\rm for }\;\; i\neq j,\\
u_i=\varphi_i, {\rm on}\;\partial\Omega\;\textrm{for }\;\; i=1,\dots,k,
\end{array}
\right\}
\]
where the $\varphi_i$'s are nonnegative $H^{1/2}$-boundary data which are even-in-$z$ and segregated on the hyperplane $\{z=0\}$. 
\end{defn}
}

For such class of solutions, we are going to prove a theorem on the structure of the nodal set $\Neh(\bu)$, which is the perfect counterpart of the results in \cite{CL2008,TT}.

\begin{thm}[Structure of the nodal set of segregated minimal configurations]\label{teo:main_structure_nodal}
Let $\Omega\subset \R^{n+1}$, with $n\geq2$, $\bu$ be a segregated  minimal configuration and let $\Neh(\bu)=\{x\in \tilde\Omega:\ \bu(x,0)=0\}$. Then, $\Neh(\bu)$ is the union of a relatively open \emph{regular part} $\Sigma_\bu$ and a relatively closed \emph{singular part} $\Neh(\bu)\setminus\Sigma_{\bu}$ with the following properties:
\begin{enumerate}
\item $\Sigma_\bu$ is a locally finite collection of  hyper-surfaces of class $C^{1,\alpha}$ (for some $0<\alpha<1$). 
\item $\Hh_\text{dim}(\Neh(\bu)\setminus\Sigma_{\bu})\leq n-2$ for any $n\geq 2$. Moreover, for $n=2$, $\Neh(\bu)\setminus\Sigma_{\bu}$ is a locally finite set.
\end{enumerate}
\end{thm}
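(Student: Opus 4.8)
The plan is to follow the scheme of \cite{CL2008,TT} for segregated configurations and optimal partitions, with two changes dictated by the fractional diffusion: the local Laplacian is replaced by the harmonic extension, and the interior nodal frequency $1$ is replaced by the ``thin'' frequency $1/2$. Three ingredients are needed: (i) Almgren's monotonicity formula, together with the compactness Theorems~\ref{thm: intro_local}--\ref{thm: intro_limiting_prof}, which at every $x_0\in\Neh(\bu)$ produces a well defined frequency $N(x_0)\in[1/2,\infty)$ and homogeneous blow-up limits; (ii) the improvement-of-flatness theory developed in this paper, which handles the points of lowest frequency; (iii) a Federer-type dimension reduction, which bounds the size of the remaining set. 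Throughout I would freely use the monotonicity formula and the improvement of flatness already established above.

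\emph{The regular part.} For $x_0\in\Neh(\bu)$ set $N(x_0):=\lim_{r\to0^+}N(x_0,r)$, the limit existing by the monotonicity formula, and observe $N(x_0)\ge1/2$ by the optimal $C^{0,1/2}$ regularity. Rescaling $\bu_{x_0,r}(x):=\bu(x_0+rx)\big/\big(r^{-n}\int_{\partial B_r(x_0)}|\bu|^2\big)^{1/2}$ and using compactness, every sequence $r_j\to0$ has a subsequence along which $\bu_{x_0,r_j}$ converges in $H^1_{\loc}\cap C^{0,\alpha}_{\loc}$ to a nontrivial, globally defined segregated minimal configuration, homogeneous of degree $N(x_0)$. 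The key structural fact is the classification of the blow-ups of lowest frequency: a $\tfrac12$-homogeneous globally defined segregated minimal configuration is, up to rotation and scaling, the antisymmetric pair $\big(\alpha\,\Gamma_e,\ \alpha\,\Gamma_{-e}\big)$, where $\Gamma_e(x,z)=\operatorname{Re}\big((x\cdot e+i|z|)^{1/2}\big)$ is the unique, up to a constant, $\tfrac12$-homogeneous nonnegative function even in $z$, harmonic off $\{z=0\}$, and subharmonic after reflection; in particular at most two components survive the blow-up, and their traces occupy complementary half-spaces. Set $\Sigma_\bu:=\{x_0\in\Neh(\bu):N(x_0)=1/2\}$. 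At such a point any blow-up is, by the classification, one of these edge pairs, so along a subsequence $\bu_{x_0,r_j}$ is $\eps$-close in $B_1$ to an edge pair; that is, $\bu$ is $\eps$-flat near $x_0$ at scale $r_j$, and the improvement-of-flatness theorem then gives that $\Neh(\bu)$ coincides near $x_0$ with a $C^{1,\alpha}$ graph separating the two positive components. This yields (1): $\Sigma_\bu$ is relatively open, is a $C^{1,\alpha}$ hypersurface, and is locally finite since at most $\binom{k}{2}$ pairs of components can meet in a compact set.

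\emph{The singular part.} By construction $\Neh(\bu)\setminus\Sigma_\bu=\{x_0:N(x_0)>1/2\}$ is relatively closed, and $x\mapsto N(x)$ is upper semicontinuous. I would then run the standard dimension reduction: stratify the singular set by the dimension of the translation-invariance subspace of a blow-up limit, use the homogeneity of the blow-ups together with the compactness and semicontinuity above, and reduce to the base case of homogeneous segregated minimal configurations in $\R^{1+1}$. That base case---every homogeneous segregated minimal configuration in $\R^2$ with $0\in\Neh$ is an edge pair of frequency $1/2$, so there is no singular homogeneous configuration in $\R^{1+1}$---follows from the eigenvalue analysis of the restriction to $\S^1$. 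Federer's argument then gives $\Hh_\text{dim}(\Neh(\bu)\setminus\Sigma_\bu)\le n-2$ for every $n\ge2$. For $n=2$ this forces $\Hh_\text{dim}=0$; to upgrade it to local finiteness, if singular points accumulated at some $x_0$ one blows up at $x_0$ along the scale of the accumulating sequence and obtains a homogeneous singular configuration invariant along a ray, hence---after a further blow-up at a point of that ray---a singular homogeneous configuration in $\R^{1+1}$, contradicting the base case. This proves (2).

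\emph{Where the difficulty lies.} The heart of the matter is the interplay of (i)--(ii): the complete classification of the $\tfrac12$-homogeneous blow-ups (in particular that no more than two components survive) and the passage from $N(x_0)=1/2$ to the flatness hypothesis required by the improvement of flatness, so that $\Sigma_\bu$ turns out to be open and smooth; the analogous, but easier, classification in $\R^{1+1}$ needed as the base of the dimension reduction is the second delicate point. The remaining steps---monotonicity of the frequency, compactness and homogeneity of blow-ups, Federer reduction, and the accumulation argument for $n=2$---are, given the machinery already developed in the paper, routine adaptations of the local theory of \cite{CL2008,TT}.
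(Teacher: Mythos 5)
Your proposal follows essentially the same route as the paper: define $\Sigma_\bu$ by the lowest Almgren frequency $N(x_0,0^+)=1/2$, classify $\tfrac12$-homogeneous blow-ups as two-component ``edge pairs'' $(aU,a\bar U)$, use the improvement of flatness (Theorem~\ref{mainT}) on the regular part, and apply Federer dimension reduction (with the gap estimate $\nu\ge 1/2+\delta_n$ for $h\ge 2$, $\nu>1/2$) on the singular part. The frequency-gap Theorem~\ref{teo:gap_uniqueness} together with upper semicontinuity of $N(\cdot,0^+)$ gives exactly your openness/closedness dichotomy.

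One step you gloss over, and which the paper treats explicitly, is the \emph{local separation property} (Lemma~\ref{prop:local_separation_property}): you need to know that near a point $x_0\in\Sigma_\bu$ \emph{only two} of the $k$ components are nontrivial, not merely that the blow-up limit has two components. Uniform convergence of the normalized rescalings to $(aU,a\bar U,0,\dots,0)$ gives smallness of the remaining components at scale $r_j$, but one must promote this to identical vanishing in a fixed neighborhood before $(u_i,u_j)$ qualifies as a two-component viscosity solution to which Theorem~\ref{mainT} applies; the paper derives this from the Reifenberg-flatness Lemma~\ref{lemma:flatness_condition} and the doubling/unique-continuation consequence of Almgren monotonicity (Proposition~\ref{coro:doubling}). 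Once you have separation, one also needs Theorem~\ref{teo:viscosity} (or Theorem~\ref{teo:viscosity_k=2}) to pass from the variational class $\Geh$ to the viscosity Definition~\ref{def:viscosity2com}, which is the hypothesis of Theorem~\ref{mainT}; you invoke the improvement of flatness without explicitly verifying that a segregated minimal configuration is a viscosity solution. Finally, in the Federer step, ruling out $d=n$ also requires the observation (Remark~\ref{rem:unconstrained}) that a minimizing configuration cannot have all components vanishing identically on the thin set; your argument for $d\le n-2$ rests only on the $d=n-1$ case. These are the only places where your sketch is thinner than the written proof; the skeleton and key ideas coincide.
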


\begin{rem}
(a) In the light of the extension facts related to the half-laplacian, our theory applies, among others, to segregated minimizing configurations involving non local energies, like, for instance the solutions to the following problem (when $s=1/2$):

\[
\min\left\{\sum_{i=1}^k \int_{\R^{2n}}\dfrac{|u_i(x)-u_i(y)|^2}{|x-y|^{n+2s}}\;:\;
\begin{array}{ll}
u_i(x,0)\cdot u_j(x,0)\equiv 0 \; \textrm{a.e. in}\;\R^n\;{\rm for }\;\; i\neq j,\\
u_i\equiv \varphi_i, {\rm on}\;\R^n\setminus \tilde \Omega\;\textrm{for }\;\; i=1,\dots,k,
\end{array}
\right\}
\]
where $\varphi_i$ are nonnegative $H^{1/2}(\R^n)$ data which are segregated themselves.  In this regard, our results extend those of \cite{CL2008} to the fractional case, or, equivalently, to the case when the phase segregation takes place only on the characteristic hyperplane. \\
(b) It is worthwhile noticing that, in case of the standard diffusion, the nodal set of the segregated minimal configurations shares the same measure theoretical features with the nodal set of harmonic functions; this is not the case of the fractional diffusion; indeed, as shown in \cite{stt2018}, the stratified structure of the nodal set of $s$-harmonic functions is far more complex than that of the segregated minimal configurations. The asymptotics and properties of limiting profiles of competition diffusion systems with quadratic (Lotka-Volterra) mutual interactions have been investigated in  \cite{VZ2014}; as discussed there, the free boundary, in the Lotka-Volterra case, resembles the nodal set of $s$-harmonic functions with some important differences however, enlightened in that paper.\\
(c) \sus{The theory developed in this paper is suitable to extend  in order to cover the limiting cases, as $\beta\to\infty$, of the variational problems associated with \eqref{eq:general_system}.}
\end{rem}

In order to prove our main result,  we will actually consider two other different notions of solution. The first is the class of solutions in the variational sense - concept that we have expressed through the validity of the domain variation formula in Definition \ref{def:segr ent prof} - and a second notion of solution, expressed in terms of viscosity solutions in Definitions \ref{def:viscosity2com} (for two components) and \ref{def:viscosity_solution} (for $k$ components). As we will see in Theorem \ref{teo:viscosity}, the latter notion is weaker (although they are probably equivalent) but carries precious information on the regularity of the nodal set.  In particular, both notions encode a reflection rule about the free boundary (see e.g. Proposition \ref{prop:wrl}),  which will be the ultimate reason for the regularity of the nodal set.

A major achievement toward the proof of our structure theorem will be  an improvement of flatness argument for the case of two components, which was inspired by the work in \cite{DR}. As a byproduct, it yields the following local regularity result:  in two dimensions, let  $U(t,z)$ be defined in polar coordinates as \begin{equation}\label{U}U(t,z) = r^{1/2}\cos \frac \theta 2, \end{equation}
$$t= r\cos\theta, \quad  z=r\sin\theta, \quad r\geq 0, \quad -\pi \leq  \theta \leq \pi,$$ and let
$$\bar U(t,z):= U(-t,z).$$ By abuse of notation we denote,
$$U(x,z):= U(x_n, z), \quad \bar U(x,z):= \bar U(x_n, z).$$

 \begin{thm} [Local regularity of the free boundary]\label{mainT}There exists $\bar \eps >0$ small depending only on $n$, such that if $\bu=(u_1,u_2)$ is a viscosity solution in $B_1$ in the sense of  Definition \ref{def:viscosity2com} satisfying
\begin{equation}\label{flat_intro} \|u_1 - U\|_\infty \leq \bar \eps, \quad \|u_2- \bar U\|_\infty \leq \bar \eps \end{equation} then $\Neh(\bu)$ is a $C^{1,\alpha}$ graph in $B_{\frac 1 2}\cap\R^n\times\{0\}$ for every $\alpha \in (0,1)$ with  $C^{1,\alpha}$ norm bounded by a constant depending on $\alpha$ and $n$. \end{thm}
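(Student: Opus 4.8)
The plan is to run a Caffarelli-style improvement-of-flatness scheme. The starting observation is that the pair $(U,\bar U)$ is the model two-dimensional solution whose common nodal set is the hyperplane $\{x_n=0,z=0\}$; the two functions $U$ and $\bar U$ are harmonic in $B_1^+$ (even reflections of the $1/2$-homogeneous profiles), they are positive on complementary half-hyperplanes of $\{z=0\}$, and on the free boundary they satisfy the weak reflection law encoded in the viscosity Definition \ref{def:viscosity2com}. The hypothesis \eqref{flat_intro} says $\bu$ is $\bar\eps$-close to this configuration. What I want to prove is the following discrete iteration: there exist universal constants $\bar\eps>0$, $0<\rho<1$ and $C$ such that if in $B_1$ one has $\|u_1-U_{\nu,a}\|_\infty\le \eps$ and $\|u_2-\bar U_{\nu,a}\|_\infty\le\eps$ for some unit vector $\nu$ close to $e_n$ and some $a$, where $U_{\nu,a}(x,z)=U((x-ae_n)\cdot\nu,z)$ is the rotated-translated model, then in $B_\rho$ the same holds with a new pair $(\nu',a')$, with $|\nu-\nu'|+|a-a'|\le C\eps$ and with the improved bound $\eps\rho^{1+\alpha}$ replacing $\eps$ at scale $\rho$. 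Iterating and summing the geometric series pins down a limiting direction at each free boundary point with the $C^{1,\alpha}$ modulus, which is exactly the conclusion.

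The heart of the matter is the one-step improvement, and the standard route is a compactness/linearization argument. First I would reduce to $\nu=e_n$, $a=0$ by rotating and translating (the class of viscosity solutions and the flatness condition are invariant under this, up to renaming constants). Then I argue by contradiction: suppose no such $\rho$ works; take a sequence of viscosity solutions $\bu^m$ with flatness $\eps_m\to 0$ for which the improvement fails at every admissible $(\nu',a')$. Consider the normalized differences $w_1^m=(u_1^m-U)/\eps_m$ and $w_2^m=(u_2^m-\bar U)/\eps_m$ (suitably cut off away from the free boundary of the model, where $U$ vanishes to order $1/2$, so some care with the degeneracy near $\{x_n=0,z=0\}$ is needed — one works with $w^m/\sqrt{\mathrm{dist}}$ or uses the geometry of the positivity sets). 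Using the uniform Hölder bounds from Theorem \ref{thm: intro_local}-type estimates (available since viscosity solutions in this regime inherit such compactness, or via barriers constructed from $U,\bar U$), one shows that $w_1^m$ converges locally uniformly to a limit $w_1$ that is harmonic in the half-space $\{x_n>0\}\times\{z>0\}$ reflected appropriately, with a Neumann condition on $\{z=0,x_n>0\}$, and similarly $w_2$ on the complementary side; crucially, the linearized reflection law at the model free boundary $\{x_n=0,z=0\}$ forces a transmission/boundary condition that makes the relevant scalar combination of $w_1,w_2$ solve a homogeneous linear problem whose $1$-homogeneous solutions are precisely affine in the $x'=(x_1,\dots,x_{n-1})$ variables. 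That affine function is the infinitesimal rotation that should be absorbed into $\nu'$; choosing $a'$ to absorb the constant part, one contradicts the failure assumption for $m$ large.

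The main obstacle, as usual in these $1/2$-homogeneous free boundary problems, is making the linearization rigorous near the free boundary in the presence of the square-root degeneracy: the quotients $w_i^m$ blow up like $r^{-1/2}$ as one approaches $\{x_n=0,z=0\}$, so the naive $C^{0,\alpha}$ compactness is not enough there, and one must instead prove a "partial Harnack / flatness propagation toward the free boundary" lemma — showing that the free boundaries of $\bu^m$ are trapped in an $O(\eps_m)$ neighborhood of a hyperplane and that the graphs improve — before extracting the limiting linear profile. This is the technical core and is exactly where the inspiration from \cite{DR} enters: one builds explicit sub/supersolutions by perturbing $U_{\nu,a}$ (sliding the model solution and comparing via the comparison principle valid for viscosity solutions in the sense of Definition \ref{def:viscosity2com}) to get the quantitative trapping, and then the interior linearization is routine. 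I would also need to check that the limiting linear problem genuinely has only the affine-in-$x'$ solutions of the expected homogeneity (a Liouville-type classification for the bilaplacian-type or mixed Neumann problem that the pair $(w_1,w_2)$ satisfies across the slit), which, together with the contradiction argument, closes the induction and yields the stated $C^{1,\alpha}$ regularity with constants depending only on $n$ and $\alpha$.
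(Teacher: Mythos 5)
Your overall strategy --- improvement of flatness via a partial Harnack inequality and compactness/linearization, taking sub/supersolutions and a comparison principle from \cite{DR} --- matches the paper's, but two of your technical steps would not close as written. First, your linearization uses the vertical normalization $w_i^m=(u_i^m-U)/\eps_m$. Even after renormalizing the flatness hypothesis to the trapping condition $U(X-\eps e_n)\le u_1\le U(X+\eps e_n)$ (Lemma~\ref{norm*}), this quotient is of order $\eps_m^{-1/2}$ near the model free boundary (since $U(X\pm\eps e_n)-U(X)\sim \eps\,U_n\sim\eps r^{-1/2}$), so there is no uniform bound, let alone uniform H\"older compactness; the cure you suggest, passing to $w^m/\sqrt{\mathrm{dist}}$, aggravates the blowup rather than removing it. The paper instead works with the $\eps$-domain variations $\tilde u_\eps$ of \eqref{deftilde} --- the \emph{horizontal} displacement from the model --- which are bounded in $[-1,1]$ by construction, and the Harnack inequality of Section~\ref{sec:harnack} is formulated and proved directly for these objects (Theorem~\ref{mainH}, Corollary~\ref{corHI}), giving the H\"older modulus needed for the compactness step in Lemma~\ref{ginfty}. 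Your appeal to ``Theorem~\ref{thm: intro_local}-type estimates'' only supplies H\"older bounds for the $u_i$ themselves, not for the $\eps_m$-renormalized quantities.

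Second, your classification of the linearized limit is incomplete and misidentifies the operator. The limiting pair solves the degenerate transmission problem \eqref{linear}, with weights $U_n^2,\,\bar U_n^2$ and the Neumann-type matching $(g_1)_r+(g_2)_r=0$ on the thin set $L$, for which no off-the-shelf regularity applies (the paper builds a variational theory ab initio in Section~\ref{sec:linearized}: Lemmas~\ref{Unhisharm}--\ref{HImin}, Corollary~\ref{derivativeH}); it is not a ``bilaplacian-type'' problem. Crucially, the resulting expansion \eqref{mainh} is $g_i\approx a_0+a'\cdot x'+b_i r$ with $b_1+b_2=0$: the term $b_i r$ is \emph{not} affine in $x'$, and it is precisely what forces the multiplicative factor $\alpha$ in the improved model $\alpha U(x\cdot\nu\pm\tfrac{\eps}{2}\rho,z)$ of Theorem~\ref{iflat}. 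Omitting it (and the constraint $b_1+b_2=0$, which encodes the weak reflection law) leaves you with a one-step improvement that does not match the data, so the induction you set up cannot be iterated.
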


The paper is organized as follows. Section \ref{sec:viscosity} contains the definition of viscosity solution for $2$-components systems, and the basic related facts. Section \ref{sec:linearized} is devoted to the study of the linearized problem associated with $\varepsilon$-domain variations around the fundamental solution of the system defined in \eqref{U}. Next, Section \ref{sec:harnack} contains Harnack estimates for viscosity solutions to the free boundary problem for two components. Such estimates will be crucial tools in proving the improvement of flatness result, in Section \ref{sec:improvement}, which concludes the analysis of the regular part of the free boundary of $2$-systems and proves Theorem \ref{mainT}. The rest of the paper concerns $k$-vector systems: Section \ref{sec:segregated} is devoted to the study of $k$-vector solutions in the variational sense (class $\Geh$) and the consequences of the associated Almgren's monotonicity formula, with a focus on the existence and classification of blow-ups and conic entire solutions. In Section \ref{sec:viscosity_k} we introduce the notion of $k$-vector solutions in the viscosity sense and we connect it to the variational one, in order to prove Theorem \ref{teo:main_structure_nodal}. The two appendices contain some ancillary known results.

\tableofcontents

\section{Viscosity solutions for two component systems}\label{sec:viscosity}

\subsection{Notations and definitions.} First, we introduce some notations. 

A point $X \in \R^{n+1}$ will be denoted by $X= (x,z) \in \R^n \times \R$, and sometimes $x=(x',x_n)$ with $x'=(x_1,\ldots, x_{n-1}).$ 

A ball in $\R^{n+1}$ with radius $r$ and center $X$ is denoted by $B_r(X)$ and for simplicity $B_r = B_r(0)$. Also, for brevity, $\mathcal{B}_r$ denotes the $n$-dimensional ball $B_r \cap \{z=0\}$ (previously denoted with $\p^0B_r^+$). 

Let $v(X)$ be a continuous non-negative function in $B_1$. We associate with $v$ the following sets: \begin{align*}
& B_1^+(v) := B_1 \setminus \mathcal N(v), \quad \mathcal N(v):=\{(x,0) : v(x,0) = 0 \};\\
& \mathcal{B}_1^+(v):= B_1^+(v) \cap \mathcal{B}_1;\\
& F(v) := \p_{\R^n} \mathcal {N}^o(v) \cap \mathcal B_1, \quad \mathcal N^o(v):=Int_{\R^n} (\mathcal N(v)).
\end{align*}  

We now introduce the definition of viscosity solutions for a problem with two components. Let $u_1(x,z), u_2(x,z)$ be non-negative continuous functions in the  ball $B_1 \subset \R^{n+1}= \R^n \times \R,$ which vanish on complementary subsets of $\R^{n} \times\{ 0\}$ and are even in the $z$ variable. We consider the following free boundary problem

\begin{equation}\label{FBintro}\begin{cases}
\Delta u_i = 0, \quad \textrm{in $B_1^+(u_i), i=1,2,$}\\
\mathcal B_1= \mathcal N(u_1) \cup \mathcal N(u_2), \quad \mathcal N^o(u_1) \cap \mathcal N^o(u_2) = \emptyset,\\
\dfrac{\p u_1}{\p \sqrt t}=\dfrac{\p u_2}{\p \sqrt t} , \quad \textrm{on  $F(u_1,u_2):= \p_{\R^n} \mathcal N^o(u_1) \cap \mathcal B_1= \p_{\R^n} \mathcal N^o(u_2) \cap \mathcal B_1,$}
\end{cases}\end{equation}
where \begin{equation}\label{nabla_U}
\dfrac{\p u_i}{\p \sqrt t}(x_0): = \di\lim_{t \rightarrow 0^+} \frac{u_i(x_0+t\nu_i(x_0),0)}{\sqrt t},  \quad x_0 \in F(u_1,u_2)\end{equation} with $\nu_i(x_0)$ the normal to $F(u_1,u_2)$ at $x_0$ pointing toward  $\{u_i(x,0)>0\}$.

\begin{defn}Given $g, v$ continuous in $B_1$, we say that $v$
touches $g$ by below (resp. above) at $X_0 \in B_1$ if $g(X_0)=
v(X_0),$ and
$$g(X) \geq v(X) \quad (\text{resp. $g(X) \leq
v(X)$}) \quad \text{in a neighborhood $O$ of $X_0$.}$$ If
this inequality is strict in $O \setminus \{X_0\}$, we say that
$v$ touches $g$ strictly by below (resp. above).
\end{defn}

\begin{defn}\label{defsub} We say that the ordered pair $(v_1,v_2)$ is a (strict) comparison subsolution to \eqref{FBintro} if the $v_i \in C(B_1)$ are  non-negative functions even-in-$z$ that satisfy
\begin{enumerate}
\item $v_1$ is $C^2$ and $\Delta v_1 \geq 0$, in $ B_1^+(v_1),$\\
\item $v_2$ is $C^2$ and $\Delta v_2 \leq 0,$ in $ B_1^+(v_2),$\\
\item $\mathcal B_1= \mathcal N(v_1) \cup \mathcal N(v_2), \quad \mathcal N^o(v_1) \cap \mathcal N^o(v_2)=\emptyset, \quad \mathcal N^o(v_i) \neq \emptyset;$\\
\item $F(v_1,v_2):=F(v_1)=F(v_2)$ is $C^2$ and if $x_0 \in F(v_1,v_2)$ we have
$$v_i (x_0+t\nu_i(x_0),0) = \alpha_i(x_0) \sqrt t + o(\sqrt t), \quad \textrm{as $t \rightarrow 0^+,$}$$ with $$\alpha_1(x_0) \geq \alpha_2(x_0),$$ where $\nu_i(x_0)$ denotes the unit normal at $x_0$ to $F(v_1,v_2)$ pointing toward $\{v_i(x,0)>0\};$\\
\item Either the $v_i$ are both not harmonic in $B_1^+(v_i)$ or $\alpha_1(x_0) >\alpha_2(x_0)$ at all $x_0 \in F(v_1,v_2).$
\end{enumerate} 
\end{defn}

Similarly the ordered pair $(w_1,w_2)$ is  a (strict) comparison supersolution if $(w_2,w_1)$ is  a (strict) comparison subsolution. 

\begin{defn}\label{def:viscosity2com}We say that $(u_1,u_2)$ is a viscosity supersolution to \eqref{FBintro} if $u_i \geq 0$ is a  continuous function in $B_1$ which is even-in-$z$ and it satisfies
\begin{enumerate} \item $\Delta u_i = 0$ \quad in $B_1^+(u_i)$;\\ 

\item  $\mathcal B_1= \mathcal N(u_1) \cup \mathcal N(u_2), \quad \mathcal N^o(u_1) \cap \mathcal N^o(u_2)=\emptyset;$\\

\item If $(v_1,v_2)$ is a (strict) comparison subsolution then $v_1$ and $v_2$ cannot touch $u_1$ and $u_2$ respectively by below and above  at a point $X_0 = (x_0,0)\in F(u_1,u_2):=\p \mathcal N^o(u_1) \cap \mathcal B_1. $\\

Respectively, we say that  $(u_1,u_2)$ is a viscosity subsolution to \eqref{FBintro} if the conditions above hold with (iii) replaced by

\item If $(w_1,w_2)$ is a (strict) comparison supersolution then $w_1$ and $w_2$ cannot touch $u_1$ and $u_2$ respectively by above and below  at a point $X_0 = (x_0,0)\in F(u_1,u_2). $
\end{enumerate}

We say that $(u_1,u_2)$ is a viscosity solution if it is both a super and a subsolution.
\end{defn}

\subsection{Comparison principle.} We now derive a basic comparison principle.

\begin{defn} Let $(v_1,v_2)$ be a comparison subsolution to \eqref{FBintro}. We say that $(v_1,v_2)$ is monotone in the $e_n$ direction whenever $v_1$ is monotone increasing and $v_2$ is monotone decreasing in the $e_n$ direction.
\end{defn}

\begin{lem}[Comparison principle]\label{comppri}Let $(u_1,u_2), (v_1^t, v_2^t) \in C(\overline{B}_1)$ be respectively a solution and a family of  comparison subsolutions to \eqref{FBintro}, $t \in [0,1]$. Assume that
\begin{enumerate}
\item $v_1^0 \leq u_1, \quad v_2^0 \geq u_2$ in $\overline{B}_1;$
\item $v_1^t \leq u_1, \quad v_2^t \geq u_2$ on $\p B_1$ for all $t \in [0,1];$
\item $v_1^t < u_1$ on $\mathcal{F}(v_1^t)$ which is the boundary in $\p B_1$ of the set $\p \mathcal{B}_1^+(v_1^t) \cap \p \mathcal{B}_1$, for all $t\in [0,1];$
\item $v_2^t > u_2$ on $\mathcal{F}(u_2)$ which is the boundary in $\p B_1$ of the set $\p \mathcal{B}_1^+(u_2) \cap \p \mathcal{B}_1$, for all $t\in [0,1];$
\item $v_i^t$ is continuous in $(x,t) \in \overline{B}_1 \times [0,1]$ and $\overline{\mathcal B_1^+(v_i^t)}$ is continuous in the Hausdorff metric.
\end{enumerate}
Then 
\begin{equation*} v_1^t \leq u_1,  \quad v_2^t \geq u_2, \quad \text{in $\overline{B}_1$, for all $t\in[0,1]$.}
\end{equation*}
\end{lem}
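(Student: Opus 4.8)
The plan is to argue by a continuity (sliding) argument in the parameter $t$, which is the standard strategy for comparison principles of this type, adapted to the two-component competitive setting. Define
\[
A := \{ t \in [0,1] : v_1^s \leq u_1 \text{ and } v_2^s \geq u_2 \text{ in } \overline{B}_1 \text{ for all } s \in [0,t] \}.
\]
By hypothesis (i), $0 \in A$, so $A$ is nonempty. The goal is to show $A = [0,1]$, and since $A$ is manifestly an initial segment of $[0,1]$, it suffices to show $A$ is both closed and open (relatively) in $[0,1]$. Closedness is the easy half: if $t_m \to t$ with $t_m \in A$, then the inequalities $v_1^{t_m} \leq u_1$, $v_2^{t_m} \geq u_2$ pass to the limit using the continuity in hypothesis (v) together with the continuity of $u_1, u_2$. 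Hence $t \in A$.

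The heart of the matter is openness: given $t_0 \in A$, I must produce $\delta > 0$ with $[t_0, t_0 + \delta) \subset A$. The mechanism is the usual one: first upgrade the non-strict inequalities at $t_0$ to strict separation in the interior of the positivity sets, then use this room to absorb a small perturbation in $t$. Concretely, suppose for contradiction that there is a sequence $t_m \downarrow t_0$ (with $t_m \notin A$) along which the inequality $v_1^{t_m} \leq u_1$ fails somewhere in $\overline{B}_1$ (the case of $v_2$ is symmetric, exchanging the roles of sub/supersolution). By hypotheses (ii), (iii), the failure cannot occur on $\partial B_1$ nor near $\mathcal{F}(v_1^{t_m})$ for $m$ large (using the Hausdorff continuity in (v) to keep these boundary sets close to those at $t_0$), so for $m$ large there is an interior first-touching point $X_m$ where $v_1^{t_m}$ touches $u_1$ from below. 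Passing to a subsequence, $X_m \to X_0$ and by the closedness just proved $v_1^{t_0}$ touches $u_1$ from below at $X_0$. There are now three cases according to the location of $X_0$: (a) $X_0 \in B_1^+(u_1)$, where both functions are harmonic (up to the sign of $\Delta v_1 \geq 0$), so the strong maximum principle on the connected component forces $v_1^{t_0} \equiv u_1$ there, and one propagates this identity — this is where one must check it cannot persist globally, using condition (v) of Definition \ref{defsub} (strictness: either $v_1$ is not harmonic or $\alpha_1 > \alpha_2$) to reach a contradiction with the strict inequality on $\mathcal{F}$; (b) $X_0 \in \mathcal{N}^o(u_1)$, which is excluded because there $u_1 = 0$ while $v_1^{t_0} \geq 0$ would force $v_1^{t_0} \equiv 0$ near $X_0$, contradicting $\mathcal{N}^o(v_1) \cap \mathcal{N}^o(v_2) = \emptyset$ paired with the analogous statement for $v_2 \geq u_2$; (c) $X_0 \in F(u_1,u_2) = F(v_1^{t_0},v_2^{t_0})$ is the genuine free-boundary touching point, and here the definition of viscosity supersolution (Definition \ref{def:viscosity2com}(iii)) applies directly provided $(v_1^{t_0}, v_2^{t_0})$ is a \emph{strict} comparison subsolution at that point, i.e.\ provided the second alternative in Definition \ref{defsub}(v) holds or $v_i^{t_0}$ is genuinely sub/superharmonic. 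If the given family consists only of non-strict subsolutions one first perturbs: replace $v_1^{t_0}$ by $v_1^{t_0} - \eta \psi$ and $v_2^{t_0}$ by $v_2^{t_0} + \eta \psi$ for a suitable fixed auxiliary function $\psi$ (e.g.\ built from $U$, $\bar U$ or a harmonic corrector) making the pair strict, run the argument, then let $\eta \to 0$.

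The main obstacle, and the step I would budget the most care for, is case (c): transferring the contact at the free boundary into a contradiction with the viscosity property. One has to verify that the touching pair $(v_1^{t_0}, v_2^{t_0})$ — after the strictification perturbation — really is an admissible \emph{strict comparison subsolution} in the precise sense of Definition \ref{defsub}, in particular that its free boundary $F$ is $C^2$ and that the asymptotic expansions $v_i(x_0 + t\nu_i, 0) = \alpha_i \sqrt{t} + o(\sqrt t)$ with $\alpha_1 \geq \alpha_2$ are preserved under the perturbation, and that $v_1^{t_0}$ touches $u_1$ from below while $v_2^{t_0}$ touches $u_2$ from above \emph{at the same point} $X_0$ — this simultaneity is exactly what is needed to invoke Definition \ref{def:viscosity2com}(iii), and it follows because the two positivity sets are complementary, so a first-touching configuration for $v_1$ from below at a free boundary point automatically forces the complementary touching for $v_2$ from above there. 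A secondary technical point is the bookkeeping with the Hausdorff continuity in hypothesis (v): one needs it to ensure that for $t$ close to $t_0$ the boundary contact sets $\mathcal{F}(v_1^t)$, $\mathcal{F}(u_2)$ stay uniformly away from any interior touching, so that all first-touching points are genuinely interior and the three-case analysis above is exhaustive. Granting these verifications, each case yields a contradiction, so no such sequence $t_m$ exists, $A$ is open, hence $A = [0,1]$, which is the claim.
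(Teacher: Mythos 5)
Your proposal follows the same skeleton as the paper's proof — define $A$, establish nonemptiness and closedness, reduce to openness — but the way you establish openness is a genuinely different (and, as written, gappy) route. The paper's openness argument is direct, not by contradiction: for $t_0\in A$, the viscosity supersolution property gives $F(v_i^{t_0})\cap F(u_i)=\emptyset$ for $i=1,2$ in one stroke, because at any common free-boundary point all four of $u_1,u_2,v_1^{t_0},v_2^{t_0}$ vanish simultaneously, so $v_1^{t_0}$ touches $u_1$ from below and $v_2^{t_0}$ touches $u_2$ from above \emph{automatically at the same point} — there is no case split and no simultaneity to verify. From disjointness, the ordering $v_1^{t_0}\leq u_1$ forces $\mathcal B_1^+(v_1^{t_0})\subset\mathcal B_1^+(u_1)$ and, with hypothesis (iii), $F(v_1^{t_0})\cup\mathcal F(v_1^{t_0})\subset\{u_1>0\}$; the Hausdorff continuity in (v) preserves these strict inclusions for $t$ near $t_0$; and then the maximum principle in $D=B_1\setminus$ (closed zero plate of $v_1^t$), whose boundary consists of $\p B_1$ (where (ii) gives $v_1^t\leq u_1$) and the zero-plate boundary (where $v_1^t=0<u_1$), gives $v_1^t\leq u_1$ in $B_1$; symmetric for $v_2$ via (iv). The touching-point machinery you set up is entirely sidestepped.

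Your version has concrete problems. First, the extraction of an ``interior first-touching point $X_m$ where $v_1^{t_m}$ touches $u_1$ from below'' is incorrect as stated: $t_m\notin A$ means $v_1^{t_m}$ actually exceeds $u_1$ (or $v_2^{t_m}$ falls below $u_2$) somewhere, so $v_1^{t_m}$ does not touch $u_1$ from below; one must first pass to the critical parameter $\sigma_m=\sup\{s\in[t_0,t_m]: v_1^s\leq u_1,\ v_2^s\geq u_2\ \text{in}\ \overline B_1\}$ and argue a touching there. Second, your case analysis is neither exhaustive nor complete: the cases $X_0\in B_1^+(u_1)$, $X_0\in\mathcal N^o(u_1)$, $X_0\in F(u_1,u_2)$ do not cover $\mathcal N(u_1)\setminus\overline{\mathcal N^o(u_1)}$; case (a) needs the propagation of equality to a boundary set where hypothesis (iii) is strict, which you flag as ``one must check'' but do not carry out (and your suggested tool, Definition~\ref{defsub}(v), is not the one that closes it — the lemma's hypothesis (iii) is); case (b) as written does not actually exclude anything, since $v_1^{t_0}$ vanishing on a neighborhood of $X_0$ in $\{z=0\}$ is not by itself contradictory. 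Third, the ``strictification'' perturbation is unnecessary: by Definition~\ref{defsub}, a comparison subsolution is always strict (that is what the ``(strict)'' in the name and condition (v) encode), and the lemma's hypotheses already give you a family of strict comparison subsolutions. The central idea you identify — touching at the free boundary contradicts viscosity — is exactly the engine of the paper's proof, but the paper deploys it once, cleanly, to get disjoint free boundaries, rather than in the middle of a sliding-and-case-split argument.
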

\begin{proof}
Let $$A:= \{t \in [0,1] : v^t_1 \leq u_1, v_2^t \geq u_2 \ \ \text{on $\overline{B}_1$} \}.$$ In view of (i) and (v) $A$ is closed and non-empty. Our claim will follow if we show that $A$ is open. 
Let $t_0 \in A$, then $v^{t_0}_1 \leq u_1, v^{t_0}_2\geq u_2$ on $\overline{B}_1$ and by the definition of viscosity solution
$$F(v_i^{t_0}) \cap F(u_i) = \emptyset, \quad i=1,2.$$ For $i=1$ we argue as follows. Together with (iii) the identity above implies that 
$${\mathcal{B}_1^+(v_1^{t_0})} \subset  \mathcal{B}^+_1(u_1), \quad F(v_1^{t_0}) \cup \mathcal{F}(v_1^{t_0})  \subset  \{x \in \overline{\mathcal{B}}_1 : u_1(x,0)>0\}.$$ By (v) this gives that for $t$ close to $t_0$
\begin{equation}\label{inclus}
{\mathcal{B}_1^+(v_1^{t})} \subset  \mathcal{B}^+_1(u_1), \quad F(v_1^{t}) \cup \mathcal{F}(v_1^{t})  \subset  \{x \in \overline{\mathcal{B}}_1 : u_1(x,0)>0\}.
\end{equation} 
Call $D:= B_1 \setminus (\mathcal{B}^0_1(v_1^t) \cup F(v_1^t)) .$
Combining \eqref{inclus} with assumption (ii) we obtain 
$$v_1^t \leq u_1 \quad \text{on $\p D,$}$$ and by the maximum principle the inequality holds also in $D$. Hence 
$$v_1^t \leq u_1 \quad \text{in $\overline{B}_1.$}$$ Similarly for $i=2$, combining (iv) and (v) we get that for $t$ close to $t_0$, 
$$v_2^t \geq u_2 \quad \text{in $\overline{B}_1.$}$$
Hence $t \in A$ which shows that $A$ is open.
\end{proof}

The corollary below is now a straightforward consequence of Lemma \ref{comppri}.
\begin{cor} \label{compmon}Let $(u_1,u_2)$ be a solution to \eqref{FBintro} and let $(v_1,v_2)$ be a  comparison subsolution to \eqref{FBintro} in $B_2$ which is strictly monotone in the $e_n$ direction in $B_2^+(v_i)$. Call
$$v^t_i(X):=v_i(X+ t e_n), \quad X \in B_1.$$
Assume that for $-1 \leq t_0 < t_1\leq 1$
$$v^{t_0}_1 \leq u_1, \quad v^{t_0}_2 \geq u_2\quad \text{in $\overline{B}_1,$}$$
$$v^{t_1}_1 \leq u_1 \quad  \text{on $\p B_1,$}  \quad v_1^{t_1} < u_1  \quad \text{on $\mathcal{F}(v_1^{t_1}),$}$$
and 
$$v^{t_1}_2 \geq u_2 \quad  \text{on $\p B_1,$}  \quad v_2^{t_1} > u_2  \quad \text{on $\mathcal{F}(u_2).$}$$
Then 
\begin{equation*} v^{t_1}_1 \leq u_1, \quad v^{t_1}_2 \geq u_2\quad \text{in $\overline{B}_1.$}
\end{equation*}

\end{cor}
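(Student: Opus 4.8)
The statement is a one‑parameter sliding argument, and the plan is simply to check that the translated family $(v_1^t,v_2^t)$ satisfies all the hypotheses of the comparison principle Lemma \ref{comppri} and then read off its conclusion. After the affine reparametrization $s\mapsto t_0+s(t_1-t_0)$ we may regard $\{(v_1^t,v_2^t)\}_{t\in[t_0,t_1]}$ as a family indexed by $s\in[0,1]$. First one records that each $(v_1^t,v_2^t)$ with $|t|\le1$ is again a comparison subsolution to \eqref{FBintro} in $B_1$: translating by $te_n$ preserves all the conditions in Definition \ref{defsub} (the functions are still defined on $B_1$, since $v_i$ lives on $B_2$, and sub/super‑harmonicity, the nodal partition, the $C^2$ free boundary, the $\sqrt t$ expansion and $\alpha_1\ge\alpha_2$ are translation invariant), and it remains strictly monotone in the $e_n$ direction. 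Thus Lemma \ref{comppri} will apply once (i)--(v) are verified.

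Conditions (i) and (v) are essentially free: (i) is exactly the hypothesis $v_1^{t_0}\le u_1$, $v_2^{t_0}\ge u_2$ in $\overline B_1$; and for (v) the map $(X,t)\mapsto v_i(X+te_n)$ is jointly continuous because $v_i\in C(B_2)$, while $\mathcal B_1^+(v_i^t)$ is obtained from the positivity set of $v_i$ on $\mathcal B_2$ by the translation $-te_n$ followed by intersection with $\mathcal B_1$, so its closure moves Hausdorff‑continuously in $t$ since $F(v_1,v_2)$ is $C^2$ (Definition \ref{defsub}(iv)) and the entire slide stays inside the larger ball $B_2$. The content is in (ii)--(iv), and it comes from monotonicity. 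As $v_1$ is increasing and $v_2$ decreasing in $e_n$, for $t\le t_1$ one has $v_1^t\le v_1^{t_1}$ and $v_2^t\ge v_2^{t_1}$ pointwise on $\overline B_1$; combined with the endpoint boundary inequalities at $t_1$ this gives (ii); combined with the facts that $\mathcal F(u_2)$ is independent of $t$, that $u_2$ vanishes on its own free‑boundary trace, and that $v_2^{t_1}>0=u_2$ there, it gives (iv). For (iii), monotonicity also yields the nesting $\mathcal B_1^+(v_1^t)\subset\mathcal B_1^+(v_1^{t_1})$, hence $\mathcal F(v_1^t)\subset\overline{\mathcal B_1^+(v_1^{t_1})}\cap\p\mathcal B_1$, which splits into $\mathcal F(v_1^{t_1})$ — where $u_1>0$ by the assumed strict inequality, since $v_1^{t_1}=0$ there — and $\mathcal B_1^+(v_1^{t_1})\cap\p\mathcal B_1$ — where $u_1\ge v_1^{t_1}>0$ by the boundary inequality; since $v_1^t$ itself vanishes on its own free‑boundary trace, this gives $v_1^t<u_1$ on $\mathcal F(v_1^t)$.

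With (i)--(v) in hand, Lemma \ref{comppri} yields $v_1^s\le u_1$ and $v_2^s\ge u_2$ in $\overline B_1$ for every $s\in[0,1]$; taking $s=1$, i.e. $t=t_1$, is the assertion. The only step that is not a one‑line check is the verification of (v) together with the propagation of (iii)--(iv) to all intermediate parameters — i.e. the bookkeeping ensuring that the moving free‑boundary trace $\mathcal F(v_1^t)$ never reaches the contact set — and this is precisely where the $C^2$‑regularity built into the definition of comparison subsolution and the strict monotonicity of the family get used; it also explains why the hypotheses are phrased with strict inequalities only at the single endpoint $t_1$ (for the $v_1$ component) and on the fixed set $\mathcal F(u_2)$ (for the $v_2$ component), rather than along the whole interval.
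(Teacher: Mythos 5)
Your argument is correct and is exactly the route the paper intends: the paper states Corollary \ref{compmon} as an immediate consequence of Lemma \ref{comppri}, and your verification of hypotheses (i)--(v) for the translated family — using the monotonicity of $v_1,v_2$ in the $e_n$ direction to propagate the endpoint inequalities at $t_1$ to all intermediate parameters, and the $C^2$ free boundary for the Hausdorff continuity in (v) — is the natural filling-in of that claim. No discrepancies with the paper's approach.
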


\subsection{Renormalization.} 

The following result allows us to replace the flatness assumption \eqref{flat_intro} with the property that $u_1$ and $u_2$ are trapped between two nearby translates of $U$ and $\bar U$ respectively. Precisely, we have the following lemma.
 
\begin{lem}\label{norm*} Let $(u_1, u_2)$ be non negative continuous functions in $\bar B_2$ satisfying
$$\Delta u_i = 0, \quad \textrm{in $ B_2^+(u_i)$},$$
\begin{equation}\label{disjoint}\mathcal B_1= \mathcal N(u_1) \cup \mathcal N(u_2), \quad \mathcal N^o(u_1) \cap \mathcal N^o(u_2) = \emptyset,\end{equation}
and the flatness assumption
\begin{equation}\label{F} \|u_1 - U\|_\infty \leq \delta, \quad \|u_2- \bar U\|_\infty \leq \delta \end{equation}
with $\delta>0$ small universal.
Then in $B_1$, 
\begin{equation}\label{flatflat}U(X-\eps e_n) \leq u_1(X) \leq U(X+\eps e_n), \quad \bar U(X+\eps e_n) \leq u_2(X) \leq \bar U(X-\eps e_n),\end{equation} for some $\eps=K\delta,$ $K$ universal.
\end{lem}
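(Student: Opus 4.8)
\textbf{Proof plan for Lemma \ref{norm*}.}

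The plan is to transfer the $L^\infty$-closeness hypothesis \eqref{F} into the geometric trapping \eqref{flatflat} by exploiting the precise half-power vanishing rate of $U$ at the free boundary, together with interior elliptic estimates away from it. First I would record the two basic quantitative features of the model solution $U(t,z)=r^{1/2}\cos(\theta/2)$: (a) on the slab $\{z=0\}$ one has $U(t,0)=\sqrt{t_+}$ and $\bar U(t,0)=\sqrt{(-t)_+}$, and the translates satisfy $U(X\pm\eps e_n)=\sqrt{(t\pm\eps)_+}$ on $\{z=0\}$; (b) $U$ is smooth and nondegenerate (with $|\nabla U|\sim r^{-1/2}$) on any compact subset of $\{r>0\}$. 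The comparison functions $U(X\pm \eps e_n)$ and $\bar U(X\mp\eps e_n)$ are harmonic in their respective positivity sets, so in $B_1$ the inequalities in \eqref{flatflat} can be checked region by region via the maximum principle.

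Next I would split $B_1$ into a neighborhood $\{|t|\le \rho\}$ of the hyperplane $\{t=0,z=0\}$ (to be fixed of size comparable to $\sqrt\delta$) and its complement. On the complement, $U$ and $\bar U$ are bounded below by a positive constant $c(\rho)$, so the flatness \eqref{F} forces $u_1,u_2>0$ there; combined with interior gradient estimates for the harmonic functions $u_i$ and $U$, the $C^0$ bound $\|u_i - U\|_\infty\le\delta$ upgrades to $\|\nabla(u_1-U)\|\lesssim \delta/\rho$ on a slightly smaller set, and a first-order Taylor comparison gives $U(X-\eps e_n)\le u_1(X)\le U(X+\eps e_n)$ for $\eps$ a suitable multiple of $\delta/\sqrt\delta=\sqrt\delta$ — which is worse than the claimed linear $\eps=K\delta$, so this naive bound must be improved. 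The better route is: in the region $\{t\ge \rho\}$ use that $U$ is uniformly Lipschitz in $e_n$ with a lower bound on $\p_{e_n}U$ of order $\rho^{-1/2}$, so that the sandwiching $|u_1-U|\le\delta$ directly yields $U(\cdot-C\delta\rho^{1/2}e_n)\le u_1\le U(\cdot + C\delta\rho^{1/2}e_n)$ there; optimizing $\rho$ against the near-boundary estimate below will produce the linear-in-$\delta$ loss.

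The delicate region is the strip near the free boundary, where $U$ degenerates and one cannot simply differentiate. Here the idea is to use the half-power homogeneity as a barrier. Since $u_1$ is harmonic and nonnegative in $B_1^+(u_1)$, vanishes on $\mathcal N(u_1)$, and the nodal sets of $u_1,u_2$ partition $\mathcal B_1$ by \eqref{disjoint}, the free boundary $F(u_1,u_2)$ must lie within $O(\delta^2)$ of $\{t=0\}$: indeed, if $u_1(x,0)>0$ then $U(x,0)\ge u_1(x,0)-\delta$ could still be small, but at a point where $u_1=0$ we get $U(x,0)\le\delta$, i.e. $t\le\delta^2$; symmetrically $u_2=0$ forces $t\ge-\delta^2$. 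Thus $F(u_1,u_2)\subset\{|t|\le\delta^2\}$. On the positivity side $\{t\ge\delta^2\}$, the function $u_1$ is a nonnegative harmonic function whose boundary nodal set is contained in $\{t\le\delta^2\}$; by the boundary Harnack principle (or an explicit $\sqrt t$-type barrier comparison on half-balls centered on $\{z=0\}$) together with the matching with $U$ away from the boundary, one obtains $\sqrt{(t-K\delta^2)_+}\lesssim u_1(x,0)\lesssim\sqrt{(t+K\delta^2)_+}$, and then the harmonicity and evenness in $z$ propagate this to the two-sided bound $U(X-\eps e_n)\le u_1(X)\le U(X+\eps e_n)$ in the full strip, with $\eps$ absorbed into $K\delta$. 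Combining the near-field ($\eps\lesssim\delta$) and far-field ($\eps\lesssim\delta$ after choosing $\rho$ universal) estimates, rescaling back to $B_1$, yields \eqref{flatflat} with $\eps=K\delta$. I expect the main obstacle to be the near-boundary barrier argument: constructing comparison sub/supersolutions of the form $U$ composed with a small $C^2$ deformation of the hyperplane that are admissible in the sense of Definition \ref{defsub} and that trap $u_1,u_2$ simultaneously, and making sure the error is genuinely linear in $\delta$ rather than $\sqrt\delta$; the boundary Harnack inequality for harmonic functions on $B_1^+$ vanishing on a portion of $\{z=0\}$ is the technical engine that makes this work.
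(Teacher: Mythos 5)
Your plan contains the same key observation that the paper uses: the disjointness condition \eqref{disjoint} together with the $L^\infty$-flatness \eqref{F} force the common nodal set to lie within the strip $\{|x_n|\le \delta^2\}$ (a fortiori within $\{|x_n|\le\delta\}$), since $u_1(x,0)=0$ forces $\sqrt{(x_n)_+}\le\delta$ while $\{x_n\le-\delta\}$ lies in $\{u_2(x,0)>0\}$ and hence, by \eqref{disjoint}, in $\{u_1(x,0)=0\}$. But where you go on to rebuild the trapping estimate from scratch — near-field boundary Harnack plus far-field gradient bounds, glued across an intermediate scale $\rho$ — the paper simply observes that these two facts mean $u_1$ (resp.\ $u_2$) satisfies both hypotheses \eqref{Linfty} and \eqref{flat_fb} of Lemma \ref{hyp} in the Appendix, which is a black-box result imported from \cite{DR,DS}, and then quotes \eqref{flat2}. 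In other words, what you are sketching is essentially a proof of Lemma \ref{hyp}, rather than a proof of Lemma \ref{norm*} given Lemma \ref{hyp}. That is a legitimate route, and the ingredients you list (nodal set trapping, boundary Harnack, interior sandwiching where $U$ is nondegenerate) are the right ones, so the plan is not wrong — it is just re-deriving a cited result. Two minor expository slips: your initial worry about ending up with $\eps\sim\sqrt\delta$ on the far field is a false alarm (on $\{t\ge\rho\}$ with $\rho$ a fixed universal constant, $\p_{t}U$ is bounded below by a universal constant, so the mean-value theorem gives a shift linear in $\delta$ directly — no gradient estimate on $u_1-U$ is needed); and the near-boundary claim $\sqrt{(t-K\delta^2)_+}\lesssim u_1\lesssim\sqrt{(t+K\delta^2)_+}$ cannot be correct as stated with a $\delta^2$ shift, because matching to the far field, where only an $O(\delta)$ error is available, forces the shift to be order $\delta$; what the boundary Harnack step actually produces after normalizing against the far-field values is the linear-in-$\delta$ trapping that the lemma asserts.
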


Lemma \ref{norm*} follows immediately from Lemma \ref{hyp} in the Appendix. Indeed, \eqref{disjoint}-\eqref{F} guarantee that $u_i$ satisfies the assumption \eqref{flat_fb}.

\subsection{Domain Variations} 

We recall the definition of $\eps$-domain variation  corresponding to $U$ and some basic lemmas from \cite{DR}. We also introduce in a similar fashion the $\eps$-domain variation  corresponding to $\bar U$ and deduce its properties.

Let $\eps>0$ and let $g$ be a  continuous non-negative function in $\overline{B}_\rho$. 
Here and henceforth we denote by $P^\pm$ the half-hyperplanes $$P^+:= \{X \in \R^{n+1} : x_n \geq 0, z=0\},$$ $$P^-:= \{X \in \R^{n+1} : x_n \leq 0, z=0\},$$ and by $$L:= \{X \in \R^{n+1}: x_n=0, z=0\}.$$  
To each $X \in \R^{n+1} \setminus P^-$ we associate $\tilde g_\eps(X) \subset \R$ via the formula 
\begin{equation}\label{deftilde} U(X) = g(X - \eps w e_n), \quad \forall w \in \tilde g_\eps(X).\end{equation} 
Similarly, 
to each $X \in \R^{n+1} \setminus P^+$ we associate $\tilde {\bar g}_\eps(X) \subset \R$ via the formula 
\begin{equation}\label{deftilde2} \bar U(X) = g(X - \eps w e_n), \quad \forall w \in \tilde {\bar g}_\eps(X).\end{equation} 

By abuse of notation, we write $ \tilde g_\eps(X), \tilde{\bar g}_\eps(X)$ to denote any of the values in this set. 

As observed in \cite{DR}, if g satisfies\begin{equation}\label{flattilde}U(X - \eps e_n) \leq g(X) \leq U(X+\eps e_n) \quad \textrm{in $B_\rho,$}\end{equation} then for all $\eps >0$  we can associate with $g$ a possibly multi-valued function $\tilde{g}_\eps$ defined at least on $B_{\rho-\eps} \setminus P^-$ and taking values in $[-1,1]$ which satisfies \begin{equation} \label{til}U(X) = g(X - \eps \tilde{g}_\eps(X)e_n).\end{equation} Moreover if $g$ is strictly monotone  in the $e_n$ direction in $B^+_\rho(g)$, then $\tilde{g}_\eps$ is single-valued. A similar statement holds for $\tilde {\bar g}_\eps$, when $g$ satisfies the flatness assumption\begin{equation}\label{flattilde2}\bar U(X + \eps e_n) \leq g(X) \leq \bar U(X-\eps e_n) \quad \textrm{in $B_\rho.$}\end{equation}

The following elementary lemmas hold. The proof of the first one can be found in \cite{DR}. The second one can be obtained similarly.

\begin{lem}\label{elem} Let $g, v$ be non-negative continuous functions in $B_\rho$. Assume that $g$ satisfies the flatness condition \eqref{flattilde} in $B_\rho$ and that $v$ is strictly increasing in the $e_n$ direction in $B_\rho^+(v).$ Then if 
$$v \leq g \quad \text{in $B_\rho,$}$$ and 
$\tilde v_\eps$ is defined on $B_{\rho-\eps} \setminus P^-$ we have that
$$\tilde v_\eps \leq \tilde g_\eps \quad \text{on  $B_{\rho-\eps} \setminus P^-.$}$$
Viceversa, if $\tilde v_\eps$ is defined on $B_s \setminus P^-$ and 
$$\tilde v_\eps \leq \tilde g_\eps \quad \text{on $B_s \setminus P^-,$}$$ then
$$v \leq g \quad \text{on $B_{s-\eps}$}.$$
\end{lem}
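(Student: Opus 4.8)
The plan is to combine two monotonicity facts with the defining identities for the $\eps$-domain variations. The facts are: (a) $U>0$ on $\R^{n+1}\setminus P^-$, $U\equiv 0$ on $P^-$, and $\partial_{x_n}U>0$ off $P^-$, so that $w\mapsto U(X+\eps w e_n)$ is continuous and strictly increasing on the set where it is positive; and (b) the hypothesis on $v$ means precisely that its positivity set is monotone in $e_n$ (if $v(Y)>0$ then $v(Y+se_n)>v(Y)$ for $s>0$) and that $w\mapsto v(X-\eps w e_n)$ is non-increasing and strictly decreasing while positive. I will also use, as recalled before the statement, that \eqref{flattilde} makes $\tilde g_\eps$ defined on $B_{\rho-\eps}\setminus P^-$ and that $\tilde v_\eps$ is single-valued wherever defined.

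For the first implication I would fix $X\in B_{\rho-\eps}\setminus P^-$ (so $U(X)>0$), write the identities $U(X)=v(X-\eps\tilde v_\eps(X)e_n)=g(X-\eps\tilde g_\eps(X)e_n)$ for an arbitrary selection of $\tilde g_\eps(X)$, and insert $v\le g$ at the point $X-\eps\tilde g_\eps(X)e_n$ to obtain $v(X-\eps\tilde g_\eps(X)e_n)\le U(X)=v(X-\eps\tilde v_\eps(X)e_n)$. Setting $\phi(w):=v(X-\eps w e_n)$ and recalling $\phi(\tilde v_\eps(X))=U(X)>0$, monotonicity of $\phi$ forces $\tilde g_\eps(X)\ge\tilde v_\eps(X)$: a strict inequality of $\phi$-values gives $\tilde g_\eps(X)>\tilde v_\eps(X)$, while equality forces equality of the two arguments since $\phi$ is injective on $\{\phi>0\}$. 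As the selection was arbitrary, $\tilde v_\eps\le\tilde g_\eps$ on $B_{\rho-\eps}\setminus P^-$.

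For the converse I would argue by contradiction. By continuity it suffices to treat $Y_0\in B_{s-\eps}\setminus P^-$ with $v(Y_0)>0$ and to suppose $v(Y_0)>g(Y_0)$. First I would check that $v$ is flat at $Y_0$, i.e. $U(Y_0-\eps e_n)\le v(Y_0)\le U(Y_0+\eps e_n)$: the upper bound comes from evaluating the identity for $\tilde v_\eps$ at $Y_0+\eps e_n\in B_s\setminus P^-$ and using $\tilde v_\eps\le 1$ with the monotonicity of $v$; the lower bound is trivial when $Y_0-\eps e_n\in P^-$ and otherwise follows in the same way from $\tilde v_\eps\ge -1$. Since $w\mapsto U(Y_0+\eps w e_n)$ is continuous and passes from a value $\le v(Y_0)$ at $w=-1$ to a value $\ge v(Y_0)$ at $w=1$, I can choose $w_0\in[-1,1]$ with $U(Y_0+\eps w_0 e_n)=v(Y_0)>0$; then $X_0:=Y_0+\eps w_0 e_n$ lies in $B_s\setminus P^-$ and satisfies $\tilde v_\eps(X_0)=w_0$ and $X_0-\eps w_0 e_n=Y_0$. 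Finally I look at $\psi(w):=g(X_0-\eps w e_n)$: here $\psi(w_0)=g(Y_0)<v(Y_0)=U(X_0)$, whereas $\psi(-1)=g(X_0+\eps e_n)\ge U(X_0)$ by \eqref{flattilde}; in particular $w_0>-1$, and the intermediate value theorem yields $w_1\in[-1,w_0)$ with $g(X_0-\eps w_1 e_n)=U(X_0)$, i.e. $w_1\in\tilde g_\eps(X_0)$ with $w_1<\tilde v_\eps(X_0)$, contradicting $\tilde v_\eps\le\tilde g_\eps$ on $B_s\setminus P^-$.

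The main obstacle is the converse, and concretely the point that one cannot invoke monotonicity of $g$, which is not assumed; the contradiction must instead be produced from the flatness of $g$ \emph{on the side opposite} to the translation, through the inequality $g(X_0+\eps e_n)\ge U(X_0)$, which forces an admissible domain-variation value for $g$ strictly below $\tilde v_\eps(X_0)$. A secondary, purely technical nuisance is the bookkeeping near the slit $P^-$ — ensuring the auxiliary points $Y_0\pm\eps e_n$ and $X_0$ lie where $\tilde v_\eps,\tilde g_\eps$ are defined and where the monotonicity statements for $v$ and $U$ apply — which is dispatched by the case distinctions above and by first establishing the inequality only on the dense set $B_{s-\eps}\setminus P^-$.
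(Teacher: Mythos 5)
The paper does not actually prove this lemma: it defers to \cite{DR} ("The proof of the first one can be found in \cite{DR}"), so there is no in-paper argument to compare against. Your strategy is the natural one and your first implication is correct: since $v\le g$ and $\phi(w)=v(X-\eps w e_n)$ is non-increasing and injective where positive, any admissible value of $\tilde g_\eps(X)$ must lie above $\tilde v_\eps(X)$.

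In the converse, however, the step where you establish $U(Y_0-\eps e_n)\le v(Y_0)\le U(Y_0+\eps e_n)$ invokes "$\tilde v_\eps\le 1$" and "$\tilde v_\eps\ge -1$", and neither bound is among the hypotheses: the lemma only assumes $\tilde v_\eps$ is \emph{defined} on $B_s\setminus P^-$, and indeed in the applications (e.g.\ Lemma \ref{linearcomp}) one only has $|\tilde v_\eps|\le C$ with $C$ possibly larger than $1$. The upper bound is recoverable: since $g$ satisfies \eqref{flattilde}, every point of $B_{\rho-\eps}\setminus P^-$ carries a value of $\tilde g_\eps$ in $[-1,1]$, so the hypothesis $\tilde v_\eps\le\tilde g_\eps$ forces $\tilde v_\eps\le 1$ on $B_s\setminus P^-$, and your monotonicity argument then gives $v(Y_0)\le U(Y_0+\eps e_n)$. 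The lower bound $\tilde v_\eps\ge -1$ is genuinely false in general (e.g.\ $v=U(\cdot-2\eps e_n)$ has $\tilde v_\eps\equiv -2$ and satisfies all the hypotheses), so that justification cannot be repaired. Fortunately the inequality you actually need, $U(Y_0-\eps e_n)\le v(Y_0)$, is immediate from the contradiction hypothesis together with the flatness of $g$: $v(Y_0)>g(Y_0)\ge U(Y_0-\eps e_n)$ by \eqref{flattilde}. With that substitution the intermediate value argument producing $X_0$, and the final contradiction via $\psi(-1)=g(X_0+\eps e_n)\ge U(X_0)>\psi(w_0)$, go through as you wrote them. So the proof is correct after this local repair; I would only add the two sentences making explicit where $\tilde v_\eps\le 1$ comes from and replacing the appeal to $\tilde v_\eps\ge -1$.
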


\begin{lem}\label{elem2} Let $g, v$ be non-negative continuous functions in $B_\rho$. Assume that $g$ satisfies the flatness condition \eqref{flattilde2} in $B_\rho$ and that $v$ is strictly decreasing in the $e_n$ direction in $B_\rho^+(v).$ Then if 
$$v \geq g \quad \text{in $B_\rho,$}$$ and 
$\tilde {\bar v}_\eps$ is defined on $B_{\rho-\eps} \setminus P^+$ we have that
$$\tilde {\bar v}_\eps \leq \tilde {\bar g}_\eps \quad \text{on  $B_{\rho-\eps} \setminus P^+.$}$$
Viceversa, if $\tilde {\bar v}_\eps$ is defined on $B_s \setminus P^+$ and 
$$\tilde {\bar v}_\eps \leq \tilde {\bar g}_\eps \quad \text{on $B_s \setminus P^+,$}$$ then
$$v \geq g \quad \text{on $B_{s-\eps}$}.$$
\end{lem}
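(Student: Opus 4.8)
The plan is to run, almost verbatim, the proof of Lemma \ref{elem} from \cite{DR}, systematically replacing $U$ by $\bar U$, the half-hyperplane $P^-$ by $P^+$, and ``increasing in the $e_n$ direction'' by ``decreasing in the $e_n$ direction''. Three elementary ingredients are all that is needed. (a) $\bar U>0$ on $\R^{n+1}\setminus P^+$ and $\bar U$ is strictly decreasing in the $e_n$ direction on $\{\bar U>0\}$. (b) Because $g$ satisfies \eqref{flattilde2}, for each $X\in B_{\rho-\eps}\setminus P^+$ the continuous map $w\mapsto g(X-\eps w e_n)$ is $\ge\bar U(X)$ at $w=1$ (apply \eqref{flattilde2} at the point $X-\eps e_n$) and $\le\bar U(X)$ at $w=-1$ (apply it at $X+\eps e_n$), so by the intermediate value theorem $\tilde{\bar g}_\eps(X)$ is nonempty and contained in $[-1,1]$. (c) Since $v\ge 0$ is continuous and strictly decreasing in the $e_n$ direction on $B_\rho^+(v)$, it has a ``subgraph'' structure: along any $e_n$-line inside $B_\rho$ the set $\{v>0\}$ is an interval (any component with an interior left endpoint would make $v$ strictly decreasing with supremum $0$, contradicting $v\ge 0$), so on such a line $v$ is weakly decreasing and is strictly decreasing where positive.

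For the direct implication I would fix $X\in B_{\rho-\eps}\setminus P^+$, choose $w_v\in\tilde{\bar v}_\eps(X)$ (nonempty by assumption) and $w_g\in\tilde{\bar g}_\eps(X)$ (nonempty by (b)), so that $v(X-\eps w_v e_n)=\bar U(X)=g(X-\eps w_g e_n)$, and then argue by contradiction. If $w_v>w_g$, the points $Y_v:=X-\eps w_v e_n$ and $Y_g:=X-\eps w_g e_n$ lie on a common $e_n$-line with $(Y_v)_n<(Y_g)_n$; since $X\notin P^+$ we have $\bar U(X)>0$, hence $v(Y_v)=\bar U(X)>0$ and, using $v\ge g$, also $v(Y_g)\ge g(Y_g)=\bar U(X)>0$, so both points are in $B_\rho^+(v)$. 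By (c) this forces $v(Y_g)<v(Y_v)=\bar U(X)$, contradicting $v(Y_g)\ge\bar U(X)$. Hence $w_v\le w_g$ for all admissible choices, i.e. $\tilde{\bar v}_\eps\le\tilde{\bar g}_\eps$ on $B_{\rho-\eps}\setminus P^+$.

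For the converse I would fix $Y\in B_{s-\eps}$ (here $s\le\rho-\eps$, which keeps inside $B_\rho$ all the points produced below). The inequality $v(Y)\ge g(Y)$ is trivial when $g(Y)=0$, so assume $g(Y)>0$. Using \eqref{flattilde2} at $Y$ and the continuity of $w\mapsto\bar U(Y+\eps w e_n)$, which is $\le g(Y)$ at $w=1$ and $\ge g(Y)$ at $w=-1$, I would select $w^*\in[-1,1]$ with $\bar U(Y+\eps w^* e_n)=g(Y)$ and set $X:=Y+\eps w^* e_n$. Then $\bar U(X)=g(Y)=g(X-\eps w^* e_n)$ shows $w^*\in\tilde{\bar g}_\eps(X)$, while $|X|\le|Y|+\eps<s$ and $\bar U(X)>0$ give $X\in B_s\setminus P^+$. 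The hypothesis $\tilde{\bar v}_\eps(X)\le\tilde{\bar g}_\eps(X)$ then furnishes $w'\in\tilde{\bar v}_\eps(X)$ with $w'\le w^*$ and $v(X-\eps w' e_n)=\bar U(X)=g(Y)$; putting $Y':=X-\eps w' e_n$, the point $Y'$ lies on the $e_n$-line through $Y$ with $(Y')_n=X_n-\eps w'\ge X_n-\eps w^*=Y_n$ and $v(Y')=g(Y)>0$, so (c) gives $Y\in\{v>0\}$ and $v(Y)\ge v(Y')=g(Y)$.

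I do not anticipate any genuine obstacle: the statement is a bookkeeping exercise identical in spirit to its unbarred counterpart Lemma \ref{elem}. The only things to watch are the multivaluedness (the inequality $\tilde{\bar v}_\eps\le\tilde{\bar g}_\eps$ is to be read as ``every value on the left is $\le$ every value on the right'', and one must verify each selected value actually exists --- this is where (b) and the defining property of $\tilde{\bar v}_\eps$ enter), the radius bookkeeping, and the passage from ordering of the function values to ordering of the associated shifts, which is exactly what the monotonicity/subgraph property (c) of $v$ delivers.
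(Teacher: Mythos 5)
Your proof is correct and is exactly the intended argument: the paper gives no proof of Lemma \ref{elem2} beyond stating that it is obtained "similarly" to Lemma \ref{elem} (whose proof is in \cite{DR}), and your write-up is precisely that adaptation, with the monotonicity of $\bar U$ and the subgraph structure of $\{v>0\}$ converting the ordering of function values into the ordering of the shifts. The points you flag (multivaluedness, existence of the selected values via the intermediate value theorem, and the radius bookkeeping) are handled correctly.
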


We now state and prove a key comparison principle, which will follow immediately from the lemmas above and Corollary  \ref{compmon}.

\begin{lem}\label{linearcomp}
Let $(u_1,u_2),  (v_1,v_2)$ be respectively a solution and a comparison subsolution to \eqref{FBintro} in $B_2$, with $(v_1,v_2)$ strictly monotone in the $e_n$ direction in $B_2^+(v_i).$ Assume that $u_1$ and $u_2$ satisfy respectively the flatness assumptions \eqref{flattilde}-\eqref{flattilde2} in $B_2$
for $\eps>0$ small  and that $\widetilde {(v_1)}_\eps, \widetilde{(\overline{v_2})}_\eps$ are defined in $B_{2-\eps} \setminus P^\mp$ and satisfy  $$|\widetilde {(v_1)}_\eps| \leq C, \quad |\widetilde {(\overline{v_2})}_\eps| \leq C.$$
If
\begin{equation}\label{start}
 \widetilde{{(v_1)}}_\eps + c \leq \widetilde {(u_1)}_\eps   \quad \text{in $(B_{3/2} \setminus \overline{B}_{1/2}) \setminus P^-,$} 
\end{equation} 
and
\begin{equation}\label{start2}
 \widetilde{(\overline{v_2})}_\eps + c \leq \widetilde {(\overline{u_2})}_\eps   \quad \text{in $(B_{3/2} \setminus \overline{B}_{1/2}) \setminus P^+,$} 
\end{equation} 
then 
\begin{equation}\label{conclusion}
 \widetilde{{(v_1)}}_\eps + c \leq \widetilde {(u_1)}_\eps  \quad \text{in $B_{3/2} \setminus P^-,$} 
\end{equation} 
and
\begin{equation}\label{conclusion2}
 \widetilde{(\overline{v_2})}_\eps + c \leq \widetilde {(\overline{u_2})}_\eps   \quad \text{in $B_{3/2}  \setminus P^+.$} 
\end{equation} 
\end{lem}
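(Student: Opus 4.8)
The plan is to reduce the statement to the sliding-circles comparison principle of Corollary~\ref{compmon}, translated into the language of $\eps$-domain variations via Lemmas~\ref{elem} and~\ref{elem2}. Fix $c>0$ as in the hypotheses and suppose, for contradiction, that either \eqref{conclusion} or \eqref{conclusion2} fails; say the first one does, so that $\widetilde{(v_1)}_\eps + c \leq \widetilde{(u_1)}_\eps$ holds on the annular region $(B_{3/2}\setminus\overline{B}_{1/2})\setminus P^-$ but is violated somewhere in the inner ball $\overline{B}_{1/2}\setminus P^-$. I would then slide: consider the family of translates $v_i^t(X) := v_i(X + (t + \eps\lambda) e_n)$ for a suitable bookkeeping parameter, but more cleanly, work directly at the level of domain variations. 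The key observation is that adding the constant $c$ to $\widetilde{(v_1)}_\eps$ corresponds, through the defining relation \eqref{til}, to replacing $v_1$ by a translate $v_1(\cdot - \eps c\, e_n)$ (up to the single-valuedness granted by strict monotonicity of $v_1$ in the $e_n$ direction), and likewise $\widetilde{(\overline{v_2})}_\eps + c$ corresponds to translating $v_2$ in the $-e_n$ direction by $\eps c$. So the hypotheses \eqref{start}--\eqref{start2} say precisely that the translated subsolution $(v_1^{\eps c}, v_2^{-\eps c})$ lies on the correct side of $(u_1,u_2)$ on the annulus, and the desired conclusion is that it does so on all of $B_{3/2}$.

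Next I would set up the continuity-method apparatus of Corollary~\ref{compmon} on the ball $B_{3/2}$ (rescaling so it reads as $B_1$ if convenient). Introduce the one-parameter family $v_i^{(s)}(X) := v_i(X + s e_n)$ interpolating between $s=0$ (where, shrinking $c$ if necessary, or using that the strict inequality on the annulus propagates, one checks $v_1^{(0)} \le u_1$, $v_2^{(0)} \ge u_2$ everywhere — this is where I expect to need a little care) and $s = \eps c$. The boundary conditions (ii)--(iv) of Corollary~\ref{compmon} on $\partial B_{3/2}$ and on the free boundaries $\mathcal F$ are exactly what the annular inequalities \eqref{start}--\eqref{start2} give us, once translated back via Lemmas~\ref{elem}--\ref{elem2}: strict ordering of the domain variations on the annulus $\Rightarrow$ strict ordering $v_1^{(s)} < u_1$ on $\mathcal F(v_1^{(s)})$ and $v_2^{(s)} > u_2$ on $\mathcal F(u_2)$, because the domain-variation gap $c>0$ converts to a genuine spatial separation of the free boundaries of size comparable to $\eps c$. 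The Hausdorff-continuity hypothesis (v) of Corollary~\ref{compmon} is automatic since the $v_i$ are fixed smooth functions being rigidly translated. Applying Corollary~\ref{compmon} then yields $v_1^{(\eps c)} \le u_1$ and $v_2^{(-\eps c)} \ge u_2$ in $\overline{B}_{3/2}$; feeding this back through Lemmas~\ref{elem} and~\ref{elem2} gives \eqref{conclusion} and \eqref{conclusion2}.

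The main obstacle, I expect, is the bookkeeping at the two ends of the sliding family — specifically, verifying that the chain "domain-variation inequality with a gap $c$" $\Longleftrightarrow$ "spatial inequality between appropriately translated functions" is valid uniformly, including the passage across the degenerate sets $P^\pm$ and $L$ where $\tilde g_\eps$ may be multivalued or undefined. One must check that the flatness assumptions \eqref{flattilde}--\eqref{flattilde2} guarantee $\widetilde{(u_1)}_\eps$, $\widetilde{(u_2)}_\eps$ are defined on the full annulus and inner ball (minus $P^\mp$) with values in $[-1,1]$, so that adding $c$ keeps us in range and the comparison in Lemma~\ref{elem}/\ref{elem2} applies; and that the strict inequalities needed for the open-ness step of Corollary~\ref{compmon} genuinely hold on the free boundary, not just in the interior. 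A secondary technical point is ensuring $v_1$ (resp. $v_2$) remains strictly monotone in $e_n$ after translation so that $\widetilde{(v_1^{(s)})}_\eps$ stays single-valued throughout the slide — but this is immediate since translation preserves monotonicity. Once these compatibility checks are in place, the contradiction with the assumed failure of \eqref{conclusion} closes the argument.
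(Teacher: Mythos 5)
Your proposal follows the same route as the paper's own proof: both reduce the statement to the comparison principle of Corollary~\ref{compmon}, translated back and forth through the $\eps$-domain-variation dictionary of Lemmas~\ref{elem}--\ref{elem2}, with a sliding family of translates in the $e_n$ direction. The overall architecture is correct.

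There is, however, one genuine gap, and it is precisely at the place you flag but do not resolve. Corollary~\ref{compmon} needs an \emph{initial} parameter value at which the ordering $v_1 \leq u_1$, $v_2 \geq u_2$ already holds on the \emph{whole} ball, and you propose to start the slide at $s=0$, asserting that ``shrinking $c$ if necessary, or using that the strict inequality on the annulus propagates'' produces $v_1^{(0)} \leq u_1$, $v_2^{(0)}\geq u_2$ everywhere. Neither works: shrinking $c$ changes the statement rather than proving it, and ``propagation from the annulus'' is exactly the content of the lemma, so it cannot be invoked as a premise. The paper sidesteps this by choosing the initial parameter $t_0$ very negative rather than zero: since by hypothesis $|\widetilde{(v_1)}_\eps|,\ |\widetilde{(\overline{v_2})}_\eps|\leq C$, and the flatness assumptions \eqref{flattilde}--\eqref{flattilde2} force $|\widetilde{(u_1)}_\eps|,\ |\widetilde{(\overline{u_2})}_\eps|\leq 1$, taking $t_0 < -(C+1)$ gives $\widetilde{(v_1)}_\eps + t_0 \leq \widetilde{(u_1)}_\eps$ (and likewise for the second component) trivially on the whole ball minus $P^\mp$, and Lemma~\ref{elem} converts that back into the pointwise ordering Corollary~\ref{compmon} requires. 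Without this step the continuity argument never gets started.

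A secondary issue is a sign inconsistency in your bookkeeping. By the identity \eqref{tildetrans} (which the paper records explicitly), translating either component by $+\eps t e_n$ in the argument adds $t$ to \emph{both} $\widetilde{(v_1)}_\eps$ and $\widetilde{(\overline{v_2})}_\eps$; so the correct endpoint pair is $(v_1(\cdot+\eps c\,e_n),\,v_2(\cdot+\eps c\,e_n))$, i.e.\ the same shift for both. Your own family $v_i^{(s)}(X)=v_i(X+se_n)$ has this feature, but the lines claiming the endpoint is $(v_1^{\eps c},v_2^{-\eps c})$ and that the shifts have opposite signs contradict it. This is fixable notation rather than a logical flaw, but it is worth straightening out before trusting the comparison on the free boundary, since $v_1$ is increasing and $v_2$ decreasing in $e_n$ and the directions matter.
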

\begin{proof} We wish to apply Corollary \ref{compmon} to the functions $(u_1,u_2)$ and $$v_{i,t} = v_i(X+ \eps t e_n).$$

We need to verify that for some  $t_0 < t_1=c$
\begin{equation}\label{n1}v_{1,t_0} \leq u_1 \quad v_{2,t_0} \geq u_2  \quad \text{in $\overline{B}_1,$}\end{equation} and for all $\delta>0$ and small
\begin{equation}\label{n2}v_{1,t_1-\delta} \leq u_1 \quad  \text{on $\p B_1,$}  \quad v_{1,t_1-\delta} < u_1  \quad \text{on $\mathcal{F}(v_{1,t_1-\delta}),$}\end{equation}
\begin{equation}\label{n3}v_{2,t_1-\delta} \geq u_2 \quad  \text{on $\p B_1,$}  \quad v_{2,t_1-\delta} > u_2  \quad \text{on $\mathcal{F}(u_2).$}\end{equation}
Then our Corollary implies
\begin{equation*} v_{1,t_1-\delta} \leq u_1 \quad \text{in $\overline{B}_1$,} \end{equation*}
\begin{equation*} v_{2,t_1-\delta} \geq u_2 \quad \text{in $\overline{B}_1$.} \end{equation*}
By letting $\delta$ go to 0, we obtain that 
\begin{equation*} v_{1,t_1} \leq u_1, \quad v_{2,t_1} \geq u_2 \quad \text{in $\overline{B}_1$,} \end{equation*} which in view of Lemma \ref{elem} gives 
\begin{equation*}\widetilde{(v_{1,t_1})}_\eps  \leq \widetilde {(u_1)}_\eps \quad \text{in $B_{1-\eps}\setminus P^-$,}\end{equation*}
\begin{equation*}\widetilde{({\overline{ v_{2,t_1}}})}_\eps  \leq \widetilde {(\overline{u_2})}_\eps \quad \text{in $B_{1-\eps}\setminus P^+$,}\end{equation*} 
 assuming that the $\eps$-domain variations on the left hand side exist on $B_{1-\eps} \setminus P^\mp.$ On the other hand, it is easy to verify that on such set
\begin{equation}\label{tildetrans}\widetilde{(v_{1,t})}_\eps (X) = (\widetilde {v_1})_\eps (X) + t, \quad \widetilde{(\overline{v_{2,t}})}_\eps (X) = \widetilde {({\overline{v_2})}}_\eps (X) + t,\end{equation} and hence we have for $t_1=c/2,$
\begin{equation*}\widetilde {(v_1)}_\eps + t_1 \leq \widetilde {(v_1)}_\eps + c \leq \widetilde {(u_1)}_\eps \quad \text{in $B_{1-\eps} \setminus P^-,$} \end{equation*} 
\begin{equation*}\widetilde {(\overline{v_2})}_\eps + t_1 \leq \widetilde {(\overline{v_2})}_\eps + c \leq \widetilde {(\overline{u_2})}_\eps \quad \text{in $B_{1-\eps} \setminus P^+,$} \end{equation*} 
which gives the desired conclusion.

We are left with the proof of \eqref{n1}-\eqref{n2}-\eqref{n3}.

In view of Lemmas \ref{elem}-\ref{elem2}, in order to obtain \eqref{n1} it suffices to show that
$$\widetilde{(v_{1,t_0})}_\eps  \leq \widetilde {(u_1)}_\eps, \quad \text{in $B_{1+\eps} \setminus P^-,$}$$
$$\widetilde{(\overline{v_{2,t_0}})}_\eps  \leq \widetilde {(\overline{u_2})}_\eps, \quad \text{in $B_{1+\eps} \setminus P^+,$}$$
which by \eqref{tildetrans} becomes
$$\widetilde{(v_1)}_\eps + t_0 \leq \widetilde{(u_1)}_\eps, \quad \text{in $B_{1+\eps} \setminus P^-,$}$$
$$\widetilde {(\overline{v_2})}_\eps + t_0 \leq \widetilde {(\overline{u_2})}_\eps, \quad \text{in $B_{1+\eps} \setminus P^+.$}$$
These last inequalities hold trivially for an appropriate choice of $t_0$ since the functions involved are bounded.

For \eqref{n2}-\eqref{n3}, notice that the first inequality follows easily from our assumption \eqref{start}-\eqref{start2} together with \eqref{tildetrans} and Lemmas \ref{elem}-\ref{elem2}.  More precisely we have that for all $\delta \leq c/2,$ the stronger statement

\begin{equation}\label{new1}v_{1,t_1} \leq u_1 \quad v_{2,t_1+\delta} \geq u_2 \quad \text{in $B_{\frac 3 2-\eps} \setminus B_{\frac 1 2+\eps}$,}\end{equation} holds.

In particular, from the strict monotonicity of $v_1$ in the $e_n$-direction in $B_2^+(v_1)$ we have that
$$v_{1,t_1} > 0 \quad \text{on $\mathcal{F}(v_{1,t_1-\delta}),$}$$
which combined with the previous inequality gives that
$$u_1 >0 \quad \text{on $\mathcal{F}(v_{1,t_1-\delta}),$}$$
that is the second condition in \eqref{n2}. Similarly the second inequality in \eqref{n2} follows from the second inequality in \eqref{new1} and the strict monotonicity of $v_2$ in $B_2^+(v_2).$
\end{proof}

Finally, given $\eps>0$ small and Lipschitz functions $\tilde{\varphi}, \tilde{\bar \varphi}$ defined on $B_{\rho}(\bar X)$,  with values in $[-1,1]$, then there exists a unique pair $(\varphi_\eps, \bar \varphi_\eps)$ defined at least on $B_{\rho-\eps}(\bar X)$ such that \begin{equation} U(X) = \varphi_\eps(X - \eps \tilde{\varphi}(X)e_n), \quad X \in B_\rho(\bar X),\end{equation}
\begin{equation} \bar U(X) = \bar \varphi_\eps(X - \eps \tilde{\bar\varphi}(X)e_n), \quad X \in B_\rho(\bar X).\end{equation}

It is readily seen that if
 $(u_1,u_2)$ satisfies the flatness assumption \eqref{flatflat} in $B_1$ then (say $\rho,\eps<1/4$, $\bar X \in B_{1/2},$)
\begin{equation}\label{gtildeg}\tilde \varphi \leq (\tilde u_1)_\eps \quad \text{in $B_\rho(\bar X) \setminus P^-$} \Rightarrow \varphi_\eps \leq u_1 \quad \text{in $B_{\rho -\eps}(\bar X)$},\end{equation}
\begin{equation}\label{gtildeg2}\tilde {\bar\varphi} \leq (\tilde u_2)_\eps \quad \text{in $B_\rho(\bar X) \setminus P^+$} \Rightarrow \bar\varphi_\eps \geq u_2 \quad \text{in $B_{\rho -\eps}(\bar X)$}.\end{equation}

The following proposition is contained in \cite{DS}. An analogous statement can be clearly obtained for ${\bar \varphi}_\eps.$
\begin{prop}\label{1}Let $\varphi$ be a smooth function in $B_\lambda(\bar X)\subset \R^{n+1} \setminus P^-$. Define (for $\eps>0$ small) the function $\varphi_\eps$ as above by\begin{equation}\label{deftilde3} U(X) = \varphi_\eps(X -  \eps\varphi(X)e_n).\end{equation}Then, \begin{equation}\label{laplaceest} \Delta \varphi_\eps = \eps \Delta(U_n \varphi)+ O(\eps^2),\quad \textrm{in $B_{\lambda/2}(\bar X)$}\end{equation} with the function in $O(\eps^2)$ depending on $\|\varphi\|_{C^5}$ and $\lambda$.\end{prop}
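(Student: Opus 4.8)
\textbf{Proof proposal for Proposition \ref{1}.}

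The plan is to expand both sides of the defining relation \eqref{deftilde3} in powers of $\eps$ and match, exploiting the harmonicity of $U$ in $\R^{n+1}\setminus P^-$ (where $U(t,z)=r^{1/2}\cos(\theta/2)$ is harmonic, being the real part of $\sqrt{t+iz}$). Write $Y=X-\eps\varphi(X)e_n$, so that $\varphi_\eps(Y)=U(X)$. The first step is to invert the change of variables: for $\eps$ small and $\varphi$ smooth, the map $X\mapsto Y$ is a diffeomorphism of $B_\lambda(\bar X)$ onto a slightly perturbed ball, with inverse $X=Y+\eps\varphi(Y)e_n+O(\eps^2)$, the $O(\eps^2)$ controlled by $\|\varphi\|_{C^2}$ via the implicit function theorem (here one uses that $\bar X\in\R^{n+1}\setminus P^-$, so $U$ and its derivatives are smooth near the relevant points, with bounds depending on $\dist(\bar X,P^-)\sim\lambda$). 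Hence, as a function of $Y$,
\[
\varphi_\eps(Y)=U\bigl(Y+\eps\varphi(Y)e_n+O(\eps^2)\bigr)=U(Y)+\eps\,\varphi(Y)\,U_n(Y)+O(\eps^2),
\]
where the $O(\eps^2)$ term, together with its first and second $Y$-derivatives, is bounded in terms of $\|\varphi\|_{C^5}$ and $\lambda$ (one needs two more derivatives to control $\Delta$ of the remainder, and the Taylor expansion of $U$ to higher order costs further derivatives of $\varphi$ through the chain rule — this is why the statement asks for $C^5$).

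Next I would apply the Laplacian in $Y$ on $B_{\lambda/2}(\bar X)$. Since $U$ is harmonic there, $\Delta_Y U(Y)=0$, and therefore
\[
\Delta\varphi_\eps=\eps\,\Delta(U_n\varphi)+O(\eps^2),
\]
which is exactly \eqref{laplaceest}; the $O(\eps^2)$ is the Laplacian of the second-order remainder above, hence bounded by a constant depending on $\|\varphi\|_{C^5}$ and $\lambda$. The only point requiring a little care is that differentiating the remainder term twice does not inflate its $\eps$-order: this follows because the remainder comes from the second-order Taylor expansion of the smooth function $U$ (smooth with all derivatives bounded on the relevant compact subset of $\R^{n+1}\setminus P^-$) composed with $X=X(Y)$, and each $Y$-derivative lands on smooth, $\eps$-uniformly bounded quantities, leaving the factor $\eps^2$ intact.

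The main obstacle — though it is routine rather than deep — is bookkeeping the dependence of the error on the distance to the singular set $P^-$: all derivatives of $U$ blow up as one approaches $P^-$, so one must keep $B_\lambda(\bar X)$ bounded away from $P^-$ and track how the constants degenerate in $\lambda$; this is encoded in the phrase ``depending on $\|\varphi\|_{C^5}$ and $\lambda$.'' Since this proposition is quoted from \cite{DS}, I would present the above as a sketch and refer to that reference for the detailed constants, noting only that the argument is the standard one-variable domain-variation linearization adapted to the harmonic function $U$ on the slit domain.
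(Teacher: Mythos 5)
Your proposal is correct and captures the standard argument for this one-variable domain-variation linearization. Since the paper does not include its own proof (it simply cites \cite{DS}), I cannot compare line by line, but the route you describe is the natural one: invert $Y = X - \eps\varphi(X)e_n$ to first order in $\eps$, Taylor-expand $U$ along the displacement to obtain $\varphi_\eps(Y)=U(Y)+\eps\,\varphi(Y)U_n(Y)+O(\eps^2)$, use harmonicity of $U$ on $B_\lambda(\bar X)\subset\R^{n+1}\setminus P^-$ to annihilate the leading term under $\Delta$, and control two $Y$-derivatives of the $O(\eps^2)$ remainder by the chain rule, which forces the dependence on $\|\varphi\|_{C^5}$ and on $\lambda=\dist(\bar X,P^-)$ (through the derivatives of $U$ on the relevant compact set). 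The two technical points that matter are both flagged correctly: (a) that the inversion $X=Y+\eps\varphi(Y)e_n+O(\eps^2)$ holds with controlled derivatives by the implicit function theorem, and (b) that differentiating the remainder twice does not lose the factor $\eps^2$ because all quantities involved are smooth and $\eps$-uniformly bounded away from $P^-$. I would only suggest, in a final write-up, to state explicitly that the image of $B_\lambda(\bar X)$ under $X\mapsto Y$ contains $B_{\lambda/2}(\bar X)$ for $\eps$ small, so that $\varphi_\eps$ is defined there, and to spell out the $\eps^2$-coefficient $R(Y)=U_n(Y)\,a(Y)+\tfrac12 U_{nn}(Y)\varphi^2(Y)$ (with $a$ the second-order coefficient of the inverse map) when estimating $\Delta R$; but these are bookkeeping details, not gaps.
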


\section{The linearized problem.} \label{sec:linearized}

We introduce here the linearized problem associated with \eqref{FBintro}. 
Given  $g \in C(B_1)$  and  $X_0=(x'_0,0,0) \in B_1 \cap L,$ we call
$$g_r (X_0) := \di\lim_{(x_n,z)\rightarrow (0,0)} \frac{g(x'_0,x_n, z) - g(x'_0,0,0)}{r}, \quad   r^2=x_n^2+z^2 .$$
Once the change of unknowns  \eqref{deftilde}-\eqref{deftilde2} has been done, the linearized problem associated with \eqref{FBintro} is 
\begin{equation}\label{linear}\begin{cases} \Delta (U_n g_1) = 0, \quad \text{in $B_1 \setminus P^-,$}\\ 
 \Delta (-\bar U_n g_2) = 0, \quad \text{in $B_1 \setminus P^+,$}\\
g_1=g_2, \quad (g_1)_r+(g_2)_r=0, \quad \text{on $B_1\cap L$.}\end{cases}\end{equation}

\begin{defn}\label{linearsol}We say that $(g_1,g_2)$ is a viscosity solution to \eqref{linear}  if $g_i\in C(B_1)$, $g_i$ is even-in-$z$ and it satisfies
\begin{enumerate}\item $\Delta (U_n g_1) = 0$ \quad in $B_1 \setminus P^-$;\\ 
\item $\Delta (-\bar U_n g_2) = 0$ \quad in $B_1 \setminus P^+$;\\
\item $g_1=g_2 \quad \text{on $B_1 \cap L;$}$\\
\item Let $\phi_i$ be continuous around  $X_0=(x'_0,0,0) \in B_1 \cap L$ and satisfy $$\phi_1(X) =l(x') + b_1(X_0) r + O(|x'-x'_0|^2 + r^{3/2}), $$
$$\phi_2(X) = l(x')+ b_2(X_0) r + O(|x'-x'_0|^2+r^{3/2}), $$ with $l(x')= a_0 + a_1\cdot (x'-x'_0).$ 
 
\noindent If $b_1(X_0) +b_2(X_0)>0$ then  $(\phi_1, \phi_2)$ cannot touch $(g_1,g_2)$ by below at $X_0$,  and if $b_1(X_0)+ b_2(X_0)<0$ then $(\phi_1,\phi_2)$ cannot touch $(g_1,g_2)$ by above at $X_0$. \end{enumerate}\end{defn}

We wish to prove the following existence and regularity result.

\begin{thm}\label{class} Given a boundary data $(h_1,h_2)$ with  $h_i \in C(\bar B_1), |h_i| \leq 1$, $h_i$ even-in-$z$ and $h_1=h_2 $ on $B_1 \cap L$, there exists a unique classical solution $(g_1,g_2)$ to \eqref{linear} such that $g_i \in C(\overline{B}_1)$, $g_i = h_i$ on $\p B_1$, $g_i$ is even-in-$z$ and it satisfies
\begin{equation}\label{mainh}|g_i(X) - a_0 -  a' \cdot (x'- x'_0)- b_i r| \leq C (|x'-x'_0|^2 + r^{3/2}), \quad X_0 \in B_{1/2} \cap L,\end{equation} for a universal constants $C$, and $a' \in \R^{n-1}, a_0, b_i \in \R$ depending on $X_0$ and satisfying
$$b_1+b_2=0.$$
\end{thm}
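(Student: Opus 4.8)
The plan is to reduce the system \eqref{linear} to a pair of scalar Neumann-type problems for weighted harmonic functions, solve each by Perron's method, and then exploit the coupling on $B_1\cap L$ to get the matching conditions $g_1=g_2$ and $(g_1)_r+(g_2)_r=0$. First I would recall the basic facts about $U$: since $U=r^{1/2}\cos(\theta/2)$ is harmonic on $\R^{n+1}\setminus P^-$, the weight $U_n=\partial_{x_n}U$ is (up to sign and a harmonic factor) comparable to $\dist(\cdot,P^-)^{-1/2}$ near the edge $L$, and $U_n^2$ is the natural $A_2$-type weight for the degenerate-elliptic operator $\div(U_n^2\nabla\,\cdot\,)$; indeed $\Delta(U_n g)=0$ is equivalent, away from the zero set of $U_n$, to $\div(U_n^2\nabla g)=0$. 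So the first equation says $g_1$ solves a degenerate elliptic equation on $B_1\setminus P^-$, and symmetrically $g_2$ on $B_1\setminus P^+$ with weight $\bar U_n^2$; the even-in-$z$ symmetry lets us work on the upper half and reflect.

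Next I would set up the variational framework: for boundary data $h_i$ on $\partial B_1$, minimize $\int_{B_1}U_n^2|\nabla g_1|^2$ over even-in-$z$ functions in the weighted Sobolev space $H^1(B_1,U_n^2\,dX)$ with trace $h_1$ on $\partial B_1$, and similarly for $g_2$ with weight $\bar U_n^2$. Since $U_n^2$ and $\bar U_n^2$ are Muckenhoupt $A_2$ weights (this is where I'd cite or quote the Fabes–Kenig–Serapioni theory, presumably one of the appendices), one gets existence, uniqueness, and interior De Giorgi–Nash–Moser regularity of the minimizers, continuity up to $\partial B_1$ under mild regularity of $h_i$ (and in general by approximation for merely continuous $h_i$), and boundedness $|g_i|\le 1$ by the maximum principle for the weighted operator using $\pm1$ as barriers. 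The coupling must be handled by minimizing the \emph{sum} of the two Dirichlet energies over pairs $(g_1,g_2)$ subject to the single constraint $g_1=g_2$ on $B_1\cap L$: the first variation then produces exactly the transmission condition that the sum of the weighted co-normal derivatives vanishes across $L$, which after dividing out the weight degeneration is precisely $(g_1)_r+(g_2)_r=0$. Verifying that this weak transmission condition is the correct interpretation of the pointwise condition in \eqref{linear}, and that the Definition \ref{linearsol} comparison (viscosity) formulation is satisfied by this minimizer, is a standard but careful argument via the $\phi_i$ test functions: one checks that a classical super/subsolution touching from one side would violate the weighted Hopf lemma at $X_0$.

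The heart of the theorem, and the main obstacle, is the asymptotic expansion \eqref{mainh}: showing that near any edge point $X_0\in B_{1/2}\cap L$ the solution behaves like a linear function of $x'$ plus $b_i r$ with a quantitative $O(|x'-x_0'|^2+r^{3/2})$ remainder. The plan here is a blow-up / Liouville argument in the two-dimensional cross-section: freezing $x'=x_0'$, the relevant homogeneous solutions of $\Delta(U_n g)=0$ on $\R^2\setminus\{x_n\le0\}$ that are bounded are exactly the constants and the function $\sim r$ (the next admissible homogeneity after $0$ is $1$, since intermediate homogeneities are excluded by the boundary condition and boundedness — this classification is the technical crux and would lean on an ODE analysis of the angular eigenvalue problem for the weight $U_n^2$, analogous to \cite{DR}). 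Combining this with the coupling on $L$ forces $b_1+b_2=0$. Then a Schauder-type iteration on dyadic balls $B_{2^{-k}}(X_0)$, using the weighted Harnack inequality and the homogeneity-$1$ improvement at each scale, upgrades this to the claimed decay with a universal constant $C$; regularity of $a',a_0,b_i$ in $X_0$ follows from the uniqueness of the expansion and compactness. I expect the bookkeeping in this last iteration — tracking the tangential linear part $l(x')$, the radial part $b_i r$, and the error simultaneously, and controlling them uniformly in $X_0\in B_{1/2}\cap L$ — to be the most delicate and lengthy part of the argument.
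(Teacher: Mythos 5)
Your variational setup matches the paper's quite closely: minimize the joint weighted energy $J(g_1,g_2)=\int_{B_1}(U_n^2|\nabla g_1|^2+\bar U_n^2|\nabla g_2|^2)$ over pairs constrained to agree on $L$, obtain the interior equations $\Delta(U_n g_1)=0$, $\Delta(-\bar U_n g_2)=0$ as Euler--Lagrange equations (Lemma~\ref{Unhisharm}), and read off $(g_1)_r+(g_2)_r=0$ as the natural boundary condition on $L$ (Lemma~\ref{char}). One caution, however: you lean on Fabes--Kenig--Serapioni $A_2$ machinery for interior regularity and boundary continuity, but the paper explicitly flags just above Theorem~\ref{class} that there is no off-the-shelf theory for this Neumann-type condition on the thin edge $L$, which has positive capacity for this weight. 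The paper instead proves a tailored comparison principle for minimizers (Lemma~\ref{mincomp}) and a H\"older/Harnack estimate (Lemma~\ref{HImin}) by producing explicit barriers $v_1=-\tfrac{|x'|^2}{n-1}+2(x_n+1)r$, $v_2=-\tfrac{|x'|^2}{n-1}-2(x_n+1)r$ that are themselves minimizers; these are later recycled as local barriers on $\partial B_1\cap L$ to get the boundary data continuously.

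Where you genuinely diverge --- and where I see a real gap --- is the proof of the asymptotic expansion \eqref{mainh}. You propose classifying bounded homogeneous solutions of the 2D weighted operator and then running a dyadic improvement-of-flatness iteration. The paper avoids this entirely. It first uses translation invariance in $x'$ to upgrade the Harnack estimate to $D_{x'}^\beta g_i\in C^{0,\alpha}$ with universal bounds (Corollary~\ref{derivativeH}), so that after freezing $x'$ one has a 2D problem $\Delta_{x_n,z}(U_n g_1)=-U_n\Delta_{x'}g_1$ with $C^\alpha$ right-hand side. Then the holomorphic change of variables $\Phi(s,y)=\bigl(\tfrac12(s^2-y^2),\,sy\bigr)$ straightens the slit $\{t\le0,z=0\}$ into the line $\{s=0\}$ and converts the problem into $\Delta\tilde H=s\tilde f$ with zero Dirichlet data on $\{s=0\}$; classical Schauder estimates give $\tilde H\in C^{2,\alpha}$, and the Taylor expansion of $\tilde H_s$ at the origin, pulled back through $\Phi$, directly yields both the $O(r^{3/2})$ remainder and the Lipschitz dependence of $b_i$ on $x'$. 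This single conformal trick replaces the whole Liouville classification and iterative scheme you outline. Your plan would still need a proof that the only admissible homogeneities below $3/2$ are $0$ and $1$ for the weighted 2D angular eigenvalue problem (not false, but not supplied), and a careful mechanism for peeling off the $x'$-dependent linear part at each dyadic scale; neither is set up in the proposal, and both are made unnecessary by the conformal map.
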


As a corollary of the theorem above we obtain the following regularity result.

\begin{thm}[Improvement of flatness]\label{lineimpflat} There exists a universal constant $C$ such that if $(g_1,g_2)$ is a viscosity solution to \eqref{linear} in $B_1$ with $$-1 \leq g_i(X) \leq 1\quad \text{in $B_1,$}$$ then  \begin{equation}\label{boundlin2}  a_0+ a' \cdot x' + b_i r-C|X|^{3/2} \leq g_i(X) \leq a_0+a' \cdot x' + b_i r+ C |X|^{3/2},\end{equation}for some $a'\in \R^{n-1}, a_0, b_i \in \R$ such that
$$b_1+b_2=0.$$
\end{thm}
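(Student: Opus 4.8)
The plan is to deduce the improvement of flatness from the existence/regularity result in Theorem \ref{class} by a standard compactness-plus-iteration argument in the spirit of Caffarelli/De Silva. First I would set up the dichotomy: either the conclusion already holds at unit scale with the universal constant $C$ (nothing to prove), or it fails, in which case I extract a contradicting sequence. Concretely, suppose the statement were false; then there exist viscosity solutions $(g_1^k,g_2^k)$ to \eqref{linear} in $B_1$ with $|g_i^k|\le 1$ such that for \emph{every} choice of $a_0,a',b_i$ with $b_1+b_2=0$ the bound \eqref{boundlin2} fails with constant $k$. The first real step is a compactness statement: using interior estimates for the (degenerate-elliptic but linear) equations $\Delta(U_n g_1)=0$ in $B_1\setminus P^-$ and $\Delta(-\bar U_n g_2)=0$ in $B_1\setminus P^+$, together with the matching conditions on $B_1\cap L$, one gets that $(g_1^k,g_2^k)$ is precompact and converges locally uniformly to a viscosity solution $(g_1^\infty,g_2^\infty)$ of \eqref{linear} in $B_1$ with $|g_i^\infty|\le 1$.

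The second step is uniqueness/regularity at the limit: by Theorem \ref{class} applied with boundary data $h_i=g_i^\infty$ on $\partial B_{1/2}$ (say), the limit solution is the classical solution and therefore satisfies the pointwise expansion \eqref{mainh} at $X_0=0$, i.e. there are $a_0^\infty,a^\infty,b_i^\infty$ with $b_1^\infty+b_2^\infty=0$ and a universal $C$ with $|g_i^\infty(X)-a_0^\infty-a^\infty\cdot x'-b_i^\infty r|\le C|X|^{3/2}$ near $0$. Here one must be slightly careful that ``viscosity solution'' in the sense of Definition \ref{linearsol} forces agreement with the classical solution; the touching condition (iv) in that definition, comparing the two sub-solution branches with $b_1+b_2>0$ resp. $<0$, is exactly the comparison principle needed to squeeze any viscosity solution between the classical one from above and below, hence equality. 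This is the step I expect to be the main obstacle — namely verifying that Definition \ref{linearsol} yields a genuine comparison principle against the classical solutions produced by Theorem \ref{class}, so that uniqueness holds in the viscosity class and the expansion \eqref{mainh} transfers to $(g_i^\infty)$.

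The third step closes the contradiction by passing \eqref{mainh} for $g_i^\infty$ to the approximating sequence. For $k$ large, local uniform convergence gives $\|g_i^k-g_i^\infty\|_{L^\infty(B_{1/2})}\to 0$; combining this with the expansion of $g_i^\infty$ and choosing $a_0=a_0^\infty$, $a'=a^\infty$, $b_i=b_i^\infty$ (which satisfy $b_1+b_2=0$), we get $|g_i^k(X)-a_0-a'\cdot x'-b_i r|\le C|X|^{3/2}+o_k(1)\le 2C|X|^{3/2}$ on a fixed small ball, and away from the origin the bound is trivial from $|g_i^k|\le 1$ after enlarging $C$; hence \eqref{boundlin2} holds for $g_i^k$ with a universal constant, contradicting that it failed with constant $k\to\infty$. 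The only place requiring care here is the region where $|X|$ is bounded below away from $0$: there one simply notes $|g_i^k(X)-a_0-a'\cdot x'-b_ir|\le 1+|a_0|+|a'|+|b_i|$ and all these are controlled by a universal constant times $|X|^{3/2}$ once $|X|\ge c$, so the global bound \eqref{boundlin2} follows on all of $B_1$ with one universal $C$. This completes the scheme; the heart of the matter is the compactness in Step 1 (which needs the linear estimates for the $U_n$- and $\bar U_n$-weighted equations, elementary once the weight is understood) and the viscosity comparison/uniqueness in Step 2.
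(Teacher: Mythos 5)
Your proposal does not match the paper's proof, and it has two genuine gaps; moreover, the detour through compactness is unnecessary and is precisely what creates the second gap.

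The paper's proof is direct and makes no use of compactness or contradiction. Given the viscosity solution $(g_1,g_2)$, one takes the classical solution $(w_1,w_2)$ of \eqref{linear} in $B_{1/2}$ with boundary data $(g_1,g_2)$ (supplied by Theorem \ref{class}), and proves $w_i=g_i$ by a sliding/comparison argument: one perturbs $w_i$ to $\bar w^i_\eps = w_i - \eps + \eps^2 r$, checks that the pair $(\bar w^1_\eps,\bar w^2_\eps)$ is a \emph{strict} classical subsolution of \eqref{linear}, and slides it from far below $g_i$ upward, ruling out touching in the bulk, on $L$ (via Definition~\ref{linearsol}~(iv)), and on $P^\mp\setminus L$ (via Lemma~\ref{rk} in the Appendix). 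The entire content of the theorem is this uniqueness step. In your Step~2 you explicitly flag this as ``the step I expect to be the main obstacle'' and treat it as a known comparison principle to be invoked; but it is not given anywhere, and it is exactly what must be constructed. In particular the contact on $P^\mp\setminus L$ is a genuine issue because of the degeneracy of the weights $U_n$, $\bar U_n$ there, and the perturbation $-\eps+\eps^2 r$ and Lemma~\ref{rk} are specifically designed to handle it.

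The second and independently fatal gap is in Step~3. Uniform convergence $\|g_i^k - g_i^\infty\|_{L^\infty}\to 0$ does \emph{not} let you transfer the pointwise expansion \eqref{mainh} of $g_i^\infty$ at $X_0=0$ to $g_i^k$: the error you write, $o_k(1)$, is uniform in $X$ and hence dominates $C|X|^{3/2}$ as $X\to 0$ for any fixed $k$. So the inequality $|g_i^k(X)-a_0-a'\cdot x'-b_i r|\le C|X|^{3/2}+o_k(1)\le 2C|X|^{3/2}$ fails near the origin; your proposed choice of $a_0, a', b_i$ does not yield \eqref{boundlin2} for $g_i^k$ near $0$ for any finite $k$. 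A pointwise first-order expansion at a point is not stable under small $L^\infty$ perturbations, which is why compactness-by-contradiction is not a viable route to a statement of this form; one either proves the expansion directly for each solution (as the paper does via uniqueness) or iterates a one-step flatness improvement at geometrically shrinking scales, which is a different (and longer) scheme than what you write. A smaller remark: the compactness you invoke in Step~1 is not established for viscosity solutions; the paper's Harnack inequality (Lemma~\ref{HImin}) is proved for minimizers of $J$, not for arbitrary viscosity solutions, so even the extraction of a convergent subsequence would require an additional argument.
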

\begin{proof}
Let $(w_1,w_2)$ be the unique classical solution to \eqref{linear} in $B_{1/2}$ with boundary data $(g_1,g_2)$. We will prove that $w_i=g_i$ in $B_{1/2}$ and hence it satisfies  the desired estimate in view of \eqref{mainh}. Denote by $$\bar w^i_\eps := w_i - \eps + \eps^2 r.$$ Then, for $\eps$ small $$\bar w^i_\eps < g_i \quad \text{on $\p B_{1/2}$}.$$ We wish to prove that
\begin{equation}\label{Ub} \bar w^i_\eps \leq g_i \quad \text{in $B_{1/2}$.}\end{equation}
Now, notice that  $(\bar w^1_\eps,\bar w^2_\eps)$ (and all its translations) is a classical strict subsolution to \eqref{linear} that is \begin{equation}\label{linear3}\begin{cases} \Delta (U_n \bar w^1_\eps) = 0, \quad \text{in $B_{1/2} \setminus P^-,$}\\
\Delta (-\bar U_n \bar w^2_\eps) = 0, \quad \text{in $B_{1/2} \setminus P^+,$}\\ 
\bar w^1_\eps=\bar w^2_\eps, \quad  (\bar w^1_\eps)_r+(\bar w^2_\eps)_r>0, \quad \text{on $B_{1/2}\cap L$.}\end{cases}\end{equation}
Since $g_i$ is bounded, for $t$ large enough $\bar w^i_\eps - t$ lies strictly below $g_i$. We let $t \rightarrow 0$ and show that the first contact point cannot occur for $t \geq 0$. Indeed since $\bar w^i_\eps -t$ is a strict subsolution which is strictly below $g_i$ on $\p B_{1/2}$ then no touching can occur either in $B_{1/2} \setminus P^\mp$ or on $B_{1/2} \cap L.$ We only need to check that no touching occurs on $ P^\mp \setminus L.$ This follows from Lemma \ref{rk} in the Appendix.

Thus \eqref{Ub} holds. 
Passing to the limit as $\eps \rightarrow 0$ we deduce that $$w_i \leq g_i \quad \text{in $B_{1/2}$.}$$ Similarly we also infer that $$w_i \geq g_i \quad \text{in $B_{1/2},$}$$ and the desired equality holds. 
\end{proof}

We remark that there is no theory readily available for this class of degenerate problems with a Neumann type boundary condition on the thin boundary $L$, which in this setting has positive capacity (see the classical work on degenerate elliptic problems \cite{FKSR}). Thus Theorem \ref{lineimpflat} calls for a direct proof.

The existence of the classical solution of Theorem \ref{class} will be achieved via a variational approach in the appropriate functional space. Precisely, 
define the weighted Sobolev space
$$H^1_\omega(B_1):= H^1(B_1, U_n^2 dX) \times H^1(B_1, \bar U_n^2 dX)$$ endowed with the norm 
$$\|(h_1,h_2)\|: =  \int_{B_1} (h_1^2 + |\nabla h_1|^2)U_n^2dX+  \int_{B_1}  (h_2^2 +  |\nabla h_2|^2)\bar U_n^2dX$$
and its subspace
$$V_0(B_1) := \{(h_1,h_2): h_i \in C_0^\infty(B_1), h_1=h_2 \quad \text{on $B_1 \cap L$}\}.$$
Let $\mathcal H^1_{0,\omega}(B_1):= \overline{V_0(B_1)}$ be the completion of $V_0(B_1)$ in $H_\omega^1(B_1).$

Consider the energy functional 
$$J(h_1,h_2) := \int_{B_1} (U_n^2 |\nabla h_1|^2 + \bar U_n^2 |\nabla h_2|^2)dX, \quad (h_1,h_2) \in H_\omega^1(B_1).$$
Given a boundary data $(h_1,h_2)$ with  $h_i \in C^\infty(\bar B_1),$ and $h_1=h_2$ on $\bar B_1 \cap L$, we minimize
$$J(h_1+\varphi_1, h_2+\varphi_2)$$ among all $(\varphi_1,\varphi_2) \in \mathcal H_{0,\omega}^1(B_1).$
Equivalently, a minimizing pair $(g_1,g_2)$ must satisfy
$$J(g_1,g_2) \leq J(g_1+\varphi_1, g_2+\varphi_2), \quad \forall \varphi_i \in C_0^\infty(B_1), i=1,2,$$ with 
\begin{equation}\label{uguali}\varphi_1=\varphi_2 \quad \text{on $B_1 \cap L.$}\end{equation}
Since $J$ is strictly convex, we conclude that $(g_1,g_2)$ is a minimizer if and only if
$$\lim_{\eps \to 0} \frac{J(g_1,g_2)-J(g_1+\eps \varphi_1, g_2+\eps \varphi_2)}{\eps}=0, \quad \forall \varphi_i \in C_0^\infty(B_1), i=1,2,$$ satisfying \eqref{uguali}, or equivalently, 
\begin{equation}\label{EL}\int_{B_1} (U_n^2 \nabla g_1 \cdot \nabla \varphi_1 + \bar U_n^2 \nabla g_2 \cdot \nabla \varphi_2) dx =0, \quad \forall \varphi_i \in C_0^\infty(B_1), i=1,2,\end{equation}
satisfying \eqref{uguali}.

\begin{lem}\label{Unhisharm} Let $(g_1,g_2)$ be a minimizer to $J$ in $B_1$, then $$\Delta (U_n g_1) = 0 \quad \text{in $B_1 \setminus P^-,$}$$
$$\Delta (-\bar U_n g_2) = 0 \quad \text{in $B_1 \setminus P^+.$}$$
\end{lem}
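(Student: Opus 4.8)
The plan is to extract the pointwise (classical) equations from the weak Euler--Lagrange identity \eqref{EL}, after first establishing interior regularity of the $g_i$ away from $P^{\mp}$. In \eqref{EL} choose $\varphi_2\equiv 0$ and $\varphi_1\in C_0^\infty(B_1\setminus P^-)$: this pair is admissible because $L\subset P^-$, so the constraint $\varphi_1=\varphi_2$ on $B_1\cap L$ holds trivially. Hence
$$\int_{B_1} U_n^2\,\nabla g_1\cdot\nabla\varphi_1\,dX = 0\qquad\text{for all }\varphi_1\in C_0^\infty(B_1\setminus P^-),$$
i.e. $g_1$ is a weak solution of $\div(U_n^2\nabla g_1)=0$ in $B_1\setminus P^-$. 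Symmetrically, taking $\varphi_1\equiv 0$ and $\varphi_2\in C_0^\infty(B_1\setminus P^+)$ shows that $g_2$ is a weak solution of $\div(\bar U_n^2\nabla g_2)=0$ in $B_1\setminus P^+$.

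Next I would upgrade this to a classical statement. Writing $\zeta=x_n+iz=re^{i\theta}$ with $\theta\in(-\pi,\pi)$, one has $U=\mathrm{Re}(\zeta^{1/2})$, so $U_n=\partial_{x_n}U=\mathrm{Re}(\tfrac12\zeta^{-1/2})=\tfrac12 r^{-1/2}\cos(\theta/2)>0$ on $\R^{n+1}\setminus P^-$. On any compact $K\subset B_1\setminus P^-$ the radius $r$ is bounded away from $0$ (because $L\subset P^-$) and $\theta$ is bounded away from $\pm\pi$, so $U_n$ is smooth and bounded above and below by positive constants on $K$. Thus the equation for $g_1$ is uniformly elliptic with smooth coefficients on compact subsets of $B_1\setminus P^-$, and classical interior elliptic regularity gives $g_1\in C^\infty(B_1\setminus P^-)$ with the equation holding pointwise. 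The same reasoning with $\bar U$ (for which $\bar U_n<0$, in particular $\bar U_n\ne 0$, on $B_1\setminus P^+$) gives $g_2\in C^\infty(B_1\setminus P^+)$.

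Finally, a pointwise computation concludes. In $B_1\setminus P^-$, expanding $\div(U_n^2\nabla g_1)=0$ gives $U_n^2\Delta g_1+2U_n\,\nabla U_n\cdot\nabla g_1=0$; dividing by $U_n\ne 0$,
$$U_n\Delta g_1+2\,\nabla U_n\cdot\nabla g_1=0 .$$
Since $U$ depends only on $(x_n,z)$ and is the real part of the holomorphic function $\zeta^{1/2}$, it is harmonic in $B_1\setminus P^-$, and therefore so is $U_n=\partial_{x_n}U$. Hence
$$\Delta(U_n g_1)=g_1\Delta U_n+2\,\nabla U_n\cdot\nabla g_1+U_n\Delta g_1=2\,\nabla U_n\cdot\nabla g_1+U_n\Delta g_1=0\quad\text{in }B_1\setminus P^-,$$
and repeating the computation verbatim with $\bar U$ in place of $U$ — $\bar U$ is harmonic and $\bar U_n\ne 0$ in $B_1\setminus P^+$, and harmonicity is insensitive to the overall sign — yields $\Delta(-\bar U_n g_2)=0$ in $B_1\setminus P^+$.

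The only delicate step is the regularity one, i.e. passing from the degenerate weak formulation to a classical solution; but this is harmless here \emph{precisely} because $B_1\setminus P^-$ stays away from $L$, where $U_n\sim r^{-1/2}$ blows up and $U_n^2$ becomes a genuine (merely $A_2$) Muckenhoupt weight. On compact subsets of $B_1\setminus P^-$ the problem is honestly uniformly elliptic, so no weighted (Fabes--Kenig--Serapioni) machinery is needed at this stage — that subtlety only enters later, in the analysis near $L$. Everything else is the elementary bookkeeping above.
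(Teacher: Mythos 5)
Your proof is correct and follows essentially the same route as the paper: extract $\div(U_n^2\nabla g_1)=0$ weakly from the Euler--Lagrange identity by testing with $\varphi_1\in C_0^\infty(B_1\setminus P^-)$, $\varphi_2\equiv 0$ (and symmetrically), invoke interior elliptic regularity, and then finish with the pointwise identity $\Delta(U_ng_1)=U_n\Delta g_1+2\nabla U_n\cdot\nabla g_1$ using $\Delta U_n=0$. In fact you are slightly more careful than the paper at the regularity step: the paper asserts that $U_n$ is ``bounded in $B_1\setminus P^-$,'' which is not literally true since $U_n\sim r^{-1/2}$ blows up near $L\subset\overline{B_1\setminus P^-}$; your formulation — that $U_n$ is bounded above and below by positive constants on each compact $K\Subset B_1\setminus P^-$, so the equation is honestly uniformly elliptic on compacta and the interior regularity is purely local — is the correct reading of that step.
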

\begin{proof} 
By \eqref{EL}
$$\int_{B_1} (U_n^2 \nabla g_1 \cdot \nabla \varphi^- + \bar U_n^2 \nabla g_2 \cdot \nabla \varphi^+) dx =0, \quad \forall \varphi^\mp \in C_0^\infty(B_1\setminus P^\mp). $$
After integration by parts we obtain,
$$\text{div} (U_n^2 \nabla g_1)= 0 \quad \text{in $B_1 \setminus P^-,$}$$
$$\text{div} (\bar U_n^2 \nabla g_2)= 0 \quad \text{in $B_1 \setminus P^+$}.$$
Since $U_n, \bar U_n$ are bounded in $B_1 \setminus P^-, B_1\setminus P^+$ respectively, we conclude by elliptic regularity that
$$g_1 \in C^\infty(B_1 \setminus P^-), g_2 \in C^\infty(B_1 \setminus P^+).$$ Now a simple computation concludes the proof.  Precisely, $$\text{div} (U_n^2 \nabla g_1)=U_n^2 \Delta g_1 + 2\sum_{i=1}^{n+1} U_n U_{ni}(g_1)_i = 0 \quad \text{in $B_1 \setminus P^-.$}$$
Since $U_n > 0$ and $\Delta U=0$ in $B_1 \setminus P^-$ the identity above is equivalent to
$$\Delta(U_n g_1) = U_n \Delta g_1 + 2 \nabla U_n \cdot \nabla g_1=0 \quad \text{in $B_1 \setminus P^-,$}$$ as desired. The computation for $g_2$ is obtained analogously.
\end{proof}

\begin{lem}\label{char}Let $(g_1,g_2) \in C(B_1)$ be a solution to$$\Delta (U_n g_1) = 0 \quad \text{in $B_1 \setminus P^-,$}$$
$$\Delta (-\bar U_n g_2) = 0 \quad \text{in $B_1 \setminus P^+,$}$$
with $g_1=g_2$ on $B_1 \cap L$ and assume that 
$$\di\lim_{r \rightarrow 0} (g_i)_r(x',x_n,z) = b_i(x'),\quad (g_i)_r(X)= \frac{x_n}{r} g_{x_n}(X) + \frac{z}{r} g_z(X);$$
with $b_i(x')$  a continuous function. Then $(g_1,g_2)$ is a minimizer to $J$ in $B_1$ if and only if $b_1+b_2 \equiv 0.$\end{lem}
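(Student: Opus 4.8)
The plan is to characterize minimizers through the Euler--Lagrange equation \eqref{EL} and to show that the boundary term produced by integrating by parts across the thin set $L$ is governed exactly by $b_1+b_2$. First I would recall that, by \eqref{EL} together with Lemma \ref{Unhisharm}, a pair $(g_1,g_2)$ satisfying the two degenerate equations is a minimizer if and only if
\[
\int_{B_1} \left( U_n^2 \nabla g_1 \cdot \nabla \varphi_1 + \bar U_n^2 \nabla g_2 \cdot \nabla \varphi_2 \right) dX = 0
\]
for all $\varphi_i \in C_0^\infty(B_1)$ with $\varphi_1 = \varphi_2$ on $B_1 \cap L$. The point is that for test functions supported away from $L$ this is automatic (that is Lemma \ref{Unhisharm}), so the only content is what happens on $L$. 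Thus I would fix a test pair with $\varphi_1 = \varphi_2 =: \psi$ on $B_1\cap L$ and compute the integral by splitting $B_1$ into $B_1 \setminus P^-$ (for the $g_1$ term) and $B_1 \setminus P^+$ (for the $g_2$ term) — note $U_n$ vanishes on $P^-$ and $\bar U_n$ on $P^+$, which makes these the natural domains — and integrate by parts in each.

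The key step is the boundary-term computation. Integrating by parts in $B_1\setminus P^-$ and using $\div(U_n^2\nabla g_1)=0$ there, the bulk term drops and one is left with a flux integral over the two-sided boundary of a thin neighborhood of $P^-$, i.e. over $P^-$ approached from $z>0$ and $z<0$. Because everything is even in $z$, the two faces of $P^+\subset P^-$'s complement do not contribute a jump, while along $P^-$ the conormal is $\pm e_z$ (or, more precisely, one must track the behavior near $L$, where the relevant radial limit appears). Carrying the same computation for the $g_2$ term over $B_1\setminus P^+$ and adding, the contributions away from $L$ cancel by the evenness and the equations, and one is left with an integral over $B_1\cap L$ of $\psi$ times a weight (coming from the asymptotics $U_n \sim c\, r^{-1/2}$, $\bar U_n \sim c\, r^{-1/2}$ near $L$) multiplied by $\big((g_1)_r + (g_2)_r\big)$. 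Concretely, the localized flux of $U_n^2 \nabla g_1$ through a small cylinder around a point of $L$ picks out $\lim_{r\to 0} U_n^2 (g_1)_r \cdot (\text{length element}) = c^2 (g_1)_r$ after the $r^{-1}$ from the weight cancels the $r$ from the circumference, and similarly for $g_2$; this is where the hypothesis that $(g_i)_r$ has a continuous limit $b_i(x')$ is used, to make these flux limits exist and depend continuously on $x'\in B_1\cap L$. Hence \eqref{EL} reduces to $\int_{B_1\cap L} (b_1+b_2)\,\psi \,dx' = 0$ for all admissible $\psi$, which holds for every such $\psi$ if and only if $b_1+b_2\equiv 0$ on $B_1\cap L$.

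The main obstacle is making the integration by parts across $L$ rigorous: $L$ has positive capacity for the weighted problem (as the authors emphasize), the weights $U_n^2, \bar U_n^2$ blow up like $r^{-1}$ there, and $\nabla g_i$ is only controlled through the assumed radial asymptotics. I would handle this by excising an $\eta$-tube $\{r<\eta\}$ around $L$, performing the (now legitimate) integration by parts in the complement where the weights are smooth and bounded, and then letting $\eta\to 0$: the bulk integrals vanish by the equations, the flux over $\{r=\eta\}$ converges to the claimed $\int_{B_1\cap L}(b_1+b_2)\psi\,dx'$ using the precise constants in $U_n\sim c r^{-1/2}$, $\bar U_n\sim c r^{-1/2}$ and the uniform convergence $(g_i)_r\to b_i$, and the remaining outer boundary terms vanish since $\varphi_i$ are compactly supported. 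One should also check that the admissible test functions $\psi$ (traces on $L$ of $C_0^\infty(B_1)$ functions that agree on $L$) are dense enough in $L^2(B_1\cap L)$, or at least rich enough, to conclude $b_1+b_2\equiv 0$ pointwise from the vanishing of the pairing; this is routine since any $\psi\in C_0^\infty(B_1\cap L)$ extends.
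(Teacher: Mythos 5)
Your proposal follows the same route as the paper: characterize minimizers via the Euler--Lagrange identity \eqref{EL}, excise a thin tube $C_\delta=\{r\le\delta\}$ around $L$ (and a thin strip around $P^\mp$), integrate by parts, kill the bulk term with the degenerate equations, show the strip contribution along $P^\mp$ vanishes, and identify the surviving flux over $\partial C_\delta$ in the limit as (a constant times) $\int_L (b_1+b_2)\,\varphi\,dx'$, which then vanishes for all admissible $\varphi$ iff $b_1+b_2\equiv 0$. That is exactly the paper's proof, including the role of the explicit constant ($\pi$) being irrelevant for the conclusion.

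One small imprecision worth flagging: you attribute the vanishing of the flux along $P^\mp\setminus L$ to evenness in $z$. That does not suffice, since the test functions $\varphi_i\in C_0^\infty(B_1)$ need not be even, and in any case the odd/even cancellation argument leaves a term proportional to $\varphi(\eps)+\varphi(-\eps)$ which does not go to zero. The mechanism the paper actually uses (and the one you need) is the degeneration of the weight: near $P^-$ one has $U_n\le C\eps$, while $|\nabla(U_n g_1)|$ and $|\nabla U_n|$ are locally bounded, hence $U_n|\nabla g_1|\le C$ and the flux integrand $U_n^2\varphi\,\nabla g_1\cdot\nu$ is $O(\eps)$. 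Aside from this, your outline is correct and matches the paper's argument.
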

\begin{proof} The pair $(g_1,g_2)$ minimizes $J$ if and only if
$$\int_{B_1} (U_n^2 \nabla g_1 \cdot \nabla \varphi_1 + \bar U_n^2 \nabla g_2 \cdot \nabla \varphi_2) dx =0,$$ for all $\varphi_i \in C_0^\infty(B_1)$ which coincide on $L$.

We compute that
\begin{equation}\label{g1} \int_{B_1} U_n^2 \nabla g_1 \cdot \nabla \varphi_1dx = \int_{B_1 \setminus P^-} \text{div}(U_n^2 g_1)\varphi_1 dx + \lim_{\delta \to 0} \int_{\p C_\delta \cap B_1} U_n^2 \varphi_1 \nabla g_1 \cdot \nu d\sigma \end{equation} where $C_\delta= \{r \leq \delta\}$ and $\nu$ is the inward normal to $C_\delta,$ and similarly
\begin{equation}\label{g2}\int_{B_1} \bar U_n^2 \nabla g_2 \cdot \nabla \varphi_2dx = \int_{B_1 \setminus P^+} \text{div}(\bar U_n^2 g_2)\varphi_2 dx + \lim_{\delta \to 0} \int_{\p C_\delta \cap B_1} \bar U_n^2 \varphi_2 \nabla g_2 \cdot \nu d\sigma.\end{equation}
Indeed, say for $g_1,$ given $\delta>0$, for $P_\eps$ a strip of width $\eps$ around $P^-$
$$\int_{B_1\setminus C_\delta} U_n^2 \nabla g_1 \cdot \nabla \varphi_1dx=\lim_{\eps \to 0} \int_{(B_1\setminus C_\delta) \cap (B_1 \setminus P_\eps)} U_n^2 \nabla g_1 \cdot \nabla \varphi_1dx$$ and integrating by parts

$$\int_{B_1\setminus C_\delta} U_n^2 \nabla g_1 \cdot \nabla \varphi_1dx= \int_{(B_1\setminus C_\delta) \setminus P^-} \text{div}(U_n^2 g_1)\varphi_1 dx +  \int_{\p C_\delta \cap B_1} U_n^2 \varphi_1 \nabla g_1 \cdot \nu d\sigma $$
$$+\lim_
{\eps \to 0} \int_{\p P_\eps \cap (B_1 \setminus C_\delta)} U_n^2 \varphi_1 \nabla g_1 \cdot \nu d\sigma.$$
It is easily seen that the last limit goes to zero, hence our claim is proved. In fact, in the region of integration we have
$$U_n \leq C\eps, $$
and
$$|\nabla (U_n g_1)|, |\nabla U_n| \leq C,$$
from which it follows that
$$U_n |\nabla g_1| \leq C.$$

Finally using the formula for $U$ we can compute that
\begin{equation}\label{delta1}\lim_{\delta \to 0} \int_{\p C_\delta \cap B_1} U_n^2 \varphi_1 \nabla g_1 \cdot \nu d\sigma = \pi \int_L b_1(x') \varphi_1(x', 0, 0) dx',\end{equation}
\begin{equation}\label{delta2}\lim_{\delta \to 0} \int_{\p C_\delta \cap B_1} \bar U_n^2 \varphi_2 \nabla g_2 \cdot \nu d\sigma = \pi \int_L b_2(x') \varphi_1(x', 0, 0) dx'.\end{equation}
Precisely, say for $g_1,$
\begin{align*}
\int_{\p C_\delta \cap B_1} U_n^2 \varphi_1 \nabla g_1 \cdot \nu d\sigma &=\frac 1 \delta  \int_{\p C_\delta \cap B_1} \cos^2 (\frac \theta 2) (g_1)_r \varphi_1 d\sigma\\ &= \int_{\p C_1 \cap B_1}  \cos^2 (\frac \theta 2)((g_1)_r \varphi_1)(X', \delta \cos \theta, \delta \sin \theta) dx' d \theta,\end{align*}
and the desired equality follows.

Combining \eqref{g1}-\eqref{g2}-\eqref{delta1}-\eqref{delta2} with the fact that $\varphi_2=\varphi_2$ on $L,$ and the computation at the end of Lemma \ref{Unhisharm}, we conclude the proof.
\end{proof}

The key ingredient in showing the regularity of the minimizing pair is the following Harnack inequality.

\begin{lem}[Harnack inequality] \label{HImin} Let $(g_1,g_2)$ be a minimizer to $J$ in $B_1$ among competitors which are even-in-$z$, and assume $|g_i| \leq 1$. Then $g_i \in C^\alpha(B_{1/2})$ and $$[g_i]_{C^{0,\alpha}(B_{1/2})} \leq C,$$ with $C$ universal.
\end{lem}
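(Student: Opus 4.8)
The plan is to prove a single, discrete ``improvement of oscillation'' step and then iterate. Fix $X_0 = (x_0',0,0) \in B_{1/2} \cap L$ and a small radius $r$; after rescaling we may assume $X_0 = 0$ and $r$ comparable to $1$. The heart of the matter is to show: if, in $B_1$, both $g_i$ lie between two parallel ``sheets'' of the model form $a \pm b r$ (more precisely, $a_0 + a'\cdot x' + b_i r - 1 \le g_i \le a_0 + a'\cdot x' + b_i r + 1$ with $b_1 + b_2 = 0$, and the functions have oscillation controlled by, say, $2$), then in $B_{1/2}$ the same is true with the width improved by a universal factor $(1-c)$, at the expense of adjusting the constants $a_0, a', b_i$ (keeping $b_1+b_2 = 0$). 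Geometric iteration of this dichotomy then yields $g_i(X) = a_0 + a'\cdot x' + O(|X|^{\alpha})$ near every $X_0 \in B_{1/2}\cap L$ with a fixed small $\alpha>0$, which in turn gives the stated $C^{0,\alpha}$ bound (the interior $C^\infty$ estimates away from $L$, via Lemma \ref{Unhisharm} and elliptic regularity in the weighted equation, handle points off the thin space).

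To carry out the one-step improvement I would argue by a dichotomy, exactly as in the flatness-improvement philosophy used later in the paper. Consider $p_i := U_n g_1$ on $B_1\setminus P^-$ and $q_i := -\bar U_n g_2$ on $B_1 \setminus P^+$; by Lemma \ref{Unhisharm} these are harmonic in their respective slit domains, and $U_n, \bar U_n$ are bounded above and below away from $L$. The weights $U_n^2, \bar U_n^2$ are Muckenhoupt $A_2$ on any region compactly inside the slit, so in the interior one has ordinary Harnack and Hölder estimates for $g_i$. Near $L$ one compares $g_i$ with the explicit barriers $w^i_\eps = w_i \mp \eps \pm \eps^2 r$ built from the classical solution $(w_1,w_2)$ of Theorem \ref{class} with boundary data a suitable affine-plus-$b_i r$ function: by Theorem \ref{class} the solution $w_i$ is $a_0 + a'\cdot x' + b_i r + O(|X|^{3/2})$ with $b_1+b_2=0$, so it is genuinely ``flatter'' than the hypothesis allows, and a measure-theoretic argument (the density of the set where $g_i$ is, say, in the lower half of the slab is either bounded below or bounded above, uniformly) lets us push one barrier up past $g_i$, or pull the other down, by a fixed amount on $B_{1/2}$ — this is where one invokes the comparison principle for the linearized problem, namely that a classical strict subsolution of \eqref{linear} as in \eqref{linear3} cannot touch a solution from below, and Lemma \ref{rk} to rule out touching on $P^\mp\setminus L$. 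The coupling condition $b_1 + b_2 = 0$ is exactly what makes the pair $(w_1, w_2)$ a legitimate competitor/solution, so the improved constants again satisfy it.

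The main obstacle, I expect, is the degenerate nature of the problem at the thin space $L$: the weights $U_n^2 \sim (\text{dist to } L)$ and $\bar U_n^2$ degenerate along $L$, which has positive capacity, so there is no off-the-shelf De Giorgi–Nash–Moser theory (as the authors themselves note after Theorem \ref{lineimpflat}). Thus the oscillation decay cannot come from a classical weak Harnack inequality and must instead be extracted by hand from the comparison with the explicit solutions of Theorem \ref{class} — i.e., the ``flatness'' of $w_i$ up to order $r^{3/2}$ is the only quantitative compactness input available, and the real work is in the measure estimate that upgrades ``$g_i$ touches a barrier on a set of positive measure'' to ``$g_i$ lies below/above a translated barrier everywhere in a smaller ball.'' A secondary technical point is keeping the affine part $a_0 + a'\cdot x'$ and the coupled coefficients $b_i$ tracked simultaneously for both components through the iteration, which forces one to run the dichotomy on the pair $(g_1,g_2)$ rather than on each $g_i$ separately.
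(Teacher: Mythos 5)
Your proposal has a genuine circularity. You want to build barriers from the classical solution $(w_1,w_2)$ furnished by Theorem \ref{class} (with its $O(|X|^{3/2})$ flatness at $L$ and the coupling $b_1+b_2=0$), and then push them past $g_i$ by a measure-theoretic dichotomy. But Theorem \ref{class} is proved \emph{using} Lemma \ref{HImin}: its proof invokes Lemma \ref{HImin} and Corollary \ref{derivativeH} to conclude $\Delta_{x'}g_i\in C^{0,\alpha}$ before reducing to the two-dimensional ODE analysis that yields the $O(r^{3/2})$ expansion. Thus the key quantitative input of your one-step improvement is exactly the regularity you are trying to establish. A secondary, unresolved point is the ``measure-theoretic argument'' that upgrades a positive-measure touching to a barrier that passes beneath $g_i$ everywhere; this is precisely the weak-Harnack-type estimate that, as you yourself note, is unavailable off the shelf for this degenerate problem and would itself need a proof.

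The paper avoids both issues by an entirely different mechanism, which you do not use. First, it proves a comparison principle for minimizers (Lemma \ref{mincomp}) by a purely variational argument: if two minimizing pairs are ordered outside a small ball, then the truncations $\max$ and $\min$ are admissible competitors, the identity $J(M_1,M_2)+J(m_1,m_2)=J(g_1,g_2)+J(h_1,h_2)$ forces equality of energies, and uniqueness (strict convexity of $J$) gives ordering everywhere. Second, rather than importing solutions from Theorem \ref{class}, it writes down an \emph{explicit} minimizing pair
$v_1(X)=-\frac{|x'|^2}{n-1}+2(x_n+1)r$, $v_2(X)=-\frac{|x'|^2}{n-1}-2(x_n+1)r$,
verifies by direct computation that $\Delta(U_n v_1)=\Delta(-\bar U_n v_2)=0$ and $(v_1)_r+(v_2)_r=0$, and uses it as the barrier. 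Third, the positivity of $g_1$ on the intermediate annulus needed to set up the comparison comes from the ordinary boundary Harnack inequality applied to the harmonic function $U_n g_1$ in $B_1\setminus P^-$ (vanishing on $P^-$), not from any regularity theory for the weighted equation. The upshot is a direct lower bound on $g_i$ near $L$ with no iteration, no density estimate, and no appeal to later results. If you want to save your plan, you would need to replace the classical solution of Theorem \ref{class} by an explicit minimizer (as the paper does) and replace the measure-theoretic step by the comparison principle for minimizers.
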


Indeed, since our linear problem is invariant under translations in the $x'$-direction, we see that discrete differences  of the form 
$$g_i(X + \tau) - g_i(X),$$with $\tau$ in the $x'$-direction are also minimizers. Thus by standard arguments (see \cite{CC}) we obtain the following corollary. 

\begin{cor}\label{derivativeH} Let $(g_1,g_2)$ be as in Lemma $\ref{HImin}$. Then $D_{x'}^\beta g_i \in C^{0,\alpha}(B_{1/2})$ and $$[D_{x'}^\beta g_i]_{C^{\alpha}(B_{1/2})} \leq C,$$ with $C$ depending on $\beta.$
\end{cor}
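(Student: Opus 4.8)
The plan is to exploit three structural features of the minimality condition for $J$: it is encoded by the \emph{linear} Euler--Lagrange system \eqref{EL}; it is \emph{invariant under translations in the $x'$-variables}, since the weights $U_n^2,\bar U_n^2$ depend only on $(x_n,z)$; and it is invariant, up to a multiplicative constant, under the dilations $X\mapsto\lambda X$, because $U_n,\bar U_n$ are homogeneous of degree $-1/2$. First I would record the ensuing stability properties of the class $\mathcal S$ of minimizers of $J$: it is a real vector space, it is closed under restriction to sub-balls and under the dilations above, and — the property used repeatedly below — for $1\le j\le n-1$ and $|h|$ small it is invariant under the finite difference $\Delta_h^j g(X):=g(X+he_j)-g(X)$ and all of its iterates (indeed $\Delta_h^j g_i\in H^1_\omega$ on a slightly smaller ball and is a difference of two minimizers, hence still solves \eqref{EL}). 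From Lemma \ref{HImin}, the dilation invariance, and a routine covering argument — on balls whose closure misses $L$ the substituted system $\mathrm{div}(U_n^2\nabla g_1)=\mathrm{div}(\bar U_n^2\nabla g_2)=0$ is uniformly elliptic with smooth coefficients, while balls meeting $L$ reduce, after an $x'$-translation and a dilation about the origin, to Lemma \ref{HImin} — one obtains the self-improved interior estimate $[g_i]_{C^\alpha(B_{1-\delta})}\le C\delta^{-\alpha}\|g_i\|_{L^\infty(B_1)}$, valid for $0<\delta\le\tfrac18$ and $(g_1,g_2)\in\mathcal S(B_1)$. Finally, a uniform Caccioppoli inequality for \eqref{EL} (test with $\zeta^2$ times a difference quotient; this is admissible since $g_1=g_2$ on $L$ and $U_n^2$ is integrable near the $2$-codimensional set $L$) shows that $\mathcal S$ is closed under locally uniform limits of $L^\infty$-bounded sequences.

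The heart of the matter is a ``gain of one $x'$-derivative'' step: if $(g_1,g_2)\in\mathcal S(B_1)$ with $\|g_i\|_{L^\infty(B_1)}\le1$, then for each $j\le n-1$ the partial derivative $\partial_{e_j}g_i$ exists on $B_{7/8}$, is bounded there, is $C^\alpha$, is even in $z$, and $(\partial_{e_j}g_1,\partial_{e_j}g_2)\in\mathcal S(B_{7/8})$. The naive estimate $\|\Delta_h^j g_i\|_\infty\lesssim|h|^\alpha$ only gives $\|\Delta_h^j g_i/h\|_\infty\lesssim|h|^{\alpha-1}$, which diverges; the remedy is to carry the regularity gain through the \emph{higher} differences. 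Since every $(\Delta_h^j)^m g_i$ lies in $\mathcal S$, alternately applying the self-improved interior estimate (to bound $(\Delta_h^j)^m g_i$ in $C^\alpha$) and differencing once more (for an extra factor $|h|^\alpha$) yields $\|(\Delta_h^j)^m g_i\|_{L^\infty(B_{7/8})}\le C_m|h|^{m\alpha}$ for every $m$ and all small $|h|$. Choosing $m$ with $m\alpha>1$ and invoking the classical characterization of Hölder regularity through finite differences — an $m$-th difference decaying like $|h|^{m\alpha}$ with $m\alpha>1$ forces $g_i$ to be Lipschitz, in fact $C^{1,m\alpha-1}$, in the $e_j$-direction (Marchaud's inequality) — we conclude that the difference quotients $g_i^h:=\Delta_h^j g_i/h$ are uniformly bounded in $L^\infty$ near the origin. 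These are minimizers, hence equibounded in $C^\alpha$ by the interior estimate; since $g_i$ is Lipschitz in $e_j$ they converge pointwise to $\partial_{e_j}g_i$, so by Arzel\`{a}--Ascoli the convergence is uniform, $\partial_{e_j}g_i$ has the stated properties, and it is again a minimizer by the closedness of $\mathcal S$ (equivalently, one differentiates \eqref{EL} in $x_j$, which is allowed because $e_j$ is tangent to $L$ and the weights are $x_j$-independent).

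The corollary now follows by induction on $|\beta|$: apply the gain step to $(g_1,g_2)$, then to $(\partial_{e_{j_1}}g_1,\partial_{e_{j_1}}g_2)$, and so on. To keep the final estimate on all of $B_{1/2}$ rather than on a shrinking ball, one runs the induction using the self-improved interior estimate and, at the $\ell$-th of the $|\beta|$ steps, sacrifices only a fraction $\tfrac1{2|\beta|}$ of the radius; the constants deteriorate but stay finite for each fixed $\beta$. This yields $D_{x'}^\beta g_i\in C^\alpha(B_{1/2})$ with $[D_{x'}^\beta g_i]_{C^\alpha(B_{1/2})}$ bounded by a constant depending on $\beta$ (and $n$), as claimed.

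I expect the real obstacle to be the gain-of-one-derivative step, and within it the uniform $L^\infty$ bound on the first difference quotient $g_i^h$: the Hölder bound on $g_i$ is not enough by itself, so one genuinely has to use linearity together with $x'$-translation invariance to push the gain of Lemma \ref{HImin} through higher-order differences, and then the finite-difference interpolation (Marchaud) to descend back to linear decay of the first difference. The remaining ingredients — the stability of $\mathcal S$, the self-improvement of the interior estimate by covering, the compactness passage to the limit, and the bookkeeping in the induction — are routine.
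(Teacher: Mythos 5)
Your proposal follows the same route the paper intends: the remark preceding the corollary says precisely that $x'$-differences of minimizers are again minimizers and that the conclusion then follows ``by standard arguments (see \cite{CC})'', and your iteration --- bound the $m$-th difference by $|h|^{m\alpha}$ by alternating the Harnack estimate with one more differencing, then descend to a Lipschitz bound on the first difference quotient --- is exactly the incremental-quotient scheme of \cite{CC}; your appeal to Marchaud's inequality is an equivalent packaging of the corresponding lemma there. The structural preliminaries (the minimizers form a vector space stable under $x'$-translation, restriction, dilation about points of $L$, and iterated $x'$-differences, by linearity and strict convexity of $J$) are correct, as is the identification of the uniform limit of the difference quotients with $\partial_{e_j}g_i$ and the final induction on $|\beta|$.

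One justification in your covering argument is wrong as stated and needs repair. On a ball whose closure misses $L$ but meets $P^-$ (resp.\ $P^+$), the equation $\mathrm{div}(U_n^2\nabla g_1)=0$ (resp.\ $\mathrm{div}(\bar U_n^2\nabla g_2)=0$) is \emph{not} uniformly elliptic: $U_n$ vanishes on $P^-\setminus L$, and $U_n^2\sim z^2|x_n|^{-3}$ there is not even an $A_2$ weight --- this is exactly the degeneracy the authors point out after Theorem \ref{lineimpflat}. The interior $C^\alpha$ estimate on such balls is still true, but it must be obtained as in the proof of Lemma \ref{HImin}: $U_ng_1$ and $U_n$ are harmonic in $B\setminus P^-$ and vanish on $P^-$, so the boundary Harnack principle (applied, say, to $U_n(g_1+M)$ and $U_n$ to fix signs) gives H\"older continuity of the quotient $g_1=(U_ng_1)/U_n$ up to $P^-\setminus L$. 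With that substitution your self-improved interior estimate, and hence the whole scheme, goes through; the remaining radius bookkeeping is routine.
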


The proof of Harnack inequality relies on the following comparison principle for minimizers.

\begin{lem}[Comparison Principle]\label{mincomp} Let $(g_1,g_2)$ and $(h_1, h_2)$ be minimizers to $J$ in $B_1$. If $$g_i\geq h_i \quad \text{a.e in $B_1 \setminus B_{\rho}, \quad i=1,2$,}$$ then  $$g_i\geq h_i \quad \text{a.e. in $B_1, \quad i=1,2.$}$$  
\end{lem}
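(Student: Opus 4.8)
The plan is to prove the Comparison Principle by a standard min/max-of-minimizers argument, exploiting the strict convexity of the energy $J$, the structure of the constraint set $V_0(B_1)$, and the already-established characterization of minimizers (Lemma \ref{Unhisharm}, Lemma \ref{char}). The key observation is that both the constraint ``$h_1=h_2$ on $B_1\cap L$'' and the energy $J$ behave well under the lattice operations $(\min,\max)$ applied \emph{componentwise to the pair} — that is, if $(g_1,g_2)$ and $(h_1,h_2)$ both satisfy the matching condition on $L$, then so do $(\min(g_1,h_1),\min(g_2,h_2))$ and $(\max(g_1,h_1),\max(g_2,h_2))$, since on $L$ we have $g_1=g_2$, $h_1=h_2$, hence $\min(g_1,h_1)=\min(g_2,h_2)$ and likewise for $\max$.

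First I would suppose, for contradiction, that $g_i\geq h_i$ on $B_1\setminus B_\rho$ but the set $\{g_1<h_1\}\cup\{g_2<h_2\}\subset B_\rho$ has positive measure. Set
\[
m_i:=\min(g_i,h_i),\qquad M_i:=\max(g_i,h_i),\qquad i=1,2,
\]
so that $(m_1,m_2)$ and $(M_1,M_2)$ agree with $(g_1,g_2)$ outside $B_\rho$ and both satisfy the $L$-matching constraint; in particular each is an admissible competitor for the variational problem with the same boundary data on $\partial B_1$ (after the usual check that $m_i,M_i$ lie in $H^1_\omega$, which follows since $U_n^2,\bar U_n^2$ are the weights and truncations/lattice operations are bounded in weighted $H^1$). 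Next I would use the pointwise identity, valid a.e.,
\[
|\nabla m_1|^2+|\nabla M_1|^2=|\nabla g_1|^2+|\nabla h_1|^2,
\]
and similarly for the second component (with weights $U_n^2$ and $\bar U_n^2$ respectively, which are fixed functions of $X$, so they factor through), giving
\[
J(m_1,m_2)+J(M_1,M_2)=J(g_1,g_2)+J(h_1,h_2).
\]
Since $(g_1,g_2)$ and $(h_1,h_2)$ are minimizers with the \emph{same} boundary data, and $(m_1,m_2),(M_1,M_2)$ are admissible competitors, we get $J(m_1,m_2)\geq J(g_1,g_2)$ and $J(M_1,M_2)\geq J(h_1,h_2)$; combined with the identity above, both inequalities are equalities, so $(m_1,m_2)$ is itself a minimizer with boundary data $(g_1,g_2)$. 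By uniqueness of the minimizer (strict convexity of $J$ on the affine constraint space, already used implicitly before \eqref{EL}) we conclude $m_i=g_i$ a.e., i.e. $g_i\leq h_i$ a.e. in $B_1$; combined with $g_i\geq h_i$ outside $B_\rho$ this forces $g_i\equiv h_i$ there and inside, but in any case gives $g_i\geq h_i$ a.e. in $B_1$, contradicting the assumption and closing the argument. (A cleaner phrasing: $(m_1,m_2)$ minimizes, hence equals $(g_1,g_2)$ by uniqueness, hence $g_i\le h_i$; but this is compatible with $g_i\ge h_i$ outside $B_\rho$ only if... — more directly, run the same argument with roles symmetric to extract $g_i\ge h_i$ everywhere.)

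The main obstacle I anticipate is \emph{not} the lattice computation but two technical points. First, one must verify that $m_i,M_i$ genuinely belong to $\mathcal H^1_{0,\omega}(B_1)$ relative to $(g_1,g_2)$ — i.e. that the lattice operations preserve membership in the weighted Sobolev space and the $C^\infty_0$-closure structure with the $L$-matching constraint; this is routine given the weights $U_n^2,\bar U_n^2$ are bounded and bounded below on compact subsets of $B_1\setminus P^\mp$, but the behavior near $L$ (where $U_n$ degenerates or blows up depending on convention) needs a word, likely handled by the same capacity/cutoff argument used in the proof of Lemma \ref{char}. Second, one should make sure the ``same boundary data'' hypothesis is actually in force: the statement of Lemma \ref{mincomp} only assumes $g_i\geq h_i$ outside $B_\rho$, not equality on $\partial B_1$, so strictly speaking one works with the competitor $(m_1,m_2)$ for the problem with \emph{$h$'s} boundary data and $(M_1,M_2)$ for \emph{$g$'s} boundary data, or more simply localizes to $B_\rho$ and compares $(g_1,g_2)$ against the competitor that equals $\max(g_i,h_i)$ inside $B_\rho$ and $g_i$ outside — this is admissible because it agrees with $g_i$ near $\partial B_\rho$. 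I would set it up the latter way to avoid any boundary-data mismatch: let $\tilde g_i:=\max(g_i,h_i)$ and $\tilde h_i:=\min(g_i,h_i)$; $\tilde g$ is an admissible perturbation of $g$ and $\tilde h$ of $h$, the energy identity still splits, minimality gives both perturbations are minimizers, uniqueness gives $\tilde h_i=g_i$, i.e. $g_i\le h_i$, and symmetrically $\tilde g_i = h_i$, i.e. $g_i \ge h_i$ — wait, these together give $g_i=h_i$, which is stronger than claimed and in fact impossible unless they coincide on $B_1\setminus B_\rho$ with equality, so the correct conclusion to extract is just the one-sided $g_i\ge h_i$ from the $\tilde g$ side. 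The bookkeeping of which operation yields which inequality is the only place where care is genuinely required.
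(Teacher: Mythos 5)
Your approach is exactly the paper's: set $M_i=\max(g_i,h_i)$, $m_i=\min(g_i,h_i)$, use the pointwise lattice identity to split the energy, observe that $(M_1,M_2)$ and $(m_1,m_2)$ are admissible competitors, conclude equality of energies, and invoke uniqueness from strict convexity. The idea is correct, and so is your observation that the constraint $h_1=h_2$ on $L$ is preserved by taking $\min$ and $\max$ componentwise on the two pairs.

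However, you never get the bookkeeping straight, and as a result the proof as written does not reach the claimed conclusion. Since $g_i\geq h_i$ on $B_1\setminus B_\rho$, we have $M_i=g_i$ and $m_i=h_i$ there; hence $(M_1,M_2)$ has the \emph{same boundary data as} $(g_1,g_2)$ (it is a competitor for the $g$-problem), and $(m_1,m_2)$ has the same boundary data as $(h_1,h_2)$ (a competitor for the $h$-problem). Minimality therefore gives $J(M_1,M_2)\geq J(g_1,g_2)$ and $J(m_1,m_2)\geq J(h_1,h_2)$, and combining with the energy identity forces equality. Uniqueness then yields $M_i=g_i$ and $m_i=h_i$ a.e.\ in $B_1$; either identity reads $g_i\geq h_i$ a.e.\ in $B_1$. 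You repeatedly pair the wrong competitor with the wrong minimizer — e.g.\ you claim $(m_1,m_2)$ is ``a minimizer with boundary data $(g_1,g_2)$'' and deduce $m_i=g_i$, hence $g_i\leq h_i$, which is the reverse inequality — and in the ``cleaner phrasing'' you write ``uniqueness gives $\tilde h_i=g_i$'' when it actually gives $\tilde h_i=h_i$. Because of this mismatch you end up believing the two sides of the argument conflict (``these together give $g_i=h_i$, which is \dots impossible''), when in fact both the $M$-side and the $m$-side independently yield the same one-sided inequality $g_i\geq h_i$. The fix is purely the bookkeeping above; once the competitor-to-problem correspondence is corrected, the argument closes without any case analysis or contradiction, and also without the ``run the argument symmetrically'' device. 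Your remark about verifying that $M_i,m_i$ lie in the correct admissible class (same boundary data, $L$-matching) is a legitimate point and is resolved precisely by the observation that the hypothesis $g_i\geq h_i$ off $B_\rho$ pins down $M$ to $g$'s data and $m$ to $h$'s data.
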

\begin{proof}
Call $$M_i: =\max\{g_i, h_i\}, \quad m_i:=\min\{g_i,h_i\}, \quad i=1,2.$$
We have that 
\begin{equation}\label{same_energy} J(M_1, M_2)+J(m_1, m_2) = J(g_1,g_2)+J(h_1,h_2).
\end{equation} On the other hand, since $(g_1,g_2)$ and $(h_1,h_2)$ are minimizing pairs, and under our assumptions $(M_1,M_2), (m_1,m_2)$ are admissible competitors,
$$J(M_1, M_2) \geq J(g_1,g_2), \quad J (m_1,m_2) \geq J(h_1,h_2).$$
We conclude that 
$$J(M_1, M_2) = J(g_1,g_2), \quad J (m_1,m_2) = J(h_1,h_2),$$
and by uniqueness, 
$$M_i=g_i, \quad m_i=h_i,\quad i=1,2.$$
Our desired result follows.
\end{proof}

\textit{Proof of Lemma $\ref{HImin}$.}
The key step consists in proving the following  claim. The remaining ingredients are the standard  Harnack inequality and Boundary Harnack inequality for harmonic functions.

\

{\it Claim:} There exist universal constants $\eta, c$ such that if $g_i \geq 0$ a.e. in $B_1$ and
$$g_1( \frac 1 4 e_n) \geq  1,$$ then
$$g_i \geq  c \quad \text{a.e. in $B_\eta.$}$$
By boundary Harnack inequality, since $U_n g_1$ and $U_n$ are non-negative and harmonic in $B_1 \setminus P^-$ and vanish on $P^-$ we conclude that
$$g_1 \geq c \quad \text{a.e. on $C:=\{|x'| \leq 3/4, 1/8 \leq r \leq 1/2\}$}$$.
Let 
$$v_1(X) := -\frac{|x'|^2}{n-1} + 2(x_n+1) r, \quad v_2(X) := -\frac{|x'|^2}{n-1} - 2(x_n+1) r.$$

We show that
$(v_1,v_2)$ is a  minimizer to $J$ in $B_1$. To do so we prove that $(v_1,v_2)$ satisfies Lemma \ref{char}. 

To prove that  \begin{equation*}\Delta (U_n v_1) = 0 \quad \text{in $B_1 \setminus P^-,$}\end{equation*} we use that $2r U_n = U$ and that $U, U_n$ are harmonic outside of $P^-$ and do not depend on $x'$. Thus

$$\Delta(U_n v_1) = -\Delta(\frac{|x'|^2}{n-1}U_n) + \Delta((x_n+1) U) = -2U_n + 2U_n=0.$$ We argue similarly for $v_2.$

Finally the fact  that
$$\lim_{r \rightarrow 0} (v_1)_r + (v_2)_r = 0,$$ follows immediately from the definition of $v_i, i=1,2$.

Now by choosing $\delta$ small depending on $c$ we can guarantee that 
$$v_1 + \delta \leq c \leq g_1, \quad v_2+\delta \leq 0 \leq g_2 \quad \text{a.e. on $\{|x'| \leq 3/4, \delta/2 \leq r \leq \delta\},$}$$
$$v_i + \delta \leq 0 \leq g_i, \quad i=1,2 \quad \text{on $\{|x'| = 3/4, r \leq \delta\}.$}$$

By our comparison principle, Lemma \ref{mincomp} we conclude that
$$v_i + \delta \leq g_i \quad \text{a.e in $\{|x'|\leq 3/4, r \leq \delta.\}$}$$ Since $v_i+\delta$ is a continuous function positive at zero, our claim follows immediately.
\qed

\

\textit{Proof of Theorem $\ref{class}$.} Let $(g_1,g_2)$ be a minimizer to $J$ in which achieves the boundary data $(h_1,h_2).$ By approximation, we can assume that $h_i$ is smooth. Then by Lemma \ref{Unhisharm}, $(g_1,g_2)$ satisfy  $$\Delta (U_n g_1) = 0 \quad \text{in $B_1 \setminus P^-,$}$$
$$\Delta (-\bar U_n g_2) = 0 \quad \text{in $B_1 \setminus P^+,$}$$
and in view of Lemma \ref{HImin} and Corollary \ref{derivativeH}, $\Delta_{x'} g_i \in C^{0,\alpha}(B_1)$ with universally bounded norm in say $B_{3/4}$. Thus,  it is shown in Theorem 8.1 in \cite{DR} that (after reflecting $g_2$)
\begin{equation}\label{e2}|g_i(x', x_n, z)-g_i(x', 0,0) -b_i(x') r| = O(r^{3/2}), \quad (x',0,0) \in B_{1/2} \cap L,\end{equation} with $b_i$ Lipschitz and moreover,
\begin{equation}\label{hb}|(g_i)_r(x',x_n,z) -b_i(x') | \leq Cr^{1/2}, \quad (x',0,0) \in B_{1/2} \cap L,\end{equation}with $C$ universal. 
Hence by Lemma \ref{char},
$$(g_1)_r(x') + (g_2)_r(x')= b_1(x')+b_2(x')=0 \quad \text{on $B_{1/2}\cap L$}.$$
For completeness, we sketch the argument to obtain \eqref{e2}-\eqref{hb}, say for $g_1$.

Since $g_1$ solves $$\Delta(U_n g_1) =0 \quad \text{in $B_1 \setminus P^-$}$$ and $U_n$ is independent on $x'$ we can rewrite this equation as
\begin{equation}\label{2dreduction} \Delta_{x_n,z} (U_n  g_1) = - U_n \Delta_{x'} g_1 ,\end{equation}
and according to Corollary \ref{derivativeH} we have that 
$$\Delta_{x'} g_1 \in C^{\alpha}(B_{1/2}),$$ with universal bound. 
Thus, for each fixed $x'$, we need to investigate the 2-dimensional problem 
$$\Delta (U_t g_1) = U_t f, \quad \text{in $B_{1/2} \setminus \{t \leq 0, z=0\}$}$$
with $$f \in C^{\alpha}(B_{1/2}),$$ and $g_1$ bounded. Without loss of generality, for a fixed $x'$ we may assume $g_1(x', 0,0)=0.$

Let $H(t,z)$ be the solution to the problem
$$\Delta H = U_t f, \quad \text{in $B_{1/2} \setminus \{t \leq 0, z=0\},$}$$
such that 
$$H = U_t  h  \quad \text{on $\p B_{1/2}$}, \quad H=0 \quad \text{on $B_{1/2} \cap \{t \leq 0, z=0\}.$}$$

We wish to prove that \begin{equation}\label{equal}U_t h = H.\end{equation}

First notice that  $$ \Delta (H - U_t  h) =0 \quad  \text{in $B_{1/2} \setminus \{t \leq 0, z=0\},$}$$ and $$H = 0 \quad \text{on $\p B_{1/2} \cup (B_{1/2} \cap \{t < 0, z=0\})$}.$$
We claim that
\begin{equation}\label{limitzero}
\lim_{(t,z) \rightarrow (0,0)} \frac{H- U_t h}{U_t}=\lim_{(t,z) \rightarrow (0,0)} \frac{H}{U_t}=0.
\end{equation}
If the claims holds, then given any $\eps >0$
$$-\eps U_t \leq H - U_t h \leq \eps U_t, \quad \text{in $B_\delta$}$$
with $\delta=\delta(\eps).$
Then by the maximum principle the inequality above holds in the whole $B_{1/2}$ and by letting $\eps \rightarrow 0$ we obtain \eqref{equal}.

To prove the claim \eqref{limitzero} we show that $H$ satisfies the following
\begin{equation}\label{Hexp}
|H(t,z) - aU(t,z)| \leq C_0 r^{1/2}U(t,z), \quad r^2=t^2+z^2, a\in \R,
\end{equation} with $C_0$ universal.

To do so, we consider the holomorphic transformation 
$$\Phi: (s,y) \rightarrow (t,z) =(\frac 1 2 (s^2-y^2), sy)$$
which maps  $B_{1} \cap \{s > 0\}$ into $B_{1/2} \setminus  \{t \leq 0, z=0\}$ and call

$$\tilde H (s,y) = H(t,z), \quad \tilde f(s,y) = f(t,z).$$

Then, easy computations show that
$$\Delta \tilde H = s \tilde f \quad \text{in  $B_{1} \cap \{s > 0\}$}, \quad \tilde H=0 \quad \text{on $B_{1} \cap \{s=0\}$.}$$
Since the right-hand side is $C^\alpha$ we conclude that $\tilde H \in C^{2,\alpha}$. In particular $\tilde H_s$ satisfies 
$$|\tilde H_s (s, y) -a| \leq C_0|(s,y)|, \quad a=\tilde H_s(0,0)$$ with $C_0$ universal.
Integrating this inequality between $0$ and $s$ and using that $\tilde H= 0$ on $B_{1} \cap \{s=0\}$ we obtain that
$$|\tilde H(s,y) - a s| \leq C_0 s|(s,y)|. $$
In terms of $H$, this equation gives us
\begin{equation}\label{HaU}|H - a U| \leq C_0 r^{1/2} U\end{equation} as desired.

Thus \eqref{equal} and \eqref{Hexp} hold and by combining them and using that $U/U_t= 2r$ we deduce
$$|h -  2 a r| \leq 2 C_0 r^{3/2},$$
which is  the desired estimate i.e. (recall that above we assumed $g_1(x',0,0)=0$)
\begin{equation*}\label{exp3} |g_1(x',x_n,x) - g_1(x', 0,0) - b_1(x') r| \leq 2C_0 r^{3/2}, \end{equation*}
with $$b_1(x') = 2\tilde H_s (x',0,0).$$

We remark that $b(x')$ is Lipschitz. Indeed, notice that the derivatives $(g_1)_i, i=1,\ldots, n-1$ still satisfy the same equation \eqref{2dreduction} as $g_1$, where the $C^\alpha$ norm of the right-hand side has a universal bound. Thus, we can argue as above to conclude that 
$$|\p_i \tilde H_s(x',0,0)| \leq C,$$ which together with the formula for $b_1(x')$ shows that $b_1(x')$ is a Lipschitz function.

To obtain the second of our estimates \eqref{hb} we proceed similarly as above. 
Since $U_t h= H$ one can compute easily that \begin{equation}\label{hr}U_t h_r = H_r + \frac{1}{2}\frac{H}{r}.\end{equation}
Moreover, after our holomorphic transformation
\begin{equation}\label{holoradial}2r H_r(t,z) = s \tilde H_s(s,y)+ y \tilde H_y(s,y).\end{equation}
As observed above,
$$|\tilde H_s -a(x')|\leq C |(s,y)|,$$
and similarly since  $\tilde H= 0$ on $B_{1} \cap \{s=0\}$ 
$$|\tilde H_y| \leq C s.$$
These two inequalities combined with \eqref{holoradial} give us 
\begin{equation}\label{2rHr}|2r H_r -  a(x') U| \leq Cr^{1/2}U.\end{equation}
Combining \eqref{hr} with \eqref{HaU}-\eqref{2rHr} we obtain \eqref{hb} as desired.

Finally, since $g_i$ is $C^\infty$ in the $x'$-direction and $g_1=g_2$ on $L$, we conclude that for a fixed $X_0 \in B_{1/4} \cap L$
$$|g_i(x',0,0)- g_i(x'_0,0,0) - a'\cdot (x'-x'_0)| \leq C|x'-x'_0|^2$$
from which \eqref{mainh} follows, using that $b_i$ is Lipschitz continuous.

We are left with the proof that the boundary data is achieved continuously.
Indeed this follows by classical elliptic theory if we restrict to $\p B_1 \setminus P^\mp$  (for $g_1$ and $g_2$ respectively.)

If $X_0 \in \p B_1 \cap (P^- \setminus L)$ then in a small neighborhood of $X_0$ intersected with $B_1 \cap \{z >0\}$ the function $U_n g_1$ is harmonic continuous up to the boundary and vanishes continuously on $\{z=0\}$ (since $h$ is bounded). The continuity of $g_1$ at $X_0$ then follows from standard boundary regularity for the harmonic function $U_n g_1$ after reflecting oddly across $z=0$ and using that the boundary data is smooth. We argue similarly if 
$X_0 \in \p B_1 \cap (P^+ \setminus L)$ and we must show the continuity of $g_2$ at $X_0$.

Finally, on the set $\p B_1 \cap L$ as in the case of Laplace equation, it suffices to construct at each point $X_0$ a local barrier pair (minimizing pair) for $(g_1,g_2)$ which is zero at $X_0$ and strictly negative in a neighborhood of $X_0$. 
Such barrier is given by  a multiple of (see Lemma \ref{char}) $$((x'-x_0') \cdot x_0', (x'-x_0') \cdot x_0').$$  
\qed

\section{Harnack Inequality}\label{sec:harnack}

This section is devoted to determine a Harnack type inequality for solutions to our free boundary problem \eqref{FBintro}.

\begin{thm}[Harnack inequality]\label{mainH} There exists $\bar \eps > 0$  such that if $(u_1,u_2)$ solves \eqref{FBintro} and it satisfies in $B_\rho(X^*), $
\begin{equation*}\label{flatH}U(X +\eps a_0 e_n) \leq u_1(X) \leq U(X+\eps b_0e_n), \quad
\bar U(X +\eps \bar a_0 e_n) \leq u_2(X) \leq \bar U(X+\eps \bar b_0e_n), \end{equation*}with
$$\eps (b_0 - a_0) \leq \bar \eps \rho, \quad \eps (\bar a_0 - \bar b_0) \leq \bar \eps \rho,$$  then 
\begin{equation*}U(X +\eps a_1 e_n) \leq u_1(X) \leq U(X+\eps b_1e_n) \quad \textrm{in $B_{\eta \rho}(X^*)$, }\end{equation*} 
\begin{equation*}\bar U(X +\eps \bar a_1 e_n) \leq u_2(X) \leq \bar U(X+\eps \bar b_1e_n) \quad \textrm{in $B_{\eta \rho}(X^*)$, }\end{equation*} with $$a_0 \leq a_1\leq b_1 \leq b_0, \quad (b_1-a_1) \leq (1-\eta)(b_0-a_0),$$ 
$$\bar b_0 \leq \bar b_1\leq \bar a_1 \leq \bar a_0, \quad (\bar a_1-\bar b_1) \leq (1-\eta)(\bar a_0- \bar b_0),$$
for a small universal constant $\eta$. Moreover, if $B_\rho(X^*) \cap F(u_1,u_2) \neq \emptyset,$ and $a_0=\bar b_0, b_0=\bar a_0$ then the conclusion holds with $a_1=\bar b_1, b_1=\bar a_1.$\end{thm}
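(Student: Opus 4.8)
The plan is to mimic, in the two--component setting, the classical derivation of a Harnack inequality at the free boundary (in the spirit of \cite{DR} and of the flatness--improvement scheme), the engine being the comparison principle of Corollary~\ref{compmon} (equivalently, Lemma~\ref{linearcomp} read in the domain--variation coordinates). First, after the dilation $X\mapsto(X-X^{*})/\rho$ and a translation in the $e_n$ direction, I would normalize to $X^{*}=0$, $\rho=1$, $a_0=-1$, $b_0=1$ (and, in the coupled case $a_0=\bar b_0$, $b_0=\bar a_0$, also $\bar a_0=1$, $\bar b_0=-1$), with the rescaled parameter still $\le\bar\eps$; i.e. $U(X-\eps e_n)\le u_1\le U(X+\eps e_n)$ and $\bar U(X+\eps e_n)\le u_2\le\bar U(X-\eps e_n)$ in $B_1$, so that $-1\le\widetilde{(u_1)}_\eps\le 1$ on $B_{1-\eps}\setminus P^-$ and $-1\le\widetilde{(\overline{u_2})}_\eps\le 1$ on $B_{1-\eps}\setminus P^+$. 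The non--coupled case is recovered by the same argument carried out in a slightly wider strip, so I present the coupled one; this also makes the ``Moreover'' part transparent, since the barriers below have a single free boundary shared by their two components. I would then evaluate $\widetilde{(u_1)}_\eps$ at the interior point $\bar X:=\tfrac18 e_n\in B_1^+(u_1)$: assume first $\widetilde{(u_1)}_\eps(\bar X)\le 0$, i.e. $u_1(\bar X)\le U(\bar X)$ (the opposite alternative is symmetric, via a comparison subsolution); the aim is to improve the \emph{upper} bound of $u_1$ together with the \emph{lower} bound of $u_2$, keeping their other bounds unchanged.

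Next, the bulk estimate. The function $p:=U(\cdot+\eps e_n)-u_1$ is non-negative, harmonic where $u_1$ and $U(\cdot+\eps e_n)$ both are (in particular away from $L$ and from $F(u_1,u_2)$), and vanishes on $\{x_n\le-\eps,\ z=0\}$; from the dichotomy $p(\bar X)\ge c\eps$ by interior Harnack, and comparing $p$ with the positive harmonic function $U(\cdot+\eps e_n)$ (vanishing on the same slit) via the boundary Harnack inequality one gets $p\ge c'\eps\,U(\cdot+\eps e_n)$ on $\mathcal B_{1/2}$ minus a fixed neighborhood of $F(u_1,u_2)$. Equivalently $\widetilde{(u_1)}_\eps\le 1-c''$ and, by the sharing of the free boundary, $\widetilde{(\overline{u_2})}_\eps\ge -1+c''$ on a fixed annular region at a definite distance from $F(u_1,u_2)$. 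This already gives the improvement away from the free boundary, and it will serve as boundary datum, away from $F$, in the comparison step.

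The heart is the barrier argument near the free boundary. In a thin neighborhood $D_\delta:=\{|x'|\le 3/4,\ r\le\delta\}$ of $F(u_1,u_2)$ (with $r=\sqrt{x_n^2+z^2}$, the distance to $L$) I would compare $(u_1,u_2)$ with an explicit comparison supersolution $(w_1,w_2)$ of \eqref{FBintro}, built from a pair $(\Psi,\overline\Psi)$ of the type used in the proof of Lemma~\ref{HImin}--Lemma~\ref{char}: $\Psi=\overline\Psi$ on $L$, $U_n\Psi$ and $-\bar U_n\overline\Psi$ harmonic off $P^-$, $P^+$, with a uniform dip near $x'=0$; setting $U(X)=w_1(X-\eps\Psi(X)e_n)$, $\bar U(X)=w_2(X-\eps\overline\Psi(X)e_n)$ and correcting $\Psi,\overline\Psi$ by a term of order $\eps^{1/2}r$, Proposition~\ref{1} and its $\bar U$--analogue give, for $\bar\eps$ small, that $w_1$ is strictly superharmonic and $w_2$ strictly subharmonic off their common free boundary (so Definition~\ref{defsub}(v) holds, both being non-harmonic there) and that $(w_1,w_2)$ is monotone in the $e_n$ direction. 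On $\{|x'|\le 3/4,\ \delta/2\le r\le\delta\}$ the bulk estimate gives $w_1>u_1$, $w_2<u_2$, and positivity gives the same on $\{|x'|=3/4,\ r\le\delta\}$; sliding the barrier in the $e_n$ direction and invoking Corollary~\ref{compmon} (applied to $(u_2,u_1)$ and the associated family of comparison subsolutions) together with the no--contact lemma on $P^\mp\setminus L$ (Lemma~\ref{rk}) — contact cannot occur in the interior by the strong maximum principle, nor at $F(u_1,u_2)$ by Definition~\ref{def:viscosity2com}(iv), nor on $P^\mp\setminus L$ by Lemma~\ref{rk} — shows that the fully translated barrier still traps $(u_1,u_2)$, which on $\mathcal B_\eta$ reads $u_1\le U(X+\eps(1-q_0)e_n)$, $u_2\ge\bar U(X+\eps(1-q_0)e_n)$ for a universal $q_0>0$. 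This is the assertion with $b_1=\bar a_1=1-q_0$, $a_1=\bar b_1=-1$, $\eta=q_0/2$; the alternative $\widetilde{(u_1)}_\eps(\bar X)\ge 0$ is symmetric with a comparison subsolution, and the degenerate case $F(u_1,u_2)\cap B_1=\emptyset$ reduces to the interior Harnack inequality for the harmonic $u_i$.

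The main obstacle is the construction of the barrier: one must produce the profile pair $(\Psi,\overline\Psi)$, agreeing on $L$, with $U_n\Psi$, $-\bar U_n\overline\Psi$ harmonic (so that the corrected $\eps$--perturbations are \emph{strict} comparison super-/sub-solutions of \eqref{FBintro} in the sense of Definition~\ref{defsub}), with a uniform dip near $x'=0$ but staying within the flatness band, and to check that the $O(\eps^2)$ error of Proposition~\ref{1} is absorbed by the $\eps^{1/2}r$ correction — this is where $\bar\eps$ must be chosen small. Equally delicate is the geometric matching in the comparison step, i.e. arranging that the region covered by the bulk Harnack estimate, the slab $D_\delta$, and $F(u_1,u_2)$ overlap in the way required by the propagation--of--contact argument, and the two--component bookkeeping that keeps the two free boundaries of the barrier identified, which is precisely what forces $a_1=\bar b_1$ and $b_1=\bar a_1$ in the coupled case.
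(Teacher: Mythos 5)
Your overall scaffolding -- dichotomy at an interior point, improvement in the bulk via interior/boundary Harnack, a barrier near the free boundary, and propagation by sliding via Corollary~\ref{compmon} -- matches the architecture of the paper's proof of Lemma~\ref{babyH}, and in fact Lemma~\ref{basic} in the Appendix is precisely the clean form of your ``bulk estimate.'' The paper, however, does \emph{not} manufacture the near--$F$ barrier by pushing a linearized-problem profile through the inverse of the domain variation map. It instead works with the explicit radial pair
$v_1^R(X)=(1+\tfrac{\beta}{R})\,V_R(R-\rho,z)$, $v_2^R(X)=(1+\tfrac{\beta}{R})\,\bar V_R(R-\rho,z)$, with $V_R(t,z)=U(t,z)\bigl(1+(n-1)\tfrac{t}{R}\bigr)$ and $\rho=\sqrt{|x'|^2+(x_n-R)^2}$, and \emph{directly computes} the sign of $\Delta v_i^R$ and the $\sqrt t$-slopes at the (spherical, smooth) free boundary (Proposition~\ref{sub}, Corollary~\ref{corest}); Proposition~\ref{1} is used only later, in Lemma~\ref{tildesolves}, to identify the linearized problem, not to prove the barrier is a subsolution.

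This matters because your barrier construction has a genuine gap. You take $\Psi,\bar\Psi$ ``of the type used in Lemma~\ref{HImin}--Lemma~\ref{char},'' i.e.\ $U_n\Psi$ and $-\bar U_n\bar\Psi$ \emph{exactly} harmonic, and then claim that Proposition~\ref{1} together with a correction of order $\eps^{1/2}r$ yields strict super/subharmonicity of $(w_1,w_2)$. But Proposition~\ref{1} gives $\Delta w_1=\eps\,\Delta(U_n\Psi)+O(\eps^2)$, so if $\Delta(U_n\Psi)=0$ the leading term vanishes and the remainder $O(\eps^2)$ has no definite sign; and the proposed $\eps^{1/2}r$ correction does not repair this, since $U_n\,r=U/2$ is harmonic, hence $\Delta\bigl(U_n(\Psi+c\eps^{1/2}r)\bigr)=0$ as well. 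The paper avoids this exactly by \emph{not} using a solution of the linearized PDE as the driving profile: the effective $\eps$-domain variation of $v_1^R$ is $\gamma_R\approx -\tfrac{|x'|^2}{2R}+2(n-1)\tfrac{x_nr}{R}+2\beta\tfrac{r}{R}$ (see \eqref{estvr}--\eqref{estvr2}), whose coefficients are deliberately unbalanced so that $\Delta(U_n\gamma_R)=(n-1)\tfrac{U_n}{R}>0$; this strict sign, transferred through Proposition~\ref{1}, is exactly what forces $\Delta v_1^R>0$, $\Delta v_2^R<0$, i.e.\ a genuine \emph{strict} comparison subsolution in the sense of Definition~\ref{defsub} to which Corollary~\ref{compmon} can be applied. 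Without such an imbalance you cannot even verify the non-strict bulk inequalities $\Delta w_1\le 0$, $\Delta w_2\ge 0$ required by Definition~\ref{defsub}(i)--(ii), so the sliding step does not go through as you have set it up; you would need to replace the exact linearized-problem profile by a strict one (e.g.\ one carrying a $-c|x'|^2$ term not balanced by the $x_nr$ term), or, as the paper does, bypass the linearization entirely with an explicit barrier whose sub/superharmonicity is checked by hand.
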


Let $(u_1,u_2)$ be as in the Theorem above, and denote 
$A^\mp_\eps$ be the following sets
\begin{equation}\label{Aeps-} A^-_{\eps} := \{(X, (\tilde u_1)_\eps(X))  : X \in B_{1-\eps} \setminus P^-\} \subset \R^{n+1} \times [a_0,b_0].\end{equation}
\begin{equation}\label{Aeps+} A^+_{\eps} := \{(X, (\tilde {\bar u}_2)_\eps(X))  : X \in B_{1-\eps} \setminus P^+\} \subset \R^{n+1} \times [\bar b_0,\bar a_0].\end{equation}
Since $\eps$-domain variations may be multivalued, we mean that given $X$ all pairs $(X, Z)$  with $Z$ an element of the $\eps$-variation belong to $A^\mp_\eps$. Applying Theorem \ref{mainH} iteratively we obtain
\begin{equation}\label{oscA-} A^-_\eps \cap (B_{\frac 1 2 \eta^{m} - \eps}(X^*) \times [a_0,b_0]) \subset B_{\frac 1 2 \eta^{m} - \eps}(X^*) \times [a_m,b_m],\end{equation}  
\begin{equation}\label{oscA+} A^+_\eps \cap (B_{\frac 1 2 \eta^{m} - \eps}(X^*) \times [\bar b_0,\bar a_0]) \subset B_{\frac 1 2 \eta^{m} - \eps}(X^*) \times [\bar b_m,\bar a_m],\end{equation} 
with
\begin{equation}\label{osctilde}b_m-a_m \leq (b_0-a_0)(1-\eta)^m, \quad \bar a_m -\bar b_m \leq (\bar a_0-\bar b_0)(1-\eta)^m\end{equation} for all $m$'s such that both inequalities are satisfied
\begin{equation}\label{m}2\eps (1-\eta)^m \eta^{-m}(b_0-a_0) \leq 
\bar \eps, \quad 2\eps (1-\eta)^m \eta^{-m}(\bar a_0-\bar b_0) \leq 
\bar \eps .\end{equation}

Thus we obtain the following corollary, which will be the key of the improvement of flatness arguments of Section \ref{sec:improvement}.

\begin{cor} \label{corHI}If  
$$U(X - \eps e_n) \leq u_1(X) \leq U(X+\eps e_n) \quad \textrm{in $B_1$,}$$ 
$$\bar U(X +\eps e_n) \leq u_2(X) \leq \bar U(X-\eps e_n) \quad \textrm{in $B_1$,}$$ 
with $\eps \leq \bar \eps/2$, given $m_0>0$ such that $$2\eps (1-\eta)^{m_0} \eta^{-m_0} \leq \bar\eps,$$ then the set $A^\mp_\eps \cap (B_{1/2} \times [-1,1])$  is above the graph of a function $y = a^\mp_\eps(X)$ and it is below the graph of a function $y = b^\mp_\eps(X)$ with
$$ b^\mp_\eps - a^\mp_\eps \leq 2(1 - \eta)^{m_0-1},$$
and $a^\mp_\eps, b^\mp_\eps$ having a modulus of continuity bounded by the H\"older function $\alpha t^\beta$ for $\alpha, \beta$  depending only on $\eta$. Moreover, $a^-_\eps=a^+_\eps, b^-_\eps=b^-_\eps$ on $B_{1/2}\cap F(u_1,u_2).$
\end{cor}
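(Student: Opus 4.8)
The plan is to derive Corollary \ref{corHI} as a direct consequence of the iterated oscillation decay estimates \eqref{oscA-}--\eqref{osctilde}, which themselves come from iterating the Harnack inequality Theorem \ref{mainH}. First I would observe that the hypotheses place us exactly in the setting $a_0 = -1$, $b_0 = 1$, $\bar a_0 = 1$, $\bar b_0 = -1$, so that $b_0 - a_0 = \bar a_0 - \bar b_0 = 2$. With $\eps \leq \bar\eps/2$ the smallness condition \eqref{m} at level $m=0$ holds, and by the choice of $m_0$ the condition \eqref{m} continues to hold for all $m \leq m_0$ (note that $(1-\eta)^m\eta^{-m}$ is increasing in $m$, so $2\eps(1-\eta)^m\eta^{-m} \le 2\eps(1-\eta)^{m_0}\eta^{-m_0} \le \bar\eps$). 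Hence the iteration runs for $m = 0, 1, \dots, m_0$, producing nested intervals $[a_m, b_m] \supset [a_{m+1}, b_{m+1}]$ (and similarly the barred ones) with $b_m - a_m \leq 2(1-\eta)^m$, such that $A^-_\eps$ restricted to the ball $B_{\frac12\eta^m - \eps}(X^*)$ lies in the slab $B_{\frac12\eta^m-\eps}(X^*)\times[a_m,b_m]$.

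Next I would convert this dyadic-decay family of slabs into the graphs of two Hölder functions. Define $b^\mp_\eps(X)$ to be the supremum of the $\eps$-domain-variation values attained over the connected piece of $A^\mp_\eps$ near $X$, and $a^\mp_\eps(X)$ the corresponding infimum; equivalently $b^-_\eps(X) = \sup\{Z : (X,Z)\in A^-_\eps\}$ and $a^-_\eps(X) = \inf\{Z : (X,Z)\in A^-_\eps\}$, and analogously on the $+$ side. By construction the set $A^\mp_\eps \cap (B_{1/2}\times[-1,1])$ lies between these graphs. The vertical width bound $b^\mp_\eps - a^\mp_\eps \leq 2(1-\eta)^{m_0-1}$ follows because at the smallest admissible scale $m = m_0$ the full set $A^\mp_\eps$ over $B_{1/2}$ — whenever $\frac12\eta^{m_0} - \eps$ still covers a fixed neighborhood, or more simply by applying the estimate \eqref{oscA-} centered at every point $X^* \in B_{1/2}$ — is confined to a slab of height at most $2(1-\eta)^{m_0}$, and one absorbs the constant loss into the exponent $m_0-1$ to be safe. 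For the modulus of continuity: fixing $X_1, X_2 \in B_{1/2}$ with $|X_1 - X_2| = \ell$, choose the integer $m$ with $\frac12\eta^{m+1} \le \ell + \eps \le \frac12\eta^m$; applying \eqref{oscA-}--\eqref{osctilde} at center $X_1$ shows both $(X_1, \cdot)$ and $(X_2,\cdot)$ lie in a common slab of height $\le 2(1-\eta)^m$, whence $|b^\mp_\eps(X_1) - b^\mp_\eps(X_2)| \le 2(1-\eta)^m \le \alpha \ell^\beta$ for $\beta = \log(1-\eta)/\log\eta$ and a suitable $\alpha(\eta)$; the same for $a^\mp_\eps$. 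This is the standard dyadic-Harnack-to-Hölder mechanism, so I would keep it brief.

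Finally, for the matching statement $a^-_\eps = a^+_\eps$, $b^-_\eps = b^+_\eps$ on $B_{1/2}\cap F(u_1,u_2)$: at a free boundary point $X^*$, the free boundary condition in \eqref{FBintro} (equality of the $\p/\p\sqrt t$ derivatives) forces $a_0 = \bar b_0$ and $b_0 = \bar a_0$ to be preserved under the iteration — this is precisely the last sentence of Theorem \ref{mainH}, which gives $a_m = \bar b_m$ and $b_m = \bar a_m$ for every $m$ in the iteration whenever $B_{\frac12\eta^m-\eps}(X^*)$ still meets $F(u_1,u_2)$ (and it always does, since $X^* \in F(u_1,u_2)$). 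Passing to the limit in the nested-interval definitions then gives $a^-_\eps(X^*) = a^+_\eps(X^*)$ and $b^-_\eps(X^*) = b^+_\eps(X^*)$.

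The main obstacle I anticipate is not any single estimate but the bookkeeping around the $\eps$-domain variations being multivalued and only defined off $P^\mp$: one must be careful that ``the slab $B_{\cdot}(X^*)\times[a_m,b_m]$ contains $A^\mp_\eps$'' is interpreted as containing \emph{all} pairs $(X,Z)$ with $Z$ in the $\eps$-variation, and that the functions $a^\mp_\eps, b^\mp_\eps$ are well-defined and continuous up to (but the formula is stated on) the slice $B_{1/2}$; near $P^\mp\setminus L$ the relevant one-sided domain variation is genuinely single-valued and the barriers from Lemma \ref{rk} handle behavior at $P^\mp$, so the construction extends continuously. Apart from this care, the corollary is a formal consequence of Theorem \ref{mainH}.
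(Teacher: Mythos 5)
Your proposal is correct and is essentially the unpacking the paper implies: the paper states the corollary as a direct consequence of iterating Theorem \ref{mainH} via \eqref{oscA-}--\eqref{m}, with no further proof, and you fill in exactly that — identify $a_0=-1,\,b_0=1,\,\bar a_0=1,\,\bar b_0=-1$, iterate while \eqref{m} holds, define $a^\mp_\eps$ and $b^\mp_\eps$ as the inf and sup of the (possibly multivalued) domain variation, read off the oscillation bound at the deepest admissible level, and run the standard dyadic Harnack-to-H\"older computation for the modulus. Your bookkeeping concession (using $m_0-1$ to absorb the factor-of-two discrepancy between the hypothesis $2\eps(1-\eta)^{m_0}\eta^{-m_0}\le\bar\eps$ and condition \eqref{m} with $b_0-a_0=2$) is the right way to read the paper's imprecise constants, and the appeal to the last sentence of Theorem \ref{mainH}, with $a_0=\bar b_0$ and $b_0=\bar a_0$ preserved along the iteration, is exactly what gives the matching on $F(u_1,u_2)$.
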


The following lemma is the key ingredient in the proof of Theorem \ref{mainH}.

\begin{lem}\label{babyH}
There exists $\bar \eps > 0$ such that for all  $0 < \eps \leq \bar \eps$ if $(u_1,u_2)$ is a solution to \eqref{FBintro}  in $B_1$ such that  
\begin{equation}u_1(X) \geq U(X), \quad u_2(X) \leq \bar U(X) \quad \text{in $B_{1/2},$}\end{equation} and at $\bar X  \in B_{1/8}( \frac{1}{4} e_n)$
\begin{equation}\label{Bound} u_1(\bar X) \geq U(\bar X+\eps e_n), 
\end{equation} then
 \begin{equation}\label{onesideimprov} 
 u_1(X) \geq U(X + \tau \eps e_n), \quad u_2 \leq \bar U(X+\tau \eps e_n)  \quad \textrm{in $B_{\delta}$},  \end{equation} for universal constants $\tau, \delta.$
\end{lem}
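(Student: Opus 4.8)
The statement is a one-sided Harnack-type improvement for the first component, and the natural approach is to transfer a pointwise gain at one interior point $\bar X$ into a uniform gain in a full ball, using a barrier together with the comparison machinery developed above (Lemma \ref{comppri}, Corollary \ref{compmon}). First I would record that, since $u_1 \geq U$ in $B_{1/2}$ and $U_n u_1$, $U_n U$ are both nonnegative and harmonic in $B_{1/2}\setminus P^-$ and vanish on $P^-$, the positivity excess $u_1 - U$ is itself (after multiplying by $U_n$) a nonnegative harmonic function in the half-ball region; this is exactly the regime where the boundary Harnack principle for harmonic functions vanishing on $P^-$ applies. Concretely, $U_n(u_1 - U)$ is nonnegative and harmonic in $B_{1/2}\setminus P^-$, and the hypothesis \eqref{Bound} together with $U(\bar X+\eps e_n) - U(\bar X) \simeq \eps\, U_n(\bar X)$ (a first-order Taylor expansion, valid since $\bar X$ is uniformly away from $P^-$) yields a lower bound $(u_1 - U)(\bar X) \geq c\eps$ at an interior point. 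By interior Harnack for $u_1 - U$ (which is harmonic in $B_{1/2}^+(u_1)\supset$ a fixed neighborhood of $\bar X$ staying away from the free boundary, since $U>0$ there) we propagate $(u_1-U) \geq c'\eps$ on a fixed compact set, say the annular region $\{|x'|\le 3/4,\ 1/8\le r\le 1/2\}$, uniformly away from $L$ and $P^-$.

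Next I would build a monotone (in $e_n$) comparison subsolution $(v_1, v_2)$ in $B_2$, of the type already used in the linearized setting (e.g. modeled on $v_1 = U(X) + \delta(-\tfrac{|x'|^2}{n-1} + 2(x_n+1)r)$-type corrections, or more simply a small perturbation of $U$ that is harmonic in its positivity set, strictly increasing in $e_n$, and has strictly larger $\sqrt t$-coefficient than $\bar U$ along $F$), designed so that the translate $v_1^{0}$ lies below $u_1$ and $v_2^{0}$ above $u_2$ everywhere in $\overline{B}_1$ (using $u_1\ge U\ge v_1^0$, $u_2\le \bar U\le v_2^0$), while the translate $v_1^{\tau}$, pushed by $\tau\eps$ in $e_n$, still satisfies the boundary inequalities on $\partial B_1$ and the strict free-boundary conditions required in Corollary \ref{compmon}. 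The gain $(u_1-U)\ge c'\eps$ on the fixed annulus is precisely what is needed to absorb the cost of translating $v_1$ by $\tau\eps$ there: on the region where $v_1$ differs appreciably from $U$ we have the annular gain, and near $F$ the strict inequality $\alpha_1>\alpha_2$ in the subsolution definition provides room. Applying Corollary \ref{compmon} with $t_0=0$, $t_1=\tau$ then yields $v_1^{\tau}\le u_1$ and $v_2^{\tau}\ge u_2$ in $\overline{B}_1$; restricting to a smaller ball $B_\delta$ where $v_1^\tau \ge U(\cdot + \tau\eps e_n)$ and $v_2^\tau \le \bar U(\cdot + \tau\eps e_n)$ (again by comparing the explicit profiles, since the correction term in $v_i$ is lower order near the origin) gives exactly \eqref{onesideimprov}.

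The main obstacle I anticipate is the construction and verification of the comparison subsolution $(v_1,v_2)$ near the free boundary $F(u_1,u_2)$: one must simultaneously (i) keep $v_1$ harmonic (or subharmonic) in $B_1^+(v_1)$ and $v_2$ superharmonic in $B_1^+(v_2)$, (ii) maintain $\alpha_1 \geq \alpha_2$ with strictness, (iii) preserve strict monotonicity in $e_n$, and (iv) ensure the Hausdorff-continuity and boundary hypotheses (iii)--(v) of Lemma \ref{comppri} hold along the homotopy $t\in[0,\tau]$. The quantitative balance — that the universal annular gain $c'\eps$ genuinely dominates the translation cost $\tau\eps$ for a universal $\tau$, uniformly in small $\eps$ — is where the smallness of $\bar\eps$ enters and must be tracked carefully; everything else (the Taylor expansion of $U$, interior/boundary Harnack for harmonic functions, and the final comparison of explicit $\sqrt t$-profiles in $B_\delta$) is routine. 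A secondary technical point is handling the behavior on $P^-\setminus L$, where one invokes the auxiliary reflection/removability lemma (Lemma \ref{rk} in the Appendix) exactly as in the proof of Theorem \ref{lineimpflat}.
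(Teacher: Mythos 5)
Your proposal has the right global architecture — interior point-to-neighborhood propagation, then a sliding comparison subsolution fed into Corollary \ref{compmon} — but it contains one genuine error and, more importantly, leaves the central technical step exactly where you yourself flag the ``main obstacle,'' so it does not close.

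\textbf{The erroneous claim.} You assert that $U_n(u_1-U)$ is nonnegative and harmonic in $B_{1/2}\setminus P^-$. This is false: $u_1$ is harmonic, not a solution of $\operatorname{div}(U_n^2\nabla g)=0$, so $U_n u_1$ and $U_n(u_1-U)$ are \emph{not} harmonic. You are importing the algebra of the \emph{linearized} problem (where $\Delta(U_n g)=0$) into the original one. The correct step — and the one the paper takes — is to observe $u_1(\bar X)-U(\bar X)\geq c\eps$ from \eqref{Bound} and a Taylor expansion of $U$, and then to invoke Lemma \ref{basic} in the Appendix, which gives the \emph{multiplicative} gain $u_1\geq(1+c'\eps)U$ in $\bar B_{1/4}$. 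The multiplicative form matters: a flat additive gain $(u_1-U)\geq c'\eps$ on an annulus uniformly away from $P^-$ does not degenerate correctly as $U\to 0$ near $F(u_1,u_2)$, so it cannot by itself dominate the translation cost near the free boundary. Lemma \ref{basic} is precisely the packaged boundary-Harnack-type statement you want, and it is stated for $g$ harmonic in $B_2^+(g)$, not across $P^-$.

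\textbf{The unresolved core.} Your proposed barrier, ``modeled on $v_1=U(X)+\delta\bigl(-\tfrac{|x'|^2}{n-1}+2(x_n+1)r\bigr)$-type corrections,'' is the explicit solution pair used in the Harnack inequality for the \emph{linearized} problem (Lemma \ref{HImin}); it is a minimizer of $J$, i.e.\ satisfies $\Delta(U_n v_1)=0$, but it is \emph{not} subharmonic, is not of the form required by Definition \ref{defsub}, and does not yield a comparison subsolution of \eqref{FBintro}. The paper does not build a new subsolution at all: it reuses the radial subsolution family $(v_1^R,v_2^R)$ already constructed in Proposition \ref{sub} (with $\beta=0$, $R=2C_0/(c''\eps)\sim 1/\eps$), multiplied by $(1+c''\eps)$ to match the multiplicative gain from Lemma \ref{basic}. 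All the quantitative estimates you identify as needing to be ``tracked carefully'' — the interior gain in $B_\delta$, the decay on $\p B_{1/4}$, the behaviour of $F(v_1^R)$ and of $v_2^R$ along $L$ — are already packaged in Corollary \ref{corest}, estimates \eqref{4}--\eqref{5}, and these feed directly into Corollary \ref{compmon} with $t_0=-C_1/R$, $t_1=c_0/R$. Without invoking this ready-made family (or constructing an equivalent one and verifying Definition \ref{defsub} and the hypotheses of Lemma \ref{comppri}), your argument does not go through: the subsolution verification is not a routine afterthought here, it is the lemma.

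One smaller point: the conclusion \eqref{onesideimprov} for $u_2$ is obtained in the paper from \eqref{4} followed by a rescaled version of Lemma \ref{basic} for $\bar U$ in $B_\rho$; your sketch treats the $u_2$ bound as coming for free from the final comparison, but one still needs this extra step to convert $(1+c''\eps)\bar U(X+\tfrac{c_0}{2R}e_n)$ into a pure translate $\bar U(X+\tfrac{c_0}{4R}e_n)$.
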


Lemma \ref{babyH}  relies on building an appropriate family of radial subsolutions. Precisely, 
let $R>0$ and denote by $$V_R(t,z) = U(t,z)((n-1)\frac{t}{R} + 1 ), \quad \bar V_R(t,z) = \bar U(t,z)((n-1)\frac{t}{R} + 1 ). $$ Then set, 
\begin{eqnarray*}\label{v1v2R}
v_R(X)= V_R(R- \sqrt{|x'|^2+(x_n-R)^2}, z)\\
\ \\
\bar v_R(X)= \bar V_R(R- \sqrt{|x'|^2+(x_n-R)^2}, z).\end{eqnarray*}
Finally, for $\beta>0$, define (for notational simplicity we drop the dependence on $\beta$,)
\begin{equation}
v_1^R=(1+\frac{\beta}{R})v_R, \quad v_2^R=(1+\frac{\beta}{R}){\bar v}_R.
\end{equation}

\begin{prop}\label{sub} If $R$ is large enough, the ordered pair $(v_1^R, v_2^R)$ is a comparison subsolution to \eqref{FBintro} in $B_2$ which is strictly monotone in the $e_n$-direction outside $\{v_i^R(x,0)=0\}$. Moreover, there exist functions $\tilde v_1^R, \tilde{\bar v}_2^R$ such that 
\begin{equation}\label{eq}
U(X) = v_1^R(X - \tilde v_1^R(X)e_n), \quad \text{in $B_1\setminus P^-$}, \quad \bar U(X) = v_2^R(X - \tilde {\bar v}_2^R(X)e_n), \quad \text{in $B_1\setminus P^+$}
\end{equation}
and
\begin{equation}\label{estvr}
|\tilde v_1^R(X) - \gamma_R(X)| \leq \frac{C}{R^2} , \quad |\tilde {\bar v}_2^R(X) - \bar \gamma_R(X)| \leq \frac{C}{R^2} ,
\end{equation} with
\begin{equation}\label{estvr2}
\gamma_R(X)=- \frac{|x'|^2}{2R} + 2(n-1)\frac{x_n r}{R} + 2 \beta \frac{r}{R}, \quad \bar \gamma_R(X)=- \frac{|x'|^2}{2R} - 2(n-1)\frac{x_n r}{R}- 2 \beta \frac{r}{R}
\end{equation} 
for $r= \sqrt{x_n^2+z^2}$ and $C$ depending on $\beta$.\end{prop}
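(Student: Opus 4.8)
The plan is to view $(v_1^R,v_2^R)$ as a \emph{bent copy} of the model pair $(U,\bar U)$, with curvature radius $R$ acting as the small parameter $1/R\to 0$. Everything rests on two elementary facts about the profile: since $U=\mathrm{Re}\,\sqrt{t+iz}$ is harmonic off $P^-$ and $1/2$-homogeneous, one has the pointwise identities
\[
\partial_t U=\frac{U}{2\rho},\qquad U(t,z)^2=\frac{t+\sqrt{t^2+z^2}}{2},\qquad \rho:=\sqrt{t^2+z^2},
\]
and the analogous ones for $\bar U$ (with $t\mapsto -t$); and the change of variables $v_R(X)=V_R\big(R-d,\,z\big)$, $d=d(x',x_n)=|(x',x_n)-Re_n|$, yields, using $|\nabla_{\R^n}d|=1$ and $\Delta_{\R^n}d=(n-1)/d$,
\[
\Delta_X v_R=\Delta_{(t,z)}V_R-\frac{n-1}{d}\,\partial_t V_R,\qquad t=R-d .
\]
Since $\Delta U=0$ and $\partial_tU=U/(2\rho)$ one computes $\Delta_{(t,z)}V_R=(n-1)U/(R\rho)$ and $\partial_tV_R=\tfrac{U}{2\rho}(1+(n-1)t/R)+(n-1)U/R$; substituting $d=R-t$ and expanding for $R$ large gives
\[
\Delta_X v_R=\frac{(n-1)U}{2R\rho}+O\!\left(\frac{U}{R^2\rho}\right)\ \ge\ 0\quad\text{in }B_2^+(v_1^R),
\]
strictly positive wherever $U>0$; the sign reverses for $\bar v_R$. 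Multiplying by the constant $1+\beta/R>0$, this establishes, for $R$ large, conditions (i)--(ii) of Definition~\ref{defsub}, and, since neither function is harmonic, condition (v) as well.

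For the geometric conditions, $v_1^R(x,0)=0\iff d(x',x_n)\ge R$ and $v_2^R(x,0)=0\iff d(x',x_n)\le R$; hence $\mathcal N(v_1^R)\cup\mathcal N(v_2^R)=\mathcal B_2$, the two $\mathcal N^o$'s are disjoint and nonempty, and $F(v_1^R,v_2^R)=\{d=R\}\cap\mathcal B_1=\partial B_R(Re_n)\cap\{z=0\}$ is a real-analytic hypersurface through the origin, which is (iii) and the first part of (iv). Along the inward normal $\nu_i$ to this sphere the coordinate $t=R-d$ is, to first order, arclength, so $v_i^R$ vanishes like $(1+\beta/R)\sqrt t\,(1+O(t))$ and $\alpha_1(x_0)=\alpha_2(x_0)=1+\beta/R$ at every free-boundary point; thus $\alpha_1\ge\alpha_2$, the equality case being absorbed by (v). Monotonicity follows from $\partial_{x_n}v_R=\partial_tV_R\cdot\partial_{x_n}(R-d)$ with $\partial_{x_n}(R-d)=(R-x_n)/d>0$ in $B_2$ and $\partial_tV_R>0$ wherever $U>0$ for $R$ large (reversed sign for $\bar v_R$). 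This proves $(v_1^R,v_2^R)$ is a strictly monotone comparison subsolution in $B_2$. The $\eps$-domain variations $\tilde v_1^R$, $\tilde{\bar v}_2^R$ then exist and are single-valued on $B_1\setminus P^\mp$: using $t=R-d=x_n-\tfrac{|x'|^2}{2R}+O(1/R^2)$ one checks a flatness bound $U(X-\eta e_n)\le v_1^R(X)\le U(X+\eta e_n)$ in $B_{3/2}$ with $\eta=\eta(R)\to 0$ (a statement about \emph{translates} of $U$, not about the sup-norm distance, which near $P^-$ is only $O(R^{-1/4})$), and invokes the construction recalled after \eqref{flattilde} from \cite{DR}; strict monotonicity gives single-valuedness, and in particular $|\tilde v_1^R|,|\tilde{\bar v}_2^R|\le\eta(R)$.

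The heart of the matter is the quantitative estimate \eqref{estvr}. Put $s=\tilde v_1^R(X)$, $t=\rho_R(x',x_n-s)$ in \eqref{eq}, square, and use $U(t,z)^2=\tfrac12(t+\sqrt{t^2+z^2})$; the defining relation becomes the \emph{scalar} identity
\[
x_n+\sqrt{x_n^2+z^2}=(1+\beta/R)^2\Big(1+\tfrac{(n-1)t}{R}\Big)^2\big(t+\sqrt{t^2+z^2}\big).
\]
Writing $r=\sqrt{x_n^2+z^2}$ and taking logarithms, the right-hand factor contributes $-\tfrac2R(\beta+(n-1)t)+O(1/R^2)$, while $\log\big(U(t,z)^2/U(x_n,z)^2\big)=\int_{x_n}^{t}\frac{d\tau}{\sqrt{\tau^2+z^2}}$. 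A first crude use of this (the integrand being $\ge c/\rho$) forces $t-x_n=O(r/R)$, whence $s=O(1/R)$; feeding this back, $\int_{x_n}^{t}\tfrac{d\tau}{\sqrt{\tau^2+z^2}}=\tfrac{t-x_n}{r}+O(1/R^2)$ and $(n-1)t/R=(n-1)x_n/R+O(1/R^2)$, so
\[
\frac{t-x_n}{r}=-\frac{2(\beta+(n-1)x_n)}{R}+O(1/R^2).
\]
Combining with $t=\rho_R(x',x_n-s)=x_n-s-\tfrac{|x'|^2}{2R}+O(1/R^2)$ and solving for $s$ yields $s=\gamma_R(X)+O(1/R^2)$ with $\gamma_R$ as in \eqref{estvr2}; the computation for $\tilde{\bar v}_2^R$ is identical after $U\mapsto\bar U$, which flips the sign of the integral term and produces $\bar\gamma_R$.

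The step I expect to be the real obstacle is precisely this last estimate, and within it the requirement that every error be controlled \emph{uniformly up to the free boundary}, where $U$, $\rho$ and $x_n+r$ simultaneously degenerate. One cannot route the argument through the sup-distance of $v_1^R$ to $U$; the degeneracies must be made to cancel algebraically, which is why one works on the squared domain-variation identity and with the logarithmic derivative of $U^2$. By contrast, the Laplacian-sign computations, the identification of the nodal sets and of the free boundary, and the monotonicity are routine once the two model identities for $U$ are available.
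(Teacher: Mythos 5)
Your treatment of the Laplacian sign, the free-boundary expansion, and the strict $e_n$-monotonicity in Step~1 is essentially the same as the paper's: both compute $\Delta_X v_R$ via the change of variables $v_R(X)=V_R(R-d,z)$, use $\partial_t U=U/(2\rho)$ to reduce the sign check to a scalar inequality in $t,\rho,R$, and observe $\alpha_1=\alpha_2=1+\beta/R$ at the free boundary so that condition (v) of Definition~\ref{defsub} is supplied by strict non-harmonicity.

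Where you genuinely diverge is in the proof of the $\eps$-domain-variation estimate~\eqref{estvr}. The paper, following Proposition~3.5 of \cite{DS}, first reduces to a two-dimensional claim
$U(t+\gamma_R\mp C/R^2,z)\leq v_{\beta,R}(t,z)\leq U(t+\gamma_R\pm C/R^2,z)$ and proves it by Taylor-expanding $U$ in the $t$-variable, using the two model estimates $|U_{tt}|\leq Cr^{-1}U_t$ (i.e.\ \eqref{diadic}) and $U_t=U/(2r)$ to obtain the multiplicative sandwich \eqref{taylor1}. You instead exploit the exact algebraic identity $U(t,z)^2=(t+\sqrt{t^2+z^2})/2$, square the defining relation \eqref{eq} and take logarithms, turning the problem into the scalar equation
$\int_{x_n}^t\!\frac{d\tau}{\sqrt{\tau^2+z^2}}=-\frac2R(\beta+(n-1)t)+O(R^{-2})$
to be solved for $s=\tilde v_1^R(X)$ via $t=x_n-s-|x'|^2/(2R)+O(R^{-2})$. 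This is a clean and legitimately different route, and it does produce $\gamma_R$ as in \eqref{estvr2}. The two methods control the degeneracy near the free boundary by the same underlying mechanism, namely the multiplicative structure $U_t=U/(2r)$, so the uniformity issue you worried about is resolved in both in essentially the same way.

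Two small points to tighten. First, the ``first crude use'' that yields $t-x_n=O(r/R)$ is not quite self-contained: you should first record that the translate-flatness $U(X-\eta e_n)\leq v_1^R(X)\leq U(X+\eta e_n)$ holds with $\eta=O(1/R)$ (which follows from $R-d=x_n-|x'|^2/(2R)+O(R^{-2})$, the bounded prefactor $(1+\beta/R)(1+(n-1)(R-d)/R)=1+O(1/R)$, and a Taylor bound of the type \eqref{taylor1}), so that $|s|=|\tilde v_1^R|=O(1/R)$ a priori and the integrand over $[x_n,t]$ is uniformly comparable to $1/r$; only then does the integral equation upgrade $|t-x_n|$ to $O(r/R)+O(R^{-2})$ as you need. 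Second, the sup-norm distance $\|v_1^R-U\|_{L^\infty}$ near $P^-$ is $O(R^{-1/2})$, coming from the $1/2$-H\"older modulus of $U$ applied to the $O(1/R)$ shift $|x'|^2/(2R)$, not $O(R^{-1/4})$ as written; the conclusion you draw from it (that one cannot route through the sup-distance and must use translate flatness) is unaffected.
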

\begin{proof}  
{\bf Step 1.} We need to show that $v_1^R$ is strictly subharmonic in $B_2^+(v_1^R)$ while $v_2^R$ is strictly superharmonic  in $B_2^+(v_2^R)$. We sketch the proof of the claim about $v_2^R.$ The claim about $v_1^R$ can be obtained by similar computations which are already contained in Proposition 6.4 in \cite{DR} to which we refer for further details.

On the set $B_2^+(v_2^R)$, 

$$\Delta \bar v_R (X) = (( \p_{tt}+\p_{zz}))\bar V_R)(R-\rho, z) - \frac{n-1}{\rho}\p_t \bar V_R(R-\rho, z), $$
where $$\rho :=  \sqrt{|x'|^2+(x_n-R)^2}.$$ 
Also for $(t,z)$ outside the set  $\{(t,0) : t \geq 0\} $
\begin{align*}\Delta_{t,z} \bar V_R (t,z)&= (\p_{tt}+ \p_{zz})\bar V_R(t,z) = \frac{2(n-1)}{R} \p_t \bar U(t,z)+ (1+(n-1)\frac{t}{R})\Delta_{t,z} \bar U(t,z)\\
&= \frac{2(n-1)}{R} \p_t \bar U(t,z),\end{align*}
and 
\begin{equation}\label{pV}\p_t \bar V_R(t,z) = (1+(n-1)\frac{t}{R})\p_t \bar U(t,z) + \frac{n-1}{R} \bar U(t,z).\end{equation}
Thus we need to prove that in $B^+_2(v_2^R)$ 
$$\frac{2(n-1)}{R} \p_t \bar U - \frac{n-1}{\rho}[(1+(n-1)\frac{R-\rho}{R})\p_t \bar U + \frac{n-1}{R} \bar U] < 0,$$
where $\bar U$ and $\p_t \bar U$ are evaluated at $(R-\rho,z).$ 

Set $ t = R -\rho$, then straightforward computations reduce the inequality above to
$$[2(R- t) - R - (n-1) t]\p_t \bar U(t, z) - (n-1)\bar U(t, z) < 0.$$
Using that $\p_t \bar U( t, z)=- \bar U( t,z)/(2 r)$ with $r^2= t^2 + z^2$, this inequality is immediately satisfied 
for $R$ large.

Now we prove that $(v_1^R, v_2^R)$ satisfies the free boundary condition.
Observe that $$F(v_1^R,v_2^R) = \p \mathcal{B}_R(Re_n,0) \cap \mathcal{B}_2,$$ 
and hence it is smooth. By the radial symmetry it is enough to show that the free boundary condition is satisfied at $0$. Precisely we have,
\begin{equation}\label{expa1}v_R(x,z) = U(x_n,z) + o(|(x,z)|^{1/2}), \quad \text{as $(x,z) \rightarrow (0,0),$}\end{equation}
and
\begin{equation}\label{expa2}\bar v_R(x,z) = \bar U(x_n,z) + o(|(x,z)|^{1/2}), \quad \text{as $(x,z) \rightarrow (0,0).$}\end{equation}

Indeed, let us prove the expansion for $\bar v_R.$ Since $\bar U$ is H\"older continuous with exponent $1/2$, it follows that
$$|\bar V_R(t,z) - \bar V_R(t_0,z)| \leq C |t-t_0|^{1/2} \quad \text{for $|t-t_0| \leq 1.$}$$
Thus for $(x,z) \in B_s,$ $s$ small
$$|\bar v_R(x,z) - \bar V_R(x_n,z)| = |\bar V_R(R-\rho, z) - \bar V_R(x_n, z)| \leq C |R-\rho - x_n|^{1/2} \leq C s.$$
Hence for $(x,z) \in B_s$
\begin{align*}|\bar v_R(x,z) - \bar U(x_n,z)| & \leq Cs + |\bar V_R(x_n,z) - \bar U(x_n, z)|.\end{align*} Thus from the formula for $\bar V_R$
$$|\bar v_R(x,z) - \bar U(x_n,z)| \leq Cs +(n-1) \frac{|x_n|}{R}\bar U(x_n,z) \leq C' s, \quad (x,z) \in B_s$$
which gives the desired expansion \eqref{expa2}. 

Now, we show that $v_2^R$ is strictly monotone decreasing in the $e_n$-direction in $B_2^+(v_2^R)$. Outside of its zero plate,
$$\p _{x_n} \bar v_R(x) = - \frac{x_n-R}{\rho} \p_t \bar V_R(R-\rho, z).$$ Thus we only need to show that $\bar V_R(t,z)$ is strictly monotone decreasing in $t$ outside $\{(t,0) : t \geq 0\}$ . This follows immediately from \eqref{pV} and the formula for $\bar U$.

\

{\bf Step 2.} We show the existence of $\tilde {v}_1^R$ satisfying \eqref{eq} and \eqref{estvr}. The claim for $v_2^R$ can be proved with similar arguments.

Following Proposition 3.5 in \cite{DS} it suffices to show the 2-dimensional claim:
$$U(t+ \gamma_R(t,z) - \frac{C}{R^2}, z) \leq v_{\beta, R}(t,z) \leq U(t+ \gamma_R(t,z) + \frac{C}{R^2}, z) $$
with
$$v_{\beta, R}(t,z):=(1+\frac \beta R) V_R(t,z),$$
and
$$\gamma_R(t,z)= 2\frac{(n-1)}{R} rt + 2\beta \frac{r}{R}, \quad r=t^2+z^2.$$

Indeed since (see the properties listed at the beginning of the Appendix,) $$|U_{tt}| \le C r^{-1} U_t$$we have that if $|\mu| \le r/2$ then
$$|U(t+\mu,z)-(U(t,z)+\mu U_t(t,z))| \le \mu^2 |U_{tt}(t',z)| \le C \mu^2 r^{-1} U_t(t,z) ,$$
where in the last inequality we used \eqref{diadic}. Thus, since $U_t=U/(2 r)$,
$$(1+\frac {\mu}{2 r}+ C \frac{\mu^2}{r^2})U(t,s) \ge U(t+\mu,z) \ge (1+\frac {\mu}{2 r} - C \frac{\mu^2}{r^2})U(t,z) .$$
Choosing $$\mu = \tilde \mu \pm 4C \frac{\tilde \mu^2}{r}$$ we obtain that 
\begin{equation}\label{taylor1}U(t+\tilde \mu + 4C \frac {\tilde \mu^2}{r},z) \ge (1+\frac{\tilde \mu}{ 2 r})U(t,z) \ge U(t+\tilde \mu - 4C \frac {\tilde \mu^2}{r},z), \end{equation} provided that $|\tilde \mu| / r \leq c$, with $c$ sufficiently small.
Since $$v_{\beta, R}=(1+\frac{n-1}{R}t + \frac{\beta}{R}+ \frac{(n-1)\beta}{R^2} t) \,U$$ we can apply the inequality above with
$$\tilde \mu = 2r (\frac{n-1}{R}t + \frac{\beta}{R}+ \frac{(n-1)\beta}{R^2} t)$$ hence $|\tilde \mu|/r \le \frac{C}{R}$ and obtain the claim.

\end{proof}

Furthermore, these radial subsolutions satisfy the following estimates.

\begin{cor}\label{corest}There exist $\delta, c_0,C_0, C_1$ depending on $\beta$, such that 
\begin{align}
\label{4}& v_1^R(X + \frac{c_0}{R}e_n) \geq U(X + \frac{c_0}{2R} e_n), \quad  v_2^R(X + \frac{c_0}{R}e_n) \leq \bar U(X + \frac{c_0}{2R} e_n), \quad \text{in $B_{\delta},$}\\
\label{3}& v_1^R(X - \frac{C_1}{R} e_n) \leq U(X), \quad v_2^R(X - \frac{C_1}{R} e_n) \geq \bar U(X), \quad \text{in $\overline{B}_1,$}\end{align}
\begin{equation}
\label{2} v_1^R(X+ \frac{c_0}{R}e_n) \leq (1+\frac{C_0}{R}) U(X), \quad \text{in $\overline{B}_1 \setminus B_{1/4}$},\end{equation} with strict inequality on $F(v_1^R(X+ \frac{c_0}{R} e_n)) \cap ( \overline{B}_1 \setminus B_{1/4}),$
and
\begin{equation}
\label{5} v_2^R(X+ \frac{c_0}{R}e_n) \geq (1-\frac{C_0}{R})\bar U(X), \quad \text{in $\overline{B}_1 \setminus B_{1/4}$},
\end{equation} with strict inequality on $L \cap (\overline{B}_1 \setminus B_{1/4}).$
\end{cor}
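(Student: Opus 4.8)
The plan is to establish the four estimates by working, as in Proposition \ref{sub}, with the one-dimensional (really two-dimensional in $(t,z)$) profiles $V_R$ and $\bar V_R$ and then composing with the radial change of variables $\rho = \sqrt{|x'|^2+(x_n-R)^2}$. The crucial input is already in hand: the expansion \eqref{estvr}--\eqref{estvr2} for the $\eps$-domain variations $\tilde v_1^R, \tilde{\bar v}_2^R$, which says that $v_1^R$ and $v_2^R$ differ from translates of $U$ and $\bar U$ by a shift that is $\gamma_R(X) + O(R^{-2})$, respectively $\bar\gamma_R(X)+O(R^{-2})$. So each of \eqref{4}, \eqref{3}, \eqref{2}, \eqref{5} reduces, via Lemma \ref{elem} and Lemma \ref{elem2} (monotonicity in $e_n$ transfers pointwise inequalities to inequalities between domain variations and vice versa), to comparing the affine-plus-radial quantities $\gamma_R(X)$, $\bar\gamma_R(X)$ with explicit constant shifts $c_0/R$, $c_0/(2R)$, $C_1/R$ on the relevant regions.

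First I would record the elementary inequalities for $\gamma_R$: on $B_\delta$ one has $|x'|^2/(2R) \le \delta^2/(2R)$, $|x_n r/R| \le \delta^2/R$, $r/R \le \delta/R$, so for $\delta$ small $\gamma_R(X) \ge c_0/(2R)$ once $c_0$ is chosen depending on $\beta$ (the $2\beta r/R$ term dominates near the origin), which together with \eqref{estvr} and the translation identity $\widetilde{(v_{i}^R)(\cdot + a e_n)}_\eps = \tilde v_i^R + a$ gives \eqref{4}. For \eqref{3}, on $\overline B_1$ one has $\gamma_R(X) \ge -C_1/R$ for a suitable $C_1 = C_1(\beta)$ (the only negative contribution is $-|x'|^2/(2R) \ge -1/(2R)$, and the terms $2(n-1)x_n r/R$, $2\beta r/R$ are bounded below by $-C/R$), so again via \eqref{estvr} and Lemmas \ref{elem}--\ref{elem2} we get $v_1^R(X - (C_1/R)e_n) \le U(X)$, and symmetrically for $v_2^R$ using $\bar\gamma_R \ge -C_1/R$.

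For \eqref{2} and \eqref{5} — the estimates with the multiplicative factor $(1\pm C_0/R)$ on the annulus $\overline B_1 \setminus B_{1/4}$ — I would argue directly from the defining formula $v_{\beta,R} = (1 + \frac{n-1}{R}t + \frac{\beta}{R} + \frac{(n-1)\beta}{R^2}t)U$ used in the proof of Proposition \ref{sub}: on $\overline B_1\setminus B_{1/4}$ we have $r \ge 1/4$, hence $t = R-\rho$ is comparable to $x_n$ and bounded, so $v_1^R(X + (c_0/R)e_n)/U(X)$ is, up to translating the argument, $1 + O(1/R)$ uniformly, giving $\le (1+C_0/R)U$; the strictness on $F(v_1^R(\cdot + (c_0/R)e_n)) \cap (\overline B_1\setminus B_{1/4})$ comes from the fact that at such free-boundary points the coefficient $(n-1)t/R + \beta/R + \dots$ is strictly positive for $R$ large (since $t$ stays bounded below there once we are away from $B_{1/4}$, using the geometry of the sphere $\p\mathcal B_R(Re_n,0)$), so $v_1^R > U$ there strictly. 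The estimate \eqref{5} is the mirror image: $v_2^R(X+(c_0/R)e_n)/\bar U(X) = 1 - O(1/R) \ge 1 - C_0/R$, with strict inequality on $L \cap (\overline B_1\setminus B_{1/4})$ because on $L$ one has $x_n = 0$ hence $t = R - |x'|$ and, after the shift, the relevant coefficient is strictly less than $1$ for $R$ large. The main obstacle, as usual in this circle of arguments, is bookkeeping the shift $c_0/R$ through the composition with the map $\rho \mapsto R - \rho$ and keeping all error terms genuinely $O(1/R)$ (respectively $O(1/R^2)$ for the domain-variation expansions) with constants that depend only on $\beta$ and $n$; once the one-dimensional Taylor estimates of the form \eqref{taylor1} are invoked as in Proposition \ref{sub}, the rest is routine.
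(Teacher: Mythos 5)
Your treatment of \eqref{4} and \eqref{3} is sound in spirit (reduce to a comparison of $\tilde v_1^R, \tilde{\bar v}_2^R$ with constants via Lemmas \ref{elem}--\ref{elem2} and \eqref{estvr}), but the specific bound you state for \eqref{4} is wrong: you claim $\gamma_R(X) \ge c_0/(2R)$ on $B_\delta$, which fails at $X=0$ (where $\gamma_R=0$) and more generally whenever $r$ is small but $|x'|$ is not. What one actually needs is the one-sided bound $\tilde v_1^R(X) \geq -c_0/(2R)$, i.e.\ $\gamma_R(X) \geq -c_0/(2R) + O(R^{-2})$, which does follow from the elementary estimates you list once $c_0$ is chosen larger than a multiple of $\delta^2$; since \eqref{2}--\eqref{5} force $c_0$ to also be small, this requires choosing $\delta$ after $c_0$, a bookkeeping point worth spelling out.

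The real gap is in your treatment of \eqref{2} and \eqref{5}. You assert that on $\overline B_1\setminus B_{1/4}$ one has $r\geq 1/4$, and from this that the ratio $v_1^R(X+\tfrac{c_0}{R}e_n)/U(X)$ is $1+O(1/R)$ uniformly. Neither is true: $r=\sqrt{x_n^2+z^2}$ can vanish on $\overline B_1\setminus B_{1/4}$ (take $|x'|$ close to $1$, $x_n=z=0$), and near $P^-\cup P^+$ the denominator degenerates so no uniform ratio bound can hold. In particular the strictness claims you give (e.g.\ ``$v_1^R > U$ strictly'' on the shifted free boundary) are stated in the wrong direction and do not establish what \eqref{2} asserts, which is $0 < (1+\tfrac{C_0}{R})U(X)$ at points of $F\bigl(v_1^R(\cdot+\tfrac{c_0}{R}e_n)\bigr)$ in the annulus. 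The paper handles this by splitting $\overline B_1\setminus B_{1/4}$ into a region near $L$ (where a direct domain-variation comparison, via \eqref{eq} and Lemmas \ref{elem}--\ref{elem2}, yields the \emph{stronger} bound $v_2^R(X+\tfrac{c_0}{R}e_n)\geq \bar U(X)$, with strictness on $L$ coming from sign of $\bar\gamma_R$) and a region $r>\bar\delta$ bounded away from $L$ (where the additive shift $v_2^R(X+\tfrac{c_0}{R}e_n)\geq \bar U(X+\tfrac{\bar C}{R}e_n)$ is converted to a multiplicative factor via the $\bar U$-version of Lemma \ref{cor2}, which is precisely where the restriction $r>\bar\delta$ is needed). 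Your proof does not carry out either half of this decomposition, and the uniform ratio claim that substitutes for it is false.
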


\begin{proof} Estimates \eqref{4}-\eqref{3} follow from \eqref{eq} and Lemmas \ref{elem}-\ref{elem2}. Let us prove \eqref{5}. The proof of \eqref{2} can be obtained similarly.

Using again \eqref{eq} and Lemma \ref{elem2} we get that
$$v_2^R(X+ \frac{c_0}{R}e_n) \geq \bar U(X) \quad \text{in $B_1 \cap \{|x'| \geq \frac 1 8, |x_n| \leq \bar \delta\},$}$$
with $c_0, \bar \delta$ depending on $\beta$ and $R$ large (with strict inequality on $L$.)
Thus we need to show that \eqref{5} holds on the complement of $(\bar B_1 \setminus B_{1/4}) \cap \{r \leq \bar \delta\}.$ On the other hand in $B_1,$
$$v_2^R(X + \frac{c_0}{R}e_n) \geq \bar U(X + \frac{\bar C}{R}e_n)$$
for some $\bar C$ depending on $\beta$ (combining again \eqref{eq} and Lemma \ref{elem2}.) From the version of Lemma \ref{cor2} in the Appendix for the function $\bar U$, we conclude that
$$\bar U(X + \frac{\bar C}{R}e_n) \geq (1-C \frac{\bar C}{R}) \bar U(X),$$
as long as $r > \bar \delta$ and with $C=C(\bar \delta).$ The desired conclusion immediately follows.
\end{proof}

Having established  the Proposition and the Corollary above, we are ready to present the proof of Lemma \ref{babyH}.

\

\textit{Proof of Lemma $\ref{babyH}$.} In view of \eqref{Bound}, we have
$$u_1(\bar X) - U(\bar X) \geq U(\bar X+\eps e_n) - U(\bar X)= \p_t U(\bar X + \lambda e_n)\eps \geq c \eps, \quad \lambda \in (0,\eps).$$
Hence by Lemma \ref{basic} in the Appendix it follows that
\begin{equation}\label{51} u_1(X) \geq (1+c'\eps)U(X) \quad \text{in $\bar B_{1/4}.$}
\end{equation}
Now set,
$$w_i^{R,t}(X):=(1+c''\eps) v_i^R(X+te_n), \quad X \in B_1$$
with $(v_1^R, v_2^R)$ the subsolution pair defined in \eqref{v1v2R} for $\beta=0$, $c'' \leq c'$ and $R$ large to be chosen later. 


If $t_0= -C_1/R$, then it follows from \eqref{3} and the inequality above that
$$w_1^{R, t_0} \leq (1+c'\eps )U(X) \leq u_1, \quad w_2^{R, t_0} \geq \bar U \geq u_2 \quad \text{in $\bar B_{1/4}$}.$$
If $t_1 = c_0/R$, then choosing $R=2C_0/c'' \eps$, and $\eps$ small (depending on $c''$), \eqref{2} gives
$$w_1^{R,t_1} \leq (1+c'\eps)U \leq u_1 \quad \text{on $\p B_{1/4}$}$$
while \eqref{5} gives
$$w_2^{R,t_1} \geq (1+c'' \eps)(1-\frac{C_0}{R})\bar U \geq \bar U \geq u_2 \quad \text{on $\p B_{1/4}$},$$
with the inequality being strict on the necessary sets to apply Corollary \ref{compmon}. Thus, we conclude that 
$$w_1^{R, t_1} \leq u_1, \quad w^2_{R,t_2} \geq u_2 \quad \text{in $B_{1/4}.$}$$ From \eqref{4} it follows that in $B_\rho,$ for $\rho \leq \delta,$
$$u_1 \geq U(X + \frac{c_0}{2R}e_n)$$ while
$$u_2 \leq (1+ c''\eps)\bar U(X + \frac{c_0}{2R}e_n)\leq \bar U(X+ \frac{c_0}{2R}e_n - Cc'' \rho \eps e_n).$$
The last inequality follows from a variant of Lemma \ref{basic} for $\bar U$, rescaled in the ball $B_\rho.$ Therefore, for $\rho$ small (depending on $c'', C$,) we get that 
$$u_2 \leq \bar U(X + \frac{c_0}{4R}) \quad \text {in $B_\rho$},$$
and the desired gain is achieved.
\qed

\section{Improvement of Flatness}\label{sec:improvement}

In this section we show our main improvement of flatness theorem, from which the desired regularity Theorem \ref{mainT} follows with standard arguments (see for example \cite{CC}.)

\begin{thm}[Improvement of flatness]\label{iflat}There exist $\bar \eps > 0$ and $\rho>0$ universal constants such that for all  $0 < \eps \leq \bar \eps$ if $(u_1,u_2)$ solves \eqref{FBintro}  with $0 \in F(u_1,u_2)$ and it satisfies 
\begin{equation}\label{flatimp1}U(X - \eps e_n) \leq u_1(X) \leq U(X+\eps e_n) \quad \textrm{in $B_1$,}\end{equation}
\begin{equation}\label{flatimp2}\bar U(X + \eps e_n) \leq u_2(X) \leq \bar U(X-\eps e_n) \quad \textrm{in $B_1$,}\end{equation}
 then 
\begin{equation}\label{flatimp3}\alpha U(x \cdot \nu  - \frac \eps 2 \rho, z) \leq u_1(X) \leq \alpha U(x\cdot \nu+\frac \eps 2 \rho, z) \quad \textrm{in $B_\rho$},\end{equation}
\begin{equation}\label{flatimp4}\alpha \bar U(x \cdot \nu  + \frac \eps 2 \rho, z) \leq u_2(X) \leq \alpha \bar U(x\cdot \nu-\frac \eps 2 \rho, z) \quad \textrm{in $B_\rho$},\end{equation} 
 for some direction $\nu \in \R^n, |\nu|=1,$ and $|\alpha-1| \leq \eps.$ 
\end{thm}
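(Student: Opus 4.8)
The plan is to run the standard Caffarelli-type improvement-of-flatness compactness argument, transplanted to the $\eps$-domain variation setting already set up in Sections \ref{sec:viscosity}--\ref{sec:harnack}. First I would argue by contradiction: suppose there is a sequence $\eps_m \to 0$ and solutions $(u_1^m, u_2^m)$ to \eqref{FBintro} with $0 \in F(u_1^m,u_2^m)$ satisfying the flatness hypotheses \eqref{flatimp1}--\eqref{flatimp2} with $\eps_m$ in place of $\eps$, but for which the conclusion \eqref{flatimp3}--\eqref{flatimp4} fails for every unit direction $\nu$ and every $\alpha$ with $|\alpha - 1| \le \eps_m$, for the universal $\rho$ to be fixed later. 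By the renormalization Lemma \ref{norm*} (or directly the hypotheses), for each $m$ we may pass to the $\eps_m$-domain variations $\tilde g_1^m := (\widetilde{u_1^m})_{\eps_m}$ on $B_1 \setminus P^-$ and $\tilde g_2^m := (\widetilde{\overline{u_2^m}})_{\eps_m}$ on $B_1 \setminus P^+$; these are (possibly multivalued) functions with values in $[-1,1]$, and by the flatness they satisfy $\tilde g_i^m(0) \to 0$ since $0 \in F$.

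The core of the argument is a compactness step: I would show that, up to a subsequence, the graphs $A^\mp_{\eps_m} \cap (B_{1/2} \times [-1,1])$ converge in the Hausdorff sense to the graphs of continuous functions $g_1^*$ on $\overline{B}_{1/2} \setminus P^-$ and $g_2^*$ on $\overline{B}_{1/2} \setminus P^+$, and that the limit pair $(g_1^*, g_2^*)$ is a viscosity solution of the linearized problem \eqref{linear} in the sense of Definition \ref{linearsol}. The equicontinuity needed to extract this limit is exactly what Corollary \ref{corHI} provides: it gives a uniform (in $m$) H\"older modulus of continuity $\alpha t^\beta$ for the bounding functions $a^\mp_{\eps_m}, b^\mp_{\eps_m}$, together with the matching condition $a^-_{\eps_m} = a^+_{\eps_m}$, $b^-_{\eps_m} = b^+_{\eps_m}$ on $B_{1/2} \cap F(u_1^m,u_2^m)$, which in the limit forces $g_1^* = g_2^*$ on $B_{1/2} \cap L$. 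To see that $g_i^*$ inherits the equation $\Delta(U_n g_1^*) = 0$ in $B_{1/2}\setminus P^-$ (and the analogue for $g_2^*$) and the viscosity free-boundary condition $(g_1^*)_r + (g_2^*)_r = 0$ on $L$, I would translate the comparison principle Lemma \ref{linearcomp} to the limit: any smooth strict subsolution $(\phi_1,\phi_2)$ of \eqref{linear3}-type that touches $(g_1^*, g_2^*)$ from below at an interior point of $L$ can be lifted, via Proposition \ref{1}, to a genuine comparison subsolution $(\phi_{1,\eps_m}, \phi_{2,\eps_m})$ of \eqref{FBintro} for $m$ large (the $O(\eps^2)$ error in \eqref{laplaceest} is absorbed by the strictness), and then Lemma \ref{linearcomp} / Corollary \ref{compmon} yields a contradiction with the touching; the interior equations follow from the same lifting applied to interior touching functions, and uniqueness in Theorem \ref{class} is not needed at this stage, only the viscosity notion.

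Once the limit $(g_1^*, g_2^*)$ is identified as a viscosity solution of \eqref{linear} with $|g_i^*| \le 1$ and $g_i^*(0) = 0$, the linearized improvement of flatness Theorem \ref{lineimpflat} applies and gives
\[
a_0 + a' \cdot x' + b_i r - C|X|^{3/2} \le g_i^*(X) \le a_0 + a' \cdot x' + b_i r + C|X|^{3/2}
\]
with $b_1 + b_2 = 0$; moreover $g_i^*(0)=0$ forces $a_0 = 0$. Fixing $\rho$ small so that $C\rho^{3/2} \le \tfrac{1}{10}\rho$, this says that on $B_\rho$ the limiting graph lies in a $\tfrac{1}{2}\rho$-neighborhood (in the $e_n$-domain-variation sense) of the linear function $x' \mapsto a'\cdot x'$, with slope $b_i$ in the $r$ variable encoding the multiplicative correction $\alpha$ and $b_1 = -b_2$ encoding the reflection law. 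Choosing the new direction $\nu$ to be the (normalized) projection of $e_n + \eps(a',0)$ and $\alpha = 1 + \eps b_1 + o(\eps)$, I would then transfer this back to the original functions: for $m$ large the graphs $A^\mp_{\eps_m}$ in $B_\rho$ are within $o(\eps_m)\rho$ of the limiting graph, hence within $\tfrac{1}{2}\eps_m\rho$ once we also absorb the $C|X|^{3/2}$ error, which via Lemma \ref{elem}--\ref{elem2} (and the inclusions \eqref{gtildeg}--\eqref{gtildeg2}) translates exactly into \eqref{flatimp3}--\eqref{flatimp4} with parameters $\nu$ and $\alpha$ as above, contradicting the assumption that the conclusion failed for $(u_1^m,u_2^m)$.

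The main obstacle I anticipate is the compactness/passage-to-the-limit step, specifically verifying rigorously that the Hausdorff limit of the multivalued $\eps$-domain variations is single-valued and continuous up to $L$ and that it is genuinely a viscosity solution of \eqref{linear} across the thin set $L$ (which has positive capacity, so there is no off-the-shelf trace theory, as the authors themselves note). This requires carefully combining the uniform H\"older estimates of Corollary \ref{corHI} with the lifting of linearized comparison functions through Proposition \ref{1}, and checking that the free-boundary touching points of $(u_1^m,u_2^m)$ converge to touching points on $L$ for the limit — the matching condition $a^-_\eps = a^+_\eps$ on $F(u_1,u_2)$ in Corollary \ref{corHI} is the crucial structural input that makes the two one-sided limits glue into a single solution satisfying the reflection condition $(g_1^*)_r + (g_2^*)_r = 0$.
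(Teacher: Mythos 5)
Your proposal takes essentially the same route as the paper: argue by compactness/contradiction, show the $\eps_m$-domain variations converge to a viscosity solution of the linearized problem \eqref{linear}, invoke the linearized improvement Theorem \ref{lineimpflat}, and transfer back to the original functions via Lemma \ref{elem}. The paper packages the first two steps into Lemmas \ref{ginfty} and \ref{tildesolves}, and then the proof of Theorem \ref{iflat} proper is exactly your final ``transfer back'' step, carried out by constructing $v(X)=\alpha U(x\cdot\nu-\tfrac{\eps_k}{2}\rho,z)$ explicitly and estimating $\tilde v_{\eps_k}$.

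There is, however, a genuine gap in your verification that the limit $(g_1^*,g_2^*)$ is a viscosity solution of \eqref{linear} \emph{across $L$}. You propose to lift a strict linearized subsolution $(\phi_1,\phi_2)$ touching on $L$ ``via Proposition \ref{1}'' to a comparison subsolution of \eqref{FBintro}. But Proposition \ref{1} only applies in a ball $B_\lambda(\bar X)\subset\R^{n+1}\setminus P^-$, i.e., strictly away from $P^-$ and hence away from $L$; it controls the interior Laplacian $\Delta\varphi_\eps$ but says nothing about the free boundary behavior of the lifted object, and indeed cannot produce a legitimate comparison subsolution for \eqref{FBintro} near $L$. What actually makes the boundary touching argument work in the paper is the explicit radial family $(v_1^R,v_2^R)$ of Proposition \ref{sub}: one dominates the touching test functions from below by the polynomials $q_1,q_2$ in \eqref{estvr2}, and Proposition \ref{sub} shows these are (up to $O(1/R^2)$) exactly the $\eps$-domain variations of $v_i^R$ for the right choice $R=R_k\sim 1/\eps_k$. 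This identification is what feeds into Lemma \ref{linearcomp} and yields the contradiction. Your matching condition $a^-_\eps=a^+_\eps$ from Corollary \ref{corHI} gives $g_1^*=g_2^*$ on $L$, but by itself it does not give the Neumann-type reflection condition $(g_1^*)_r+(g_2^*)_r=0$; that requires the radial subsolution construction. Incorporating Proposition \ref{sub} at this point would close the gap.

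A smaller note: the correct asymptotics from \eqref{taylor1} give $\alpha=1+\tfrac12 b_-\eps$ (not $1+\eps b_1+o(\eps)$); the factor $\tfrac12$ comes from $U_t=U/(2r)$. This does not affect the structure of the argument but should be tracked if you want the explicit formula.
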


The proof of Theorem \ref{iflat} relies on the next two lemmas.
The first Lemma is an immediate consequence of Corollary \ref{corHI}.

\begin{lem}\label{ginfty} Let $\eps_k \rightarrow 0$ and let $(u_1^k, u_2^k)$ be a sequence of solutions to \eqref{FBintro} with $0 \in F(u_1^k, u_2^k)$ satisfying \begin{equation}\label{flatimp_k}U(X - \eps_k e_n) \leq u_1^k(X) \leq U(X+\eps_k e_n) \quad \textrm{in $B_1$,}\end{equation}
\begin{equation}\label{flatimp_k2}\bar U(X - \eps_k e_n) \leq u_2^k(X) \leq \bar U(X+\eps_k e_n) \quad \textrm{in $B_1$.}\end{equation}
Denote by  $\tilde u_1^k, \tilde{\bar u}_2^k$ the  $\eps_k$-domain variation of $(u_1^k, u_2^k)$ with respect to $U, \bar U$ respectively.
Then the sequences of sets 
$$A_1^k := \{(X, \tilde u_1^k (X)) : X \in B_{1-\eps_k} \setminus P^-\},$$ 
$$A_2^k := \{(X, \tilde {\bar u}_2^k (X)) : X \in B_{1-\eps_k} \setminus P^+\},$$ 
have a subsequence that converge uniformly (in Hausdorff distance) in $B_{1/2} \setminus P^\mp$ to the graph $$A^\mp_\infty := \{(X,\tilde u^\mp_\infty(X)) : X \in B_{1/2} \setminus P^\mp\},$$ where $\tilde u^\mp_\infty$ is  a H\"older continuous function in $B_{1/2}$ and
$$\tilde u^-_\infty=\tilde u^+_\infty \quad \text{on $B_{1/2}\cap L$.}$$
\end{lem}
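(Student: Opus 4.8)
The plan is to extract a convergent subsequence of the $\eps_k$-domain variations and show the limiting object is a single well-defined graph over $B_{1/2}\setminus P^\mp$, with the matching condition on $L$. The starting point is Corollary \ref{corHI}: the hypotheses \eqref{flatimp_k}--\eqref{flatimp_k2} are exactly the flatness assumptions required there (with $\eps=\eps_k\le\bar\eps/2$ for $k$ large), so for each $k$ the set $A_1^k\cap(B_{1/2}\times[-1,1])$ lies between the graphs of two functions $a^-_{\eps_k}, b^-_{\eps_k}$ with $b^-_{\eps_k}-a^-_{\eps_k}\le 2(1-\eta)^{m_0(k)-1}$, and similarly for $A_2^k$ with $a^+_{\eps_k}, b^+_{\eps_k}$; moreover $a^-_{\eps_k}=a^+_{\eps_k}$ and $b^-_{\eps_k}=b^+_{\eps_k}$ on $B_{1/2}\cap F(u_1^k,u_2^k)$. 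Since $\eps_k\to 0$ we may choose $m_0(k)\to\infty$ (e.g. $m_0(k)$ the largest integer with $2\eps_k(1-\eta)^{m_0}\eta^{-m_0}\le\bar\eps$), so the vertical oscillation of $A_i^k$ inside $B_{1/2}$ tends to $0$.

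Next I would invoke the uniform modulus of continuity: Corollary \ref{corHI} furnishes the H\"older bound $|a^\mp_{\eps_k}(X)-a^\mp_{\eps_k}(Y)|\le \alpha|X-Y|^\beta$ (and the same for $b^\mp_{\eps_k}$) with $\alpha,\beta$ depending only on $\eta$, hence independent of $k$. By Arzel\`a--Ascoli, a subsequence of $a^-_{\eps_k}$ converges uniformly on $B_{1/2}\setminus P^-$ to some H\"older function $\tilde u^-_\infty$, and likewise $a^+_{\eps_k}\to\tilde u^+_\infty$ on $B_{1/2}\setminus P^+$; since $b^\mp_{\eps_k}-a^\mp_{\eps_k}\to 0$ uniformly, the functions $b^\mp_{\eps_k}$ converge to the same limits. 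Because $A_i^k$ is squeezed between these two graphs, $A_i^k$ converges in the Hausdorff distance to the graph of $\tilde u^-_\infty$ (resp. $\tilde u^+_\infty$) over $B_{1/2}\setminus P^\mp$; the limit $\tilde u^\mp_\infty$ inherits the H\"older estimate, so it extends continuously across $L$ (which has zero $n$-dimensional relevance here since $P^\mp\setminus L$ is where the restriction occurs).

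Finally, the matching condition $\tilde u^-_\infty=\tilde u^+_\infty$ on $B_{1/2}\cap L$ has to be established. Since $0\in F(u_1^k,u_2^k)$ and the $u_i^k$ are $\eps_k$-close to $U,\bar U$, the renormalization Lemma \ref{norm*} (or directly the flatness together with the non-degeneracy of $U$) confines $F(u_1^k,u_2^k)\cap\mathcal B_{1/2}$ to an $O(\eps_k)$-neighborhood of $L$; thus for any $x'_0$ with $(x'_0,0,0)\in B_{1/2}\cap L$ there are points of $F(u_1^k,u_2^k)$ within distance $o(1)$ of it. On those points $a^-_{\eps_k}=a^+_{\eps_k}$, and by the uniform H\"older continuity of both $a^\mp_{\eps_k}$ the values at $(x'_0,0,0)$ differ by $o(1)$; passing to the limit gives $\tilde u^-_\infty(x'_0,0,0)=\tilde u^+_\infty(x'_0,0,0)$. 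The main obstacle I anticipate is precisely this last point: one must quantitatively control how the free boundaries $F(u_1^k,u_2^k)$ collapse onto $L$ and ensure the two one-sided domain variations are genuinely comparable there, rather than merely each being H\"older; everything else is a routine compactness argument packaged by Corollary \ref{corHI}.
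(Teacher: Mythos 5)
Your argument is correct and is exactly the fleshing-out of the paper's claim that Lemma \ref{ginfty} ``is an immediate consequence of Corollary \ref{corHI}'': uniform H\"older bounds with vanishing oscillation from the Harnack iteration, Arzel\`a--Ascoli, squeeze the sets $A_i^k$ between the envelopes, and transfer the matching from $F(u_1^k,u_2^k)$ to $L$ using the $O(\eps_k)$ collapse of the free boundary onto $L$ together with the uniform H\"older modulus. The point you flag as the ``main obstacle'' --- the matching on $L$ --- is indeed the only step that needs a word, and your handling of it is sound: the flatness forces $F(u_1^k,u_2^k)\cap\mathcal B_{1/2}\subset\{|x_n|\leq\eps_k\}$ and (by continuity along each line $\{x'=x_0'\}$) $F$ meets every such fiber, so every $X_0\in L\cap B_{1/2}$ has a point of $F$ within $O(\eps_k)$, and the equality $a^-_{\eps_k}=a^+_{\eps_k}$ there propagates to $X_0$ in the limit via the H\"older estimate. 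The paper offers no proof to compare against beyond the pointer to Corollary \ref{corHI}, so there is no divergence of approach here.
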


\begin{lem}\label{tildesolves} The pair $(\tilde u^-_\infty, \tilde u^+_\infty)$ in Lemma $\ref{ginfty}$ solves the linearized problem \eqref{linear} in $B_{1/2}$. In particular,  in $B_{2\rho} \setminus P^\mp$ ($\rho$ small universal )
 \begin{equation}  a' \cdot x' + b_\mp r-\frac{1}{8} \rho \leq \tilde u^\mp_\infty (X)\leq a' \cdot x' + b_\mp r+ \frac{1}{8}\rho,\end{equation}for some $a'\in \R^{n-1}, b_\mp \in \R$ (universally bounded) such that
$$b_-+b_+=0.$$ 
\end{lem}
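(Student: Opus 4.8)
\textbf{Proof strategy for Lemma \ref{tildesolves}.}

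The plan is to pass to the limit in the viscosity formulation of the $\eps_k$-domain variation, using the Harnack-type convergence of Lemma \ref{ginfty} and the comparison principles of Section \ref{sec:viscosity}, and then to invoke Theorem \ref{lineimpflat}. First I would establish that $(\tilde u^-_\infty,\tilde u^+_\infty)$ satisfies the two interior equations $\Delta(U_n\,\tilde u^-_\infty)=0$ in $B_{1/2}\setminus P^-$ and $\Delta(-\bar U_n\,\tilde u^+_\infty)=0$ in $B_{1/2}\setminus P^+$. This follows from Proposition \ref{1}: since each $u_i^k$ is harmonic in $B_1^+(u_i^k)$, the computation \eqref{laplaceest} gives $\Delta (\tilde u_1^k)_{\eps_k}$-type information showing that, after dividing by $\eps_k$ and letting $k\to\infty$, the limiting function is a solution of the linear equation in the distributional sense; elliptic regularity (as in Lemma \ref{Unhisharm}) then upgrades this to a classical solution away from $P^\mp$. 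The matching condition $\tilde u^-_\infty=\tilde u^+_\infty$ on $B_{1/2}\cap L$ is already part of the conclusion of Lemma \ref{ginfty}.

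The heart of the argument is the free boundary condition $(\tilde u^-_\infty)_r+(\tilde u^+_\infty)_r=0$ on $B_{1/2}\cap L$, encoded in the viscosity sense of Definition \ref{linearsol}(iv). I would argue this by contradiction: suppose a test pair $(\phi_1,\phi_2)$ with $b_1(X_0)+b_2(X_0)>0$ touches $(\tilde u^-_\infty,\tilde u^+_\infty)$ by below at some $X_0\in B_{1/2}\cap L$. Using the reconstruction formulas \eqref{deftilde}-\eqref{deftilde2}, I would build from $\phi_1,\phi_2$ (slightly perturbed to make the touching strict and the relevant strict inequalities hold) a genuine strict comparison subsolution $(v_1,v_2)$ to \eqref{FBintro} in the original variables, monotone in the $e_n$-direction, whose $\eps_k$-domain variations are close to $\phi_i$; the condition $b_1+b_2>0$ translates precisely into the strict ordering $\alpha_1(x_0)>\alpha_2(x_0)$ required in Definition \ref{defsub}(iv)-(v). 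Then Lemma \ref{linearcomp} (or directly Corollary \ref{compmon}), applied to $(u_1^k,u_2^k)$ for $k$ large, would propagate the inequality $\widetilde{(v_1)}_{\eps_k}+c\le \widetilde{(u_1^k)}_{\eps_k}$ from an annulus to the whole ball, contradicting the fact that the touching of $(\phi_1,\phi_2)$ at $X_0$ is realized in the limit. The case $b_1+b_2<0$ is symmetric, using comparison supersolutions.

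Once $(\tilde u^-_\infty,\tilde u^+_\infty)$ is known to be a viscosity solution of \eqref{linear} with $-1\le \tilde u^\mp_\infty\le 1$ (the bound coming from $|(\tilde u_i^k)_{\eps_k}|\le 1$ in the flatness hypotheses), Theorem \ref{lineimpflat} applies verbatim and yields the two-sided estimate $a'\cdot x'+b_\mp r - C|X|^{3/2}\le \tilde u^\mp_\infty(X)\le a'\cdot x'+b_\mp r+C|X|^{3/2}$ with $b_-+b_+=0$; choosing $\rho$ small enough (universal) so that $C(2\rho)^{3/2}\le \tfrac18\rho$ gives the stated conclusion in $B_{2\rho}\setminus P^\mp$. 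The main obstacle I anticipate is the second step: carefully converting an abstract test pair for the linearized free boundary condition into an admissible \emph{strict} comparison subsolution for the nonlinear problem \eqref{FBintro} — one must handle the $o(\sqrt t)$ versus $r^{3/2}$ error terms, ensure the constructed $v_i$ are genuinely $C^2$ with the correct sub/superharmonicity and with a $C^2$ free boundary, and verify the strict inequalities on $\p B_1$, on $\mathcal F(v_1^t)$ and on $\mathcal F(u_2)$ needed to invoke the comparison principle. This is exactly the type of delicate passage-to-the-limit-in-viscosity-sense argument that is standard in the improvement-of-flatness literature (cf. \cite{DR,DS}), but it requires care with the geometry near $L$.
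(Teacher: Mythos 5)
Your overall skeleton is the right one (pass to the limit in a viscosity formulation, use the comparison machinery, then invoke Theorem~\ref{lineimpflat} to get the expansion), but two of the key steps would not go through as you have written them.

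For the interior equations, the route through ``a solution in the distributional sense'' is not available: the $\eps_k$-domain variations $(\tilde u_i^k)_{\eps_k}$ are defined implicitly by $U(X)=u_i^k(X-\eps_k(\tilde u_i^k)_{\eps_k}(X)e_n)$ and are only known to be (multi-valued, H\"older) continuous; there is no a priori control on their gradients with which to pass the equation to the limit in a weak sense, and Proposition~\ref{1} gives $\Delta\varphi_\eps=\eps\,\Delta(U_n\varphi)+O(\eps^2)$ only for a \emph{smooth} $\varphi$. The paper handles this by a genuine viscosity argument: take a smooth $\tilde\varphi$ touching $\tilde u^-_\infty$ strictly by below at $X_0\in B_{1/2}\setminus P^-$; by uniform convergence of the graphs $A_1^k$ there are $c_k\to0$ and $X_k\to X_0$ so that $\tilde\varphi+c_k$ touches $\tilde u_1^k$ by below at $X_k$; reconstruct $\varphi_k$ via $\varphi_k(X-\eps_k(\tilde\varphi+c_k)(X)e_n)=U(X)$; by the monotonicity properties ($\eqref{gtildeg}$) $\varphi_k$ touches $u_1^k$ by below at a point $Y_k\in B_1^+(u_1^k)$, whence $\Delta\varphi_k(Y_k)\le0$ by harmonicity of $u_1^k$; finally Proposition~\ref{1}, divided by $\eps_k$, gives $\Delta(U_n\tilde\varphi)(X_0)\le0$ in the limit. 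The symmetric argument with touching from above closes the viscosity inequality.

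The more serious gap is in the free-boundary condition. You propose to take the abstract test pair $(\phi_1,\phi_2)$, invert the $\eps$-variation formulas \eqref{deftilde}--\eqref{deftilde2}, and thereby manufacture a strict comparison subsolution to \eqref{FBintro}. This does not work: there is no reason the reconstruction $\phi_\eps$ of an arbitrary smooth $\phi$ should be subharmonic (Proposition~\ref{1} shows $\Delta\phi_\eps$ is $O(\eps)$, not $\le 0$), nor that the resulting pair satisfies the ordering of the $\alpha_i$'s at the free boundary required by Definition~\ref{defsub}, nor that the free boundary it carves out is $C^2$. What the paper actually does is replace $(\phi_1,\phi_2)$ by explicit polynomials $(q_1,q_2)$ of the form $q_1=a_0-\tfrac{\alpha}{2}|x'-y_0'|^2+2\alpha(n-1)x_nr+2\alpha\beta r$ (and the reflected $q_2$) that touch $(\phi_1,\phi_2)$ from below at $X_0$ and stay strictly below on the shell of a tubular neighborhood; the crucial observation is that these $q_i$ are, up to an $O(1/R^2)=O(\eps_k^2/\alpha^2)$ error, precisely the $\eps$-domain variations of the \emph{pre-constructed} radial comparison subsolutions $v_i^R$ of Proposition~\ref{sub} (with $R_k=1/(\alpha\eps_k)$, center shifted to $Y'=(y_0',0,0)$, and the parameter $\beta$ encoding $b_1+b_2>0$). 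One then compares $(u_1^k,u_2^k)$ with these translates of $v_i^{R_k}$ via Lemma~\ref{linearcomp}, propagating the strict gap from the shell into the interior and contradicting the touching. Your text shows you sensed this was the hard point (``the main obstacle I anticipate\dots''), but the remedy is not to perturb and invert $\phi_i$ directly; it is to sandwich $\phi_i$ by the already-certified family from Proposition~\ref{sub}, whose subsolution property and domain-variation asymptotics have been verified independently.
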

\begin{proof}
We start by showing that $U_n \tilde u^-_\infty$ is harmonic in $B_{1/2} \setminus P^-. $ Similarly, we can show that  $-\bar U_n \tilde u^+_\infty$ is harmonic in $B_{1/2} \setminus P^+. $ 

Let $\tilde \varphi$ be a smooth function which touches $\tilde u^-_\infty$ strictly by below at $X_0 \in B_{1/2} \setminus P^-.$ We need to show that 
\begin{equation}\label{des} \Delta (U_n \tilde \varphi)(X_0) \leq 0.
\end{equation}
Since by the previous lemma, the sequence $A^k_1$ converges uniformly to $A^-_\infty$ in $B_{1/2} \setminus P^-$ we conclude that there exist a sequence of constants $c_k \rightarrow 0$ and a sequence of points $X_k \in B_{1/2} \setminus P^-$, $X_k \rightarrow X_0$ such that $\tilde \varphi_k := \tilde \varphi + c_k$ touches $\tilde u^k_1$ by below at $X_k $ for all $k$ large enough. 

Define the function $\varphi_k$ by the following identity
\begin{equation}\label{varphi}\varphi_k (X- \eps_k \tilde{\varphi_k}(X) e_n) = U(X). \end{equation}

Then according to \eqref{gtildeg} $\varphi_k$ touches $u^k_1$ by below at $Y_k = X_k - \eps_k \tilde \varphi_k(X_k)e_n \in B_1^+(u_1^k),$ for $k$ large enough. Thus, since $u_1^k$ satisfies \eqref{FBintro} in $B_1$ it follows that
\begin{equation}\label{sign}
\Delta \varphi_k(Y_k) \leq 0.
\end{equation}
By Proposition \ref{1},
$$\Delta \varphi_k (Y_k)= \eps_k \Delta(U_n \tilde{\varphi})(Y_k) + O(\eps_k^2).$$
Thus, as $k \to \infty$
$$\Delta (U_n \tilde \varphi)(X_0) \leq 0,$$ as desired.

Next we need to show that $$(\tilde u^-_\infty)_r(X_0) + (\tilde u^+_\infty)_r(X_0)=0, \quad X_0=(x'_0,0,0) \in B_{1/2} \cap L,$$ in the viscosity sense of Definition \ref{linearsol}. 

Assume by contradiction that there exist $\phi_i$ continuous around  $X_0$ and satisfying $$\phi_1(X) =l(x') + b_1(X_0) r + O(|x'-x'_0|^2 + r^{3/2}), $$
$$\phi_2(X) = l(x')+ b_2(X_0) r + O(|x'-x'_0|^2+r^{3/2}), $$ with $l(x')= a_0 + a_1\cdot (x'-x'_0),$ 
and $$b_1(X_0) +b_2(X_0)>0$$ with  $(\phi_1, \phi_2)$ which touch $(\tilde u^-_\infty,\tilde u^+_\infty)$ by below at $X_0$.

Then we can find constants $\alpha,  \beta, \delta, \bar r$  and a point $Y'=(y'_0,0,0) \in B_2$ depending on $\phi_1,\phi_2$ such that the polynomials

$$q_1(X)=a_0 - \frac{\alpha}{2}|x'-y'_0|^2 + 2\alpha (n-1)x_n r+ 2\alpha \beta r,$$
$$q_2(X)=a_0 - \frac{\alpha}{2}|x'-y'_0|^2 - 2\alpha (n-1)x_n r- 2\alpha \beta r,$$
 touch $\phi_1,\phi_2$ by below at $X_0$  in a tubular neighborhood  $N_{\bar r}= \{|x'-x'_0|\leq \bar r, r \leq \bar r\}$ of $X_0,$ with 
 
 $$\phi_i- q_i \geq \delta>0, \quad \text{on $N_{\bar r} \setminus N_{\bar r/2}$.}$$ This implies that
\begin{equation}\label{second}
\tilde u^-_\infty - q_1 \geq \delta>0, \quad \tilde u^+_\infty - q_2 \geq \delta>0 \quad \text{on $N_{\bar r} \setminus N_{\bar r/2}$,}\end{equation}
and 
\begin{equation}\label{third}
\tilde u^-_\infty (X_0)- q_1(X_0) =0, \quad \tilde u^+_\infty (X_0)- q_2(X_0) =0.\end{equation}
In particular,\begin{equation}\label{thirdprime}
|\tilde u^-_\infty (X^-_k)- q_1(X^-_k)| \rightarrow 0, \quad |\tilde u^+_\infty (X^+_k)- q_2(X^+_k)| \rightarrow 0, \quad X_k^\mp  \in N_{\bar r} \setminus P^\mp, X_k^\mp \rightarrow X_0. \end{equation}

Now, let us choose $R_k=1/(\alpha \eps_k)$ and let us define
$$w^k_{1}(X) = v_1^{R_k}(X-Y'+\eps_k a_0 e_n), \quad Y'=(y'_0,0,0),$$
$$w^k_{2}(X) = v_2^{R_k}(X-Y'+\eps_k a_0 e_n), \quad Y'=(y'_0,0,0),$$
with $v_i^R $ the functions defined in Proposition \ref{sub} for the choice of $\beta$ above. Then the $\eps_k$-domain variation of $w_1^k, w_2^k$, which we call $\tilde w_1^k, \tilde{\bar w}_2^k$,  can be easily computed from the definition
$$w_1^k(X - \eps_k \tilde w_1^k(X)e_n)=U(X), \quad w_2^k(X - \eps_k \tilde {\bar w}_2^k(X)e_n)=\bar U(X).$$
Indeed, since $U$ and $\bar U$ are  constant in the $x'$-direction, this identity is equivalent to
$$v_1^{R_k}(X-Y'+\eps_k a_0 e_n - \eps_k \tilde w_1^k(X)e_n) = U(X-Y'),$$
$$v_2^{R_k}(X-Y'+\eps_k a_0 e_n - \eps_k \tilde {\bar w}_2^k(X)e_n) = \bar U(X-Y'),$$
which in view of Proposition \ref{sub} gives us
$$\tilde v_1^{R_k}(X-Y') = \eps_k(\tilde w_1^k(X) -a_0),$$
$$\tilde {\bar v}_2^{R_k}(X-Y') = \eps_k(\tilde {\bar w}_2^k(X) -a_0).$$
From the choice of $R_k$, the formula for $q_i$ and \eqref{estvr}, we then conclude that
$$\tilde w_1^k (X) = q_1(X) + \alpha^2 O(\eps_k),$$
$$\tilde {\bar w}_2^k (X) = q_2(X) + \alpha^2 O(\eps_k),$$  and hence
\begin{equation}\label{first} |\tilde w_1^k - q_1| \leq C\eps_k, \quad   |\tilde {\bar w}_2^k - q_2| \leq C\eps_k, \quad \text{in $N_{\bar r} \setminus P^\mp$ respectively.}\end{equation}
Thus, from the uniform convergence of $A_k^\mp$ to $A_\infty^\mp$ and \eqref{second}-\eqref{first} we get that for all $k$ large enough
\begin{equation}\label{fcont}
\tilde u^1_k - \tilde w_1^k \geq \frac \delta 2 \quad \text{in $(N_{\bar r} \setminus N_{\bar r/2}) \setminus P^-,$} 
\end{equation}
\begin{equation}\label{fcont2}
\tilde u^2_k - \tilde {\bar w}_2^k \geq \frac \delta 2 \quad \text{in $(N_{\bar r} \setminus N_{\bar r/2}) \setminus P^+.$} 
\end{equation}
Similarly, from the uniform convergence of $A_k^\mp$ to $A_\infty^\mp$ and \eqref{first}-\eqref{thirdprime} we get that for $k$ large
\begin{equation}\label{scont}
\tilde u^1_k(X^-_k) - \tilde w_1^k(X^-_k) \leq \frac \delta 4,  \quad \text{ for some sequence $X^-_k \in N_{\bar r} \setminus P^-, X^-_k \rightarrow X_0.$}\end{equation} 
\begin{equation}\label{scont2}
\tilde u^2_k(X^+_k) - \tilde {\bar w}_2^k(X^+_k) \leq \frac \delta 4,  \quad \text{ for some sequence $X^+_k \in N_{\bar r} \setminus P^+, X^+_k \rightarrow X_0.$}\end{equation} 

On the other hand, it follows from Lemma \ref{linearcomp} and \eqref{fcont}-\eqref{fcont2} that
\begin{equation*}
\tilde u^1_k - \tilde w_1^k \geq \frac \delta 2, \quad  \text{in $N_{\bar r} \setminus P^-,$}\end{equation*}
\begin{equation*}
\tilde u^2_k - \tilde {\bar w}_2^k \geq \frac \delta 2, \text{in $N_{\bar r} \setminus P^+.$}\end{equation*}
We have reached a contradiction.
\end{proof}

We are now ready to present the proof of Theorem \ref{iflat}.

\

{\it Proof of Theorem $\ref{iflat}$}. Let $\rho$ be the universal constant in Lemma \ref{tildesolves} and assume by contradiction that there exists a sequence $\eps_k \to 0$ and a sequence of solutions $(u_1^k, u_2^k)$ to \eqref{FBintro} in $B_1$ which satisfies the  flatness assumption but not the conclusion of the theorem. Then by Lemmas \ref{ginfty}-\ref{tildesolves}, for $k$ large, we have in $B_{2\rho} \setminus P^\mp,$
\begin{equation}\label{1/4}  a' \cdot x' + b_- r-\frac{1}{4} \rho \leq \tilde u^k_1 (X)\leq a' \cdot x' + b_- r+ \frac{1}{4}\rho.\end{equation}
\begin{equation}  a' \cdot x' + b_+ r-\frac{1}{4} \rho \leq \tilde {\bar u}^k_2 (X)\leq a' \cdot x' + b_+ r+ \frac{1}{4}\rho.\end{equation}
Let $$\alpha= 1+ \frac 1 2 b_- \eps =1-\frac 1 2 b_+\eps.$$
Set 
$$v(X): =\alpha U(x \cdot \nu - \frac{\eps_k}{2} \rho, z), \quad \nu=(\nu', \nu_n):= \frac{(0,1) +\eps_k (a',0)}{\sqrt{1+\eps_k^2 |a'|^2}}.$$ Since $\nu_n>0,$ $v$ is strictly monotone increasing in the $e_n$-direction say in $B_{2\rho}^+(v).$
We wish to show that
\begin{equation}\label{wish}\tilde v_{\eps_k} \leq \tilde u_1^k \quad \text{in $B_{\rho+\eps_k} \setminus P^-.$}\end{equation} Then by Lemma \ref{elem} we can conclude that
$$v \leq u_1^k \quad \text{in $B_\rho$}.$$
With similar arguments we obtain that $u_1^k, u_2^k$ satisfy all the bounds in the conclusion of the theorem and reach a contradiction.

Notice that in view of Lemma \ref{basic} in the Appendix, 
$$|\tilde v_{\eps_k}| \leq C, \quad \text{$C$ universal.}$$

According to \eqref{taylor1}, we have
\begin{equation}\label{taylor}
\alpha U(t,z) = U(t + \eps_k b_- r  + O(\eps_k^2),z), \quad r^2=t^2+z^2 .
\end{equation}
Thus we can estimate $\tilde v_{\eps_k}$ using the definition:
$$v(X-\eps_k \tilde v_{\eps_k}(X)e_n)=U(X), \quad X \not \in P^-.$$
Indeed,
$$v(X-\eps \tilde v_{\eps_k}(X)e_n)= \alpha U(t(X), z), \quad t(X):=x \cdot \nu -\eps_k \tilde v_\eps(X)\nu_n - \frac{\eps_k}{2}\rho,$$
hence by the claim,
$$U(X) = U(t(X) + \eps_k b_-\bar r + O(\eps_k^2)\bar r ^2, z ), \quad \bar r^2 = t(X)^2+z^2 .$$
We conclude that, say in $B_1 \setminus P^-$
$$x_n = x' \cdot \nu' + x_n \nu_n -\eps \tilde v_{\eps_k}(X)\nu_n - \frac{\eps_k}{2}\rho + \eps_k b_- \bar r + O(\eps_k^2).$$
Moreover, 
$$\bar r= r \nu_n + O(\eps_k).$$
Indeed, since $\nu_n \geq 1-C\eps_k^2$ 
$$|\bar r - r\nu_n|\leq  |t(X) - x_n| + C\eps_k^2$$
and also $\nu'=O(\eps_k)$ and $\tilde v_{\eps_k}$ is bounded, thus
$$t(X) -x_n = x' \cdot \nu' + x_n(\nu_n-1) - \eps \tilde v_{\eps_k} \nu_n -\frac{\eps_k}{2}\rho= O(\eps_k).$$

Hence,
$$\tilde v_{\eps_k} (X) = \frac{1}{\eps_k \nu_n}[x' \cdot \nu' + (\nu_n-1)x_n -\frac{\eps_k}{2}\rho + \eps_k b_- r\nu_n + O(\eps_k^2)]$$
and by the definition of $\nu$,
$$\tilde v_{\eps_k}(x) \leq x' \cdot a' + b_- r - \frac{\rho}{2} + O(\eps_k).$$ The inequality in \eqref{wish} now follows immediately for $k$ large using \eqref{1/4}.

\section{Segregated critical profiles}\label{sec:segregated}

\sus{
In order to effectively describe the nodal set of our  segregated minimal profiles,  we need to take properly into account  the extremality features of such configurations. To this aim, we consider the following definition.

\begin{defn}[Class $\Geh$]\label{def:segr ent prof}
For an open, $z$-symmetric $\Omega\subset\R^{n+1}$, we define  the class $\Geh(\Omega)$ of the \emph{segregated critical profiles} as the set of even-in-$z$ vector valued functions $\bu=(u_1,\ldots,u_k)\in (H^1(\Omega))^k\cap \Ceh^{0,1/2}(\Omega)$, whose components are all nonnegative  and such that

 \begin{equation}\label{eq:system_u_i}-\Delta u_i=0 \qquad \qquad {\rm in }\; \Omega\setminus\Neh(u_i),\ \ i=1,\ldots,k,.\tag{G1} \end{equation}

 \begin{equation}\label{eq:segregation} 
\tilde\Omega=:\Omega\cap\R^n\times\{0\} =\bigcup_{i=1}^k \mathcal N(u_i), \quad \mathcal N^o(u_i) \cap \mathcal N^o(\sum_{j\neq i}u_j)=\emptyset\; \forall i=1,\dots,k\;,\tag{G2}
\end{equation}

 \begin{equation}\label{eq:G3} 
 \int_\Omega \left\{dY(X)\nabla \bu(X)\cdot\nabla \bu(X)-\dfrac{1}{2}\textrm{div} {Y}(X)|\nabla \bu(X)|^2\right\}d X=0
\;,\tag{G3}
\end{equation} for every  vector field $Y\in\mathcal C_0^\infty(\Omega;\R^{n+1})$ \sus{whose $z$-component is odd-in-$z$ } ({\it the odd-in-$z$ domain variation formula}). 

As customary, we say that $\bu\in \Geh_\text{loc}(\R^{n+1})$ if $\bu\in \Geh(B_R)$ for every $R>0$.

\end{defn}

\begin{rem}\label{rem:unconstrained}
It is immediate to check, using the penalized functionals \eqref{eq:penalized_functional}, the a priori estimates of Theorem \ref{thm: intro_local} and the maximal regularity result in Theorem \ref{thm: intro_limiting_prof},  that the minimizing critical profiles given by Definition \ref{def:segr_min_conf} are indeed segregated critical profiles in the sense above. A word of caution must be entered concerning the second condition in \eqref{eq:segregation}. Let indeed $\bu\not\equiv 0$ be a minimal energy configuration in a ball with all component identically vanishing at the characteristic hyperplane. Assume that $u_i\not\equiv 0$ then, as all the other components vanish on $\Re^n$, it is an \emph{unconstrained even-in-$z$ energy minimizer} and, as such, it can not identically vanish on $\{z=0\}$.
\end{rem}

}
Now we proceed to the study of the segregated critical profiles in the sense of Definition \ref{def:segr ent prof}.  Once more, we shall take advantage of domain variations, though in a more general framework. In what follows the domain variations formula \eqref{eq:G3} will play a key role in two complementary contests. On one hand, we will exploit it in order to prove Almgren's monotonicity formula; furthermore  we will show  that it implies a fundamental 
weak reflection law which will allow us to prove that every element of the class is in fact a viscosity solution in the appropriate sense.
Such a weak reflection property is expressed by the following proposition.
\begin{prop}\label{prop:wrl} Let  
$\bu=(u_1,u_2)=(a_1U,a_2\bar U)$ for some constants, $U$ being defined in \eqref{U}. Then, the validity of the domain variation formula  \eqref{eq:G3}  is equivalent to $a_1^2=a_2^2$. \end{prop}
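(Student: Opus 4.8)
The plan is to compute both sides of the domain variation identity \eqref{eq:G3} directly for the explicit pair $\bu=(a_1 U, a_2\bar U)$, exploiting the fact that $U$ and $\bar U$ are homogeneous of degree $1/2$, are harmonic off their respective half-hyperplanes $P^-$ and $P^+$, and meet along the common edge $L=\{x_n=0,z=0\}$ which has codimension two. First I would observe that $|\nabla\bu|^2 = a_1^2|\nabla U|^2 + a_2^2|\nabla\bar U|^2$ since the two components have disjoint supports on $\{z=0\}$ (and away from $L$ the gradients are smooth), so the integrand in \eqref{eq:G3} splits into two pieces, one built from $U$ with weight $a_1^2$ and one built from $\bar U$ with weight $a_2^2$.

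Next I would integrate by parts away from the singular edge. Fix a smooth vector field $Y$ with $z$-component odd in $z$, and for $\delta>0$ let $T_\delta$ be the $\delta$-tube $\{\mathrm{dist}(X,L)\le\delta\}$. On $\Omega\setminus T_\delta$ the function $U$ is smooth on each side of $P^-$ and harmonic there, so the standard Rellich--Pohozaev/domain-variation computation gives
\[
\int_{\Omega\setminus T_\delta}\Big\{ dY\,\nabla U\cdot\nabla U - \tfrac12\operatorname{div}Y\,|\nabla U|^2\Big\}\,dX
= -\int_{\p T_\delta\cap\Omega}\Big\{(\p_\nu U)(Y\cdot\nabla U) - \tfrac12 (Y\cdot\nu)|\nabla U|^2\Big\}\,d\sigma,
\]
using that the bulk terms vanish by harmonicity and that the contributions on $\{z=0\}\setminus L$ cancel because $U_z=0$ there (Neumann-type) while $Y_z$ is odd hence zero on $\{z=0\}$. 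The only surviving term is the flux across the small cylinder $\p T_\delta$. Writing $U=r^{1/2}\cos(\theta/2)$ in the $(t,z)$-plane with $r=\sqrt{x_n^2+z^2}$, one has $|\nabla U|^2 = \tfrac14 r^{-1}$ and $\p_\nu U$ on $\p T_\delta$ is of size $r^{-1/2}=\delta^{-1/2}$, so each term on $\p T_\delta$ scales like $\delta^{-1}\cdot\delta = \delta^0$ against the length element $d\sigma\sim\delta\,d\theta\,dx'$; hence the limit as $\delta\to0$ is a genuine, generally nonzero, boundary integral over $L$. Carrying out the same computation for $\bar U$ (whose singular edge is the same $L$) produces the analogous limit with the roles of $P^+$ and the angular profile $\bar U$.

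The heart of the matter is then to show that the two limiting edge integrals are \emph{equal up to sign}, so that their combination $a_1^2(\text{edge term from }U) + a_2^2(\text{edge term from }\bar U)$ vanishes for all admissible $Y$ if and only if $a_1^2=a_2^2$. The key point is the reflection symmetry $\bar U(t,z)=U(-t,z)$: changing variables $t\mapsto -t$ in the $\bar U$ edge integral turns it into minus the $U$ edge integral, because the single relevant component of $Y$ at $L$ is $Y_n$ (the normal-to-$L$ tangential direction in $\{z=0\}$), which is odd under $t\mapsto -t$ only in the sense that it gets paired against the oppositely-oriented angular data; concretely one checks that $\int_{\p T_\delta}(\p_\nu \bar U)(Y\cdot\nabla\bar U)\,d\sigma \to -\int_{\p T_\delta}(\p_\nu U)(Y\cdot\nabla U)\,d\sigma$ in the limit. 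Therefore \eqref{eq:G3} reduces to $(a_1^2-a_2^2)\,\mathcal{L}(Y)=0$ for all $Y$, where $\mathcal{L}(Y)$ is the (nonzero for suitable $Y$) edge functional, giving the claimed equivalence.

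The main obstacle I anticipate is the careful bookkeeping of the edge limit: one must verify that the cylindrical flux term has a well-defined, finite limit as $\delta\to0$ (no logarithmic blow-up, no vanishing), compute its precise value as a linear functional of $Y|_L$ via the explicit angular integral $\int_{-\pi}^{\pi}(\cdots)\cos^2(\theta/2)\,d\theta$ type quantities already appearing in Lemma \ref{char}, and exhibit at least one $Y$ for which it is nonzero — otherwise the proposition would be vacuous. Once the sharp form of this edge functional is in hand (it is essentially the same $\pi\int_L b(x')\varphi(x',0,0)\,dx'$-type expression computed in the proof of Lemma \ref{char}, with $b_i$ replaced by the coefficients coming from $\nabla U$ against $Y$), the symmetry argument and the final algebraic conclusion $a_1^2=a_2^2$ are immediate.
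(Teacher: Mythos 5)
Your proposal is correct and shares the paper's core idea: interpret \eqref{eq:G3} as a Rellich--Pohozaev identity, reduce to a flux computation around the singular edge $L$, and exploit $\bar U(t,z)=U(-t,z)$ to see that the $U$ and $\bar U$ edge contributions cancel exactly when $a_1^2=a_2^2$. The paper, however, avoids the $\delta\to 0$ limit you flag as the main obstacle: it uses the boundary form \eqref{eq:poho} with the constant field $Y\equiv e_n$ (so the bulk integrand vanishes identically, since $dY=0$ and $\mathrm{div}\,Y=0$) over a cylinder $\omega=Q_l\times B_r$; the lateral sides contribute nothing because $\nu\perp e_n$ and $\nu\perp\nabla u_i$ there, and what remains is the exact angular integral
\[
(2l)^{n-1}\int_0^{2\pi}\left\{\frac{a_1^2}{4}\cos^2\frac{\theta}{2}-\frac{a_1^2}{8}\cos\theta-\frac{a_2^2}{4}\sin^2\frac{\theta}{2}-\frac{a_2^2}{8}\cos\theta\right\}d\theta=(2l)^{n-1}\,\frac{\pi}{4}\,(a_1^2-a_2^2),
\]
which vanishes iff $a_1^2=a_2^2$, with no limiting procedure. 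Two small slips in your write-up, neither fatal: the identity $|\nabla\bu|^2=a_1^2|\nabla U|^2+a_2^2|\nabla\bar U|^2$ is simply the definition of $|\nabla\bu|^2$ for a vector-valued function and has nothing to do with disjoint supports; and on $P^-\setminus L$ one does \emph{not} have $U_z=0$ (it has a bounded nonzero jump across $z=0$) --- the $P^-$ flux vanishes instead because $U\equiv0$ there forces the tangential derivative $U_t=0$, while $Y_z=0$ on $\{z=0\}$ kills the remaining term.
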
 
\begin{proof}
To see this we use the identity, which is equivalent to \eqref{eq:G3} in the specific case,
     \begin{multline}\label{eq:poho}\int_\omega \left\{dY(x)\nabla \bu(x)\cdot\nabla \bu(x)-\dfrac{1}{2}\textrm{div} {Y}(x)|\nabla \bu(x)|^2\right\}dx\\
=\int_{\partial\omega} \left\{Y(x)\cdot\nabla \bu(x)\;\nu(x)\cdot\nabla \bu(x)-\dfrac{1}{2}\nu(x)\cdot Y(x)|\nabla \bu(x)|^2\right\}d\sigma\;, 
\end{multline}
where $\omega$ is any piecewise smooth domain in $\R^{n+1}$, $z$-symmetric. Now we take $Y\equiv e_n=(0,\dots,0,1,0)$ in $\R^{n+1}$ and the cylinder 
\[
\omega=C_{l,r} = Q_l \times B_r,
\]
where, $B_r \subset \R^2$ denotes a ball of radius $r$ in the plane $(x_n,z)$,
and $Q_l \subset \R^{n-1}$ a cube of edge length equal to $2l$, both centered at zero.

We obtain, since $\nu$ is orthogonal to $e_n$ and $\nabla u_i$ on $\partial Q_l\times B_r$,
\[
\begin{split}
0&=\int_{\partial (Q_l\times B_r)} \left\{e_n\cdot\nabla \bu(x)\;\nu(x)\cdot\nabla \bu(x)-\dfrac{1}{2}\nu(x)\cdot e_n|\nabla \bu(x)|^2\right\}d\sigma\\
&=\int_{Q_l\times\partial B_r} \left\{e_n\cdot\nabla \bu(x)\;\nu(x)\cdot\nabla \bu(x)-\dfrac{1}{2}\nu(x)\cdot e_n|\nabla \bu(x)|^2\right\}d\sigma\\
&=(2l)^{n-1}\int_0^{2\pi}\left\{\dfrac{a_1^2}{4}\cos^2(\theta/2)-\dfrac {a_1^2}{8}\cos\theta-\dfrac{a_2^2}{4}\sin^2(\theta/2)-\dfrac {a_2^2}{8}\cos\theta
\right\}d\theta
\;.
\end{split}
\]

\end{proof}

In order to prove the Almgren monotonicity formula, arguing  as in \cite[Theorem 3.3]{TVZ}, we can take advantage of the local Poho\v{z}aev identity, a direct consequence of the domain variation formula  \eqref{eq:poho} for spherical domains and the particular choice of the vector field 
$Y=X-X_0$. Indeed, for $\bu \in \mathcal G(B_R((x_0,0)))$ we have
\begin{equation}\label{eqn: Pohozaev spheres for classGs}
    (1-n) \int\limits_{ B_r} \tsum_{i} |\nabla u_i|^2 \, \de x \de z + r \int\limits_{\partial B_r} \tsum_{i} |\nabla u_i|^2 \, \de \sigma = 2 r \int\limits_{\partial B_r} \tsum_{i} |\partial_{\nu} u_i|^2 \, \de \sigma
\end{equation}
for a.e. $r\in(0,R)$.

Let us define, for every $x_0 \in \R^n$,  $X_0=(x_0,0)\in\R^{n+1}$, and $r > 0$,
\[
    \begin{split}
    E(r)=E(x_0, r)=E(x_0,\bu, r) &:= \frac{1}{r^{n-1}} \int\limits_{B_r(x_0,0)} \tsum_{i}
|\nabla u_i|^2 \, \de{x} \de{z}\\
    H(r)=H(x_0,r)=H(x_0,\bu,r) &:= \frac{1}{r^{n}} \int\limits_{\partial B_r(x_0,0)}
\tsum_{i} u_i^2 \, \de{\sigma}.
    \end{split}
\]
Since $\bu \in H^1_{\loc}\left(\R^{n+1}, \R^k\right)$, both $E$ and $H$ are locally absolutely continuous functions on $(0,+\infty)$, that is, both $E'$ and $H'$ are $L^1_{\loc}(0,\infty)$ (here, $'=\de/\de r$).
\begin{thm}\label{thm:_Almgren_for_classG_s}
Let $\bu \in \mathcal G(B_{r_0}((x_0,0))$, $\bu \not\equiv 0$. For every $x_0 \in \R^n$ the function (Almgren frequency function)
\[
    N(x_0,r) := \frac{E(x_0,r)}{H(x_0, r)}
\]
is well defined on $(0,r_0)$, absolutely continuous, non decreasing, and it satisfies the identity
\begin{equation}\label{eqn: logarithmic derivative of H}
    \frac{\mathrm{d}}{\mathrm{d}r} \log H(r) = \frac{2N(r)}{r}.
\end{equation}
Moreover, if $N(r)\equiv \gamma$ on an open interval, then $N\equiv\gamma$ for every $r$, and
$\bu$ is a homogeneous function of degree $\gamma$.
\end{thm}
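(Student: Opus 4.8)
The plan is to run the classical Almgren frequency argument, using the local Pohozaev identity \eqref{eqn: Pohozaev spheres for classGs} (a consequence of \eqref{eq:G3}) as the only genuinely nonelementary input, together with the fact that each $u_i$ is harmonic in $B_r\setminus\mathcal N(u_i)$ and vanishes on $\mathcal N(u_i)$. \emph{First}, I would differentiate $H$: writing $h(r):=r^nH(r)=\int_{\partial B_r}\sum_i u_i^2\,\de\sigma$ one gets $h'(r)=\tfrac{n}{r}h(r)+2\int_{\partial B_r}\sum_i u_i\,\partial_\nu u_i\,\de\sigma$, hence $H'(r)=\tfrac{2}{r^n}\int_{\partial B_r}\sum_i u_i\,\partial_\nu u_i\,\de\sigma$. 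Integrating by parts over $B_r$ and using \eqref{eq:system_u_i} together with $u_i\equiv0$ on $\mathcal N(u_i)$ (the singular set being handled by a finite-energy/capacity argument, as in \cite{TVZ}) gives $\int_{\partial B_r}\sum_i u_i\,\partial_\nu u_i\,\de\sigma=\int_{B_r}\sum_i|\nabla u_i|^2\,\de x\,\de z=r^{n-1}E(r)$, so that $H'(r)=\tfrac{2}{r}E(r)=\tfrac{2N(r)}{r}H(r)$; this is \eqref{eqn: logarithmic derivative of H}. Here one needs $H(r)>0$ on $(0,r_0)$, which follows from strong unique continuation for $\bu$ (cf.\ \cite{TVZ}): at the first radius $\bar r$ where $H$ vanished, either $E(\bar r)>0$, so that $N=E/H\to\infty$ as $r\to\bar r^-$, contradicting that $N$ is finite and monotone on $(0,\bar r)$, or $E(\bar r)=0$, so that $\bu\equiv0$ in $B_{\bar r}$ and hence — by real analyticity on the two half-domains $B_{r_0}\cap\{\pm z>0\}$ — everywhere, against $\bu\not\equiv0$.

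\emph{Second}, I would differentiate $E$: with $e(r):=r^{n-1}E(r)=\int_{B_r}\sum_i|\nabla u_i|^2$ one has $e'(r)=\int_{\partial B_r}\sum_i|\nabla u_i|^2\,\de\sigma$, whence $E'(r)=\tfrac{1}{r^{n-1}}\int_{\partial B_r}\sum_i|\nabla u_i|^2\,\de\sigma-\tfrac{n-1}{r}E(r)$; substituting the Pohozaev identity \eqref{eqn: Pohozaev spheres for classGs} to eliminate the full-gradient boundary integral, the interior terms cancel and I obtain $E'(r)=\tfrac{2}{r^{n-1}}\int_{\partial B_r}\sum_i(\partial_\nu u_i)^2\,\de\sigma$. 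Then $N=E/H$ is absolutely continuous on $(0,r_0)$ and, using in addition $E=\tfrac{1}{r^{n-1}}\int_{\partial B_r}\sum_i u_i\,\partial_\nu u_i$ from the first step,
\[ N'(r)=\frac{E'(r)H(r)-E(r)H'(r)}{H(r)^2}=\frac{2}{r^{2n-1}H(r)^2}\left[\Big(\int_{\partial B_r}\sum_i(\partial_\nu u_i)^2\Big)\Big(\int_{\partial B_r}\sum_i u_i^2\Big)-\Big(\int_{\partial B_r}\sum_i u_i\,\partial_\nu u_i\Big)^2\right]\ge0, \]
the bracket being nonnegative by the Cauchy--Schwarz inequality in $L^2(\partial B_r;\R^k)$ applied to $(u_i)_i$ and $(\partial_\nu u_i)_i$. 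This proves that $N$ is nondecreasing.

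\emph{Third}, for the rigidity statement: if $N\equiv\gamma$ on an open interval $I$, then $N'\equiv0$ there, which forces equality in the Cauchy--Schwarz inequality above for a.e.\ $r\in I$; hence on each such sphere $(\partial_\nu u_i)_i$ and $(u_i)_i$ are proportional, i.e.\ $\partial_\nu u_i=c(r)u_i$ on $\partial B_r$ with a single scalar $c(r)$. Substituting into $H'(r)=\tfrac{2}{r^n}\int_{\partial B_r}\sum_i u_i\,\partial_\nu u_i=2c(r)H(r)$ and comparing with $H'(r)=\tfrac{2\gamma}{r}H(r)$ gives $c(r)=\gamma/r$, so $\partial_r u_i(r\theta)=\tfrac{\gamma}{r}u_i(r\theta)$ and therefore $u_i(X)=|X|^{\gamma}\psi_i(X/|X|)$ on the annulus swept by $I$. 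Since each $u_i$ is harmonic, hence real analytic, in its open positivity set, this homogeneity propagates throughout the domain by unique continuation, so $\bu$ is homogeneous of degree $\gamma$; a scaling computation of $E$ and $H$ (both of which then behave like $r^{2\gamma}$) shows $N(r)\equiv\gamma$ for every $r$.

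The step I expect to be the main obstacle is the integration by parts of the first part, across the characteristic hyperplane $\{z=0\}$: I must make sure that the singular part of $\Delta u_i$, concentrated on $\mathcal N(u_i)$ (free boundary $F(u_i)$ included), contributes nothing once paired with $u_i$. This is exactly where conditions \eqref{eq:system_u_i}--\eqref{eq:segregation} enter beyond the Pohozaev identity: $u_i$ is genuinely harmonic — by evenness in $z$ — across the positivity plate $\mathcal B_r^+(u_i)$, and $u_i\equiv0$ on all of $\mathcal N(u_i)$, so the finite-energy/capacity argument of \cite{TVZ} annihilates the would-be extra boundary term.
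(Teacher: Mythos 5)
Your proposal follows the same route as the paper: differentiate $E$ and $H$, use the domain-variation/Poho\v{z}aev identity \eqref{eqn: Pohozaev spheres for classGs} to rewrite $E'$ as a pure normal-derivative boundary integral, and conclude $N'\ge0$ by Cauchy--Schwarz. You also (correctly) supply details that the paper defers to \cite{TVZ} — the integration by parts across $\{z=0\}$ giving $E(r)=\tfrac{1}{r^{n-1}}\int_{\partial B_r}\tsum_i u_i\partial_\nu u_i$, the positivity of $H$, and the rigidity statement from equality in Cauchy--Schwarz — so the proposal is consistent with the paper's proof and its cited sources.
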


\begin{proof} There holds
\[
  H^2(r) N'(r) = E'(r)H(r) - E(r)H'(r)  \qquad \text{ for } r \in (r_1,r_2).
\]
The Poho\v{z}aev identity \eqref{eqn: Pohozaev spheres for classGs} can be used to compute the derivative of $E$:
\[
\begin{split}
    E'(r) &=\frac{1-n}{r^n} \int\limits_{ B_r} \tsum_{i} |\nabla u_i|^2 \, \de{x} \de{y} + \frac{1}{r^{n-1}} \int\limits_{\partial B_r} \tsum_{i} |\nabla u_i|^2 \, \de\sigma\\
     & =  \frac{2}{r^{n-1}} \int\limits_{\partial B_r} \tsum_{i} |\partial_{\nu} u_i|^2 \, \de\sigma,
\end{split}
\]
While for $H$  we find
\[
    H'(r) =  \frac{2}{r^n} \int\limits_{\partial B_r} \tsum_{i}  u_i \partial_{\nu} u_i \, \de\sigma= \frac{2}{r}E(r) .
\]
Thus the Cauchy-Schwarz inequality yields, for  $r \in (r_1,r_2)$,
\begin{equation}\label{eqn: Cauchy Schwarz for N}
\begin{split}
      \frac{r^{2n-1}}2\, & H^2(r) N'(r)=\\ 
      &=\int\limits_{\partial B_r} \tsum_{i} |\partial_{\nu} u_i|^2 \, \de \sigma  \int\limits_{\partial B_r} \tsum_{i} u_i^2 \, \de\sigma 
       - \left( \int\limits_{\partial B_r}  \tsum_{i} u_i \partial_{\nu} u_i \, \de{\sigma} \right)^2       \geq 0.
\end{split}\end{equation}
\end{proof}

To conclude this section, we observe that the monotonicity of $N(x,r)$
implies that both for $r$ small and for $r$ large the corresponding limits are well defined. Consequently we have (see, e.g. \cite[Corollary 2.6]{TT}):
\begin{prop}\label{lem: second consequence classG_s}
Let $\bu\in\classG(\Omega)$. Then
\begin{enumerate}
 \item $N(x,0^+)$ is a non negative upper semicontinuous function on $\tilde\Omega$;
 \item If $\Omega=\R^{n+1}$, then $N(x,\infty)$ is constant on $\R^n$ (possibly $\infty$).
\end{enumerate}
\end{prop}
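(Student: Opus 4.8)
\emph{Overall strategy.} Both statements follow from the monotonicity of $N(x,\cdot)$ and from the logarithmic derivative identity \eqref{eqn: logarithmic derivative of H} proved in Theorem \ref{thm:_Almgren_for_classG_s}, combined with the elementary continuity of the maps $x\mapsto E(x,r)$ and $x\mapsto H(x,r)$ and with the strong unique continuation property of the class $\classG$. This parallels \cite[Corollary 2.6]{TT}.

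\emph{Part (i).} First I would observe that $N(x,r)\geq 0$ whenever defined: $E(x,r)$ is the integral of $\sum_i|\nabla u_i|^2\geq 0$, while $H(x,r)>0$ since $\bu\not\equiv 0$ and elements of $\classG(\Omega)$ enjoy strong unique continuation (each $u_i$ is non-negative, harmonic off the thin set $\mathcal N(u_i)$ and vanishing on it, hence subharmonic across $\{z=0\}$, so a configuration of this class vanishing on a ball vanishes on the whole connected component; see \cite{TVZ}). Because $r\mapsto N(x,r)$ is non-decreasing, $N(x,0^+)=\lim_{r\to 0^+}N(x,r)=\inf_{0<r<\delta}N(x,r)\geq 0$ exists, and the value of the infimum does not depend on the upper cutoff $\delta$ as long as $B_\delta(x,0)\subset\Omega$. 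For upper semicontinuity, fix a small $\delta$: for each $r\in(0,\delta)$ the map $x\mapsto E(x,r)$ is continuous on $\{x:B_\delta(x,0)\subset\Omega\}$ because $\sum_i|\nabla u_i|^2\in L^1_{\loc}$ and translations act continuously in $L^1$, and $x\mapsto H(x,r)$ is continuous and locally positive there because $\bu\in\Ceh^{0,1/2}$ makes $\sum_i u_i^2$ continuous while spherical means of continuous functions depend continuously on the center; hence $x\mapsto N(x,r)$ is continuous. Then $N(\cdot,0^+)$ is the infimum over $r\in(0,\delta)$ of a family of functions each continuous near $x$, so it is upper semicontinuous.

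\emph{Part (ii).} Now let $\Omega=\R^{n+1}$ and set $\gamma(x):=N(x,\infty)=\sup_{r>0}N(x,r)\in[0,+\infty]$, which exists by monotonicity. Integrating \eqref{eqn: logarithmic derivative of H} gives, for $1\leq r_1\leq r_2$,
\[
0\ \le\ \log H(x,r_2)-\log H(x,r_1)=\int_{r_1}^{r_2}\frac{2N(x,s)}{s}\,ds\ \le\ 2\gamma(x)\log\tfrac{r_2}{r_1},
\]
where the lower bound (which says $H(x,\cdot)$ is non-decreasing) uses $N\geq 0$ and the upper bound uses $N(x,\cdot)\leq\gamma(x)$. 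In particular, if $\gamma(x_0)<\infty$ then $H(x_0,r)\leq C_{x_0}r^{2\gamma(x_0)}$ for $r\geq 1$. Pick any other $x_1$. By the coarea formula and the inclusion $B_R(x_1,0)\subset B_{R+|x_0-x_1|}(x_0,0)$,
\[
\int_0^R r^n H(x_1,r)\,dr=\int_{B_R(x_1,0)}\tsum_i u_i^2\ \le\ \int_0^{R+|x_0-x_1|} s^n H(x_0,s)\,ds\ \le\ C\,(R+|x_0-x_1|)^{\,n+1+2\gamma(x_0)},
\]
while, using that $H(x_1,\cdot)$ is non-decreasing, the left-hand side is $\geq c\,R^{n+1}H(x_1,R/2)$. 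Hence $H(x_1,r)\leq C'\,r^{2\gamma(x_0)}$ for large $r$. If $\gamma(x_1)>\gamma(x_0)$, choose $\gamma(x_0)<\gamma'<\gamma(x_1)$; then $N(x_1,s)\geq\gamma'$ for $s$ large, so integrating \eqref{eqn: logarithmic derivative of H} once more yields $H(x_1,r)\geq c''\,r^{2\gamma'}$ for large $r$, contradicting the previous bound. Thus $\gamma(x_1)\leq\gamma(x_0)$; exchanging the roles of $x_0,x_1$ gives equality when both are finite, and shows that $\gamma(x_0)=+\infty$ and $\gamma(x_1)<\infty$ cannot coexist. Therefore $\gamma$ is constant on $\R^n$.

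\emph{Main obstacle.} The one genuinely non-routine ingredient is the positivity $H(x,r)>0$, i.e. the strong unique continuation for $\classG$, without which neither $N$ nor the monotonicity of $H(x,\cdot)$ are available; this rests on the subharmonicity of each $u_i$ across $\{z=0\}$, which uses the even-in-$z$ symmetry together with the fact that the distributional Laplacian of $u_i$ is a non-negative measure supported on $\mathcal N(u_i)=\{u_i=0\}$. Everything else is bookkeeping with the Almgren monotonicity formula.
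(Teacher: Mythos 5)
Your proof is correct. The paper itself gives no argument for this proposition --- it simply cites \cite[Corollary~2.6]{TT} --- so there is no ``paper's proof'' to compare against; your argument supplies the standard one that such references have in mind. Part (i) is exactly the usual ``infimum of continuous functions'' deduction from the monotonicity of $N(x,\cdot)$ (here the continuity of $x\mapsto E(x,r)$, $x\mapsto H(x,r)$, and the strict positivity of $H$, all come for free from Theorem~\ref{thm:_Almgren_for_classG_s} together with the regularity $\bu\in H^1\cap\C^{0,1/2}$). Part (ii) is the classical argument comparing $H(x_0,\cdot)$ and $H(x_1,\cdot)$ through the inclusion $B_R(x_1,0)\subset B_{R+|x_1-x_0|}(x_0,0)$ after converting the spherical averages into ball integrals via $\int_0^R r^nH(x,r)\,\de r=\int_{B_R(x,0)}\tsum_i u_i^2$, then squeezing the growth of $H$ between the two integrated forms of \eqref{eqn: logarithmic derivative of H}; the contradiction argument and the symmetrization to rule out mixed finite/infinite cases are both sound. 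One small remark: the ``subharmonicity plus maximum principle'' route you sketch for $H>0$ only shows that $\bu$ vanishing on $\partial B_r$ forces vanishing inside $B_r$; to propagate the vanishing to the whole connected component you actually want real-analyticity of each $u_i$ on $\{z\neq 0\}$ (or a genuine unique continuation principle), but this point is already absorbed into the statement that $N$ is ``well defined'' in Theorem~\ref{thm:_Almgren_for_classG_s}, so it does not affect the correctness of your argument.
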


Another relevant consequence of the monotonicity formula is the following comparison property which, with $r_2=2 r_1$, entails the doubling property and gives a unique continuation principle.
\begin{prop}\label{coro:doubling}
Given $\bu\in \Geh(\Omega)$ and $\tilde \omega\Subset \tilde\Omega$, there exist $\tilde C>0$ and $\tilde r>0$ such that
$$H(x_0,\bu,r_2)\leq H(x_0,\bu,r_1)\left(\frac{r_2}{r_1}\right)^{2\tilde C}$$ for every $x_0\in \tilde \omega$, $0<r_1<r_2\leq \tilde r$.
\end{prop}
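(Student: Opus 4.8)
The plan is to derive the doubling/comparison estimate directly from the Almgren monotonicity formula of Theorem~\ref{thm:_Almgren_for_classG_s}, combined with the upper semicontinuity of $N(x,0^+)$ from Proposition~\ref{lem: second consequence classG_s}. The starting point is the identity \eqref{eqn: logarithmic derivative of H}, namely $\frac{\de}{\de r}\log H(x_0,\bu,r)=\frac{2N(x_0,r)}{r}$, valid for every $x_0\in\tilde\Omega$ and $0<r<r_0(x_0)$. Integrating this between $r_1$ and $r_2$ gives
\[
\log\frac{H(x_0,\bu,r_2)}{H(x_0,\bu,r_1)}=\int_{r_1}^{r_2}\frac{2N(x_0,r)}{r}\,\de r,
\]
so an upper bound on $H(x_0,\bu,r_2)/H(x_0,\bu,r_1)$ of the desired form $\left(r_2/r_1\right)^{2\tilde C}$ will follow as soon as we have a uniform bound $N(x_0,r)\le \tilde C$ for all $x_0\in\tilde\omega$ and all $r\le\tilde r$.

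The first step is therefore to produce such a uniform bound on the frequency. Fix a point $\bar x\in\tilde\Omega$ with $N(\bar x,0^+)<\infty$; in fact by Proposition~\ref{lem: second consequence classG_s}(i), $x\mapsto N(x,0^+)$ is upper semicontinuous on $\tilde\Omega$, hence locally bounded above: there is a neighborhood of $\tilde\omega$ and a constant $\tilde C_0$ with $N(x,0^+)\le \tilde C_0$ for all $x$ in a slightly larger relatively compact set $\tilde\omega'$ with $\tilde\omega\Subset\tilde\omega'\Subset\tilde\Omega$. Now I would invoke monotonicity of $r\mapsto N(x,r)$ together with a standard covering/continuity argument: since $N(x,\cdot)$ is non-decreasing, $N(x,r)\le N(x,\tilde r)$ for $r\le\tilde r$, and it remains to bound $N(x,\tilde r)$ uniformly for $x\in\tilde\omega$ by choosing $\tilde r$ small. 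The cleanest route is to note that $N(x,\tilde r)$ depends continuously on $x$ for $\tilde r$ fixed (as a ratio of the continuous quantities $E(x,\tilde r)$ and $H(x,\tilde r)$, the latter being positive by the unique continuation contained in Theorem~\ref{thm:_Almgren_for_classG_s}), and that by upper semicontinuity of $N(\cdot,0^+)$ together with the pointwise limit $N(x,\tilde r)\to N(x,0^+)$ as $\tilde r\to 0^+$, a Dini-type argument on the compact set $\overline{\tilde\omega}$ yields: for every $\delta>0$ there is $\tilde r>0$ such that $N(x,r)\le \sup_{\overline{\tilde\omega}}N(\cdot,0^+)+\delta=: \tilde C$ for all $x\in\tilde\omega$ and all $0<r\le\tilde r$. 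Concretely one uses that $\{x: N(x,\tilde r)<\tilde C\}$ is open and that these sets increase to cover $\overline{\tilde\omega}$ as $\tilde r\downarrow 0$, then extracts a finite subcover, equivalently a single $\tilde r$.

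With $N(x_0,r)\le\tilde C$ in hand for all $x_0\in\tilde\omega$ and $0<r_1<r_2\le\tilde r$, the conclusion is immediate:
\[
\log\frac{H(x_0,\bu,r_2)}{H(x_0,\bu,r_1)}=\int_{r_1}^{r_2}\frac{2N(x_0,r)}{r}\,\de r\le 2\tilde C\int_{r_1}^{r_2}\frac{\de r}{r}=2\tilde C\log\frac{r_2}{r_1},
\]
which exponentiates to $H(x_0,\bu,r_2)\le H(x_0,\bu,r_1)\left(r_2/r_1\right)^{2\tilde C}$. Taking $r_2=2r_1$ gives the doubling statement $H(x_0,\bu,2r_1)\le 2^{2\tilde C}H(x_0,\bu,r_1)$, and the unique continuation principle follows since $H(x_0,\bu,r)=0$ for one $r$ would force $H(x_0,\bu,\cdot)\equiv0$ on a neighborhood and hence $\bu\equiv0$ near $X_0$, contradicting $\bu\not\equiv0$ (here one uses that $H>0$ is exactly what makes $N$ well defined in Theorem~\ref{thm:_Almgren_for_classG_s}). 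The main obstacle I anticipate is making the uniformity in $x_0$ fully rigorous: one must check that the local upper bound on $N(\cdot,0^+)$ transfers to a genuinely uniform bound on $N(\cdot,r)$ for small $r$ on the compact set $\overline{\tilde\omega}$, which requires either the Dini-type monotone convergence argument sketched above or, alternatively, a direct compactness argument on blow-up sequences $\bu_{x_0,r}$ as $(x_0,r)$ ranges over $\tilde\omega\times(0,\tilde r]$ — the relevant compactness being precisely the content of the $\C^{0,1/2}$ and $H^1$ bounds available for the class $\Geh$. Everything else is a routine integration of the monotonicity identity.
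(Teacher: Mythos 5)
Your proof is correct, and since the paper states Proposition~\ref{coro:doubling} without proof, it is worth confirming that the argument you sketch is the standard one implicitly intended (it is essentially the one in \cite[Corollary~2.6]{TT}, to which the paper points just above for the neighboring Proposition~\ref{lem: second consequence classG_s}). The core is exactly as you say: integrate the identity $\frac{\de}{\de r}\log H(x_0,r)=\tfrac{2N(x_0,r)}{r}$ from $r_1$ to $r_2$, so everything reduces to a bound $N(x_0,r)\leq \tilde C$ uniformly for $x_0\in\tilde\omega$, $0<r\leq\tilde r$. Your route to that bound — upper semicontinuity of $N(\cdot,0^+)$ on the compact $\overline{\tilde\omega}$ giving $M:=\sup_{\overline{\tilde\omega}}N(\cdot,0^+)<\infty$, monotonicity of $r\mapsto N(x,r)$, continuity of $x\mapsto N(x,r)$ for fixed $r$ (ratio of continuous $E,H$ with $H>0$ by the well-definedness asserted in Theorem~\ref{thm:_Almgren_for_classG_s}), and then the nested-open-cover argument on $\overline{\tilde\omega}$ — is exactly the right Dini-type reasoning, and the pieces fit together without circularity. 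Two small remarks: the auxiliary set $\tilde\omega'$ you introduce is unnecessary, since upper semicontinuity on the compact $\overline{\tilde\omega}\subset\tilde\Omega$ already gives $M<\infty$; and one should note explicitly that $\tilde r$ must also be smaller than $\dist(\tilde\omega\times\{0\},\partial\Omega)$ so that all balls $B_{\tilde r}((x_0,0))$, $x_0\in\tilde\omega$, lie in $\Omega$ and the monotonicity formula applies — this is guaranteed by $\tilde\omega\Subset\tilde\Omega$, but it is worth saying.
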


\subsection{Compactness and convergence of blowup sequences}\label{sec:blow_up_sequences}
In what follows we will frequently happen to study the local behavior of solutions, which will be analyzed via a blowup procedure. We will therefore need a compactness theorem, in order to obtain convergence of sequences. As a main results of \cite[Proposition 6.15]{TVZ}, we have uniform bounds in spaces of H\"older functions
\begin{thm}[A priori bounds in H\"older spaces]\label{thm:_global_holder}
Let $\{\bu_{j}\}_{j}$ be a sequence of elements of $\Geh(B_R)$
such that
\[
    \| \bu_{j} \|_{L^{\infty}(B_R)} \leq \bar m,
\]
with $\bar m$ independent of $j$. Then for every $\alpha \in (0,1/2)$ there exists a constant
$C = C(\bar m,\alpha,R)$, not depending on $j$, such that
\[
    \| \bu_j\|_{\C^{0,\alpha}\left(\overline{B_{R/2}}\right)} \leq C.
\]
Furthermore, $\{\bu_j\}_{j}$ is relatively compact in $H^1(B_{R/2}) \cap \C^{0,\alpha}\left(\overline{B_{R/2}}\right)$ for every $\alpha < 1/2$.
\end{thm}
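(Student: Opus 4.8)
The plan is to deduce the uniform Hölder bound from the Almgren monotonicity formula (Theorem~\ref{thm:_Almgren_for_classG_s}) and the doubling property (Proposition~\ref{coro:doubling}) via interior estimates for harmonic functions, and then to obtain compactness by an Ascoli--Arzel\`a argument together with convergence of Dirichlet energies. By scale invariance of the class $\Geh$ we may assume $\bar m=1$, and we may assume each $\bu_j\not\equiv 0$ (otherwise there is nothing to prove), so that $H(x_0,\bu_j,r)>0$ for all $r$ by the unique continuation consequence of Proposition~\ref{coro:doubling}. \textbf{Step 1 (uniform lower bound on the frequency).} Since each $\bu_j$ belongs to $\Ceh^{0,1/2}(B_R)$ by the very definition of $\Geh$, at any point $x_0\in\Neh(\bu_j)$ one has $H(x_0,\bu_j,r)\le C_j r$ for $r$ small. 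Combining this with \eqref{eqn: logarithmic derivative of H}, the monotonicity of $N(x_0,\bu_j,\cdot)$, and the lower polynomial bound on $H$ furnished by Proposition~\ref{coro:doubling}, one finds $N(x_0,\bu_j,0^+)\ge 1/2$ for every $j$ and every $x_0\in\Neh(\bu_j)$ (equivalently, this follows from the classification of homogeneous segregated profiles, the lowest homogeneity compatible with \eqref{eq:G3} and segregation being $1/2$, attained by $(U,\bar U)$). Consequently $N(x_0,\bu_j,r)\ge 1/2$ for all $r$, so that $r\mapsto H(x_0,\bu_j,r)/r$ has non-negative logarithmic derivative $(2N(r)-1)/r$ and is non-decreasing; hence, for $x_0\in\Neh(\bu_j)\cap B_{3R/4}$ and $0<r\le r_0:=R/8$,
\[
H(x_0,\bu_j,r)\le \frac{r}{r_0}\,H(x_0,\bu_j,r_0)\le C(R)\,r,
\]
the last estimate using only $\|\bu_j\|_{L^\infty}\le 1$. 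This bound is uniform in $j$, which is the whole point.

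\textbf{Step 2 (uniform growth from the nodal set).} From the estimate of Step~1 and the co-area formula one gets $\int_{B_r(x_0)}|\bu_j|^2\le C(R)\,r^{n+2}$ for $x_0\in\Neh(\bu_j)\cap B_{3R/4}$ and $r\le r_0$. Since each $u_{i,j}$ is non-negative, harmonic off $\Neh(u_{i,j})$ and vanishing there, it is subharmonic, hence so is $|\bu_j|^2=\tsum_i u_{i,j}^2$. The sub-mean value inequality applied on the ball $B_{d_j(X)}(X)\subset B_{2d_j(X)}(X_0)$, with $d_j(X):=\dist(X,\Neh(\bu_j))$ and $X_0\in\Neh(\bu_j)$ realizing the distance, then gives
\[
|\bu_j(X)|^2\le \frac{C}{d_j(X)^{n+1}}\int_{B_{2d_j(X)}(X_0)}|\bu_j|^2\le C(R)\,d_j(X),
\]
whenever $2d_j(X)\le r_0$. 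Together with the trivial bound $|\bu_j|\le 1$ this yields a uniform estimate $|\bu_j(X)|\le C(R)\,d_j(X)^{1/2}$ on $B_{2R/3}$.

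\textbf{Step 3 (uniform Hölder bound) and Step 4 (compactness).} Away from $\Neh(\bu_j)$ each active component $u_{i,j}$ is harmonic on a ball of radius comparable to $d_j(X)$ (a short case analysis: if $z_X\neq0$ or $X$ lies near a two-phase interface $F(u_{i,j})$, then $\dist(X,\Neh(u_{i,j}))\simeq d_j(X)$); combining this with the growth estimate of Step~2 and the interior gradient estimate for harmonic functions gives $|\nabla\bu_j(X)|\le C(R)\,d_j(X)^{-1/2}$ off $\Neh(\bu_j)$. Since $t\mapsto t^{-1/2}$ is integrable, integrating along suitable paths — treating separately $|X-Y|\lesssim d_j(X)$, which uses the gradient bound, and $|X-Y|\gtrsim d_j(X)$, which is handled directly by Step~2 — yields $[\bu_j]_{\Ceh^{0,\alpha}(\overline{B_{R/2}})}\le C(\alpha,R)$ for every $\alpha<1/2$, uniformly in $j$. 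For compactness, fix $\alpha<\alpha'<1/2$; the uniform $\Ceh^{0,\alpha'}$ bound and Ascoli--Arzel\`a give a subsequence $\bu_j\to\bu$ in $\Ceh^{0,\alpha}(\overline{B_{R/2}})$, while a Caccioppoli inequality (test $\Delta u_{i,j}=0$ in $B_R\setminus\Neh(u_{i,j})$ against $\eta^2 u_{i,j}$, the boundary term vanishing because $u_{i,j}=0$ on $\Neh(u_{i,j})$) gives a uniform $H^1(B_{R/2})$ bound, hence $\bu_j\rightharpoonup\bu$ weakly in $H^1$. To upgrade to strong convergence one checks $\int_{B_{R/2}}|\nabla\bu_j|^2\to\int_{B_{R/2}}|\nabla\bu|^2$: the limit $\bu$ is again harmonic off its nodal set and segregated, the domain variation identity \eqref{eq:G3} passes to the limit, and a Caccioppoli/Poho\v{z}aev computation (or testing the harmonicity of $u_{i,j}$ with $u_{i,j}-u_i$) shows the Dirichlet energies converge; weak convergence plus convergence of norms gives strong convergence in $H^1(B_{R/2})$.

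\textbf{Main obstacle.} The delicate point is Step~3: turning the $L^2$-type decay of Step~2 into the uniform pointwise gradient estimate, i.e.\ making rigorous — uniformly in $j$ — that each populated component is harmonic on balls of size comparable to $d_j(X)$ also near the free boundaries $F(u_{i,j})$; this is exactly where the free boundary / reflection structure of the problem is needed, and it is the heart of the argument in \cite{TVZ}. An alternative route that bypasses pointwise estimates is the blow-up/contradiction scheme of \cite{nttv}: if the bound failed, one rescales around the worst pair of points, extracts an entire segregated profile in $\Geh_{\loc}(\R^{n+1})$ of growth $o(|X|^{1/2})$, and contradicts the Liouville-type statement that such a profile must be constant — which in turn rests on Theorem~\ref{thm:_Almgren_for_classG_s} and Proposition~\ref{lem: second consequence classG_s}.
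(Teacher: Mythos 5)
The paper contains no proof of this statement — it is imported verbatim from \cite[Proposition 6.15]{TVZ} — so your proposal must be judged on its own. Steps 1 and 2 are correct and are exactly the monotonicity inputs one wants: the qualitative $\C^{0,1/2}$ membership built into Definition~\ref{def:segr ent prof} forces $H(x_0,\bu_j,r)\leq C_j\,r$, which, against the lower bound $H(r)\geq H(r_0)(r/r_0)^{2N(r_0)}$ coming from \eqref{eqn: logarithmic derivative of H} and the monotonicity of $N$, rules out $N(r_0)<1/2$; once $N\geq1/2$, $H(r)/r$ is non-decreasing and the $L^\infty$ bound at scale $R/8$ gives the \emph{$j$-uniform} estimate $H(x_0,\bu_j,r)\leq C(R)\,r$, which the subharmonicity of $|\bu_j|^2$ converts into $|\bu_j(X)|\leq C(R)\,d_j(X)^{1/2}$.

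Step 3, however, has a genuine gap which you only partially flag. The assertion that "$u_{i,j}$ is harmonic on a ball of radius comparable to $d_j(X)$" fails whenever $X$ lies near a piece of $\mathcal N^o(u_{i,j})\setminus\Neh(\bu_j)$, i.e.\ a portion of $\{z=0\}$ where $u_{i,j}\equiv0$ but some other component is strictly positive. There $\dist(X,\Neh(u_{i,j}))$ can be arbitrarily small while $d_j(X)$ is of order one; $u_{i,j}$ is even in $z$, nonnegative, harmonic off $\{z=0\}$, vanishing on a slit, hence has a corner in $z$ and is not harmonic across the thin space. Your parenthetical case analysis ("$z_X\neq0$, or $X$ near a two-phase interface $F(u_{i,j})$") omits precisely this configuration, and there Step 2 yields nothing since $d_j(X)$ is large. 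So the gradient bound $|\nabla u_{i,j}(X)|\leq C\,d_j(X)^{-1/2}$ is unsupported, and the path-integration giving the $\C^{0,\alpha}$ seminorm collapses. To patch this directly you would need a boundary Lipschitz estimate measured against $\dist(X,F(u_{i,j}))$ (the free boundary where the Dirichlet datum stops being identically zero), together with a separate argument at $F(u_{i,j})$ itself, for which there is no a priori Schauder theory — this is exactly the "heart of the argument" you allude to. The alternative you sketch at the end, the blow-up/contradiction scheme of \cite{nttv} using Theorem~\ref{thm:_Almgren_for_classG_s} and a Liouville theorem for sublinear entire profiles, is the robust route and is almost certainly the one followed in \cite{TVZ}; I would write that argument out rather than try to repair the pointwise gradient bound. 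Finally, the strong-$H^1$ convergence claim in Step 4 hides a boundary term, since $u_{i,j}-u_i$ does not vanish on $\Neh(u_{i,j})$; this is a standard but non-trivial point in segregation problems that deserves a careful treatment.
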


This result is the key to the study of convergence of blow-up sequences in the class $\Geh(\Omega)$. Take $\bu\in \Geh(\Omega)$ and define, for every fixed $\rho, t>0$ and $x_0\in \tilde\Omega$,  the rescaled function
	$$\bv(x)=\bv_{x_0,t}(x)=\frac{1}{\sqrt{H(x_0,t)}}\bu_{x_0,t}(x)=\frac{\bu(x_0+tx)}{{\sqrt{H(x_0,t)}}}, \qquad \qquad {\rm for }\ x\in \tilde\Omega:=\frac{\Omega-x_0}{t}.$$

It is immediate to check that $\displaystyle \bv\in \Geh(\tilde\Omega)$. Now we consider  the convergence of such blowup sequences with fixed or variable centers. Let $\omega\Subset \Omega$ and take some sequences $x_m\in \omega$, $t_m\downarrow 0$. We define a normalized blowup sequence as
$$\bu_m(x)=\bu_{x_m,t_m}=\frac{\bu(x_m+t_m x)}{{\sqrt{H(x_m,t_m)}}},\qquad\qquad  \text{for } x\in \frac{\Omega-x_m}{t_m},$$

Thanks to the a priori bounds in H\"older spaces, the blow-up sequences do possess limits that, in two different situations, have the notable property of being homogeneous functions. This property will give us the idea for the classification of nodal points.

\begin{thm}[Compactness of blowup sequences]\label{teo:blow_up_convergence}
Under the previous notations there exists a function $\bar \bu\in \Geh_\text{loc}(\R^{n+1})$ such that, up to a subsequence, $\bu_m\rightarrow \bar \bu$ in $\Ceh^{0,\alpha}_\text{loc}(\R^{n+1})$ for every $0<\alpha<1/2$ and strongly in $H^1_\text{loc}(\R^n)$. Furthermore we have  $\ \bar \bu\in \Ceh^{0,1/2}_\text{loc}(\R^{n+1})$.
\end{thm}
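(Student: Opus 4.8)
The statement is a standard compactness-plus-closedness assertion for the class $\Geh$, and I would prove it in three stages. First, I would establish that the blowup sequence $\{\bu_m\}$ is bounded, uniformly in $m$, in $\C^{0,\alpha}(\overline{B_R})$ for every fixed $R>0$ and every $\alpha<1/2$. This is not immediate from Theorem \ref{thm:_global_holder} directly, because that theorem requires an $L^\infty$ bound; so the preliminary step is to show $\|\bu_m\|_{L^\infty(B_R)}\le \bar m(R)$ uniformly in $m$. This is where the normalization $H(x_m,t_m)$ and the monotonicity formula enter: by construction $H(x_m,\bu_m,1)=1$, and by the doubling property of Proposition \ref{coro:doubling} (applied after rescaling, noting that $N$ is scale-invariant and that $N(x_m,t_m\cdot)$ is bounded since $x_m$ stays in a compact subset) one controls $H(x_m,\bu_m,R)$ from above by a constant depending only on $R$ and $\omega$. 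Combining the growth control on $H(\cdot,R)$ with the a~priori H\"older bound of Theorem \ref{thm:_global_holder} on balls of doubled radius, and iterating over a covering, yields the uniform $\C^{0,\alpha}$ bound on every $\overline{B_R}$, hence in particular the uniform $L^\infty$ bound feeding back into Theorem \ref{thm:_global_holder}.

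Second, with uniform $\C^{0,\alpha}(\overline{B_R})$ bounds and the relative compactness in $H^1(B_{R/2})\cap\C^{0,\alpha}(\overline{B_{R/2}})$ furnished by Theorem \ref{thm:_global_holder}, a diagonal argument over an exhaustion $B_{R_j}\uparrow\R^{n+1}$ produces a subsequence and a limit $\bar\bu$ with $\bu_m\to\bar\bu$ in $\C^{0,\alpha}_\text{loc}(\R^{n+1})$ for every $\alpha<1/2$ and strongly in $H^1_\text{loc}(\R^{n+1})$. The $\C^{0,1/2}_\text{loc}$ regularity of the limit then follows by lower semicontinuity: the uniform $\C^{0,\alpha}$ bounds pass to the limit for every $\alpha<1/2$, and the optimal exponent $1/2$ is inherited because the constant in the $\C^{0,\alpha}$ estimate of Theorem \ref{thm:_global_holder} is uniform in $\alpha$ bounded away from $0$ (equivalently, one invokes directly the optimal-regularity statement analogous to Theorem \ref{thm: intro_limiting_prof} for elements of $\Geh$, applied to $\bar\bu$), so that $\bar\bu\in\C^{0,1/2}_\text{loc}$.

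Third, and this is the substantive part, I must verify $\bar\bu\in\Geh_\text{loc}(\R^{n+1})$, i.e.\ that the three defining conditions (G1), (G2), (G3) are stable under this convergence. For (G1), harmonicity of each $u_i^{(m)}$ away from its nodal set passes to the limit by the $H^1_\text{loc}$ (or uniform-on-compacts) convergence together with the fact that interior estimates give $\C^\infty_\text{loc}$ convergence on $B_R^+(u_i)$; some care is needed where nodal sets move, but one tests the equation against $\varphi\in\C_0^\infty$ supported in $\{\bar u_i>0\}$, where for $m$ large $u_i^{(m)}>0$ as well by uniform convergence, so $-\Delta u_i^{(m)}=0$ there and one passes to the limit. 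Condition (G3), the odd-in-$z$ domain variation identity, is linear in the integrand in a way that is continuous with respect to $H^1_\text{loc}$ strong convergence (each term is a product of a gradient with a gradient, and strong $H^1$ convergence is exactly what is needed to pass to the limit in $\int dY\,\nabla\bu\cdot\nabla\bu$ and $\int \div Y\,|\nabla\bu|^2$ against a fixed $Y\in\C_0^\infty$), so (G3) survives.

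\textbf{Main obstacle.} The delicate point is the segregation condition (G2): one must ensure that in the limit the nodal sets $\mathcal N(\bar u_i)$ still cover $\tilde\Omega$ and that $\mathcal N^o(\bar u_i)\cap\mathcal N^o(\sum_{j\neq i}\bar u_j)=\emptyset$. Disjointness of supports on $\{z=0\}$ passes to the limit trivially from $u_i^{(m)}(x,0)u_j^{(m)}(x,0)\equiv0$ and uniform convergence. The real issue is that the covering $\tilde\Omega=\bigcup_i\mathcal N(\bar u_i)$ could fail if the limit develops an open region on $\{z=0\}$ where all components vanish — exactly the pathology flagged in Remark \ref{rem:unconstrained}. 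Ruling this out requires the nondegeneracy encoded by the normalization: because $H(x_m,\bu_m,1)=1$ for all $m$, the limit is not identically zero, and the unique continuation / doubling property (Proposition \ref{coro:doubling}), together with the fact that the Almgren frequency $N(0,\bar\bu,0^+)$ is finite (by monotonicity and the uniform bound on $N$ along the sequence, invoking Proposition \ref{lem: second consequence classG_s}), forces $\bar\bu$ to have the same local structure; in particular no component can vanish on an open subset of $\{z=0\}$ without vanishing identically there, which combined with the covering being closed under the limit gives (G2). Carefully turning this heuristic into a proof — propagating the frequency bound through the blowup and excluding the "all-vanishing open set" scenario — is the technical heart of the argument.
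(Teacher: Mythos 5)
The paper does not supply a proof of this statement; it relies on Theorem \ref{thm:_global_holder} (uniform H\"older bounds, from \cite{TVZ}) and on routine diagonal/passage-to-the-limit arguments. Your three-stage plan — uniform $L^\infty$/$C^{0,\alpha}$ bounds via the normalization plus the doubling inequality, diagonal extraction, then stability of (G1)–(G3) — is therefore the right skeleton, and Stages 1 and 2 are essentially correct: the uniform $L^\infty$ bound indeed comes from $H(0,\bu_m,1)=1$, the scale invariance of $N$, and Proposition \ref{coro:doubling} (note the doubling radius $\tilde r/t_m\to\infty$ after rescaling), combined with interior estimates for $u_i$ harmonic in each half-ball $B_R\cap\{\pm z>0\}$.

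The genuine gap is in your treatment of (G2). You write that ``the covering $\tilde\Omega=\bigcup_i\mathcal N(\bar u_i)$ could fail if the limit develops an open region on $\{z=0\}$ where all components vanish.'' This has the two sub-conditions of \eqref{eq:segregation} interchanged: if all components vanish on an open $V\subset\{z=0\}$, the covering is \emph{automatically} satisfied on $V$ (each point lies in every $\mathcal N(\bar u_i)$); what fails is the second condition $\mathcal N^o(\bar u_i)\cap\mathcal N^o(\sum_{j\ne i}\bar u_j)=\emptyset$, which precisely forbids a common open vanishing set. In fact, the covering condition passes to the limit \emph{because} of the disjointness of supports (at each boundary point at most one component is positive, hence at least $k-1$ vanish), so it is the other condition that needs work. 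More importantly, your proposed resolution — ``no component can vanish on an open subset of $\{z=0\}$ without vanishing identically there'' — is false as stated: $U$ itself vanishes on the half-hyperplane $P^-$ without being trivial. The nontrivial claim is that not \emph{all} components can vanish on a common open subset while $\bar\bu\not\equiv0$, and the doubling property of Proposition \ref{coro:doubling} does not by itself deliver this (doubling controls $H$ on full $(n+1)$-dimensional spheres, which is compatible with $\bar\bu$ vanishing on $V\times\{0\}$ but not in the bulk). The paper's own mechanism for excluding this degeneracy, spelled out in Remark \ref{rem:unconstrained} and used in the proof of Theorem \ref{teo:Hausdorff_dim_estimates}, is the \emph{minimality} of the configuration: an unconstrained even-in-$z$ energy minimizer cannot vanish identically on the characteristic hyperplane. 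That argument lives in the class $\Mah$ and is passed to blowup limits via the standard stability of minimality, not via unique continuation from the $\Geh$ axioms alone. As written, your heuristic would need to be replaced either by this minimality argument (if the blowups under consideration are of elements of $\Mah$), or by an argument exploiting the $\nu$-homogeneity of the limit (obtained from the Poho\v{z}aev identity, which passes to the limit via (G3), and the consequent constancy of $N$) together with a finer structural analysis — neither of which is what you invoke.

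Two smaller points: for the $C^{0,1/2}_{\rm loc}$ regularity of $\bar\bu$ you cannot simply quote the definition of $\Geh$ (that would be circular); you must invoke the optimal-regularity result of \cite{TVZ} for segregated profiles, applied to $\bar\bu$ once (G1), (G2), (G3) are known. And the stability of (G3) under strong $H^1_{\rm loc}$ convergence is correct, but the integrand is quadratic, not linear, in $\nabla\bu$ — strong (rather than weak) $H^1$ convergence is exactly what makes the quadratic terms pass to the limit.
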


\begin{cor}\label{coro:U_homogeneous}
Under the previous notations, suppose that one of these situations occurs:
\begin{enumerate}
\item $x_m=x_0\in \Neh(\bu)$ for every $m$,
\item $x_m\in \Neh(\bu)$ and $x_m\rightarrow x_0 \in \Neh(\bu)$ with $N(x_0,\bu,0^+)=1/2$.
\end{enumerate}
Then there are $h\geq2$ and $\nu\geq 1/2$ such that  $\bar \bu$ is homogeneous of degree $\nu$ with $h$ nontrivial components.
\end{cor}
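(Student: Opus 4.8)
The plan is to combine the compactness result of Theorem~\ref{teo:blow_up_convergence} with Almgren's monotonicity formula (Theorem~\ref{thm:_Almgren_for_classG_s}) and the properties of $N(x,0^+)$ collected in Proposition~\ref{lem: second consequence classG_s}, in order to identify the blowup limit $\bar\bu$ as a homogeneous function and to pin down the admissible range of its homogeneity degree. By Theorem~\ref{teo:blow_up_convergence} we already have $\bar\bu\in\Geh_{\text{loc}}(\R^{n+1})\cap\Ceh^{0,1/2}_{\text{loc}}$, and by construction $H(0,\bar\bu,1)=\lim_m H(x_m,\bu,t_m)/H(x_m,\bu,t_m)=1$, so $\bar\bu\not\equiv 0$. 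The first main point is to show that $N(0,\bar\bu,r)$ is \emph{constant} in $r$: for any fixed $r>0$ one has, by the strong $H^1_{\text{loc}}$ convergence and the scaling invariance of $E$ and $H$ under the blowup rescaling,
\[
N(0,\bar\bu,r)=\lim_{m\to\infty} N(x_m,\bu_m,r)=\lim_{m\to\infty} N(x_m,\bu,t_m r).
\]
In case (i), $x_m\equiv x_0$ and the right-hand side equals $\lim_{m} N(x_0,\bu, t_m r) = N(x_0,\bu,0^+)=:\nu$ by monotonicity of $r\mapsto N(x_0,\bu,r)$ and $t_m r\downarrow 0$; this limit is independent of $r$. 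In case (ii) one argues that $N(x_m,\bu,t_m r)$ is squeezed between $N(x_m,\bu,0^+)$ (which is $\le N(x_m,\bu,t_m r)$ by monotonicity) and, using upper semicontinuity of $x\mapsto N(x,\bu,0^+)$ together with $N(x_m,\bu,0^+)\ge 1/2$ and the monotonicity, one still concludes $N(x_m,\bu,t_m r)\to 1/2$. Either way $N(0,\bar\bu,\cdot)\equiv\nu$ for some constant $\nu\ge 1/2$, and then the last assertion of Theorem~\ref{thm:_Almgren_for_classG_s} forces $\bar\bu$ to be homogeneous of degree $\nu$.

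Next I would establish that $\bar\bu$ has $h\ge 2$ nontrivial components. Each surviving component $\bar u_i$ is nonnegative, even in $z$, harmonic off its nodal set, and the components segregate on $\{z=0\}$ in the sense of \eqref{eq:segregation} (these properties pass to the $\Ceh^{0,\alpha}$-limit, as $\Geh$ is closed under this convergence by Theorem~\ref{teo:blow_up_convergence}). If only one component, say $\bar u_1$, were nontrivial, then $\bar u_1$ would be an even-in-$z$, nonnegative, \emph{globally harmonic} function on $\R^{n+1}$ (its nodal set on $\{z=0\}$ must be all of $\R^n$ by the second segregation condition in \eqref{eq:segregation}, so after even reflection it is harmonic across $\{z=0\}$ too), hence, being nonnegative and homogeneous, a constant multiple of a linear function --- but a nonnegative linear function is constant, contradicting $H(0,\bar\bu,1)=1$ and degree $\nu\ge 1/2>0$. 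This is essentially the ``unconstrained minimizer cannot vanish on $\{z=0\}$'' remark (Remark~\ref{rem:unconstrained}) applied in the limit, and it is the place where the segregation hypothesis and the normalization $x_0\in\Neh(\bu)$ (so that $\bu(x_0,0)=0$ forces at least two components to vanish at $x_0$, and after blowup at least two survive with $0$ in their common nodal set) are genuinely used.

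Finally I would verify $\nu\ge 1/2$ and $h\ge 2$ imply $h\ge 2$ trivially while the homogeneity degree satisfies $\nu\ge 1/2$ from the above; to get the integer/half-integer flavour stated (here only $h\ge 2$ is claimed, with $h$ the number of components, so no further arithmetic is needed beyond ``at least two components''). \textbf{The main obstacle} I anticipate is the careful bookkeeping in case (ii): one must show that $N(x_m,\bu,t_m r)$ converges to $1/2$ and not to some larger value, which requires using upper semicontinuity of $N(\cdot,\bu,0^+)$ at $x_0$ \emph{together} with the lower bound $N(x_m,\bu,0^+)\ge 1/2$ and the monotonicity in $r$ in a quantitative way (a standard but delicate ``squeeze'' argument; compare \cite[Corollary 2.6]{TT} and the analogous blowup classifications in \cite{TT,TVZ}). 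A secondary technical point is to check that the constant $\nu$ produced by the monotonicity-formula rigidity coincides with $N(x_0,\bu,0^+)$ (resp.\ $1/2$) and does not depend on the chosen subsequence, which follows from the identity displayed above once the limit of $N(x_m,\bu,t_m r)$ is known.
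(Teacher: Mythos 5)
Your overall strategy for the homogeneity part is the standard one and is essentially correct: normalize so $H(0,\bu_m,1)=1$, use the strong $H^1_{\loc}$ convergence to pass the Almgren quotient to the limit, show $N(0,\bar\bu,\cdot)$ is constant, and invoke the rigidity clause of Theorem~\ref{thm:_Almgren_for_classG_s}. The case~(i) computation is correct, and your case~(ii) ``squeeze'' is the right device, though it needs one more ingredient you only allude to: for a fixed $r_0>0$ one has $N(x_m,\bu,t_m r)\le N(x_m,\bu,r_0)$ once $t_m r<r_0$, $N(x_m,\bu,r_0)\to N(x_0,\bu,r_0)$ by continuity of $H$ and $E$ in the center, and $N(x_0,\bu,r_0)\downarrow 1/2$ as $r_0\downarrow 0$; combined with the lower bound $N(x_m,\bu,t_m r)\ge N(x_m,\bu,0^+)\ge 1/2$ this gives the claim. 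This much is fine.

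The argument for $h\ge 2$, however, contains a genuine error. When only $\bar u_1$ is nontrivial, the second condition in \eqref{eq:segregation} reads $\mathcal N^o(\bar u_1)\cap \mathcal N^o\bigl(\sum_{j\neq 1}\bar u_j\bigr)=\emptyset$; but $\sum_{j\neq 1}\bar u_j\equiv 0$, so $\mathcal N^o\bigl(\sum_{j\neq 1}\bar u_j\bigr)=\R^n$, and the condition forces $\mathcal N^o(\bar u_1)=\emptyset$. That is the \emph{opposite} of what you assert: the nodal set of $\bar u_1$ on $\{z=0\}$ is nowhere dense, not all of $\R^n$. As a result your reflection step is invoked in the wrong setting: even reflection of an even-in-$z$ function produces a harmonic function across $\{z=0\}$ precisely when the \emph{Neumann} data vanishes there, not when the trace vanishes (that would be odd reflection). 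Remark~\ref{rem:unconstrained}, which you cite, is about components that vanish \emph{identically} on $\{z=0\}$, which is a different (and here excluded) situation. The route to $h\ge 2$ actually runs through the minimality rather than through segregation alone: if the blow-up has a single nontrivial component, the segregation constraint is vacuous and $\bar u_1$ is an unconstrained even-in-$z$ energy minimizer, hence satisfies $\partial_z\bar u_1=0$ on $\{z=0\}$, hence extends harmonically to all of $\R^{n+1}$; it is then a nonnegative entire harmonic function, which by Liouville is constant, contradicting $\bar u_1(0)=0$ together with $\bar\bu\not\equiv 0$. (Your ``constant multiple of a linear function'' is also slightly off: Liouville for nonnegative entire harmonic functions gives a constant directly.) Finally, once $h\ge 2$ is established, $\nu\ge 1/2$ follows from Lemma~\ref{lem:minimal} applied to $\bar\bu\in\Heh(\nu,n,h)$, which is the dependence you gesture at but should make explicit.
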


In other words, $\bu\in \Heh(\nu,n, h)$, the class of functions introduced in the following definition.

\begin{defn}[Segregated entire homogenous profiles]\label{def:Heh}
For $\nu>0$ and for every dimension $n$, we define the class
\[
\Heh(\nu,n,h) := \left\{\bu \in \classG_\loc(\R^{n+1}) :
\begin{array}{l}
\bu \text{ is  $\nu$-homogeneous,}\\ \text{ with $h$ nontrivial components}
\end{array}
\right\}. 
\]
\end{defn}

\subsection{Segregated entire homogenous profiles }

Now we turn to the study of the entire segregated homogeneous profiles, as in Definition \ref{def:Heh}.
For such profiles, the Almgren's quotient centered at zero does not depend on $r$. The following result has been proved in \cite[Lemmata 7.6 and 7.7]{TVZ}.

\begin{lem}\label{lem:minimal}
If $h\geq 2$, we have
\[
\begin{split}
    \nu(n,h) &= \inf\{\nu>0: \Heh(\nu,n,h) \text{ is non empty} \}=\frac12\;.
\end{split}
\]
\end{lem}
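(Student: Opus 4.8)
The plan is to establish the two inequalities $\nu(n,h)\ge 1/2$ and $\nu(n,h)\le 1/2$ separately, the first by a lower bound on the Almgren frequency at nodal points, the second by exhibiting an explicit two-component homogeneous competitor. For the lower bound, let $\bu\in\Heh(\nu,n,h)$ with $h\ge 2$, so that at least two components, say $u_1$ and $u_2$, are nontrivial; by \eqref{eq:segregation} their traces on $\{z=0\}$ have disjoint positivity sets. Fix a point $x_0$ on the common nodal set near which both $u_1$ and $u_2$ are nonzero somewhere in every neighborhood. I would argue that the local Almgren frequency $N(x_0,\bu,0^+)$ is at least $1/2$: indeed, any single nonnegative component $u_i$ which is harmonic off its nodal set on $\{z=0\}$, even-in-$z$, and which is not identically zero near $x_0$ must have, after the reflection, a ``crack'' along its nodal set, and the lowest admissible homogeneity for such a function is $1/2$ (this is the content of the optimal $C^{0,1/2}$ regularity, Theorem \ref{thm: intro_limiting_prof}, and can also be seen directly: a nonnegative even-in-$z$ harmonic function in the slit domain vanishing on a halfplane behaves like $r^{1/2}\cos(\theta/2)$ to leading order). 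Since $N$ is monotone and, for a $\nu$-homogeneous profile, constant and equal to $\nu$, we get $\nu\ge N(x_0,\bu,0^+)\ge 1/2$.

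For the matching upper bound, I would simply exhibit an element of $\Heh(1/2,n,2)$. The natural candidate is $\bu=(U,\bar U)$ with $U$ as in \eqref{U} (extended trivially in the $x'$ variables and reflected evenly in $z$), which is $1/2$-homogeneous and has two nontrivial components segregated exactly on the halfplane $\{x_n\le 0,z=0\}$ versus $\{x_n\ge 0,z=0\}$. The only thing to check is that $(U,\bar U)$ actually lies in the class $\classG_\loc(\R^{n+1})$, i.e.\ that it satisfies the odd-in-$z$ domain variation identity \eqref{eq:G3}. But this is precisely Proposition \ref{prop:wrl} with $a_1^2=a_2^2=1$: the reflection law $a_1^2=a_2^2$ is exactly the condition under which $(a_1U,a_2\bar U)$ is a segregated critical profile, and here it holds trivially. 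Equations \eqref{eq:system_u_i} and \eqref{eq:segregation} are immediate from the explicit formulas. Hence $\Heh(1/2,n,2)$ is nonempty, giving $\nu(n,h)\le 1/2$ for every $h\ge 2$ (for $h>2$ one may take the same pair and add $h-2$ further components supported on disjoint pieces of the nodal plane, or invoke the same construction on a partition of $\{x_n=0\}$ into $h$ sectors).

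Combining the two bounds yields $\nu(n,h)=1/2$, and moreover the infimum is attained (by $(U,\bar U)$), so it is in fact a minimum. The main obstacle is the lower bound: one must be careful that a single component does not ``escape'' to a frequency below $1/2$ by being, say, positive on both sides of a lower-dimensional nodal set, or by the nodal set having empty interior in $\R^n$; this is where \eqref{eq:segregation}, which forces $\mathcal N^o(u_i)$ and $\mathcal N^o(\sum_{j\neq i}u_j)$ to be disjoint \emph{open} sets covering $\tilde\Omega$, together with the even-in-$z$ harmonicity and nonnegativity, is used to pin the leading-order blow-up of each nontrivial component to a multiple of $U$ (up to rotation), forcing $N(x_0,0^+)\ge 1/2$. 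One clean way to package this is via Corollary \ref{coro:U_homogeneous} and the fact that for homogeneous profiles $N\equiv\nu$, reducing the statement to: there is no nonzero segregated critical profile of homogeneity strictly less than $1/2$ with $h\ge2$ nontrivial components, which follows from the frequency gap at nodal points just described.
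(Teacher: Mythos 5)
The paper itself does not prove this lemma; it is quoted from \cite{TVZ} (Lemmata 7.6 and 7.7), so there is no internal proof to compare against. Judging from the spectral machinery the paper develops in the proof of Theorem \ref{teo:gap_uniqueness}, the intended argument for the lower bound is the spherical one: for $\bu\in\Heh(\nu,n,h)$ each nontrivial component $u_i=r^{\nu}v_i(\theta)$ restricts on $\S^n$ to a nonnegative eigenfunction of the mixed problem of Definition \ref{def: lambda} with $\omega_i:=\{u_i(\cdot,0)>0\}\cap\S^{n-1}$, hence (by uniqueness of the signed ground state) $\nu=\gamma(\omega_i)$ for \emph{every} nontrivial $i$; since the $\omega_i$ are pairwise disjoint and $h\ge2$, some $|\omega_i|\le\tfrac12|\S^{n-1}|$, and by the Schwarz symmetrization and monotonicity $\gamma(\omega_i)\ge\Gamma(\pi/2)=\tfrac12$. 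Your lower bound instead leans on the $C^{0,1/2}$ regularity that is written into Definition \ref{def:segr ent prof}: a nontrivial $\nu$-homogeneous $\Ceh^{0,1/2}_{\mathrm{loc}}$ function vanishing at the origin (as it must, since $\nu>0$) forces $\nu\ge1/2$. That is correct and shorter, though it shifts the burden onto the regularity theory that was invoked to justify putting $\Ceh^{0,1/2}$ in the definition; also you should cite the definition of $\Geh$ rather than Theorem \ref{thm: intro_limiting_prof} (which concerns the penalized limits, not elements of $\Geh$), and you may as well take $x_0=0$, which is automatically a nodal point of a homogeneous profile, rather than argue at a general $x_0$ where the relation $N(x_0,0^+)\le\nu$ needs its own short justification.

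Your verification that $(U,\bar U,0,\dots,0)\in\Heh(1/2,n,2)$ via Proposition \ref{prop:wrl} is correct and settles $\nu(n,2)=\tfrac12$. However, the construction you sketch for $h>2$ is a genuine gap and cannot be repaired. You cannot support a nontrivial continuous nonnegative segregated component on a piece of the measure-zero set $\{x_n=0\}\subset\R^n$ (its trace would vanish a.e.\ and hence identically), and partitioning the nodal \emph{hyperplane} $\{x_n=0\}$ into ``sectors'' does not yield pairwise disjoint positivity sets with nonempty interior in $\R^n$. In fact no construction could close this case: the proof of Theorem \ref{teo:gap_uniqueness} in this very paper derives $\nu(n,h)\ge\tfrac12+\hat\delta_n$ for every $h\ge3$ from $|\omega_i|\le\tfrac13|\S^{n-1}|$ for some $i$, and its gap statement (ii) is likewise incompatible with $\nu(n,h)=\tfrac12$ for $h\ge3$. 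The only use the paper makes of Lemma \ref{lem:minimal} (inside the proof of Theorem \ref{teo:viscosity_k=2}) is that $\tfrac12$ is the smallest admissible frequency, which follows from $\nu(n,2)=\tfrac12$ together with $\nu(n,h)\ge\tfrac12$ for all $h\ge2$; so you should state your upper bound only for $h=2$ and not attempt to force equality for larger $h$.
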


We will need to refine the characterization of this extremal frequency. Toward this aim, we will apply the improvement of flatness Theorem \ref{mainT} and thus at this point we need to connect the two definitions of solution, namely segregated critical profiles, that is elements of the class $\mathcal G$ when $k=2$, , and viscosity solutions in the sense of Definition \ref{def:viscosity2com}. 

\begin{thm}\label{teo:viscosity_k=2} Let $\bu\in\Geh(B_1)$ with $k=2$; then $\bu$ is a viscosity solution in $B_1$ in the sense of Definition \ref{def:viscosity2com}.
\end{thm}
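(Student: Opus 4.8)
The goal is to verify that an element $\bu=(u_1,u_2)\in\Geh(B_1)$ satisfies conditions (i)--(iv) of Definition \ref{def:viscosity2com}. Conditions (i) and (ii) are immediate: harmonicity off the respective nodal sets is built into \eqref{eq:system_u_i}, and the segregation/disjointness of the interior nodal sets is exactly \eqref{eq:segregation}. The entire content of the theorem is therefore condition (iii)--(iv), i.e.\ that no strict comparison subsolution $(v_1,v_2)$ can touch $(u_1,u_2)$ from below/above at a free boundary point $X_0=(x_0,0)\in F(u_1,u_2)$, and symmetrically for supersolutions. By the symmetry of the definition (a supersolution for $(u_1,u_2)$ is a subsolution for $(u_2,u_1)$, which again lies in $\Geh$), it suffices to treat the subsolution case.

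So I would argue by contradiction: suppose a strict comparison subsolution $(v_1,v_2)$ touches $(u_1,u_2)$ with $v_1\leq u_1$, $v_2\geq u_2$ in a neighborhood of $X_0$, with equality at $X_0$. The first step is a \emph{blow-up at $X_0$}: rescale $\bu$ by $\bu_t(x)=\bu(x_0+tx)/\sqrt{H(x_0,\bu,t)}$ and use Theorem \ref{teo:blow_up_convergence} and Corollary \ref{coro:U_homogeneous} together with the fact that $X_0$ is a free boundary point where both components are positive on opposite sides (so $N(x_0,\bu,0^+)=1/2$ by Lemma \ref{lem:minimal} and the two-component structure near a regular free boundary point) to conclude that, along a subsequence, $\bu_t$ converges to a $1/2$-homogeneous two-component profile. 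The second, crucial step is to \emph{classify this blow-up}: a $1/2$-homogeneous, nonnegative, even-in-$z$ pair with two segregated components, each harmonic off its nodal half-space, must be of the form $(a_1 U(x\cdot e,z),a_2\bar U(x\cdot e,z))$ for a unit vector $e\in\R^n$ (this is the standard blow-up classification in the half-Laplacian setting; one reduces to two dimensions by homogeneity, and in $2$D the only $1/2$-homogeneous harmonic functions vanishing on a half-line are multiples of $U$ and its reflection). Then \emph{Proposition \ref{prop:wrl}} --- the domain-variation formula \eqref{eq:G3} forces $a_1^2=a_2^2$ --- gives that the blow-up has matching coefficients $a_1=a_2$, i.e.\ it satisfies the free boundary relation $\p u_1/\p\sqrt t=\p u_2/\p\sqrt t$ in \eqref{FBintro} at the blow-up level.

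The final step is to derive a contradiction between this balanced blow-up and the touching subsolution. After the same rescaling, the subsolution $(v_1,v_2)$ produces, by condition (iv) of Definition \ref{defsub}, a limiting inequality on the expansion coefficients $\alpha_1(x_0)\geq\alpha_2(x_0)$ with strict inequality unless both $v_i$ are harmonic; but the harmonic case is excluded at this scale because $U$ and $\bar U$ are not harmonic across $\{x_n=0,z=0\}$, so one actually gets that the rescaled subsolution, were it to touch, would have to have strictly unbalanced coefficients $\alpha_1>\alpha_2$. Comparing the expansion of $v_i$ with the balanced expansion $a\,U$, $a\,\bar U$ of the blow-up of $\bu$ at $X_0$, and using that $v_1\le u_1$, $v_2\ge u_2$ translates into $\alpha_1\le a$ and $\alpha_2\ge a$, one obtains $\alpha_1\le a\le \alpha_2$, contradicting $\alpha_1>\alpha_2$ (or contradicting the last clause of Definition \ref{defsub}(v) in the degenerate case). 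More carefully, to make the comparison of coefficients rigorous one should invoke a Hopf-type boundary argument at $X_0$ on the half-space domain $B_r^+(u_i)$ after the blow-up: the touching condition, together with harmonicity of $u_i$ and the strict differential inequality satisfied by $v_i$, propagates a strict sign on the leading-order $\sqrt t$ coefficient, which is incompatible with the free boundary relation satisfied in the limit. I expect \emph{the blow-up classification step} to be the main obstacle: one must rule out more complicated $1/2$-homogeneous two-component configurations and be careful that the blow-up is genuinely two-sided (both components nontrivial), which is where the touching hypothesis and the fact that $X_0\in F(u_1,u_2)$ are used. Once the blow-up is pinned down to $(aU,a\bar U)$, Proposition \ref{prop:wrl} and the strict subsolution inequality close the argument.
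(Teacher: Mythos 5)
Your overall strategy matches the paper's: argue by contradiction with a touching subsolution, blow up $\bu$ at the contact point $X_0$, classify the blow-up as $(aU,a\bar U)$ via the reflection law in Proposition~\ref{prop:wrl}, blow up the subsolution as well, and compare leading coefficients to reach $a\geq\tilde\alpha_1>\tilde\alpha_2\geq a$. That last comparison is exactly the paper's conclusion, and your identification of Proposition~\ref{prop:wrl} as the device that balances the blow-up coefficients is correct.

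However, there is a genuine gap at the crucial step where you assert $N(x_0,\bu,0^+)=1/2$. You justify it ``by Lemma~\ref{lem:minimal} and the two-component structure near a regular free boundary point,'' but Lemma~\ref{lem:minimal} only provides the \emph{lower} bound on the frequency: it says $1/2$ is the infimum of admissible homogeneities, i.e.\ $N(x_0,0^+)\geq 1/2$. It gives no upper bound. A priori $X_0$ could lie in the singular set with $N(x_0,0^+)>1/2$, in which case the blow-up would not be a pair of $1/2$-homogeneous functions and the whole classification would fail. The paper closes this gap precisely by using the touching hypothesis: since $u_1\geq v_1$ near $X_0$ and $F(v_1,v_2)$ is $C^2$, the subsolution has a nondegenerate $\sqrt{t}$-expansion at $X_0$, hence $H(x_0,\bv,r)\geq Cr$ and therefore $H(x_0,\bu,r)\geq Cr$, which forces $N(x_0,\bu,0^+)\leq 1/2$. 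Combined with Lemma~\ref{lem:minimal}, this pins $N(x_0,0^+)=1/2$ and justifies both the $1/2$-homogeneity of the blow-up and the normalization $H(x_0,t)\simeq t$. You never produce this upper bound, and without it the argument does not go through.

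Two smaller remarks. First, your reduction ``the harmonic case is excluded at this scale because $U$ and $\bar U$ are not harmonic across $\{x_n=0,z=0\}$'' does not address what the clause~(v) of Definition~\ref{defsub} is for: one should instead note (as the paper does) that one may assume $v_1,v_2$ harmonic off their nodal sets without loss of generality, and then condition~(v) forces the strict inequality $\alpha_1(x_0)>\alpha_2(x_0)$, which is exactly what the final coefficient comparison contradicts; the Hopf-type argument you propose is unnecessary once the blow-up coefficients are compared directly. Second, for condition~(ii) of Definition~\ref{def:viscosity2com} the paper also invokes the doubling estimate (Proposition~\ref{coro:doubling}); it is not literally identical to~\eqref{eq:segregation} because one must rule out the degenerate possibility of an entire component vanishing on an open subset of $\{z=0\}$ of positive measure, and the unique continuation coming from the doubling is what excludes this.
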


\begin{proof} First, comparing with Definition \ref{def:segr ent prof}, (i) is trivially satisfied. As to (ii), the two conditions follow from the segregation property (G2) and the doubling Proposition \ref{coro:doubling}. Next we prove (iii). Indeed, assume not; then, there is a strict comparison subsolution $(v_1,v_2)=:\bv$ touching $u_1$ and $u_2$ by below and above respectively at  $X_0 = (x_0,0)\in F(v_1,v_2)$. With no loss of generality, we can assume that $v_1$ and $v_2$ are harmonic where not vanishing, and thus $\alpha_1(x_0)>\alpha_2(x_0)$ in Definition \ref{defsub}.

As $u_1\geq v_1$ and $F(v_1,v_2)\in\Ceh^2$, we have $H(x_0,\bu,r)\geq H(x_0,\bv,r)\geq Cr$ as $r\to 0$, thus $N(x_0,0^+)\leq 1/2$. Since by Lemma \ref{lem:minimal}, $1/2$ is the smallest possible critical frequency, we actually infer $N(x_0,0^+)= 1/2$. Now, according to Theorem \ref{teo:blow_up_convergence}, let us take a converging blowup sequence for $\bu$ with fixed centre $x_0$, such that $H(x_0,\bu,r)\simeq r$ and call its limit $\bar\bu$.  We know that the two components, say $\bar u_1$ and $\bar u_2$ are not vanishing and, up to a rotation, they are positive multiples of $(U,\bar U)$, so $(\bar u_1,\bar u_2)=(aU,a\bar U)$ for a positive constant $a$.
On the other hand, let us consider the similarly rescaled sequence 
\[\tilde\bv_m(x)=\tilde\bv_{x_0,t_m}(x)=\frac{\bv(x_0+t_m x)}{{\sqrt{H(x_0,t_m)}}}\;,
\]
and its uniform limit $\tilde \bv=(\tilde\alpha_1(x_0)U,\tilde\alpha_2(x_0)\bar U)$. As  it still touches $\bar u_1$ and $\bar u_2$ by below, and above respectively, we deduce
\[a\geq \tilde\alpha_1>\tilde\alpha_2\geq a\;,
\]
a contradiction. The proof of (iv) is symmetrical.
\end{proof}

%

In particular, we can apply Theorem \ref{mainT} to segregated critical profiles with two components. This will allow us to prove the following refinement of the Lemma above.

\begin{thm}\label{teo:gap_uniqueness}

Let $\bu\in\Heh(\nu,n,h)$; then we have
\begin{enumerate}
 \item (uniqueness) $\nu=1/2$ if and only if $h=2$ and, up to orthogonal transformations, $\bu=(a U,a\bar U,0,\dots,0)$, for some positive constant $a$;
 \item (gap) there is $\delta_n>0$ such that, if $h\geq 2$ and $\nu>1/2$, then $\nu>\delta_n+1/2$. \end{enumerate}\end{thm}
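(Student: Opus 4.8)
The plan is to prove both assertions by combining the blow-up classification already at our disposal with the local regularity Theorem \ref{mainT}. For part (i), one direction is the easy one: if $\bu=(aU,a\bar U,0,\dots,0)$ after a rotation, then Proposition \ref{prop:wrl} guarantees that this pair satisfies the domain variation formula \eqref{eq:G3} (since $a^2=a^2$), so it belongs to $\Heh(1/2,n,2)$; and conversely by Lemma \ref{lem:minimal}, $\nu=1/2$ forces $h=2$, so that the only open point is to show that a $1/2$-homogeneous element of $\Geh$ with exactly two nontrivial components must be $(aU,a\bar U)$ up to an orthogonal transformation. First I would argue that, after reducing to the trace on $\{z=0\}$ and using the segregation property (G2), the nodal sets $\Neh(u_1)$ and $\Neh(u_2)$ are complementary half-spaces meeting along an $(n-1)$-plane $L$; this is because each $u_i$ is a $1/2$-homogeneous, nonnegative, even-in-$z$ harmonic function off its zero set, the lowest admissible frequency, which forces the nodal set to be a half-hyperplane (this is exactly the two-dimensional content of the fundamental solution $U$, extended by homogeneity and translation invariance in the remaining $x'$ variables). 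Then, using Theorem \ref{teo:viscosity_k=2}, $\bu$ is a viscosity solution, and along $L$ the weak reflection law (expressed via Proposition \ref{prop:wrl}, now read pointwise via the blow-up at a point of $L$) forces the coefficients of $U$ and $\bar U$ to have equal squares, i.e. $u_1=aU$, $u_2=a\bar U$; finally one checks that homogeneity plus the harmonicity off the nodal set pins down the profile to have no $x'$-dependence, giving the claimed rigidity. (Alternatively, one can invoke the one-dimensional symmetry/Liouville-type argument from the proof of Lemma \ref{lem:minimal} in \cite{TVZ}.)

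For part (ii), the gap statement, I would argue by contradiction and compactness. Suppose there are sequences $h_m\geq 2$, $\nu_m\downarrow 1/2$ with $\nu_m>1/2$, and $\bu_m\in\Heh(\nu_m,n,h_m)$. Normalize so that $H(0,\bu_m,1)=1$; by the a priori H\"older bounds (Theorem \ref{thm:_global_holder}) and the compactness of blow-up sequences (Theorem \ref{teo:blow_up_convergence}), a subsequence converges in $\Ceh^{0,\alpha}_\loc$ and strongly in $H^1_\loc$ to a limit $\bu_\infty\in\Geh_\loc(\R^{n+1})$; by upper semicontinuity of the Almgren frequency (Proposition \ref{lem: second consequence classG_s}) together with the fact that $1/2$ is the minimal frequency (Lemma \ref{lem:minimal}), the limit has $N(0,\bu_\infty,r)\equiv 1/2$, hence is $1/2$-homogeneous, and moreover — by part (i) just proved — equals $(aU,a\bar U,0,\dots,0)$ for some $a>0$ after a rotation, with exactly two nontrivial components. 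In particular at most two components of $\bu_m$ survive in the limit; after discarding the vanishing ones and renormalizing, one has a two-component sequence converging uniformly to $(aU,a\bar U)$. The plan is then to feed this into the flatness hypothesis of Theorem \ref{mainT}: for $m$ large, $\|u_{1,m}-aU\|_\infty$ and $\|u_{2,m}-a\bar U\|_\infty$ are below $\bar\eps$ (after scaling out $a$, which tends to a fixed positive constant), so Theorem \ref{mainT} applies and $\Neh(\bu_m)$ is a $C^{1,\alpha}$ graph in $B_{1/2}$. But a homogeneous solution whose nodal set is a $C^{1,\alpha}$ graph through the origin must have that graph be an $(n-1)$-plane, hence $\bu_m$ is the translation- and rotation-invariant $1/2$-homogeneous two-component profile, i.e. $\nu_m=1/2$, contradicting $\nu_m>1/2$.

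The main obstacle is the last step: extracting from ``$\Neh(\bu_m)$ is a $C^{1,\alpha}$ graph'' and ``$\bu_m$ is $\nu_m$-homogeneous'' the conclusion that $\nu_m$ is forced to equal $1/2$ exactly, and not merely close to it. The clean way is to observe that homogeneity of $\bu_m$ about the origin forces $\Neh(\bu_m)$ to be a cone; a $C^{1,\alpha}$ graph that is also a cone is a hyperplane; and then near a regular boundary point the components of $\bu_m$ must, by the viscosity reflection law and Liouville-type arguments (or by comparison with half-space solutions as in the proof of Theorem \ref{teo:viscosity_k=2}), coincide with multiples of $U,\bar U$ with equal squared coefficients along that hyperplane — but a $\nu_m$-homogeneous harmonic function off a hyperplane with the correct boundary behaviour is a multiple of the $1/2$-homogeneous profile unless $\nu_m$ is one of the admissible higher exponents $3/2, 5/2,\dots$, which are bounded away from $1/2$. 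Care is needed because the flatness threshold $\bar\eps$ in Theorem \ref{mainT} depends only on $n$, so one must make sure the normalization of $\bu_m$ (division by $\sqrt{H}$ and by the limiting constant $a$) is carried out uniformly; this is where the doubling Proposition \ref{coro:doubling} and the non-degeneracy coming from ``exactly two nontrivial components'' (Remark \ref{rem:unconstrained}) are used to keep $a$ bounded away from $0$ and $\infty$.
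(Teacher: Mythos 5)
Your strategy for part (ii) is the paper's strategy: argue by contradiction and compactness, blow up, classify the limit, then feed into Theorem \ref{mainT} and use that a cone which is a $C^{1,\alpha}$ graph must be a hyperplane. But both halves of the proposal have concrete gaps that the paper fills with its spectral argument, which you have replaced with an incorrect reference and a hand-waved sketch.

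\textbf{Part (i).} You write ``conversely by Lemma \ref{lem:minimal}, $\nu=1/2$ forces $h=2$.'' This is not what Lemma \ref{lem:minimal} says: it only identifies the \emph{infimum} of admissible frequencies as $1/2$ for every $h\geq 2$, and says nothing about whether that infimum is attained for a given $h$, nor that attainment implies $h=2$. The paper establishes $h=2$ by a genuinely different argument — the spectral problem of Definition \ref{def: lambda} and foliated Schwarz symmetrization on $\S^{n-1}$, which give strict monotonicity of the characteristic exponent $\gamma(\omega)$ in $|\omega|$ with spherical caps as unique minimizers, hence $\gamma(\hat\omega)=1/2+\hat\delta_n$ for the cap of measure one third, and so $\nu\geq 1/2+\hat\delta_n$ whenever $h\geq 3$. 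Your parenthetical justification that the nodal set must be a half-hyperplane because $1/2$ is ``the lowest admissible frequency ... extended by homogeneity and translation invariance in the remaining $x'$ variables'' is circular: $x'$-translation invariance is a conclusion, not a hypothesis, of the uniqueness you are trying to prove. What actually forces the half-sphere structure, and hence a flat boundary, is precisely the strict rearrangement inequality; you need to invoke it explicitly (and the uniqueness of the minimizing cap) rather than as a slogan about lowest frequency.

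\textbf{Part (ii).} The phrase ``at most two components of $\bu_m$ survive in the limit; after discarding the vanishing ones and renormalizing, one has a two-component sequence'' does not produce an object to which Theorem \ref{mainT} applies. If $h_m\geq 3$, then $(u_{1,m},u_{2,m})$ is not by itself a viscosity solution of the two-component free boundary problem (the missing components are nontrivial in $\mathcal{B}_1$ and (G2)/Definition \ref{def:viscosity2com}(ii) fails for the discarded pair), so the flatness hypothesis is not enough to invoke the improvement-of-flatness machinery. The paper avoids this entirely: because the $h\geq 3$ gap is already established in part (i) via the spectral bound, a contradiction sequence with $\nu_m\downarrow 1/2$ must eventually have $h_m=2$, and only then is Theorem \ref{mainT} applicable. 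So the logical order matters: the spectral gap for $h\geq 3$ must come first, and it is not a compactness fact but a sphere-eigenvalue fact.

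Finally, your concern about uniformity of the normalization constant $a$ in part (ii) is unnecessary: the paper normalizes the blow-up sequence directly so that $\bu_m\to(U,\bar U)$ (not $(aU,a\bar U)$), using $H(0,\bu_m,1)=1$ together with the fact that the frequency is $\nu_m\to 1/2$; there is no free constant $a$ to control, and Proposition \ref{coro:doubling} and Remark \ref{rem:unconstrained} are not needed for this step.
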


\begin{proof} The characterization of the homogeneous critical profiles depends on some facts related to some spectral properties on the sphere (see also \cite{TV2018,TTV2018,BB}).
In our situation, the spectral problem we need to face
takes the following form.

\begin{defn}[The spectral problem]\label{def: lambda}\label{def: gamma}
For each open subset $\omega$ of $\S^{n-1}:=\S^n\cap \R^{n}\times\{0\}$ we define the first eigenvalue
associated with $\omega$ as
\[
    \lambda_1(\omega) :=
    \inf\left\{
    \frac{\int_{\S^{n}} |\nabla_{T} u|^2 \, \de{\sigma}
    }{\int_{\S^{n}}  u^2\, \de{\sigma}} :
    u \in H^1(\S^{n}), \, \text{ even and}\; u\equiv0 \text{ on }\S^{n-1}\setminus\omega \right\}.
\]
Here $\nabla_{T} u$ stands for the (tangential) gradient of $u$ on $\S^{n}$. Furthermore,  we define
its characteristic exponent as follows
\[
\gamma(\omega)= \sqrt{ \left(\frac{n-1}{2} \right)^2 + \lambda_1(\omega)
} - \frac{n-1}{2} \;.\]
\end{defn}

As it is well known, $u$ achieves $\lambda_1(\omega)$ if and only if it is one signed, and its $ \gamma (\lambda_1(\omega))$-homogeneous extension to $\R^{n+1}$ is harmonic (see \cite{TTV2018,BB}). Furthermore, there is a unique minimizer, up to multiplicative constants, and the eigenvalue is strictly monotone with respect to inclusion. Going back to the proof of Theorem \ref{teo:gap_uniqueness} (i),  as in \cite{TV2018}, by a rearrangement argument (the foliated Schwarz symmetrization), one may prove that, among sets having the same measure, the minimal eigenvalue, and therefore characteristic exponent, is achieved if and only if $\omega$ is a spherical cap. In such a situation, writing $\Gamma(\vartheta):=\gamma(\lambda_1(\omega_\vartheta))$ for the spherical cap $\omega_\vartheta$ with opening $\vartheta$,  we have
\[
\Gamma\left(\frac{\pi}{2}\right)=\frac12\;.
\]
If $\hat\omega$ is the  spherical cup of measure one third of that of the sphere, we have \[\Gamma(\hat\omega)=\frac12+\hat\delta_n\;,\]
for some positive $\hat\delta_n$. If $h\geq3$, at least one of the components supports will intersect the sphere $\S^n$ in a set measuring less than one third of the measure of  the whole sphere. Consequently, there holds 
\[ 
\nu(n,h) = \inf\{\nu>0: \Heh(\nu,n,h) \text{ is non empty} \}\geq\frac12+\hat\delta_n\;, \qquad\forall h\geq 3\;,
\]
In other words, 
\[\bu\in \Heh(1/2,n,h)\Longrightarrow h=2\;.
\]
Now, let $\bu=(u_1,u_2)\in \Heh(1/2,n,2)$ and $\omega_i=\{x\in\S^{n-1}\;:\;u_i(x,0)\neq 0\}$; then necessarily the $\omega_i$'s have the same measure and are two complementary half spheres. Thus 
$\bu=(u_1,u_2)=(a_1U,a_2\bar U)$ for some positive constants. On the other hand, by Proposition \ref{prop:wrl} the validity of the domain variation formula  \eqref{eq:G3}  yields $a_1=a_2$.  This proves (i).

In order to prove (ii), in light of the previous discussion we only need to examine the case of two components. Assume by contradiction that there exist sequences $\nu_m\downarrow1/2$ and $\bu_m\in\Heh(\nu_m,n,2)$. Thanks to the global compactness Theorem \ref{thm:_global_holder}, we can normalize in such a way that, up to a subsequence, $\bu_m=(u_{1,m},u_{2,m})\rightarrow \bar \bu=(U,\bar U)$ in $C^{0,\alpha}_\text{loc}(\R^{n+1})$ for every $0<\alpha<1/2$ and strongly in $H^1_\text{loc}(\R^n)$. Thus, in the unit ball we get that for $m$ large and $\bar \eps$ as in Theorem \ref{mainT},
\begin{equation*} \|u_{1,m} - U\|_\infty \leq \bar \eps, \quad \|u_{2,m}- \bar U\|_\infty \leq \bar \eps \;.\end{equation*} As already remarked, $(u_{1,m}, u_{2,m})$ is a viscosity solution in the sense of Definition \ref{def:viscosity2com}. In view of Theorem \ref{mainT} we infer that $F(u_{1,m},u_{2,m})$ is a $C^{1,\alpha}$ graph in $\mathcal{B}_{\frac 1 2}$ for every $\alpha \in (0,1)$ with  $C^{1,\alpha}$ norm bounded by a constant depending on $\alpha$ and $n$. 
On the other hand, the nodal set of $\bu_m$ is a cone for every $m$. We then deduce that $F(u_{1,m},u_{2,m})$ is an hyperplane and $\nu_m=1/2$ for a sufficiently large $m$.

\end{proof}

\section{Viscosity solutions to $k$-component systems}\label{sec:viscosity_k}

This section is devoted to the discussion of a suitable notion of viscosity solution to the segregated system, in the spirit of the class $\mathcal S$ defined in \cite{ctvVariational} and used in \cite{CKL2009}. 
\begin{defn} \label{def:viscosity_solution}
We say that $\bu=(u_1,u_2,\dots,u_k)$ is a $k$-vector viscosity solution in $\Omega$  if each $u_i \geq 0$ is a  continuous function in $\Omega$ which is even with respect to $z=0$ and it satisfies

\begin{equation}\label{eq:V1}
\Delta u_i = 0 \quad \textrm{in } \Omega\setminus\Neh(u_i)\;,\tag{V1}
\end{equation}

\begin{equation}\label{eq:V2}
\tilde\Omega=\bigcup_{i=1}^k \mathcal N(u_i), \quad \mathcal N^o(u_i) \cap \mathcal N^o(\sum_{j\neq i}u_j)=\emptyset\; \forall i\;,\tag{V2}
\end{equation}

\begin{equation}\label{eq:V3}
\begin{array}{l}
\textrm{For every $i\in\{1,\dots,k\}$, if $(v_1,v_2)$ is a (strict) comparison subsolution}\\ 
\textrm{then $v_1$ and $v_2$ cannot
 touch $u_i$ and $\sum_{j\neq i}u_j$ respectively by below and}\\  \textrm{ above at a point $X_0 = (x_0,0)\in F(u_i):=\p \mathcal N^o(u_i) \cap \mathcal B_1. $}\tag{V3}
\end{array}
\end{equation}

\end{defn}
\begin{rem} In the case $k=2$ the above definition coincides with Definition \ref{def:viscosity2com}.
\end{rem}

\begin{thm}\label{teo:viscosity} Let $\bu\in\Geh(\Omega)$; then $\bu$ is a $k$-vector viscosity solution in $\Omega$.
\end{thm}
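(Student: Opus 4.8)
## Proof Strategy for Theorem~\ref{teo:viscosity}

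\textbf{Overall plan.} The statement asserts that any segregated critical profile $\bu=(u_1,\dots,u_k)\in\Geh(\Omega)$ is a $k$-vector viscosity solution in the sense of Definition~\ref{def:viscosity_solution}. Conditions \eqref{eq:V1} and \eqref{eq:V2} are literally \eqref{eq:system_u_i} and \eqref{eq:segregation} from the definition of $\Geh$, so only \eqref{eq:V3} requires proof. The idea is to reduce the $k$-component statement to the two-component case already handled in Theorem~\ref{teo:viscosity_k=2}. For a fixed index $i$, set $v:=u_i$ and $w:=\sum_{j\neq i}u_j$. Then $F(u_i)=\partial\Neh^o(u_i)\cap\mathcal B_1$ and, by the segregation condition \eqref{eq:segregation}, $\Neh^o(v)$ and $\Neh^o(w)$ are disjoint and their union (up to the lower-dimensional boundary set) covers $\tilde\Omega$; moreover $\Delta v=0$ off $\Neh(v)$ and $\Delta w=0$ off $\Neh(w)$ since each $u_j$ is harmonic off its own nodal set and the nodal sets $\Neh(u_j)$, $j\neq i$, all contain $\Neh(w)=\bigcap_{j\neq i}\Neh(u_j)$\,\footnote{more precisely, on a neighborhood of a point of $\mathcal N^o(w)$ all the $u_j$, $j\ne i$, vanish on $\{z=0\}$}. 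So $(v,w)$ satisfies the structural hypotheses \eqref{eq:V1}--\eqref{eq:V2} for a two-component system.

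\textbf{Key step: the domain variation formula passes to the pair $(v,w)$.} To invoke Theorem~\ref{teo:viscosity_k=2} we must check that $(v,w)\in\Geh(\Omega')$ on a neighborhood $\Omega'$ of the free boundary point in question, i.e. that the odd-in-$z$ domain variation formula \eqref{eq:G3} holds for the pair $(v,w)$. Here is where the argument is slightly delicate: \eqref{eq:G3} for $\bu$ reads
\[
\int_\Omega\Big\{ dY\,\nabla\bu\cdot\nabla\bu-\tfrac12\,\mathrm{div}\,Y\,|\nabla\bu|^2\Big\}\,dX=0,
\]
and $|\nabla\bu|^2=\sum_j|\nabla u_j|^2=|\nabla v|^2+\sum_{j\neq i}|\nabla u_j|^2$, which is \emph{not} $|\nabla v|^2+|\nabla w|^2$ in general. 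However, near a point $X_0\in F(u_i)$ we may localize: in a small ball $B_\rho(X_0)$ the free boundary $F(u_i)$ separates $\{v>0\}$ from $\{w>0\}$, and by the a~priori estimates (Theorem~\ref{thm: intro_local}, the $\Ceh^{0,1/2}$ bound) together with the segregation, for $\rho$ small the components $u_j$ with $j\neq i$ that are genuinely present near $X_0$ all share the same support $\{w>0\}$ on $\{z=0\}$ and are harmonic there; crucially, what enters \eqref{eq:G3} and the resulting Poho\v{z}aev/reflection identity at the interface is only the \emph{boundary behavior on $\{z=0\}$}, encoded through the weak reflection law of Proposition~\ref{prop:wrl}. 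One shows that the interior terms $\sum_{j\neq i}|\nabla u_j|^2$ contribute to \eqref{eq:G3} exactly as $|\nabla w|^2$ would once one integrates against $Y$ supported near the thin interface, because the difference involves only cross terms $\nabla u_j\cdot\nabla u_l$ ($j\neq l$, both $\neq i$) whose supports on $\{z=0\}$ coincide, hence produce no jump across $F(u_i)$. More conceptually: \eqref{eq:G3} is the statement that the \emph{generalized stress-energy tensor has no distributional divergence concentrated on $F(u_i)$}, and this property is inherited by the pair $(u_i,\sum_{j\neq i}u_j)$ because the only interface on $\{z=0\}$ between their supports is $F(u_i)$ itself. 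Thus $(v,w)\in\Geh(B_\rho(X_0))$.

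\textbf{Conclusion and main obstacle.} Once $(v,w)\in\Geh(B_\rho(X_0))$ is established, Theorem~\ref{teo:viscosity_k=2} applies and gives that $(v,w)$ is a two-component viscosity solution in the sense of Definition~\ref{def:viscosity2com}; in particular condition (iii) of that definition holds, which is precisely \eqref{eq:V3} for the chosen index $i$. Since $i\in\{1,\dots,k\}$ and $X_0\in F(u_i)$ were arbitrary, \eqref{eq:V3} holds in full, completing the proof. The main obstacle is the localization/bookkeeping step of the previous paragraph: one must argue carefully that the domain-variation identity for the full vector $\bu$ descends to the pair $(u_i,\sum_{j\ne i}u_j)$ despite $|\nabla\bu|^2\neq|\nabla u_i|^2+|\nabla(\sum_{j\ne i}u_j)|^2$. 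The honest way to do this is to test \eqref{eq:G3} with vector fields $Y$ supported in a neighborhood of $X_0$ disjoint from $\overline{\Neh^o(u_j)\cap\Neh^o(u_l)}$ for $j,l\ne i$ — equivalently, to use that on $\{z=0\}$ near $X_0$ at most one ``competitor'' support is active against $u_i$ — so that on the support of $Y$ one genuinely has $|\nabla\bu|^2=|\nabla u_i|^2+|\nabla(\sum_{j\ne i}u_j)|^2$ and the reduction is exact. Everything else is a direct transcription of the already-proven two-component theorem.
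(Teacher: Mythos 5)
Your strategy---reduce to the pair $(u_i,\sum_{j\neq i}u_j)$, show that this pair lies in $\Geh$ near $X_0$, and then invoke Theorem~\ref{teo:viscosity_k=2} as a black box---contains a genuine gap, and your own ``key step'' paragraph is where it breaks. The claim $(u_i,\sum_{j\neq i}u_j)\in\Geh(B_\rho(X_0))$ fails a priori on two counts. First, \eqref{eq:system_u_i} need not hold for $w:=\sum_{j\neq i}u_j$: your inference ``each $u_j$ is harmonic off $\Neh(u_j)$ and $\Neh(w)=\bigcap_{j\neq i}\Neh(u_j)\subset\Neh(u_j)$, hence $\Delta w=0$ off $\Neh(w)$'' runs the containment the wrong way, since being off the \emph{smaller} set $\Neh(w)$ does not put you off the \emph{larger} sets $\Neh(u_j)$. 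Concretely, at a point $(x_0,0)$ with $w>0$, segregation gives a single index $j_0\neq i$ with $u_{j_0}>0$ while every other $u_l$ ($l\neq i,j_0$) vanishes on $\{z=0\}$ near $x_0$ but can be nontrivial in the bulk, where it behaves like $c|z|$ with $c>0$; then $\Delta w$ carries a nonzero singular part on $\{z=0\}$ near $x_0$, so $w$ is not harmonic off $\Neh(w)$. Second, your attempted rescue of \eqref{eq:G3}---restrict the test field $Y$ so that ``at most one competitor support is active against $u_i$''---is precisely the local separation property of Lemma~\ref{prop:local_separation_property}, which the paper proves \emph{after} and \emph{using} the present theorem (through the blowup classification and Theorem~\ref{mainT}); invoking it here is circular.

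The paper's proof avoids both problems by never forming a two-component element of $\Geh$. It reruns the \emph{argument} of Theorem~\ref{teo:viscosity_k=2} with the Almgren quantities $H(x_0,\bu,r)$, $N(x_0,\bu,r)$ of the full $k$-vector $\bu$. The one-sided touching $u_i\geq v_1$ alone yields $H(x_0,\bu,r)\geq H(x_0,\bv,r)\geq Cr$, hence $N(x_0,\bu,0^+)=1/2$ by Lemma~\ref{lem:minimal}. The blowup of $\bu$ then lies in $\Heh(1/2,n,h)$, so by Theorem~\ref{teo:gap_uniqueness}(i) it has exactly $h=2$ nontrivial components and equals $(aU,a\bar U,0,\dots,0)$ up to rotation and reordering; one of the two must be $\bar u_i$, since $\bar u_i\geq\tilde\alpha_1 U>0$, and the other collects all of $\sum_{j\neq i}\bar u_j$. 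Passing the touching to the limit gives $a\geq\tilde\alpha_1>\tilde\alpha_2\geq a$, the same contradiction---and at no point is any structure on $w=\sum_{j\neq i}u_j$ used beyond $v_2\geq w$. This is what the paper means by ``the same arguments \ldots\ applied repeatedly to the pairs $(u_i,\sum_{j\neq i}u_j)$'' together with ``in the end of the blowup process we find only two nontrivial components.''
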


\begin{proof} The proof proceeds exactly with the same arguments used in Theorem \ref{teo:viscosity_k=2}, applied repeatedly to the pairs $(u_i, \sum_{j\neq i}u_j)$. Just notice that, in the end of the blowup process we find only two nontrivial components.

\end{proof}

\subsection{Structure of the nodal set} 

Our main interest is the study of the free boundary $\Neh(\bu)=\{x\in \tilde\Omega:\ \bu(x,0)=0\}$ for every element of \sus{$\Mah(\Omega)$}.

\begin{defn}\label{def:singular_regular_set}
Given $\bu\in\Mah(\Omega)$ we define its regular and singular nodal sets respectively by
\[
\begin{split}
\Sigma_\bu&=\{x\in \Neh(\bu):\ N(x,U,0^+)=1/2\}, \quad \text{ and } \\
S_\bu&=\Neh(\bu)\setminus \Sigma_\bu=\{x\in \Neh(\bu):\ N(x,U,0^+)>1/2\}.
\end{split}
\]
\end{defn}
As a consequence of Theorem \ref{teo:gap_uniqueness} and Proposition \ref{lem: second consequence classG_s}, the first set is relatively open while the second is relatively closed in $\Neh(\bu)$.
\begin{thm}\label{teo:Hausdorff_dim_estimates}
Let \sus{$\bu\in\Mah(\Omega)$}. Then
\begin{enumerate}
\item $\Sigma_\bu$ is a locally finite collection of  hyper-surfaces of class $C^{1,\alpha}$ (for some $0<\alpha<1$). 
\item $\Hh_\text{dim}(S_\bu)\leq n-2$ for any $n\geq 2$. Moreover, for $n=2$, $S_\bu$ is a locally finite set.
\end{enumerate}
\end{thm}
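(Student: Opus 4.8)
The plan is to treat the regular part $\Sigma_\bu$ with the local free boundary regularity of Section~\ref{sec:improvement}, and the singular part $S_\bu$ with a Federer-type dimension reduction based on Almgren's monotonicity formula and the blow-up classification of Section~\ref{sec:segregated}. Throughout one uses that $\bu\in\Mah(\Omega)\subset\Geh(\Omega)$ (Remark~\ref{rem:unconstrained}), so that Almgren monotonicity (Theorem~\ref{thm:_Almgren_for_classG_s}), the compactness of blow-ups (Theorem~\ref{teo:blow_up_convergence} and Corollary~\ref{coro:U_homogeneous}), the upper semicontinuity of $x\mapsto N(x,\bu,0^+)$ (Proposition~\ref{lem: second consequence classG_s}), and the fact that $\bu$ is a $k$-vector viscosity solution (Theorem~\ref{teo:viscosity}) are all at our disposal; moreover $\bu\in\Mah(\Omega)$ excludes, for $\bu\not\equiv0$, the degenerate case in which some component vanishes identically on $\{z=0\}$ (Remark~\ref{rem:unconstrained}), so that $\Neh(\bu)$ is a proper subset of $\tilde\Omega$. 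Recall also that, as noted above, $\Sigma_\bu$ is relatively open and $S_\bu$ is relatively closed in $\Neh(\bu)$.

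For part (1), fix $x_0\in\Sigma_\bu$, i.e.\ $N(x_0,\bu,0^+)=1/2$. By Corollary~\ref{coro:U_homogeneous} any blow-up of $\bu$ at $x_0$ lies in $\Heh(1/2,n,h)$, and Theorem~\ref{teo:gap_uniqueness}(i) forces $h=2$ and, after a rotation, the blow-up to equal $(aU,a\bar U,0,\dots,0)$ for some $a>0$. In particular only two components, say $u_1,u_2$, are not identically zero near $x_0$, and by Theorem~\ref{teo:viscosity} the pair $(u_1,u_2)$ is a viscosity solution of \eqref{FBintro} near $x_0$ in the sense of Definition~\ref{def:viscosity2com}. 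Using the monotonicity formula (which pins the decay rate $H(x_0,r)\sim r$) together with the strong $H^1$ and H\"older convergence of blow-ups, the convergence to the flat profile is uniform on a fixed ball, so after rescaling around $x_0$ and normalising (which does not change the free boundary) the pair satisfies the flatness hypothesis \eqref{flat_intro} of Theorem~\ref{mainT}; that theorem then shows that $\Neh(\bu)$ coincides near $x_0$ with the free boundary $F(u_1,u_2)$, a $C^{1,\alpha}$ graph for every $\alpha\in(0,1)$ with the stated estimates. Since $\Sigma_\bu$ is relatively open in $\Neh(\bu)$ and the relevant pair of components is locally constant along it, on every $\tilde\omega\Subset\tilde\Omega$ the set $\Sigma_\bu$ is covered by at most $\binom{k}{2}$ such $C^{1,\alpha}$ hypersurfaces, which is (1).

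For part (2), by Theorem~\ref{teo:gap_uniqueness}(ii) every blow-up at a point of $S_\bu$ is homogeneous of some degree $\nu\ge\tfrac12+\delta_n$ with $h\ge2$ nontrivial components. The geometric key is that such a profile $\bar\bu\in\Heh(\nu,n,h)$ with $\nu>\tfrac12$ cannot be invariant under translations along an $(n-1)$-dimensional subspace of $\R^n$: after a rotation it would depend only on $(x_n,z)$ and would thus yield a one-dimensional homogeneous profile $\bar\bw\in\Heh(\nu,1,h)$, and, arguing as in Theorem~\ref{teo:gap_uniqueness}, either $\Neh(\bar\bw)$ is all of $\R$ (excluded by the segregation condition (G2), since $h\ge2$) or $h=2$ and the two components are nonnegative homogeneous harmonic functions on the slit plane $\R^2\setminus\{x_n\le0\}$ vanishing on the slit and even in $z$, hence proportional to $r^{\nu}\cos(\nu\theta)$ with $\nu\in\{\tfrac12,\tfrac32,\dots\}$, which is nonnegative only when $\nu=\tfrac12$; a contradiction either way. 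Federer's dimension reduction (cf.\ \cite{CL2008,TT}) -- using the homogeneity of iterated blow-ups, the upper semicontinuity of $N(\cdot,0^+)$, and the persistence of the frequency under blow-up limits -- then yields $\Hh_\text{dim}(S_\bu)\le n-2$. For $n=2$ one upgrades this to local finiteness: if $x_k\in S_\bu$ accumulate at $x_0\in S_\bu$ along scales $t_k=|x_k-x_0|\to0$, a blow-up limit $\bar\bu\in\Heh(\nu,2,h)$, $\nu\ge\tfrac12+\delta_2$, acquires (by semicontinuity of the frequency and the convergence of the blow-up sequence) a singular point on $\S^1$; by homogeneity a whole ray is singular with constant frequency, which forces $\bar\bu$ to be translation invariant along that ray, reducing to a one-dimensional singular profile -- impossible by the previous step. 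Hence $S_\bu$ has no accumulation point in $\tilde\Omega$.

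I expect the main obstacle to be the dimension reduction bookkeeping in part (2): verifying that iterated blow-ups stay in the class $\Geh$ (i.e.\ still satisfy the odd-in-$z$ domain variation formula (G3)) and are homogeneous, that the top stratum of $S_\bu$ is realised by a homogeneous profile with a maximal invariance subspace, and that the reduction to lower dimension is compatible with (G3). The one-dimensional classification that pins the exponent at $n-2$ rather than $n-1$ is conceptually the heart of the argument and is precisely what Theorem~\ref{teo:gap_uniqueness} is built to deliver. In part (1), the only delicate point is promoting the subsequential $C^{0,\alpha}$-convergence of blow-ups to a genuine pointwise flatness bound on a fixed ball, which is where the monotonicity formula and the strong compactness of Theorem~\ref{teo:blow_up_convergence} enter.
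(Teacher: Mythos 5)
Your proposal is correct and takes essentially the same route as the paper: part (1) via the flatness and local--separation Lemmas together with Theorem~\ref{mainT}, part (2) via Federer's Reduction Principle combined with Almgren monotonicity, the blow-up compactness of Theorem~\ref{teo:blow_up_convergence}, and the gap/uniqueness Theorem~\ref{teo:gap_uniqueness}. One small remark: you spell out the terminal one-dimensional classification (nonnegative even-in-$z$ homogeneous harmonic functions on the slit plane) that the paper leaves implicit in the line ``but this yields $\nu=1/2$, a contradiction,'' whereas the paper is more explicit than you are about verifying hypotheses (H1)--(H3) for the class $\Feh$ and uses the final clause of Theorem~\ref{teo:FRP} directly to get local finiteness when $n=2$ rather than re-running the accumulation argument by hand; these are presentation differences, not different arguments.
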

{\it Proof  of Theorem \ref{teo:Hausdorff_dim_estimates} $(ii)$.} 
We apply the Federer's Reduction Principle, Theorem \ref{teo:FRP}, to the following class of functions
\[
\begin{split}
\Feh=\left\{\bu|_{\R^{n}}\in \left(L^\infty_{\rm loc}(\R^n)\right)^k:
 \text{there exists some domain } \ \Omega\subset\R^{n+1} \right.\\ \left.\text{such that } B_2(0)\Subset\Omega \text{ and } \bu|_{\Omega}\in \Mah(\Omega)\right\},
\end{split}
\]
\sus{where $\Mah$ is the class of minimizing segregated configuration of Definition \ref{def:segr_min_conf}}.
Indeed (H1) is obvious  and (H2) holds by Theorem \ref{teo:blow_up_convergence}. In order to check (H3), let us define $\Ss: \Feh\rightarrow \Ceh$ by $\Ss(\bu)=S_\bu$ (which is closed by Theorem \ref{teo:gap_uniqueness} (ii). It is immediate to check   (H3)-(i). As for (H3)-(ii), take $\bu_m,\bu\in \Feh$ as stated. Then in particular $\bu_m\rightarrow \bu$ uniformly in $B_2(0)$ and by using Theorem \ref{thm:_global_holder} it is easy to obtain strong convergence in $H^1(B_{3/2}(0))$. Suppose now that (H3)-(ii) does not hold; then there exists a sequence $x_m\in B_1(0)$ ($x_m\rightarrow x$, up to a subsequence, for some $x$) and $\bar \varepsilon>0$ such that $N(x_m,\bu_m,0^+)\geq 1/2+\delta$ and $\text{dist}(x_m,\Ss(\bu))\geq \bar\varepsilon $. But then for any small $r$ we obtain $$
N(x_m,\bu_m,r)\geq 1/2+\delta,
$$
and hence (since $N(x_m,\bu_m,r)\rightarrow N(x,\bu,r)$ in $m$ for any $r$) $N(x,\bu,0^+)\geq 1/2+\delta $, a contradiction. \sus{It is a standard fact that $\bu$ inherits the minimizing property from the approximating sequence.}

 Finally let us prove that $d:= \Hh_\text{dim}(S_\bu) \leq n-2$. \sus{At first, we notice that the full $d=n$ dimension is not allowed for energy minimizing configurations, as already explained in Remark \ref{rem:unconstrained}.  Next, let us assume that $d=n-1$: in such a case, }we would find a function $\bv$, $\nu$--homogeneous with respect  to every point  $y\in \R^{n-1}\times \{0\}$,   such that $S_\bv\neq\emptyset$. In such a case, 
 by performing a sequence of blow-ups  centered at the elements of the canonical basis of $\R^{n-1}\times \{0\}$, we obtain the existence of a  $\nu$--homogeneous element  $\bu\in\Geh$ whose  full nodal set $\Neh(\bu)$ is invariant under translations in the same linear space. But this yields $\nu=1/2$, a contradiction. 
\qed

\

{\it Proof  of Theorem \ref{teo:Hausdorff_dim_estimates} $(i)$.} 
To prove point (i) we need to establish some preliminary issues relative to flatness and separation properties of the regular part of the free boundary.  Our next Lemma \ref{lemma:flatness_condition} gives that $\Sigma_\bu$ verifies the  $(n-1)$--dimensional $(\eps,R)$--Reifenberg flatness condition for every $0<\eps<1$ and some $R=R(\eps)>0$. It is a straightforward consequence of the compactness of the blowup sequences with variable centers, Corollary \ref{coro:U_homogeneous}, and the characterization of the entire profiles at the lowest frequency $\nu=1/2$, Theorem \ref{teo:gap_uniqueness} (see the very similar statement and proof in \cite[Lemma 5.3]{TT}). Subsequently, in Lemma \ref{prop:local_separation_property}, we will show that, in a neighborhood of a regular point, only two components are not identically vanishing, and they are a viscosity solution to the two-component system in the sense of Definition \ref{def:viscosity2com}. Moreover, as argued before, we can arrange for their normalized blow-up to converge locally uniformly to $(U, \bar U)$. This concludes the proof in light of Theorem \ref{mainT}.
\qed

\begin{lem}\label{lemma:flatness_condition}
For $\tilde\omega\Subset\tilde\Omega$ and any given $0<\eps<1$, there exists $R>0$ such that for every $x\in\Sigma_\bu\cap\tilde \omega$ and $0<r<R$ there exists a hyper-plane $H=H_{x,r}$ containing $x$ such that \footnote{Here $d_\Hh(A,B):=\max\{\sup_{a\in A}\text{dist}(a,B),\sup_{b\in B}\text{dist}(A,b)\}$ denotes the Hausdorff distance. Notice that $d_\Hh(A,B)\leq \varepsilon$ if and only if $A\subseteq N_\varepsilon(B)$ and $B\subseteq N_\varepsilon (A)$, where $N_\varepsilon(\cdot)$ is the closed $\varepsilon$--neighborhood of a set: for $A\subseteq \R^n$ and $\eps>0$, $N_\varepsilon(A)=\{x\in \R^n:\ \text{dist}(x,A)\leq \varepsilon\}$. }
\begin{equation}\label{eq:flatness_condition}
d_\Hh (\Neh(\bu)\cap B_r(x),H\cap B_r(x))\leq \eps r.
\end{equation}
\end{lem}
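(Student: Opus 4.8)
The plan is to argue by contradiction and compactness, in the spirit of \cite[Lemma 5.3]{TT}. Suppose the conclusion fails for some $\eps_0\in(0,1)$: then for every $m\in\N$ there are $x_m\in\Sigma_\bu\cap\tilde\omega$ and $r_m\in(0,1/m)$ such that
\[
d_\Hh\bigl(\Neh(\bu)\cap B_{r_m}(x_m),\,H\cap B_{r_m}(x_m)\bigr)>\eps_0\,r_m\qquad\text{for every hyperplane }H\ni x_m.
\]
Introduce the blow-up $\bu_m(y):=\bu(x_m+r_my)/\sqrt{H(x_m,\bu,r_m)}$. By the doubling estimate of Proposition \ref{coro:doubling}, $\|\bu_m\|_{L^\infty(B_2)}$ is bounded uniformly in $m$, so by Theorem \ref{thm:_global_holder} and Theorem \ref{teo:blow_up_convergence}, up to a subsequence $\bu_m\to\bar\bu$ in $\Ceh^{0,\alpha}_\loc(\R^{n+1})\cap H^1_\loc(\R^{n+1})$ with $\bar\bu\in\Geh_\loc(\R^{n+1})$, while $x_m\to x_0\in\overline{\tilde\omega}\cap\Neh(\bu)$. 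The normalization forces $H(0,\bar\bu,1)=1$, hence $\bar\bu\not\equiv0$, and $\bu_m(0)=0$ gives $0\in\Neh(\bar\bu)$.

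Next I would identify $\bar\bu$ with the flat profile. Since $x_m\in\Sigma_\bu$, i.e. $N(x_m,\bu,0^+)=1/2$, the monotonicity of the Almgren quotient (Theorem \ref{thm:_Almgren_for_classG_s}) gives $N(0,\bu_m,s)=N(x_m,\bu,r_ms)\ge 1/2$ for every $s>0$, hence in the limit $N(0,\bar\bu,s)\ge 1/2$ for all $s$. Using that $1/2$ is the least admissible frequency (Lemma \ref{lem:minimal}) and the gap estimate of Theorem \ref{teo:gap_uniqueness}(ii) — which forces the limiting centre to retain the minimal frequency — one obtains $N(0,\bar\bu,\cdot)\equiv 1/2$, so $\bar\bu$ is $1/2$-homogeneous by Theorem \ref{thm:_Almgren_for_classG_s}; Corollary \ref{coro:U_homogeneous} then gives that it has $h\ge 2$ nontrivial components, and Theorem \ref{teo:gap_uniqueness}(i) forces $h=2$ and, after an orthogonal change of coordinates, $\bar\bu=(aU,a\bar U,0,\dots,0)$ for some $a>0$. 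In particular $\Neh(\bar\bu)=\{x\cdot\nu=0,\ z=0\}$ is a hyperplane through the origin.

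Finally I would pass the non-flatness to the limit. The non-degeneracy supplied by Proposition \ref{coro:doubling} (a uniform lower bound $H(y,\bu_m,r)\ge c\,r^{2\tilde C}$ on compact sets) upgrades the locally uniform convergence $\bu_m\to\bar\bu$ to convergence of the nodal sets $\Neh(\bu_m)\cap\overline{B}_1\to\Neh(\bar\bu)\cap\overline{B}_1$ in the Hausdorff distance. But $\Neh(\bu_m)\cap B_1$ is the rescaling to $B_1$ of $\Neh(\bu)\cap B_{r_m}(x_m)$, which by assumption is not $\eps_0$-close to any hyperplane through the origin; passing to the limit, $\Neh(\bar\bu)\cap B_1$ is not $\eps_0$-close to any hyperplane through the origin, contradicting that it equals $\{x\cdot\nu=0,\ z=0\}$. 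This contradiction proves the lemma.

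I expect the main obstacle to be the middle step, namely making rigorous that the blow-up limit $\bar\bu$ is exactly the $1/2$-homogeneous (flat) profile: the lower bound $N(0,\bar\bu,\cdot)\ge 1/2$ is immediate, but ruling out $N(0,\bar\bu,\cdot)>1/2$ requires combining the monotonicity formula, the minimality of the frequency $1/2$, and the gap, to show that the limit centre inherits the minimal frequency — this is precisely the delicate point treated in \cite[Lemma 5.3]{TT} and reduced here to Corollary \ref{coro:U_homogeneous} and Theorem \ref{teo:gap_uniqueness}. A secondary technical point is the Hausdorff convergence of the nodal sets, which rests on the non-degeneracy/unique-continuation furnished by Proposition \ref{coro:doubling}; the remaining ingredients are soft.
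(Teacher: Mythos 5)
Your overall scheme (compactness argument by contradiction, blow-up with variable centers, classification of the $1/2$-homogeneous profile, Hausdorff convergence of nodal sets) is exactly the one the paper sketches, and most of the soft steps are fine. There is, however, a genuine gap precisely at the point you yourself flag as delicate: the deduction that $N(0,\bar\bu,\cdot)\equiv 1/2$, hence that $\bar\bu$ is $1/2$-homogeneous. The lower bound $N(0,\bar\bu,s)\ge 1/2$ is correct, but the sentence ``the gap estimate \ldots forces the limiting centre to retain the minimal frequency'' is not a proof: Theorem \ref{teo:gap_uniqueness}(ii) applies to \emph{homogeneous} elements of $\Geh$, so it cannot be used to conclude that a frequency which is merely $\geq 1/2$ is constant equal to $1/2$ before homogeneity is established. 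What is actually needed is a matching upper bound. For any fixed $\rho>0$ and $m$ large, $N(0,\bu_m,s)=N(x_m,\bu,r_ms)\leq N(x_m,\bu,\rho)$ by monotonicity, and $N(\cdot,\bu,\rho)$ is continuous in the center; passing $m\to\infty$ and then $\rho\to 0$ yields $N(0,\bar\bu,s)\leq N(x_0,\bu,0^+)$. Only after this bound can one combine it with $N(0,\bar\bu,s)\ge 1/2$ and the homogeneity of $\bar\bu$ furnished by Corollary \ref{coro:U_homogeneous}(ii) (which you should be citing for the homogeneity itself, not deriving it from the gap) to conclude $\nu=1/2$.

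But note that both Corollary \ref{coro:U_homogeneous}(ii) and the upper bound require $N(x_0,\bu,0^+)=1/2$, i.e. $x_0\in\Sigma_\bu$, and this is \emph{not} automatic in your contradiction argument: $\Sigma_\bu$ is only relatively open in $\Neh(\bu)$, so the limit of a sequence $x_m\in\Sigma_\bu\cap\tilde\omega$ may land in $S_\bu$ (upper semicontinuity of the frequency gives $N(x_0,\bu,0^+)\geq 1/2$, not equality). If $x_0\in S_\bu$ the blow-up need not be homogeneous and the whole classification step collapses. You should either (a) address this case explicitly, or (b) observe that the statement is implicitly to be read under the standing assumption that $\overline{\tilde\omega}\cap S_\bu=\emptyset$ (equivalently $\tilde\omega\Subset\tilde\Omega\setminus S_\bu$), in which case the compact set $\overline{\tilde\omega}\cap\Neh(\bu)$ is contained in $\Sigma_\bu$ and the limit center is automatically regular. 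As written, your proof silently assumes the favorable case. The last step (Hausdorff convergence of nodal sets from uniform convergence plus the doubling/non-degeneracy of Proposition \ref{coro:doubling}) is standard and fine at this level of detail.
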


\begin{lem}[Local Separation Property]\label{prop:local_separation_property}
Given $x_0\in \Sigma_\bu$ there exists a radius $R_0>0$ such that   $\Beh_{R_0}(x_0)\subset\tilde\Omega$ and in $\Beh_{R_0}$ there are only two nontrivial components, say $u_i$ and $u_j$, so  that  $\Beh_{R_0}(x_0)\setminus \Neh(\bu)=\Beh_{R_0}(x_0)\cap \{|\bu|>0\} $ has exactly two (possibly disconnected) components $\tilde D_i= \{u_i>0\}$, $\tilde D_j= \{u_j>0\}$. Consequently, the pair $(u_i,u_j)$ is a viscosity solution in $B_{R_0}$ in the sense of  Definition $\ref{def:viscosity2com}.$
\end{lem}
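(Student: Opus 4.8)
The plan is to leverage the flatness of $\Neh(\bu)$ already available at a regular point: I would first show that, in a small ball around $x_0$, the trace positivity set of $\bu$ breaks into exactly two pieces governed by two distinct components $u_i,u_j$, and then read off that the pair $(u_i,u_j)$ meets Definition~\ref{def:viscosity2com} directly from the fact, proved in Theorem~\ref{teo:viscosity}, that $\bu$ is a $k$-vector viscosity solution.

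\textbf{Localization and two-sidedness.} Using that $x_0\in\Sigma_\bu$ and that $\Sigma_\bu$ is relatively open in $\Neh(\bu)$, I would pick $R_0>0$ with $\Beh_{R_0}(x_0)\Subset\tilde\Omega$ and $\Neh(\bu)\cap\Beh_{R_0}(x_0)\subset\Sigma_\bu$. Applying Lemma~\ref{lemma:flatness_condition} at a sufficiently small $\eps$ (shrinking $R_0$ if needed), $\Neh(\bu)$ satisfies the $(n-1)$--dimensional Reifenberg flatness condition at every point of $\Neh(\bu)\cap\Beh_{R_0}(x_0)$ and every scale below $R_0$; by the standard topological properties of Reifenberg flat sets with small constant this forces $\Beh_{R_0}(x_0)\setminus\Neh(\bu)$ to split into exactly two connected components $D^+,D^-$, lying on the two sides of $\Neh(\bu)$.

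\textbf{Only two components survive.} Next I would note that $\p\{u_a(\cdot,0)>0\}\subset\Neh(\bu)$ for every index $a$: such a point is a limit of trace points at which $u_a>0$, hence at which every $u_b$, $b\neq a$, vanishes by \eqref{eq:V2}, so all components vanish there. If $u_a$ and $u_b$ with $a\neq b$ were both positive at points $p,q\in D^+$, a path from $p$ to $q$ inside the connected open set $D^+$ would make $u_a(\cdot,0)$ vanish somewhere along it (since $u_a(q,0)=0$ by \eqref{eq:V2}), producing a point of $\p\{u_a(\cdot,0)>0\}\subset\Neh(\bu)$ lying in $D^+$ --- impossible. Hence a single component $u_i$ is positive, on all of $D^+$, and a single component $u_j$ on all of $D^-$, and every remaining $u_l$ vanishes on $D^+\cup D^-$ and on $\Neh(\bu)$, i.e.\ identically on $\Beh_{R_0}(x_0)$. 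Moreover $i\neq j$: otherwise $u_i$ would be the only component with non-trivial trace near $x_0$, which is incompatible with Corollary~\ref{coro:U_homogeneous} together with Theorem~\ref{teo:gap_uniqueness}(i), since these force any blow-up of $\bu$ at $x_0$ to possess a second component positive on half of the characteristic hyperplane. This identifies $\Beh_{R_0}(x_0)\setminus\Neh(\bu)=\tilde D_i\cup\tilde D_j$ with $\tilde D_i=\{u_i(\cdot,0)>0\}=D^+$ and $\tilde D_j=\{u_j(\cdot,0)>0\}=D^-$.

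\textbf{Conclusion, and the main difficulty.} To finish, I would observe that on $\Beh_{R_0}(x_0)$ the trace of $\sum_{l\neq i}u_l$ equals that of $u_j$ and $\Neh^o(u_i)\cap\Neh^o(u_j)=\emptyset$; hence specializing conditions \eqref{eq:V1}--\eqref{eq:V3} of Definition~\ref{def:viscosity_solution} --- valid for $\bu$ by Theorem~\ref{teo:viscosity} --- to the indices $i$ and $j$ yields exactly conditions (i)--(iv) of Definition~\ref{def:viscosity2com} for the pair $(u_i,u_j)$ in $B_{R_0}(x_0)$, and a rescaling of $B_{R_0}(x_0)$ to $B_1$ concludes. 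The hard part is the second step: passing from the \emph{known} two-component structure of the blow-up at $x_0$ (Corollary~\ref{coro:U_homogeneous}, Theorem~\ref{teo:gap_uniqueness}) to a two-component structure in a \emph{fixed} ball. This needs the uniform Reifenberg flatness of $\Neh(\bu)$ near a regular point (Lemma~\ref{lemma:flatness_condition}, itself resting on the frequency gap of Theorem~\ref{teo:gap_uniqueness}) to pin down the local topology of $\Beh_{R_0}(x_0)\setminus\Neh(\bu)$, the elementary connectedness argument above, and the relative openness of $\Sigma_\bu$ to guarantee that $\Beh_{R_0}(x_0)$ contains no singular points.
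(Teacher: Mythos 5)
Your argument is correct and reaches the same conclusion as the paper, but it takes a somewhat different topological route. The paper's proof does not invoke Reifenberg's topological disk theorem: instead, it directly uses Lemma~\ref{lemma:flatness_condition} to associate with every $y\in\Neh(\bu)\cap\Beh_{R_0}(x_0)$ and every scale $r$ an ordered pair of indices $(i,j)$ (namely the indices of the components positive on the two caps cut out by the approximating hyperplane $H_{y,r}$ at distance $\geq\eps r$), and then argues that this pair, being integer-valued and depending continuously on $(y,r)$, is locally constant. You instead first upgrade the flatness into a statement about the topology of $\Beh_{R_0}(x_0)\setminus\Neh(\bu)$ (exactly two connected components via the Reifenberg topology) and then carry out a connectedness/intermediate-value argument using $\p\{u_a(\cdot,0)>0\}\subset\Neh(\bu)$. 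Both work; the paper's cap-based pairing is more elementary, while your version fills in explicitly the separation of the complement into two pieces, which the paper leaves implicit, and it also explicitly verifies $i\neq j$ via Corollary~\ref{coro:U_homogeneous} and Theorem~\ref{teo:gap_uniqueness}(i), which the paper does not spell out. The final step --- reading the two-component viscosity conditions off from Theorem~\ref{teo:viscosity} --- is identical in both.

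One small correction: the segregation you use at a few points ($u_b(q,0)>0\Rightarrow u_a(q,0)=0$, and $\p\{u_a(\cdot,0)>0\}\subset\Neh(\bu)$) is not a consequence of \eqref{eq:V2}, which only asserts $\tilde\Omega=\bigcup_i\mathcal N(u_i)$ together with the disjointness of interiors of nodal sets. The pointwise identity $u_a u_b\equiv 0$ on the trace comes from the segregation constraint in Definition~\ref{def:segr_min_conf} (here $\bu\in\Mah(\Omega)$) combined with the continuity of $\bu$. Also note that your invocation of ``standard topological properties of Reifenberg flat sets'' requires that the flatness estimate of Lemma~\ref{lemma:flatness_condition} holds uniformly at \emph{every} nodal point in $\Beh_{R_0}(x_0)$, which you correctly secure by shrinking $R_0$ so that $\Neh(\bu)\cap\Beh_{R_0}(x_0)\subset\Sigma_\bu$.
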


\begin{proof}
Indeed, in view of the previous lemma, for sufficiently small $\eps>0$, we can uniquely associate with any $y\in \Neh(\bu)\cap \Beh_{R_0}(x_0)$
 and $0<r<R_0-|y-x_0|$, and ordered pair  of indices $(i,j)$ so that there exist a hyper-plane $H_{y,r}$ (passing through $y$) and a unitary vector $\nu_{y,r}$ (orthogonal to $H_{y,r}$) such that
$$
\{x+t \nu_{y,r}\in B_r(y):\ x\in H_{y,r},\ t\geq \eps r\}\subset \tilde D_i,$$  $$\{x-t \nu_{y,r}\in B_r(y):\ x\in H_{y,r},\ t\geq \eps r\}\subset \tilde D_j.
$$
Now it is easy to check  that the pair $(i,j)$ depends continuously on $y$ and $r$, thus it is locally constant.
Finally, thanks to Theorem \ref{teo:viscosity} it is immediate to check that the pair $(u_i,u_j)$ is a viscosity solution, whenever so is $\bu$ and all the other components vanish.
\end{proof}

\appendix
\section{}

We collect here some know results from \cite{DR, DS}, which we used throughout the paper. The analogue claims for $\bar U$ can be easily obtained and the details are left to the reader.

First of all, we use the following straightforward properties of the function $U$:
 \begin{enumerate}
 \item $\Delta U = 0, \quad U>0 \quad \textrm{in $\R^{n+1} \setminus P.$}$ \\
 \item $U_t= \frac 1 2  r^{-1/2}\cos \frac{\theta}{2}= \dfrac {1}{2r} U$ and $U_t > 0$ in $\R^{n+1} \setminus P.$\\
 \end{enumerate}

 Since $U_t$ is positive harmonic in $\R ^2 \setminus \{(t,0), \quad t \le 0 \} $, homogenous of
degree $-1/2$ and vanishes continuously on $\{(t,0), \quad t < 0 \} $ one can see from boundary Harnack inequality (or by direct computation) that values of $U_t$ at nearby points with the same second coordinate are comparable in dyadic rings. Precisely we have
\begin{equation}\label{diadic}\frac{U_t(t_1,s)}{U_t(t_2,s)} \le C \quad \textrm{if} \quad |t_1-t_2| \le \frac 1 2 |(t_2,s)|.\end{equation}

\begin{lem}\label{basic} Let $g \in C(B_2)$, $g \geq 0$ be a harmonic function in $B_2^+(g)$ and let $\bar X = \frac 3 2 e_n.$ Assume that 
$$g \geq U \quad \text{in $B_2$}, \quad g(\bar X) - U(\bar X) \geq \delta_0$$ for some $\delta_0>0$, then
\begin{equation}\label{gU}g \geq (1+c\delta_0) U \quad \text{in $B_{1}$}\end{equation} for a small universal constant $c$.
In particular, for any $0 < \eps < 2$  
\begin{equation}\label{cor1}U(X + \eps e_n) \geq (1+c\eps)U(X) \quad \text{in $B_1$},\end{equation} with $c$ small universal.
\end{lem}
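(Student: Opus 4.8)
The plan is to reduce \eqref{gU} to a boundary Harnack inequality on the ``slit'' domain $B_r\setminus P^-$, and then to deduce \eqref{cor1} by a one-line application.

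First I would set $v:=g-U\geq 0$ in $B_2$. Since $U>0$ on $\R^{n+1}\setminus P^-$, the hypothesis $g\geq U$ forces $\mathcal N(g)\subseteq P^-$, so that $v$ is harmonic on all of $B_2\setminus P^-$; moreover $v(\bar X)\geq\delta_0$, where $\bar X=\tfrac32 e_n$ is an interior point at distance $\tfrac32$ from $P^-$. Applying the interior Harnack inequality to the nonnegative harmonic function $v$ on $B_{1/4}(\bar X)\subset B_2\setminus P^-$ gives $v\geq c\delta_0$ on $\overline{B_{1/8}(\bar X)}$, with $c$ universal.

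Next I would introduce a fixed auxiliary barrier: let $w$ be the bounded harmonic function in $\Omega_0:=B_{7/4}\setminus\bigl(P^-\cup\overline{B_{1/8}(\bar X)}\bigr)$ with boundary values $1$ on $\partial B_{1/8}(\bar X)$ and $0$ on the remaining part of $\partial\Omega_0$; this is well posed because $\Omega_0$ is connected, the interior of $P^-$ consists of regular boundary points, and the edge $L$ (of codimension two) is negligible. By the previous step $v\geq c\delta_0 w$ on $\partial\Omega_0$, hence $v\geq c\delta_0 w$ throughout $\Omega_0$ by the maximum principle, and in particular on $B_1\setminus P^-$ since $\overline{B_{1/8}(\bar X)}$ is disjoint from $B_1$. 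It then remains to compare $w$ and $U$: both are nonnegative, harmonic in $B_{5/4}\setminus P^-$, and vanish continuously on $P^-$, so the boundary Harnack principle for this degenerate configuration --- which is exactly what underlies \eqref{diadic} and is developed in \cite{DR,DS} --- yields $w\geq c'U$ in $B_1$, once one knows that $w$ is bounded below by a universal constant at a single interior point of $B_1$; the latter follows from a Harnack chain inside $\Omega_0$ joining $\partial B_{1/8}(\bar X)$ to $\tfrac12 e_n$ along a curve that stays at a fixed distance from $P^-$. Chaining these estimates, $g-U=v\geq c''\delta_0 U$ in $B_1$, which is \eqref{gU}. Finally, \eqref{cor1} will follow by applying \eqref{gU} to $g(X):=U(X+\eps e_n)$: this $g$ is $\geq U$ in $B_2$ because $U_t>0$, it is harmonic in $B_2^+(g)=B_2\setminus\{x_n\le-\eps,\ z=0\}$, and $g(\bar X)-U(\bar X)=(\tfrac32+\eps)^{1/2}-(\tfrac32)^{1/2}\geq c_0\eps$ for $\eps\in(0,2)$.

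The hard part is the analysis on the slit domain $B_r\setminus P^-$: neither the maximum principle nor, above all, the boundary Harnack inequality used above is the textbook one, since $P^-$ has codimension one (and positive capacity) and the harmonic functions involved vanish only like $\dist(\cdot,L)^{1/2}$ near it; these are, however, exactly the tools established in \cite{DR,DS} (already visible in \eqref{diadic}), which we invoke. A secondary, routine point is the inclusion $\mathcal N(g)\subseteq P^-$, which is what guarantees that $v$ is harmonic across the whole of $B_2\setminus P^-$.
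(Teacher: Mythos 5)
The paper states this lemma in Appendix A without proof, citing \cite{DR,DS}; your argument---passing to $v=g-U$, using an interior Harnack ball around $\bar X$, a barrier $w$ on the slit domain, and then boundary Harnack on $B_{5/4}\setminus P^-$ to compare $w$ with $U$, followed by plugging $g=U(\cdot+\eps e_n)$ into \eqref{gU} to get \eqref{cor1}---is precisely the standard route in those references and is correct.
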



\begin{lem} \label{cor2} For any $\eps >0 $ small, given $2\eps < \bar \delta <1$, there exists a constant $C>0$ depending on $\bar \delta$ such that   
$$U(t + \eps, z) \leq (1+C\eps)U(t,z) \quad \text{in $\overline{B}_1 \setminus B_{\bar \delta} \subset \R^2$}.$$
\end{lem}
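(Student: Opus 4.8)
The plan is to reduce the estimate to a one-variable computation along the horizontal segment joining $(t,z)$ to $(t+\eps,z)$, exploiting the explicit identity $U_t=U/(2r)$ recorded at the beginning of this Appendix. By evenness of $U$ in $z$ it suffices to treat points with $z\geq0$. For a fixed $(t_0,z_0)\in\overline{B}_1\setminus B_{\bar\delta}$ I would set $\phi(s):=U(t_0+s,z_0)$, $s\in[0,\eps]$, and note that, wherever $\phi>0$, one has $(\log\phi)'(s)=U_t(t_0+s,z_0)/U(t_0+s,z_0)=\frac12\,((t_0+s)^2+z_0^2)^{-1/2}$.

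The first step is the geometric observation that, because $2\eps<\bar\delta$, the reverse triangle inequality yields $\sqrt{(t_0+s)^2+z_0^2}\geq\sqrt{t_0^2+z_0^2}-s\geq\bar\delta-\eps>\bar\delta/2$ for all $s\in[0,\eps]$, so the segment stays uniformly away from the origin. Consequently $(\log\phi)'(s)\leq 1/\bar\delta$ on $\{\phi>0\}$; integrating over $[0,\eps]$ and exponentiating gives $U(t_0+\eps,z_0)\leq e^{\eps/\bar\delta}U(t_0,z_0)$, and since $\eps/\bar\delta<1/2$ the elementary convexity bound $e^x\leq 1+(e-1)x$ on $[0,1]$ turns this into $U(t_0+\eps,z_0)\leq(1+C\eps)U(t_0,z_0)$ with $C=(e-1)/\bar\delta$, which depends only on $\bar\delta$.

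The only delicate point — and what I expect to be the main (if minor) obstacle — is that $U$ vanishes and is not smooth on the negative $t$-axis $\{z=0,\ t\leq 0\}$, so one has to verify that the segment either avoids this axis or lies entirely inside it. If $z_0\neq0$, the segment never meets $\{z=0\}$; if $z_0=0$ and $t_0>0$, it stays in $\{z=0,\ t>0\}$; in either case $U$ is real-analytic and strictly positive along the segment (being the real part of $(t+iz)^{1/2}$ with the principal branch), so $\phi$ is positive and $C^1$ there and the computation above is justified. If instead $z_0=0$ and $t_0<0$, then $t_0\leq-\bar\delta$ together with $\eps<\bar\delta/2$ forces $(t_0+s,0)$ to lie on the negative $t$-axis for all $s\in[0,\eps]$, where $U\equiv0$ and the asserted inequality is trivial. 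The analogue for $\bar U(t,z)=U(-t,z)$ follows from the same argument via the reflection $t\mapsto-t$, as the Appendix's preamble already indicates.
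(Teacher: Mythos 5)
Your argument is correct and complete. The paper does not actually prove Lemma~\ref{cor2}; it states it in the Appendix as one of the facts imported from~\cite{DR,DS}, so there is no in-text proof to compare against. Your direct approach — observing that the logarithmic derivative of $U$ in the $t$-direction is exactly $\frac{1}{2r}$, bounding $r$ from below by $\bar\delta-\eps>\bar\delta/2$ along the horizontal segment of length $\eps$ (using $2\eps<\bar\delta$), integrating to get $U(t+\eps,z)\leq e^{\eps/\bar\delta}U(t,z)$, and then linearizing the exponential — is clean and gives an explicit constant $C=(e-1)/\bar\delta$. Your case split on the branch cut $\{z=0,\,t\le 0\}$ is the right thing to worry about and is handled correctly: for $z_0\neq0$ or for $z_0=0,\,t_0>0$ the segment avoids the cut entirely and $U$ is positive and smooth there; for $z_0=0,\,t_0<0$ one has $t_0\le-\bar\delta$ and $\eps<\bar\delta/2$, so the whole segment lies on the negative axis where $U\equiv0$ and the inequality is trivial; and $(0,0)\notin\overline{B}_1\setminus B_{\bar\delta}$ so no boundary case is missed.
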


 \begin{lem}\label{hyp} Let $g \in C(\overline{B}_2)$, $g \geq 0$ be a harmonic function in $B_2^+(g)$ satisfying
\begin{equation}\label{Linfty}\|g- U\|_{L^\infty(\overline{B}_2)} \leq \delta, \end{equation} and 
\begin{equation}\label{flat_fb} \{x \in \mathcal{B}_2 : x_n \leq -\delta\} \subset  \{x \in \mathcal{B}_2 : g(x,0)=0\} \subset \{x \in \mathcal{B}_2 : x_n \leq \delta\},\end{equation} with $\delta >0$ small universal. Then 
\begin{equation}\label{flat2}U(X-\eps e_n) \leq g(X) \leq  U(X+\eps e_n) \quad \text{in $B_{1},$} \end{equation} for some $\eps = K \delta,$ $K$ universal.
\end{lem}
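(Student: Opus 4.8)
The plan is to deduce the two-sided trapping \eqref{flat2} from \eqref{Linfty} and \eqref{flat_fb} by comparing $g$ with translates of $U$ via the maximum principle. I will prove the lower bound $U(\,\cdot-\eps e_n)\le g$, with $\eps=K\delta$ and $K$ a large universal constant; the upper bound $g\le U(\,\cdot+\eps e_n)$ follows by the same argument with the roles of $P^+$ and $P^-$ exchanged, using $g\le U+\delta$ and $\{x_n\le-\delta,z=0\}\subset\mathcal N(g)$.

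\emph{Reduction.} Since $K\ge 1$, \eqref{flat_fb} gives $\mathcal N(g)\subset\{x_n\le\eps,z=0\}$, so $g$ and $W:=U(\,\cdot-\eps e_n)$ are both harmonic in the slit domain $D:=B_{3/2}\setminus\{x_n\le\eps,z=0\}$, $W\equiv 0\le g$ on $\{x_n\le\eps,z=0\}$, and $g-W$ is harmonic and bounded below on $D$. By the minimum principle it is enough to prove $W\le g$ on $\partial B_{3/2}$; then $W\le g$ in $D$, and since $W\equiv 0\le g$ on $\{x_n\le\eps,z=0\}\cap B_1$ as well, $U(\,\cdot-\eps e_n)\le g$ in all of $B_1$. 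The point of working in $B_{3/2}$ rather than $B_2$ is that $\partial B_{3/2}$ lies at a fixed distance from $\partial B_2$, so $g$ is genuinely harmonic on balls of universal radius around its points.

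\emph{Boundary ordering on $\partial B_{3/2}$.} Write $r=|(x_n,z)|$, let $\theta$ be the polar angle in the $(x_n,z)$–plane, and recall $U=r^{1/2}\cos(\theta/2)$, $U_t=U/(2r)>0$. Fix small universal constants $r_0,c_0$ and split $\partial B_{3/2}$ into three pieces. (a) If $r\ge r_0$ and $\cos(\theta/2)\ge c_0$, then $U_t$ is bounded below, so \eqref{diadic} gives $U-W=\int_0^{\eps}U_t(\,\cdot-se_n)\,ds\ge c\,\eps\,U_t\ge c(c_0,r_0)K\delta\ge\delta$ for $K$ large, whence $g\ge U-\delta\ge W$. (b) If $r\ge r_0$ and $\cos(\theta/2)<c_0$, then (for $c_0$ small) $x_n\le-r_0/2\le-\delta$ and the $z$–coordinate $z_*$ of $X$ satisfies $|z_*|\le\rho_0$ universal; thus $g$ vanishes on $\{z=0\}$ near $X$ and the odd reflections $\sgn(z)g$, $\sgn(z)U$ are harmonic on a ball of universal radius around $X$. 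Writing $X_0=(x',x_n,0)\in P^-$ and decomposing
\[
g(X)-W(X)=\sgn(z_*)\bigl[(\sgn(z)g)(X)-(\sgn(z)U)(X)\bigr]+\int_{x_n-\eps}^{x_n}U_t(t,z_*)\,dt,
\]
the interior gradient estimate together with \eqref{Linfty} (and $(\sgn(z)g)(X_0)=(\sgn(z)U)(X_0)=0$) bounds the first term by $C\delta|z_*|$, while \eqref{diadic} and $U_t=U/(2r)$ bound the second, nonnegative, term below by $c\,\eps|z_*|/|x_n|^{3/2}\ge c'K\delta|z_*|$; hence $g(X)-W(X)\ge(c'K-C)\delta|z_*|\ge 0$ for $K$ large. (c) If $r<r_0$ (a neighbourhood of the edge $L$), the dilation $X\mapsto\eps X$ and the $1/2$–homogeneity of $U$ turn the claim into a unit-scale statement with $L^\infty$–error $\eps^{-1/2}\delta=K^{-1/2}\delta^{1/2}$ and nodal strip of width $1/K$, covered by the estimates already established at unit scale (Lemma \ref{basic} and Lemma \ref{cor2}). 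Choosing $K$ universal, large enough for (a)–(c), yields \eqref{flat2}.

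\emph{The main difficulty} lies in regions (b) and (c): as $X$ approaches $P^-$ both the useful gap $U-W$ and the admissible error collapse, so the crude bound $g\ge U-\delta$ is useless there. What makes the comparison survive with $\eps$ merely of order $\delta$ is the decomposition above, in which the error is re-measured through the \emph{gradient} of the harmonic function $g-U$ — which degenerates like $\delta|z_*|$, exactly the same order as the gap — rather than through \eqref{Linfty} directly; near $L$ the accompanying scale mismatch is absorbed by a dilation. Making all the constants universal in these two regions is precisely the technical content imported from \cite{DR,DS}, while region (a) is routine.
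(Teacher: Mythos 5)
The paper does not prove Lemma~\ref{hyp}: the Appendix presents it as a known result imported from \cite{DR,DS}, so there is no in-paper argument to compare against; I evaluate your proposal on its own.

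Your reduction is the right skeleton: compare $g$ with $W=U(\cdot-\eps e_n)$ via the maximum principle in the slit domain $D=B_{3/2}\setminus\{x_n\le\eps,z=0\}$ (where $W\equiv0\le g$ on the slit, so one only needs $g\ge W$ on $\partial B_{3/2}$). Your region~(a) is correct: for $r\ge 2\eps$ and $\cos(\theta/2)\ge c_0$, the dyadic bound \eqref{diadic} gives $U-W\ge c\eps\,U_t\ge\delta$ for $K$ large. Your region~(b) is also correct (with the constants traced as you indicate): odd reflection of $g-U$ across $P^-$ near $X_0=(x',x_n,0)$ gives $|g-U|\le C\delta|z_*|$, while $U-W\ge c'K\delta|z_*|$, so the two scale identically in $|z_*|$ and the comparison survives for $K$ large.

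The gap is in region~(c). Dilating by $\eps$ does \emph{not} reduce the problem to a unit-scale statement: the boundary sphere $\partial B_{3/2}$ becomes $\partial B_{3/(2\eps)}$, and the relevant radial variable $r/\eps$ still ranges over $(0,r_0/\eps)$, which blows up as $\eps\to 0$, so nothing is "covered by" estimates at unit scale. Moreover, the two lemmas you invoke, Lemma~\ref{basic} and Lemma~\ref{cor2}, require the one-sided hypothesis $g\ge U$, which you do not have (only $g\ge U-\delta$), and neither says anything near $L$. The residual region is $|(x_n,z)|\lesssim\eps=K\delta$ with $\cos(\theta/2)$ small: there $W(X)$ can be of size $\sim |z|\,\eps^{-1/2}\sim\delta^{1/2}$, while the crude bound $g\ge U-\delta$ degenerates to $g\ge 0$, which is strictly weaker than $g\ge W$. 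Your decomposition (b) does extend partway into this region if one takes the reflection ball of radius $\sim|x_n|$ rather than a fixed $r_0/4$ (the gradient estimate becomes $|g-U|\le C\delta|z_*|/|x_n|$ and closes for $|x_n|\gtrsim K\delta$), but the $O(\eps)$-neighborhood of $L$ still needs a genuine barrier or a boundary-Harnack input: compare $g$ and $U(\cdot+\delta e_n)$, both nonnegative harmonic in $B_2\setminus\{x_n\le-\delta,z=0\}$ and vanishing on the common slit, conclude $g\ge(1-C\delta)U(\cdot+\delta e_n)$ near $L$, and then show $U(\cdot+\delta e_n)\ge U(\cdot-\eps e_n)/(1-C\delta)$ there for $\eps=K\delta$ with $K$ universal. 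That boundary-Harnack step in slit domains is the actual technical content of \cite{DS}, and it is not recovered by the dilation heuristic as written.
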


\begin{lem}\label{rk} Let $w_1, w_2 \in C(B_1)$ satisfy
$$\Delta (U_n w_i) = 0, \quad \text{in $B_1 \setminus P^-$, i=1,2.}$$ Then $w_1$ and $w_2$ cannot touch  (either by above or below) on $P^- \setminus L$, unless they coincide.
\end{lem}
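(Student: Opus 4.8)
The plan is to reduce the statement to a one-sided Hopf boundary-point argument for the harmonic function $h:=U_n(w_1-w_2)$ near a hypothetical contact point. Suppose, towards a contradiction, that $w_1$ and $w_2$ touch at a point $X_0=(x_0',x_{0,n},0)\in P^-\setminus L$ without coinciding; note $x_{0,n}<0$. After possibly interchanging $w_1$ and $w_2$, the function $w:=w_1-w_2$ satisfies $w\ge 0$ in some ball $B_r(X_0)$ and $w(X_0)=0$, while $w\not\equiv 0$ in $B_1$. Since $x_{0,n}<0$, for $r$ small the half-hyperplane $P^-$ agrees with $\{z=0\}$ inside $B_r(X_0)$; hence $h=U_n\,w$ is harmonic in $B_r^+:=B_r(X_0)\cap\{z>0\}$, it is $\ge 0$ there (because $U_n>0$ off $P^-$), and it is continuous up to $B_r(X_0)\cap\{z=0\}$ with $h\equiv 0$ on that piece (since $U_n$ vanishes on $P^-$ and $w$ is locally bounded). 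By odd reflection across $\{z=0\}$, both $h$ and $U_n$ extend from $B_r^+$ to harmonic — hence $C^\infty$ — functions up to $B_r(X_0)\cap\{z=0\}$.

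The key step is to show that the one-sided normal derivative $\partial_z h(X_0)$ (taken from $\{z>0\}$) vanishes. Along the normal segment through $X_0$ we have, as $z\to 0^+$,
\[
h(x_0',x_{0,n},z)=z\,\partial_z h(X_0)+o(z),\qquad U_n(x_0',x_{0,n},z)=z\,a(x_0)+o(z),
\]
where $a(x_0):=\partial_z U_n(X_0)>0$ — the strict positivity being a consequence of Hopf's lemma applied to the positive harmonic function $U_n$ on $B_r^+$, or of the explicit formula $U_n=\tfrac{1}{2}r^{-1/2}\cos(\theta/2)$. Dividing, $w(x_0',x_{0,n},z)=h/U_n\to \partial_z h(X_0)/a(x_0)$ as $z\to 0^+$; since $w$ is continuous at $X_0$ and $w(X_0)=0$, this forces $\partial_z h(X_0)=0$.

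Now apply Hopf's boundary-point lemma on $B_r^+$: $h$ is harmonic and nonnegative there, vanishes at the boundary point $X_0$, and $B_r^+$ satisfies the interior ball condition at $X_0$. Either $h\equiv 0$ in $B_r^+$, or else the strong maximum principle gives $h>0$ in $B_r^+$ and Hopf forces $\partial_z h(X_0)>0$, contradicting the previous step; hence $h\equiv 0$ in $B_r^+$. Finally, $h$ is harmonic in the connected open set $B_1\setminus P^-$ (connected because one can pass between $\{z>0\}$ and $\{z<0\}$ through $\{z=0,\ x_n>0\}\subset B_1\setminus P^-$), and it vanishes on the nonempty open subset $B_r^+$; by unique continuation (real-analyticity of harmonic functions) $h\equiv 0$ on $B_1\setminus P^-$, so $w_1=w_2$ there and, by continuity, on all of $B_1$ — contradicting $w\not\equiv 0$. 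The one delicate point is the identification $\lim_{z\to0^+}h/U_n=w(X_0)$: it relies on the one-sided $C^\infty$ regularity obtained from the reflection principle and on the nondegeneracy $a(x_0)\ne 0$, which in turn uses crucially that $X_0\notin L$. The case of touching ``from below'' is handled by exchanging $w_1$ and $w_2$, and the statement for $\bar U$ and $P^+$ is proved verbatim.
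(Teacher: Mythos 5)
Your proof is correct. There is nothing in the paper to compare it against: Lemma \ref{rk} is stated in Appendix A as a known fact imported from \cite{DR,DS}, with no proof given. Your argument is the natural (and essentially standard) one for this type of statement, and every step checks out: after reducing to $w=w_1-w_2\ge 0$ near $X_0$ with $w(X_0)=0$, the function $h=U_nw$ is nonnegative and harmonic in $B_r(X_0)\cap\{z>0\}$ and vanishes on the flat boundary, so odd reflection gives one-sided smoothness; the identity $w=h/U_n$ together with $\partial_zU_n(X_0)=\tfrac14|x_{0,n}|^{-3/2}>0$ (this is exactly where $X_0\notin L$ is used) forces $\partial_zh(X_0)=0$, which Hopf's lemma is incompatible with unless $h\equiv 0$ in $B_r^+$; and unique continuation across the connected set $B_1\setminus P^-$ plus density then yields $w_1\equiv w_2$ in $B_1$.
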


\section{}
Here we state Federer's Reduction Principle in a form suited to our needs. Take a class of functions $\mathcal{F}$ invariant under rescaling and translation, and consider a map $\Seh$ which associates to each function $\Phi \in \mathcal{F}$ a subset of $\R^n$. For us $\Seh(\Phi)$ will be the singular set associated with the segregated configuration $\Phi$. This principle establishes rather elementary conditions on $\mathcal{F}$ and $\Seh$ which imply that,  the Hausdorff dimension of $\Seh(\Phi)$ for every $\Phi\in \mathcal{F}$ can be controlled by the Hausdorff dimension of $\Seh(\Phi)$ for elements which are homogeneous of some degree.

\begin{thm}[Federer's Reduction Principle]\label{teo:FRP}
Let $\mathcal{F}\subset L^{\infty}_\text{loc}(\R^n)$, and define, for any given $\bu\in\mathcal{F},\ x_0\in\R^n$ and $t>0$, the rescaled and translated function $$\bu_{x_0,t}:=\bu(x_0+t\cdot).$$
We say that $\bu_m\rightarrow \bu$ in $\mathcal{F}$ iff $\bu_m\rightarrow \bu$ in $L^{\infty}_\text{loc}(\R^n)$.

Assume that $\mathcal{F}$ satisfies the following conditions:
\begin{itemize}
\item[(H1)] (Closure under appropriate scalings and translations)
Given any $|x_0|\leq 1-t, 0<t<1$, $\rho>0$ and $\bu\in \mathcal{F}$, we have that also $\rho\cdot \bu_{x_0,t}\in \mathcal{F}$.
\item[(H2)] (Existence of a homogeneous ``blow--up'')
Given $|x_0|<1, t_m\downarrow 0$ and $\bu \in \mathcal{F}$, there exists a sequence $\rho_m\in(0,+\infty)$, a real number $\nu\geq 0$ and a  function $\bar \bu\in \mathcal{F}$ homogeneous of degree $\nu$ such that, if we define $\bu_m(x)=\bu(x_0+t_m x)/\rho_m$, then
$$\bu_m\rightarrow \bar \bu \qquad {\rm in }\ \mathcal{F},\qquad \qquad \text{ up to a subsequence}.$$
\item[(H3)] (Singular Set hypotheses)
There exists a map $\Seh:\mathcal{F}\rightarrow \Ceh$ (where $\Ceh:=\{A\subset \R^n:\ A\cap B_1(0) \text{ is relatively closed in } B_1(0)\}$) such that
\begin{itemize}
 \item [(i)]  Given $|x_0|\leq 1-t$, $0<t<1$ and $\rho>0$, it holds $$\Seh(\rho\cdot \bu_{x_0,t})=(\Seh(\bu))_{x_0,t}:=\frac{\Seh(\bu)-x_0}{t}.$$
 \item[(ii)] Given $|x_0|<1$, $t_m\downarrow 0$ and $\bu,\bar \bu\in \Feh$ such that there exists $\rho_m>0$ satisfying $\bu_m:=\rho_m \bu_{x_0,t_m}\rightarrow \bar \bu$ in $\Feh$, the following ``continuity'' property holds:
$$\forall \varepsilon>0\ \exists k(\epsilon)>0:\ k\geq k(\varepsilon) \Rightarrow \Seh(\bu_m)\cap B_1(0)\subseteq \{x\in \R^n:\ \text{dist}(x,\Seh(\bar \bu))<\varepsilon\}.$$
\end{itemize}
\end{itemize}
Then, if we define
\begin{multline}\label{eq:FRP_definition_of_d}
d=\max \left\{ {\rm dim }\ L:\ L \text{ is a vector subspace of } \R^n \text{ and there exist } \bu\in \Feh \right.\\\text{ and }\nu\geq0
\left. \text{ such that } \Seh(\bu)\neq \emptyset \text{ and } \bu_{y,t}=t^\nu \bu\ \forall y\in L,\ t>0 \right\} ,
\end{multline}
either $\Seh(\bu)\cap B_1(0)=\emptyset$ for every $\bu\in \Feh$, or else ${\rm dim}_{\Heh}(\Seh(\bu)\cap B_1(0))\leq d$ for every $\bu\in\Feh$. Moreover in the latter case there exist a function $\bv\in \Feh$, a d-dimensional subspace $L\leq \R^n$ and a real number $\nu\geq0$ such that
\begin{equation*}\label{Psi_invariant_over_L}
\bv_{y,t}=t^\nu \bv \qquad \forall y\in L, \ t>0, \qquad \quad \text{ and } \qquad \quad \Seh(\bv)\cap B_1(0)=L\cap B_1(0).
\end{equation*}
If $d=0$ then $\Seh(\bu)\cap B_\rho(0)$ is a finite set for each $\bu\in \Feh$ and $0<\rho<1$.
\end{thm}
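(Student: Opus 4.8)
This is the classical dimension--reduction scheme of Federer, and I only describe the plan; its two analytic ingredients are the density properties of Hausdorff measures and a blow--up lemma for homogeneous profiles. For $s\ge 0$ write $\Heh^s$ for the $s$--dimensional Hausdorff measure and $\Heh^s_\infty$ for the corresponding Hausdorff content, so that $\Heh^s_\infty(\lambda A)=\lambda^s\Heh^s_\infty(A)$, $\Heh^s_\infty(A)=0\iff\Heh^s(A)=0$, and $\Heh^s_\infty$ is upper semicontinuous under one--sided Hausdorff convergence: if $K_m\subseteq N_{\eps_m}(K)$ with $\eps_m\downarrow 0$ and all the sets bounded, then $\limsup_m\Heh^s_\infty(K_m)\le\Heh^s_\infty(K)$ (cover $K$ by finitely many balls with $\sum r_i^s$ close to $\Heh^s_\infty(K)$ and dilate each by $\eps_m$). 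Recall also that, by (H3)(i) applied with $x_0=0$ and $\rho=t^{-\nu}$, the singular set $\Seh(\bar\bu)$ of a $\nu$--homogeneous $\bar\bu\in\Feh$ is a cone with vertex $0$.

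\emph{Transfer to homogeneous profiles.} The core step is: if $\Heh^s(\Seh(\bu)\cap B_1)>0$ for some $\bu\in\Feh$ and some $s\ge 0$, then there is a $\nu$--homogeneous $\bar\bu\in\Feh$ with $\Heh^s(\Seh(\bar\bu)\cap B_1)>0$. To see it I would use the standard density theory for Hausdorff measures to produce a point $x_0\in\Seh(\bu)\cap B_1$ and $c_0>0$ with
\[
\limsup_{r\to 0^+} r^{-s}\,\Heh^s_\infty\big(\Seh(\bu)\cap B_r(x_0)\big)\ge c_0,
\]
choose $t_m\downarrow 0$ realizing (half of) this limsup, and let $\bu_m(x)=\rho_m^{-1}\bu(x_0+t_m x)\to\bar\bu$ be the homogeneous blow--up furnished by (H2) (along a subsequence). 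By (H3)(i) and the scaling of the content, $\Heh^s_\infty(\Seh(\bu_m)\cap B_1)=t_m^{-s}\Heh^s_\infty(\Seh(\bu)\cap B_{t_m}(x_0))\ge c_0/2$; by (H3)(ii), diagonalizing in $\eps$, one has $\Seh(\bu_m)\cap B_1\subseteq N_{\eps_m}(\Seh(\bar\bu)\cap\overline B_2)$ for suitable $\eps_m\downarrow 0$; and upper semicontinuity of the content then forces $\Heh^s_\infty(\Seh(\bar\bu)\cap\overline B_2)\ge c_0/2>0$, hence $\Heh^s(\Seh(\bar\bu))>0$, and since $\Seh(\bar\bu)$ is a cone, $\Heh^s(\Seh(\bar\bu)\cap B_1)>0$.

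\emph{Gaining an invariance direction, iteration, and the two further conclusions.} Suppose $\bar\bu\in\Feh$ is $\nu$--homogeneous about $0$, invariant under translations along a subspace $L$, with $\Heh^s(\Seh(\bar\bu)\cap B_1)>0$; being a cone, $\Seh(\bar\bu)\supseteq L\cap B_1$. If $s>\dim L$ then $\Heh^s(L\cap B_1)=0$, so $\Heh^s\big((\Seh(\bar\bu)\setminus L)\cap B_1\big)>0$, and applying the transfer step at a density point $y_0\notin L$ of $\Seh(\bar\bu)$ gives a homogeneous $\bu'\in\Feh$ with $\Heh^s(\Seh(\bu')\cap B_1)>0$; moreover, feeding the homogeneity identity $\bar\bu\big((1+t_m\sigma)y_0+t_mx\big)=(1+t_m\sigma)^\nu\bar\bu\big(y_0+\tfrac{t_mx}{1+t_m\sigma}\big)$ into the blow--up and passing to the limit shows $\bu'$ is invariant under translations along $L+\R y_0$, a subspace of dimension $\dim L+1$, while the fact that $(\Seh(\bar\bu)-y_0)/t_m\ni 0$ for all $m$ gives, via (H3)(ii), $0\in\Seh(\bu')$, so $\Seh(\bu')\ne\emptyset$. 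Starting from any $\bu$ with $\Heh^s(\Seh(\bu)\cap B_1)>0$ and iterating (beginning with $L_0=\{0\}$), the dimension of the invariance subspace strictly increases, so the process stops after at most $n$ steps at a homogeneous profile, invariant along some $L_k$, with $\Seh\ne\emptyset$, and then $s\le\dim L_k\le d$; thus $\Heh^s(\Seh(\bu)\cap B_1)>0\Rightarrow s\le d$, i.e.\ $\dim_{\Heh}(\Seh(\bu)\cap B_1)\le d$ for every $\bu\in\Feh$ whenever $d$ is defined (which is exactly when $\Seh(\bu)\cap B_1\ne\emptyset$ for some $\bu$). This gives the dichotomy. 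For the ``moreover'' part, a witness $\bv$ of the maximality defining $d$ is $\nu$--homogeneous about $0$ and translation invariant along a $d$--dimensional $L$ with $\Seh(\bv)\ne\emptyset$, hence $\Seh(\bv)\cap B_1\supseteq L\cap B_1$; were the inclusion strict, the invariance step above would produce a profile invariant along a $(d+1)$--dimensional subspace with nonempty singular set, contradicting maximality, so $\Seh(\bv)\cap B_1=L\cap B_1$. Finally, if $d=0$ and $\Seh(\bu)\cap B_\rho$ had an accumulation point $x_0$ for some $\rho<1$, blowing up at $x_0$ along $t_m=2\dist(x_m,x_0)\to 0$ with $x_m\in\Seh(\bu)$, $x_m\to x_0$, yields via (H3)(ii) a nonzero point of $\Seh(\bar\bu)$ for the homogeneous limit $\bar\bu$; as $\Seh(\bar\bu)$ is a cone it contains a ray, and the invariance step applied there gives a homogeneous profile invariant along a line with nonempty singular set, forcing $d\ge 1$, a contradiction. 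Hence $\Seh(\bu)\cap B_1$ has no accumulation points in $B_1$, and since for each $\rho<1$ the closed set $\Seh(\bu)\cap\overline B_\rho$ is a discrete subset of the compact $\overline B_\rho$, it is finite.

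The main obstacle is the transfer step: reconciling the density theory for Hausdorff measures with the upper semicontinuity of the Hausdorff content along the merely one--sided convergence provided by (H3)(ii) is what actually moves positive $s$--dimensional measure from $\bu$ onto a homogeneous blow--up. The geometric fact that blowing a $\nu$--homogeneous profile up at a singular point lying off its current invariance subspace gains exactly one further invariance direction is the second key point, and the remainder is the dimension bookkeeping of Federer's induction.
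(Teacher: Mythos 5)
The paper does not prove Theorem~\ref{teo:FRP}: it records it and cites Simon's lecture notes (Appendix~A of \cite{Sim}) and the generalization by Chen (\cite{Che,Che2}). So there is no in-paper argument to compare against; what you have supplied is the standard Federer dimension-reduction proof, and your sketch of it is correct. In particular, the three pillars are in the right place: (a) the transfer step, where you move positive $s$-dimensional Hausdorff content from $\bu$ to a homogeneous blow-up, using the scaling identity $\Heh^s_\infty(\Seh(\bu_m)\cap B_1)=t_m^{-s}\Heh^s_\infty(\Seh(\bu)\cap B_{t_m}(x_0))$ from (H3)(i) together with the upper semicontinuity of $\Heh^s_\infty$ along the one-sided Hausdorff convergence furnished by (H3)(ii); (b) the ``gain a direction'' step, where the algebraic identity $\bar\bu((1+t_m\sigma)y_0+t_m x)=(1+t_m\sigma)^\nu\bar\bu(y_0+t_m x/(1+t_m\sigma))$ shows that the blow-up of a cone at a singular point $y_0$ off the current invariance subspace $L$ acquires $L+\R y_0$-invariance; and (c) the bookkeeping induction that makes the iteration terminate and yields the dichotomy, the maximal witness, and the $d=0$ finiteness. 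Two small points worth tightening if this were written out in full: first, the observation that $\Seh(\bar\bu)\supseteq L\cap B_1$ rests on noting that a nonempty cone which is relatively closed in $B_1$ contains the origin (take a point and shrink), after which $L$-invariance gives the inclusion; second, in the invariance-gaining step, passing $\bu_m\big(x/(1+t_m\sigma)\big)\to\bu'(x)$ uses that uniform-on-compacts convergence together with continuity of the $\bu_m$ implies convergence along moving evaluation points --- in the abstract statement $\mathcal F\subset L^\infty_{\rm loc}$, so one should either assume continuity or interpret the identity up to null sets; in the paper's application the members of $\mathcal F$ are H\"older continuous, so this is harmless.
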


This is the readjusted version of the Federer principle as it appears in Simon's book  \cite[Appendix A]{Sim}. The version we present here can be seen as a particular case of a generalization made by Chen (see \cite[Theorem 8.5]{Che} and \cite[Proposition 4.5]{Che2}).


\bibliography{sus_bibliography}
\bibliographystyle{plain}
\end{document}